\documentclass[11pt,a4paper,leqno]{article}

\usepackage{amssymb}
\usepackage{amsmath, amsthm}
\usepackage{graphicx, color}
\usepackage{setspace}

\textwidth=14.7truecm \textheight=24.1truecm \topmargin=-1cm
\oddsidemargin=0.7truecm \evensidemargin=0.7truecm

\def\RR{{\bf R}}
\def\ZZ{{\bf Z}}
\def\QQ{{\bf Q}}
\def\mGamma{{\mit\Gamma}}
\def\mLambda{{\mit\Lambda}}

\numberwithin{equation}{section}

\newcommand{\pr}{\mathop{\rm Pr} }

\newcommand{\diam}{\mathop{\rm diam} }

\newcommand{\Conv}{\mathop{\rm Conv} }

\newcommand{\dom}{\mathop{\rm dom} }

\newtheorem{Thm}{Theorem}[section]
\newtheorem{Prop}[Thm]{Proposition}
\newtheorem{Lem}[Thm]{Lemma}
\newtheorem{Cor}[Thm]{Corollary}
\theoremstyle{definition}

\newtheorem{Rem}[Thm]{Remark}

\newenvironment{myitem}{
\refstepcounter{equation}
\\[1.0em] 
$(\thesection.\arabic{equation})$ \quad \quad
\begin{minipage}[t]{12.5cm}}
{\end{minipage}\\[1.0em]\noindent}

\newenvironment{myitem1}{
\refstepcounter{equation}
\\[1.0em]
$(\thesection.\arabic{equation})$  
\begin{minipage}[t]{13.5cm} \begin{itemize}}
{\end{itemize}\end{minipage}\\[1.0em]}

\title{Discrete Convexity and Polynomial Solvability\\ 
in Minimum 0-Extension Problems}
\author{Hiroshi HIRAI \\
Department of Mathematical Informatics, \\
Graduate School of Information Science and Technology,   \\
The University of Tokyo, Tokyo, 113-8656, Japan.\\
\texttt{\normalsize hirai@mist.i.u-tokyo.ac.jp}}
\date{October, 2012\\ May, 2014 (revised) \\ September, 2014 (final)}

\begin{document}

\maketitle

\begin{abstract}
A $0$-extension of graph $\mGamma$ 
is a metric $d$ on a set $V$ containing the vertex set $V_{\mGamma}$ of $\mGamma$
such that $d$ extends the shortest path metric of $\mGamma$ and for all $x \in V$ 
there exists a vertex $s$ in $\mGamma$ with $d(x,s) = 0$.
The minimum $0$-extension problem {\bf 0-Ext}$[\mGamma]$ on $\mGamma$
is: given  a set $V \supseteq V_{\mGamma}$ and a nonnegative cost function $c$ 
defined on the set of all pairs of $V$,
find a $0$-extension $d$ of $\mGamma$ with $\sum_{xy}c(xy) d(x,y)$ minimum.
The $0$-extension problem generalizes 
a number of basic combinatorial optimization problems, 
such as minimum $(s,t)$-cut problem and 
multiway cut problem.

Karzanov proved the polynomial solvability of {\bf 0-Ext}$[\mGamma]$
for a certain large class of modular graphs $\mGamma$, 
and raised the question: What are the graphs $\mGamma$ 
for which {\bf 0-Ext}$[\mGamma]$ 
can be solved in polynomial time?
He also proved that {\bf 0-Ext}$[\mGamma]$ is NP-hard
if $\mGamma$ is not modular or not orientable 
(in a certain sense).

In this paper, we prove the converse:
if $\mGamma$ is orientable and modular, 
then {\bf 0-Ext}$[\mGamma]$ can be solved in polynomial time.
This completes the classification of graphs $\mGamma$ for which {\bf 0-Ext}$[\mGamma]$ is tractable.
To prove our main result, 
we develop a theory of discrete convex functions 
on orientable modular graphs, 
analogous to discrete convex analysis by Murota, 
and utilize a recent result of 
Thapper and \v{Z}ivn\'y on valued CSP.
\end{abstract}

\section{Introduction}
%

By a {\em (semi)metric} $d$ on a finite set $V$ 
we mean a nonnegative symmetric function on $V \times V$ 
satisfying $d(x,x) = 0$ for all $x \in V$ and 
the triangle inequalities 
$d(x,y) + d(y,z) \geq d(x,z)$ 
for all $x,y,z \in V$.
An {\em extension} of a metric space $(S, \mu)$ 
is a metric space $(V,d)$ with $V \supseteq S$ 
and $d(s,t) = \mu(s,t)$ for $s,t \in S$.
An extension $(V,d)$ of $(S,\mu)$ is 
called a {\em $0$-extension} if for all $x \in V$ 
there exists $s \in S$ with $d(s,x) = 0$.

Let $\mGamma$ be a simple connected undirected graph with 
vertex set $V_{\mGamma}$. 
Let $d_\mGamma$ denote the shortest path metric 
on $V_\mGamma$ 
with respect to the uniform unit edge-length of $\mGamma$.
The {\em minimum $0$-extension problem} 
{\bf 0-Ext}$[\mGamma]$
on $\mGamma$ is formulated as:
\begin{description}
\item[{\bf 0-Ext}${[\mGamma]}$:\quad]   
Given $V \supseteq V_{\mGamma}$ and $\displaystyle c: {V \choose 2} \to \QQ_+$,
\item[\hspace{+2cm}]  
minimize 
$\displaystyle \sum_{xy \in {V \choose 2}} c(xy) d(x,y)$ over all 
$0$-extensions $(V,d)$ of $(V_{\mGamma},d_{\mGamma})$.
\end{description}
Here ${V \choose 2}$ denotes the set of all pairs of $V$.
The minimum $0$-extension problem is formulated by Karzanov~\cite{Kar98a}, and 
is equivalent to
the following classical facility location problem, known as
{\em multifacility location problem}~\cite{TFL83}, 
where we let $V \setminus V_{\mGamma} := \{1,2,\ldots, n\}$:
\begin{eqnarray}\label{eqn:multifac}
\mbox{Min.}  && \sum_{s \in V_{\mGamma}} \sum_{1 \leq j \leq n} c (sj) d_{\mGamma}(s, \rho_j) + \sum_{1 \leq i < j \leq n} c(ij) d_{\mGamma}(\rho_i, \rho_j)  \\[0.2em]
\mbox{s.t.} && \rho = (\rho_1,\rho_2,\ldots, \rho_n) \in V_{\mGamma} \times V_{\mGamma} \times \cdots \times V_{\mGamma}. \nonumber
\end{eqnarray}
This problem can be interpreted as follows:
We are going to locate $n$ new facilities $1,2,\ldots,n$ on 
graph $\mGamma$,
where the facilities communicate each other and communicate existing facilities on $\mGamma$.
The cost of the communication is propositional to the distance.
Our goal is to find a location of minimum total communication cost.
This classic facility location problem arises in many practical situations
such as the image segmentation in computer vision, 
and related clustering problems in machine learning; 
see \cite{KleinbergTardos02}.
Also {\bf 0-Ext}$[\mGamma]$ includes
a number of basic combinatorial optimization problems.
For example, take as $\mGamma$ the graph $K_{2}$ 
consisting of a single edge $st$.
Then {\bf 0-Ext}$[K_2]$ is 
the minimum $(s,t)$-cut problem.
More generally, 
{\bf 0-Ext}$[K_{m}]$ is the multiway cut problem 
on $m$ terminals.
Therefore {\bf 0-Ext}$[K_m]$ 
is solvable in polynomial time if $m=2$ and
is NP-hard if $m>2$~\cite{DJPSY94}.

This paper addresses the following 
problem considered by Karzanov~\cite{Kar98a, Kar04a, Kar04b}.
\begin{quote}
{\em What are the graphs $\mGamma$ for which {\bf 0-Ext}$[\mGamma]$
is solvable in polynomial time?}
\end{quote}
Here such a graph is simply called {\em tractable}.

A classical result in location theory in the 1970's is:
\begin{Thm}[{\cite{PicardRatliff}; also see \cite{Kolen}}]
\label{thm:tree}
If $\mGamma$ is a tree,
then {\bf 0-Ext}$[\mGamma]$ is solvable in polynomial time.
\end{Thm}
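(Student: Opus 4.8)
The plan is to exploit the very special structure of a tree metric and reduce {\bf 0-Ext}$[\mGamma]$ to minimum cut. The starting point is the \emph{cut decomposition} of a tree metric: for each edge $e$ of $\mGamma$, deleting $e$ splits the tree into two parts, one of which I call $A_e$, and then
\[
d_{\mGamma}(u,v) = \sum_{e \in E(\mGamma)} \delta_{A_e}(u,v),
\]
where $\delta_{A_e}(u,v)=1$ if exactly one of $u,v$ lies in $A_e$ and $\delta_{A_e}(u,v)=0$ otherwise, because the unit-length shortest path between $u$ and $v$ crosses exactly the edges separating them. Substituting this into the multifacility objective~\eqref{eqn:multifac} rewrites the total cost as a sum, over the edges $e$, of a term depending on $\rho_1,\dots,\rho_n$ only through the binary data ``on which side of $e$ does $\rho_j$ lie.'' For a \emph{single} fixed $e$, minimizing that term over side-vectors $(x_j)_j \in \{0,1\}^n$ amounts to minimizing $\sum_{i<j} c(ij)\,[x_i \neq x_j] + \sum_j \kappa^e_j\, x_j$ for suitable constants coming from the pinned terminal sides; since the $c(ij)$ are nonnegative this is the cut function of a weighted graph, hence submodular and solvable by a single max-flow.

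The difficulty, and the step I expect to be the main obstacle, is \emph{consistency across edges}: the side-vectors chosen for the different edges must all come from one genuine tree vertex $\rho_j$. Rooting $\mGamma$, the relevant sets $\{A_e\}$ form a laminar family, so along any root-to-vertex path the sides behave monotonically; but at a \emph{branch vertex} a location may descend into at most one child subtree. Encoded naively, this exclusive choice contributes supermodular penalties of the form $+\infty\cdot x_j^{e}x_j^{e'}$, which would turn the joint minimization into an intractable, multiway-cut-type problem and destroy submodularity. Showing that the tree nevertheless avoids this is the crux.

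To resolve it I would use that a tree is a median graph — the simplest orientable modular graph — and that the tree metric is \emph{additive along paths}. Concretely, choosing an edge $e=pq$ and fixing the partition of the facilities into its $p$-side and $q$-side makes every cross cost separate, since $d_{\mGamma}(\rho_i,\rho_j)=d_{\mGamma}(\rho_i,p)+1+d_{\mGamma}(\rho_j,q)$ whenever $\rho_i$ lies on the $p$-side and $\rho_j$ on the $q$-side; thus the two subtree subproblems decouple into independent instances (the cross terms becoming extra linear ``pull'' costs toward $p$ and $q$), and recursing on the laminar structure yields an algorithm making only polynomially many max-flow calls. Equivalently, after fixing the tree's orientation the objective becomes a discrete-convex function whose local gadgets are all of the submodular cut type, so the whole problem is graph-representable and solvable by one minimum cut. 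Either route establishes that {\bf 0-Ext}$[\mGamma]$ for a tree reduces to polynomially many polynomial-time max-flow computations, which is exactly the simplest instance of the orientable-modular phenomenon developed in general in this paper.
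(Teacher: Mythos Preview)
The paper does not give its own proof of Theorem~\ref{thm:tree}; it is quoted as a classical result. Your cut decomposition $d_\mGamma=\sum_e \delta_{A_e}$ is the right starting point, and you correctly isolate the real difficulty: the per-edge side-vectors must be \emph{consistent}, and the sibling constraint ``at most one child subtree'' is not a lattice constraint. But your resolution does not actually dispose of it. Observing that, \emph{after fixing} a partition at $e=pq$, the two subtree problems decouple with extra linear pulls toward $p$ and $q$ is true, yet it says nothing about \emph{which} partition to fix. You tacitly take the $F_e$-minimizing cut and recurse, but that is exactly the statement to be proved: that some globally optimal $\rho$ induces a minimum cut at \emph{every} edge simultaneously. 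The recursion, as written, is circular.

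What is missing is a short uncrossing argument on the per-edge min-cuts. Root the tree and, for each non-root vertex $v$ with edge $e_v$ to its parent, let $X_v$ be a minimum cut for the $e_v$-problem (the facilities placed on the $T_v$ side). If $w$ is a child of $v$ then $A_{e_w}\subseteq A_{e_v}$, and submodularity of the cut function shows $X_v\cap X_w$ and $X_v\cup X_w$ are again optimal for the $w$- and $v$-problems, so one may take $X_w\subseteq X_v$. If $v,w$ are siblings then $A_{e_v}\cap A_{e_w}=\emptyset$, and applying submodularity to $X_v$ and the complement of $X_w$ shows that $X_v\setminus X_w$ and $X_w\setminus X_v$ are still optimal, so one may take them disjoint. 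Choosing each $X_v$ to be the unique \emph{minimal} min-cut makes all these relations hold at once; the resulting side-vectors then encode an actual location $\rho\in V_\mGamma^n$ achieving $\sum_e\min F_e$, and one min-cut per edge suffices. Within this paper the same fact is the tree case of orbit-additivity (Theorem~\ref{thm:additive}): each edge is its own orbit and $\mGamma/Q\simeq K_2$, so {\bf 0-Ext}$[\mGamma]$ decomposes into $|E_\mGamma|$ independent $(s,t)$-cuts with a common optimizer guaranteed by Theorem~\ref{thm:optimality}.
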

The tractability of graphs $\mGamma$ is preserved under taking 
Cartesian products.
Therefore, cubes, grid graphs, 
and the Cartesian product of trees
are tractable.
Chepoi~\cite{Chepoi96} extended
this classical result to median graphs as follows.
\begin{figure} 
\begin{center} 
\includegraphics[scale=0.8]{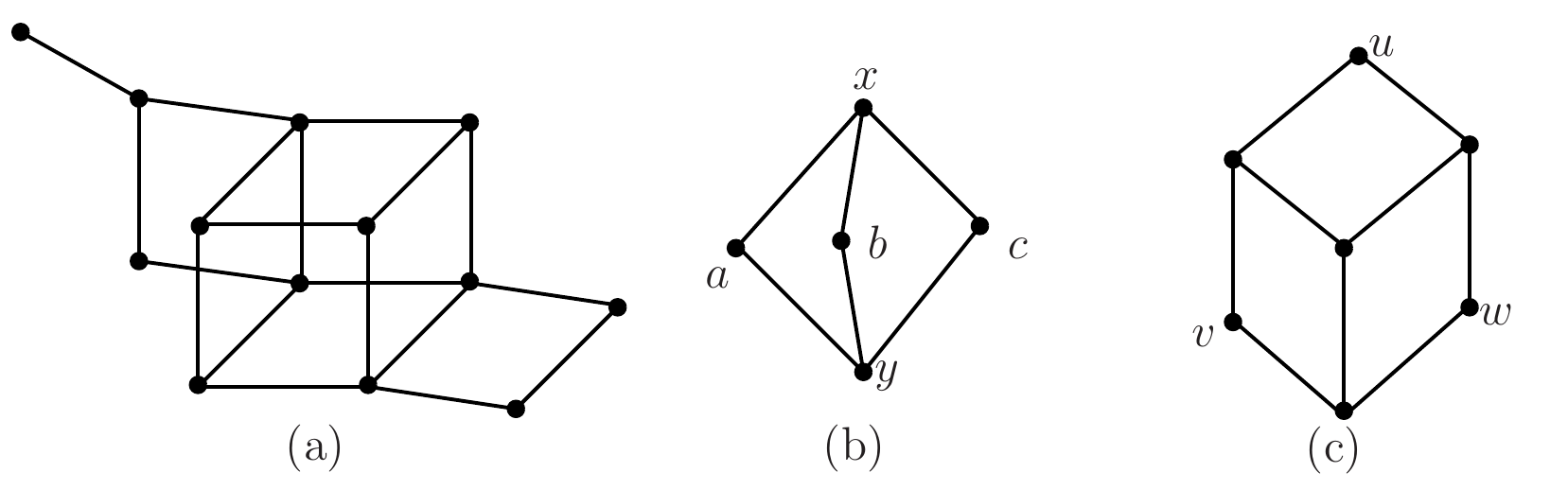}
\caption{(a) a median graph, (b) $a,b,c$ have two medians $x,y$, and 
(c) $u,v,w$ have no median}  
\label{fig:median_graph}         
\end{center}
\end{figure}
A {\em median} of a triple $p_1,p_2,p_3$ of vertices
is a vertex $m$ satisfying 
$d_{\mGamma}(p_i,p_j) = d_{\mGamma}(p_i,m) + d_{\mGamma}(m, p_j)$
for $1 \leq i < j \leq 3$.
A {\em median graph} is a graph
in which every triple of vertices 
has a {\em unique} median.
Trees and their products are median graphs. 
See Figure~\ref{fig:median_graph} for 
illustration of the median concept.
\begin{Thm}[\cite{Chepoi96}]\label{thm:median}
If $\mGamma$ is a median graph,
then {\bf 0-Ext}$[\mGamma]$ 
is solvable in polynomial time.
\end{Thm}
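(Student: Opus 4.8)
The plan is to exploit the special metric structure of median graphs, namely that $d_{\mGamma}$ decomposes isometrically into a sum of cut metrics, and to reduce \textbf{0-Ext}$[\mGamma]$ to polynomially many minimum cut problems. First I would invoke the Djokovi\'c--Winkler relation $\Theta$ on the edges of $\mGamma$: for a median graph the $\Theta$-classes partition the edge set into at most $|E_{\mGamma}|$ parallelism classes, and each class determines a \emph{split} $\{A,B\}$ of $V_{\mGamma}$ into two complementary convex \emph{halfspaces}. Writing $\delta_{\{A,B\}}(x,y)=1$ if $x,y$ lie on opposite sides and $0$ otherwise, one has the identity $d_{\mGamma}(x,y)=\sum_{\{A,B\}}\delta_{\{A,B\}}(x,y)$, the sum ranging over the (polynomially many) splits; this is exactly the isometric embedding of a median graph into a hypercube.

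Next I would substitute this decomposition into the multifacility objective \eqref{eqn:multifac}. Since $d_{\mGamma}$ is a sum over splits, the objective becomes $\sum_{\{A,B\}}F_{\{A,B\}}(\sigma)$, where for a fixed split a \emph{side-assignment} $\sigma\in\{A,B\}^{\{1,\dots,n\}}$ records on which halfspace each facility is placed (each terminal $s\in V_{\mGamma}$ already has a fixed side). Each $F_{\{A,B\}}$ has the form $\sum_{i<j}c(ij)[\sigma_i\ne\sigma_j]+\sum_{s,j}c(sj)[\mathrm{side}(s)\ne\sigma_j]$ with nonnegative coefficients; this is precisely a cut function, so minimizing each $F_{\{A,B\}}$ separately is a minimum $(s,t)$-cut problem, solvable in polynomial time. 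Summing the separate minima yields a lower bound on $\opt$.

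The main obstacle is \emph{consistency}: a family of side-assignments, one per split, corresponds to an actual tuple $\rho\in V_{\mGamma}^{n}$ only when, for each facility $j$, the chosen halfspaces come from a single vertex. In a median graph two halfspaces of distinct splits are either nested or crossing, so a choice of one halfspace per split is realized by a vertex iff the chosen halfspaces pairwise intersect; the Helly property of halfspaces in median graphs then upgrades pairwise compatibility to global realizability. Thus the crux is to select, among the minimizers of each $F_{\{A,B\}}$, minimizers that are pairwise compatible. I would establish this by an uncrossing argument driven by the median operation $m(\cdot,\cdot,\cdot)$: replacing an incompatible pair of candidate locations for a facility by their median with a fixed reference vertex cannot increase any term $d_{\mGamma}(\cdot,\cdot)$, because $x\mapsto d_{\mGamma}(x,t)$ is convex with respect to the median structure. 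This \emph{median-convexity} of the distance, together with the submodularity of the cut functions (whose minimizers form a distributive lattice), lets one round the independent cut optima to a single feasible $\rho$ without increasing cost, so the lower bound is attained.

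Putting the pieces together, the optimum of \textbf{0-Ext}$[\mGamma]$ equals the sum of the minimum cut values over the polynomially many splits, and a consistent optimal assignment is recovered from canonically chosen (e.g.\ componentwise minimal) cut minimizers. Since there are at most $|E_{\mGamma}|$ splits and each cut is computed in polynomial time, the whole procedure runs in polynomial time, proving Theorem~\ref{thm:median}. The only genuinely delicate point is the consistency/rounding step, which is where the defining median property — and not merely the $\ell_1$-embeddability of $d_{\mGamma}$ — is essential.
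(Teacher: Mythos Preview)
Your split-decomposition approach is essentially Chepoi's, and the paper does not reprove Theorem~\ref{thm:median} directly but cites it; the paper's own framework recovers the same decomposition in greater generality via orbit-additivity (Theorem~\ref{thm:additive} and the remark following it): for a median graph every orbit graph $\mGamma/Q$ is isomorphic to $K_2$, the orbit subproblem~\eqref{eqn:Gamma/Q} is exactly your min-cut, and $\tau=\sum_Q\tau_Q$ is the identity you are after.

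The consistency step, however, has a genuine gap as written. The assertion that taking the median with a fixed reference vertex ``cannot increase any term $d_{\mGamma}(\cdot,\cdot)$'' is false: in the $2$-cube with $x=(1,0)$, $y=(0,1)$, $v=(1,1)$, $t=(0,0)$ one has $m(x,y,v)=(1,1)$ and $d_{\mGamma}(m,t)=2>1=d_{\mGamma}(x,t)=d_{\mGamma}(y,t)$, so median-uncrossing can strictly increase individual distance terms. Moreover, the distributive lattices of cut minimizers live in separate two-element posets with no canonical alignment across splits, so ``componentwise minimal'' is not well-defined without further input. The argument that actually works---and is how both Chepoi and the present paper proceed---runs in the opposite direction: one proves that a single global optimum $\rho^*$ of \textbf{0-Ext}$[\mGamma]$ is automatically optimal for \emph{every} split subproblem (this is the equivalence (1)$\Leftrightarrow$(5) in Theorem~\ref{thm:optimality}), which immediately yields $\tau=\sum_Q\tau_Q$; an optimal $\rho^*$ is then extracted by the variable-fixing trick described after Theorem~\ref{thm:additive}. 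Your decomposition is right, but the consistency proof should go through ``a global optimum restricts to local optima,'' not through gluing independently chosen local optima.
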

Karzanov~\cite{Kar98a} introduced the following LP-relaxation 
of {\bf 0-Ext}$[\mGamma]$.
\begin{description}
\item[{\bf Ext}${[\mGamma]}$:\quad] 
Given $V \supseteq V_{\mGamma}$ 
and $c: \displaystyle{V \choose 2} \to \QQ_+$,\\
\item[\hspace{+2cm}]  minimize 
$\displaystyle \sum_{xy \in {V \choose 2}} c(xy) d(x,y)$ over all 
extensions $(V,d)$ of $(V_{\mGamma},d_{\mGamma})$.
\end{description}
This relaxation {\bf Ext}$[\mGamma]$ is a linear program
with size polynomial in the input size.
Therefore, if for every input $(V,c)$, 
{\bf Ext}$[\mGamma]$ has an optimal solution that is a $0$-extension,
then {\bf 0-Ext}$[\mGamma]$ is solvable in polynomial time.
In this case 
we say that {\em {\bf Ext}$[\mGamma]$ is exact}. 
In the same paper,
Karzanov gave a combinatorial 
characterization of graphs $\mGamma$ 
for which {\bf Ext}$[\mGamma]$ is exact.
A graph $\mGamma$ is called a {\em frame} if
\begin{myitem1}
\item[(1)] $\mGamma$ is bipartite,
\item[(2)] $\mGamma$ has no isometric cycle of length greater than $4$, and
\item[(3)] $\mGamma$ has an orientation $o$ with the property that
for every 4-cycle $uv,vv',v'u',u'u$, one has 
$u \swarrow_o v$ if and only if $u' \swarrow_o v'$. \label{eqn:frame}
\end{myitem1}\noindent
Here an {\em isometric cycle} in $\mGamma$ means a cycle $C$
such that every pair of vertices in $C$
has a shortest path for $\mGamma$ in this cycle $C$, 
and $p \swarrow_o q$ means that
edge $pq$ is oriented from $q$ to $p$ by $o$.
\begin{Thm}[{\cite{Kar98a}}]\label{thm:minimizable}
{\bf Ext}$[\mGamma]$ is exact 
if and only if $\mGamma$ is a frame. 
\end{Thm}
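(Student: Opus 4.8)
The plan is to analyze {\bf Ext}$[\mGamma]$ through the tight span (injective hull) $T_\mGamma$ of the metric space $(V_\mGamma,d_\mGamma)$, realized inside $(\RR^{V_\mGamma},\ell_\infty)$. Writing $h_s:=d_\mGamma(s,\cdot)$, the Kuratowski map $\iota:s\mapsto h_s$ is an isometric embedding whose image consists of $0$-cells of $T_\mGamma$. I would first record the standard fact that, since $T_\mGamma$ is precisely the set of pointwise-minimal extensions of $d_\mGamma$ and all costs $c$ are nonnegative, every instance of {\bf Ext}$[\mGamma]$ admits an optimum in which each new vertex $x$ sits at some $f_x\in T_\mGamma$. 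In these coordinates the problem becomes the $\ell_\infty$-multifacility location problem
\[
\mbox{minimize}\quad \sum_{xy\in{V\choose2}}c(xy)\,\|f_x-f_y\|_\infty \qquad (f_s=h_s\ \text{for }s\in V_\mGamma),
\]
and {\bf Ext}$[\mGamma]$ is exact exactly when this minimum can always be attained with every $f_x\in\iota(V_\mGamma)$, i.e.\ by a $0$-extension.

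For the \emph{necessity} direction I would argue by contraposition, producing for each violated frame condition a \emph{single}-facility instance whose optimal location in $T_\mGamma$ has cost \emph{strictly} below $\min_{s\in V_\mGamma}\sum_i c_i\|h_s-h_i\|_\infty$, the value of the best $0$-extension; a single such instance already destroys exactness. If $\mGamma$ is not bipartite it contains an odd cycle, and the barycenter of its tight span strictly improves over every cycle vertex (for $K_3$ one finds cost $3/2$ against $2$). If $\mGamma$ has an isometric cycle of length $2k>4$, that cycle's tight span contains a central point strictly cheaper than all of its vertices, yielding the same conclusion. Finally, if no orientation $o$ as in \eqref{eqn:frame} exists—the prototype being $K_{3,3}$, which is bipartite with no isometric cycle longer than $4$—a frustrated family of $4$-cycles produces a two-dimensional cell of $T_\mGamma$ whose barycenter strictly beats its vertices for a suitable symmetric cost, completing this direction.

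For the \emph{sufficiency} direction the three frame conditions will be used to endow $\iota(V_\mGamma)$ with a \emph{consistent median operation}: bipartiteness together with the absence of isometric cycles longer than $4$ makes $\mGamma$ modular—every triple of vertices has a median—and forces the local structure of $T_\mGamma$ to be cubical, while the orientation of \eqref{eqn:frame} singles out a globally coherent median $m(a,b,c)\in V_\mGamma$ for each triple. Given any optimal $(f_x)$ of the location problem, I would then round it into $\iota(V_\mGamma)$ by repeatedly medianizing each facility against the current configuration, relying on the key inequality that the $\ell_\infty$ cost is nonincreasing under this median; iterating over all new vertices yields an optimal $0$-extension and hence exactness. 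The main obstacle is precisely this inequality: bipartiteness and the short-cycle condition only supply the median \emph{locally} on each cube, so that the work lies in using the orientation condition \eqref{eqn:frame} to \emph{globalize} it, guaranteeing that simultaneously pushing every facility toward $\iota(V_\mGamma)$ never increases $\sum_{xy}c(xy)d(x,y)$. This is exactly the point at which orientability, as opposed to mere modularity, is indispensable.
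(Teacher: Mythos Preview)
This theorem is cited from \cite{Kar98a} and is not proved in the present paper, so there is no ``paper's own proof'' to compare against. Evaluating your proposal on its merits:

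Your overall framework---reducing {\bf Ext}$[\mGamma]$ to an $\ell_\infty$-location problem in the tight span $T_\mGamma$ and asking whether optimal points can always be pushed back to $\iota(V_\mGamma)$---is sound, and the necessity sketch is broadly reasonable (your $K_{3,3}$ example is indeed the standard non-orientable modular graph). The sufficiency direction, however, has a genuine gap.

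You claim that the admissible orientation ``singles out a globally coherent median $m(a,b,c)\in V_\mGamma$ for each triple,'' and you then round by medianizing. But this is precisely what fails: a frame is modular yet, as the paper explicitly notes just before Theorem~\ref{thm:main}, \emph{not} in general a median graph. Concretely, the diamond $M_3$ (the rank-$2$ complemented modular lattice with three atoms) is a frame, and the triple of atoms has \emph{two} medians, ${\bf 0}$ and ${\bf 1}$; the admissible orientation (the Hasse orientation) does not canonically prefer one over the other. So there is no ternary operation $m:V_\mGamma^3\to V_\mGamma$ of the kind you need, and the median-rounding paradigm that works for Chepoi's Theorem~\ref{thm:median} does not carry over. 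Your final paragraph effectively concedes this: you identify ``globalizing the median'' as the main obstacle and say orientability is indispensable there, but you do not supply the mechanism by which orientability resolves it. That mechanism is the actual content of Karzanov's argument, and it is not median-based; it proceeds instead through a structural analysis of minimal extensions (equivalently, of $T_\mGamma$) specific to the frame conditions, showing that every cell of the tight span retracts to a vertex in a cost-nonincreasing way. Without that, your sufficiency argument is a restatement of the difficulty rather than a proof.
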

\begin{Thm}[{\cite{Kar98a}}]\label{thm:frame}
If $\mGamma$ is a frame,
then {\bf 0-Ext}$[\mGamma]$ is solvable in polynomial time.
\end{Thm}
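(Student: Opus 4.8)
The plan is to obtain Theorem~\ref{thm:frame} as an algorithmic consequence of the exactness characterization in Theorem~\ref{thm:minimizable}, turning the mere \emph{existence} of an optimal $0$-extension into a polynomial-time construction of one by self-reduction on the facility locations $\rho$.

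First I would record that \textbf{Ext}$[\mGamma]$ is a linear program whose variables are the pairwise distances $d(x,y)$ for $x,y \in V$ and whose constraints---nonnegativity, symmetry, the triangle inequalities, and the equalities $d(s,t)=d_{\mGamma}(s,t)$ for $s,t \in V_{\mGamma}$---number polynomially in $|V|$. Hence its minimum value is computable in polynomial time, as a rational of polynomially bounded bit-length. Since $\mGamma$ is a frame, Theorem~\ref{thm:minimizable} asserts that \textbf{Ext}$[\mGamma]$ is exact, so for every input $(V,c)$ this value coincides with the optimum of \textbf{0-Ext}$[\mGamma]$.

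The heart of the argument is to recover an optimal $0$-extension by fixing the new vertices one at a time. Assigning a new vertex $x$ to a terminal $s \in V_{\mGamma}$ amounts to imposing $d(x,s)=0$, which forces $d(x,t)=d_{\mGamma}(s,t)$ for every terminal $t$; thus $x$ may be \emph{merged} into $s$, producing a new instance of \textbf{0-Ext}$[\mGamma]$ on the \emph{same} graph $\mGamma$, with one fewer new vertex and with each cost $c(xy)$ transferred onto the pair $sy$. Because $\mGamma$ remains a frame, Theorem~\ref{thm:minimizable} applies to every such merged instance, so its \textbf{Ext}$[\mGamma]$ optimum again equals its $0$-extension optimum. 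I would therefore process the new vertices successively, maintaining the invariant that the current instance has the same $0$-extension optimum as the original one. To process a vertex $x$, for each $s \in V_{\mGamma}$ I solve the polynomial-size linear program of the instance obtained by merging $x$ into $s$, and commit to a terminal $s^{*}$ attaining the smallest value. Exactness of the merged instances guarantees that this smallest value equals the minimum cost over all $0$-extensions of the current instance with $x$ left free, which by the invariant is the original optimum; hence committing to $s^{*}$ preserves the invariant. After all $|V \setminus V_{\mGamma}|$ new vertices have been merged, the sequence of choices defines a map $\rho\colon V \setminus V_{\mGamma} \to V_{\mGamma}$ whose $0$-extension attains the optimum of \textbf{0-Ext}$[\mGamma]$.

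The total work is polynomial, since the procedure solves $O(|V \setminus V_{\mGamma}|\cdot|V_{\mGamma}|)$ linear programs of polynomial size. The step needing the most care---and the only place where anything can go wrong---is the correctness of this self-reduction: one must check that merging a new vertex into a terminal genuinely yields a legitimate \textbf{0-Ext}$[\mGamma]$ instance on the same frame, so that exactness is inherited at every stage. This is routine, and I do not expect a serious obstacle, precisely because all the combinatorial substance has already been absorbed into Theorem~\ref{thm:minimizable}; the present statement is essentially its algorithmic corollary. One could instead try to read a $0$-extension off a single vertex solution of \textbf{Ext}$[\mGamma]$, but that requires a perturbation argument to handle optimal faces that are not vertices, which the self-reduction above avoids entirely.
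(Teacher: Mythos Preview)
Your proposal is correct and matches the paper's own treatment. The paper does not give a detailed proof of this theorem---it cites the result from \cite{Kar98a}---but the one-line justification it offers (just before Theorem~\ref{thm:minimizable}: ``if for every input $(V,c)$, {\bf Ext}$[\mGamma]$ has an optimal solution that is a $0$-extension, then {\bf 0-Ext}$[\mGamma]$ is solvable in polynomial time'') is exactly the exactness-plus-variable-fixing argument you spell out, and the same variable-fixing technique is described explicitly later in the paper after Theorem~\ref{thm:TZ}.
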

It is noted that the class of frames is {\em not} closed 
under taking Cartesian products, 
whereas the tractability of graphs
is preserved under taking Cartesian products.
Also it should be noted that {\bf Ext}$[\mGamma]$ 
is the LP-dual 
to the $d_{\mGamma}$-weighted maximum {\em multiflow} problem, and
{\bf 0-Ext}$[\mGamma]$ describes a combinatorial 
dual problem~\cite{Kar98a, Kar98b}; 
see also~\cite{HH09JCTB, HH11folder, HHbounded, HHMPA} 
for further elaboration of this duality.

Karzanov~\cite{Kar98a} also proved the following hardness result.
For an undirected graph $\mGamma$,
an orientation  with the property (\ref{eqn:frame})~(3) 
is said to be {\em admissible}.
$\mGamma$ is said to be {\em orientable} 
if it has an admissible orientation.
$\mGamma$ is said to be {\em modular} 
if every triple of vertices 
has a (not necessarily unique) median.
\begin{Thm}[{\cite{Kar98a}}]\label{thm:hard}
If $\mGamma$ is not orientable or not modular, 
then {\bf 0-Ext}$[\mGamma]$ is NP-hard.
\end{Thm}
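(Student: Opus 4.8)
The plan is to prove NP-hardness by gadget reductions from canonical hard problems, using non-modularity or non-orientability of $\mGamma$ to supply the gadget, together with a retraction principle that lets me localize the argument to a small obstruction subgraph. The key tool is the following reduction lemma. Say that $H$ is an \emph{isometric retract} of $\mGamma$ if $H$ is an isometric subgraph (so $d_H = d_\mGamma$ on $V_H \times V_H$) and there is a non-expansive map $r \colon V_\mGamma \to V_H$ with $r|_{V_H} = \mathrm{id}$ and $d_\mGamma(r(x),r(y)) \le d_\mGamma(x,y)$. Then {\bf 0-Ext}$[H]$ reduces to {\bf 0-Ext}$[\mGamma]$: given a {\bf 0-Ext}$[H]$ instance $(V,c)$ with $V \supseteq V_H$, treat it as a {\bf 0-Ext}$[\mGamma]$ instance with the new facilities $V \setminus V_H$ allowed to range over all of $V_\mGamma$ and with zero cost attached to terminals in $V_\mGamma \setminus V_H$. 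Any optimal $0$-extension can be pushed back into $V_H$ by applying $r$; since $r$ is non-expansive and fixes $V_H$, this does not increase the cost, while the isometry of $H$ shows every $0$-extension of $H$ is already a $0$-extension of $\mGamma$ of equal cost. Hence the two optima coincide, and it suffices to exhibit, for each non-modular or non-orientable $\mGamma$, an isometric retract $H$ whose problem {\bf 0-Ext}$[H]$ is already NP-hard.

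For the non-modular case I would begin with a triple $a,b,c$ of vertices with no median. The absence of a median says the three shortest-path intervals between the pairs have empty common intersection, so $a,b,c$ exert three mutually incompatible ``pulls''—the same feature responsible for the hardness of the multiway cut problem {\bf 0-Ext}$[K_m]$ with $m>2$~\cite{DJPSY94}. The aim is to show that a median-free triple forces a (small) non-modular isometric retract on which I can place heavy costs pinning three new facilities near $a,b,c$ and moderate costs on the remaining facilities, so that every optimal placement must commit each remaining facility to the ``side'' of one of the three pinned vertices. The missing median guarantees these commitments are genuinely three-way incompatible, producing a faithful copy of a multiway-cut instance (and, for the bipartite obstructions such as $C_6$ where no triangle is available, a copy of a maximum-cut/min-uncut instance built from the antipodal pairs).

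For the non-orientable but modular case the obstruction is of parity type. Admissibility~(\ref{eqn:frame})~(3) requires opposite edges of every $4$-cycle to be oriented consistently; non-orientability means that propagating this rule around some closed chain of $4$-cycles forces an odd number of sign flips, a frustration formally identical to an odd cycle obstructing a $2$-colouring. I would exploit such a frustrated chain to encode minimum uncut (equivalently maximum cut): new facilities placed along the chain must choose, at each $4$-cycle, one of the two opposite edge-directions, and the odd frustration converts the global consistency requirement into the ``anti-ferromagnetic'' binary cost that vanishes exactly on disagreeing pairs. Minimizing the total of this cost is Min-Uncut, which is NP-hard, so the resulting {\bf 0-Ext}$[\mGamma]$ instance is NP-hard as well.

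The main obstacle is the \emph{soundness} of these gadgets: verifying that an optimal $0$-extension is genuinely forced into the intended obstruction and cannot cheat by routing new facilities through unrelated regions of $\mGamma$. This requires calibrating $c$ so that the penalty for leaving the obstruction vertices strictly dominates any conceivable saving elsewhere, and then using the metric structure of $d_\mGamma$—the interval geometry of the median-free triple, and the enforced distance relations along the frustrated chain—to prove that every optimal placement restricts to the gadget and decodes to a feasible solution of the source problem. Equally delicate is the construction itself: exhibiting the non-expansive retraction $r$ onto the chosen subgraph $H$ and proving $H$ is isometric. Once the base-case hardness of the minimal obstruction graphs is established and these retractions are in place, the reductions above are polynomial and complete the proof.
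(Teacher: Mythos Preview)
The paper does not prove this theorem; it is quoted from Karzanov~\cite{Kar98a} without proof, so there is no argument in the present paper to compare your proposal against.

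Your outline is in the right spirit---Karzanov's original argument also proceeds by locating a structural obstruction in $\mGamma$ (an odd cycle when $\mGamma$ is not bipartite, an isometric $K_{2,3}$-type configuration when $\mGamma$ is bipartite but not modular, a ``M\"obius'' chain of $4$-cycles when $\mGamma$ is modular but not orientable) and then building a gadget that encodes a known NP-hard problem on that obstruction. However, what you have written is explicitly a plan rather than a proof, and the parts you yourself flag as ``the main obstacle'' (soundness of the gadgets, construction of the non-expansive retraction $r$, verifying that $H$ is isometric) constitute essentially the entire technical content of the theorem.

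One concrete concern: your retraction lemma is correct as stated, but to use it you must \emph{produce} the isometric retract $H$ and independently establish that {\bf 0-Ext}$[H]$ is NP-hard. A median-free triple in $\mGamma$ does not by itself guarantee that $\mGamma$ retracts onto any particular small hard graph; the existence of such a retract is a nontrivial structural statement that would itself require proof. In practice the reduction is built directly on the obstruction sitting inside $\mGamma$, using large ``anchoring'' costs to pin certain free points to the obstruction vertices, rather than by first retracting onto a canonical target. Similarly, in the non-orientable case, turning the parity frustration along a chain of $4$-cycles into a genuine Min-Uncut encoding requires specifying precisely which vertices are terminals, which are free, how the costs are calibrated so that every optimal placement is forced into a binary choice at each link of the chain, and why no placement can escape the chain---none of which you carry out. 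As written, the proposal identifies plausible ingredients but does not assemble them into a proof.
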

In fact, a frame is precisely
an orientable modular graph 
with the hereditary property that 
every isometric subgraph 
is modular; see \cite{Bandelt88}.
A median graph is an orientable modular graph but 
the converse is not true.
Moreover,
a median graph is not necessarily a frame, 
and a frame is not necessarily a median graph.
In \cite{Kar04a}, 
Karzanov proved a tractability theorem extending 
Theorem~\ref{thm:median}.
He conjectured that {\bf 0-Ext}$[\mGamma]$ is tractable 
for a certain {\em proper} subclass 
of orientable modular graphs 
including frames and median graphs.
He also conjectured that {\bf 0-Ext}$[\mGamma]$ is 
NP-hard for any graph $\mGamma$ not in this class.

The main result of this paper is 
the tractability theorem for 
{\em all} orientable modular graphs.
Thus the class of tractable graphs is larger than his expectation.
\begin{Thm}\label{thm:main}
If $\mGamma$ is orientable modular, 
then {\bf 0-Ext}$[\mGamma]$ is solvable in polynomial time.
\end{Thm}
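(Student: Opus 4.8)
The plan is to recast {\bf 0-Ext}$[\mGamma]$ as a finite-valued constraint satisfaction problem (VCSP) over the domain $V_{\mGamma}$, and to prove that the associated valued constraint language is tractable by exhibiting enough structure to invoke the LP-based tractability theorem of Thapper and \v{Z}ivn\'y. Via the multifacility formulation~(\ref{eqn:multifac}), the problem is exactly to minimize, over $\rho \in V_{\mGamma}^{n}$, a nonnegative combination of the binary cost function $(\rho_i,\rho_j)\mapsto d_{\mGamma}(\rho_i,\rho_j)$ and the unary cost functions $\rho_j \mapsto d_{\mGamma}(s,\rho_j)$ for the fixed terminals $s\in V_{\mGamma}$. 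Thus the relevant valued constraint language $L_{\mGamma}$ is the one generated on $V_{\mGamma}$ by $d_{\mGamma}$ together with all unary functions, and these are finite-valued since $d_{\mGamma}$ is integer-valued. By the theorem of Thapper and \v{Z}ivn\'y, it suffices to show that $L_{\mGamma}$ admits a symmetric fractional polymorphism of every arity; then the basic LP relaxation of this VCSP is exact and {\bf 0-Ext}$[\mGamma]$ is solved in polynomial time.

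The heart of the argument is a theory of discrete convex functions on the orientable modular graph $\mGamma$, modelled on Murota's discrete convex analysis over $\ZZ^{n}$. In that theory, L-convex functions $f$ satisfy the discrete midpoint inequality $f(x)+f(y)\ge f(\lceil (x+y)/2\rceil)+f(\lfloor (x+y)/2\rfloor)$, where the ceiling and floor act coordinatewise; iterating and symmetrizing these midpoint roundings yields symmetric fractional polymorphisms of all arities. I would develop the graph analogue as follows. First, I would use the admissible orientation $o$ to define, on each pair $x,y\in V_{\mGamma}$, a canonical pair of ``averaging'' vertices playing the role of $\lceil (x+y)/2\rceil$ and $\lfloor (x+y)/2\rfloor$. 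The role of orientability is decisive here: in a modular graph a triple of vertices may possess several medians, and the orientation is precisely the extra data needed to select medians consistently, so that the operations become well defined and compatible across the graph. I would then define an $L_{\mGamma}$-convex function to be one satisfying the corresponding midpoint submodular inequality with respect to these operations, and establish the basic calculus of this class: closure under nonnegative combination, closure under the averaging operations, and the higher-arity Jensen-type inequalities.

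With the theory in place, the two technical claims to establish are that the generating cost functions are discrete convex and that discrete convexity is exactly what furnishes the fractional polymorphisms. Concretely, I would prove that $d_{\mGamma}$, viewed as a function of its two arguments, and each unary function $d_{\mGamma}(s,\cdot)$ are $L_{\mGamma}$-convex; by closure, every instance objective is then $L_{\mGamma}$-convex. The midpoint inequality for these objectives gives a binary symmetric fractional polymorphism, while the higher-arity averaging operations give symmetric fractional polymorphisms of every arity, exactly matching the hypothesis of the Thapper--\v{Z}ivn\'y theorem.

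I expect the construction and verification of the averaging operations to be the principal obstacle. Unlike $\ZZ^{n}$, whose local cells are cubes carrying an obvious coordinatewise midpoint rounding, an orientable modular graph has a much richer local geometry, and one must show that the orientation-guided choice of medians yields operations that are genuinely well defined (independent of the choices made), suitably commutative and associative, and satisfy the midpoint inequality for the metric $d_{\mGamma}$ itself. Establishing this---in effect, building an analogue of L-convex analysis from scratch on orientable modular graphs and proving the key inequality for $d_{\mGamma}$---is where the bulk of the work lies; once it is done, tractability of {\bf 0-Ext}$[\mGamma]$ follows from the VCSP machinery almost formally.
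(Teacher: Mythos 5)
Your high-level plan --- cast \textbf{0-Ext}$[\mGamma]$ as a finite-valued VCSP on domain $V_{\mGamma}$, exhibit a symmetric fractional polymorphism for the language generated by $d_{\mGamma}$ and the relevant unary costs, and invoke Thapper--\v{Z}ivn\'y to get exactness of the basic LP --- is exactly the route the paper considers and explicitly \emph{cannot} carry out in general. The paper proves (Remark~\ref{rem:nonsemilattice}) that this direct argument works when $\mGamma$ is the covering graph of a modular semilattice, because then $\wedge$ together with the fractional join gives the required polymorphism; but Figure~\ref{fig:nonsemilattice} exhibits an orientable modular graph that is not the covering graph of any modular semilattice (every admissible orientation has two sinks and two sources), so there is no global semilattice operation to anchor the polymorphism, and the concluding section states plainly that no such global fractional polymorphism could be found. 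Your proposed fix --- orientation-guided selection of medians to define global ``midpoint'' operations $\lceil\cdot\rceil,\lfloor\cdot\rfloor$ --- is precisely where this breaks: the orientation yields consistent \emph{local} lattice structure (the neighborhood semilattices ${\cal L}_p^{\pm}$ and ${\cal L}_p^{*}$), but these local structures do not patch into a single well-defined binary operation on $V_{\mGamma}$. You flag this as ``the principal obstacle'' and ``where the bulk of the work lies,'' but that is the entire content of the theorem; as written the proposal defers the one step that is genuinely hard, and defers it onto a construction that the existence of the counterexample graph strongly suggests cannot be made to work in the form you describe. (A smaller but real error: you take the language to contain \emph{all} unary functions. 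That makes the language conservative, which forces any fractional polymorphism to be supported on operations returning one of their arguments with weight $1/2$ each, and is far too restrictive --- compare the NP-hardness of \textbf{0-Ext}$[K_m]$ for $m\ge 3$. Only the unary functions $d_{\mGamma}(s,\cdot)$ are needed and should be included.)

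The paper's actual proof circumvents the missing global polymorphism by a two-level argument. It develops submodularity and fractional joins only on modular semilattices, applies Thapper--\v{Z}ivn\'y \emph{locally} to minimize the sum of arity-2 submodular constraints over ${\cal L}^{+}_{\rho}$ and ${\cal L}^{-}_{\rho}$ (Theorem~\ref{thm:minimized}), proves an L-optimality criterion (Theorem~\ref{thm:L-optimality}) showing that local optimality over these neighborhoods implies global optimality, and then runs a steepest descent algorithm whose number of iterations is bounded weakly polynomially by a cost-scaling argument. If you want to salvage your direct approach, you would need either to restrict to the semilattice case or to supply the global binary symmetric idempotent fractional polymorphism whose existence is guaranteed abstractly by the Thapper--\v{Z}ivn\'y dichotomy but whose explicit construction the paper leaves open.
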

Combining this result with Theorem~\ref{thm:hard},
we obtain a complete classification of the graphs $\mGamma$
for which {\bf 0-Ext}$[\mGamma]$ is 
solvable in polynomial time.

\paragraph{Overview.}
In proving Theorem~\ref{thm:main},
we employ an axiomatic approach 
to optimization in orientable modular graphs.
This approach is inspired by the theory 
of discrete convex analysis developed by Murota 
and his collaborators (including Fujishige, Shioura, and Tamura);
see \cite{FujishigeMurota, Murota98, MurotaShioura, MurotaTamura, MurotaBook} 
and also \cite[Chapter VII]{FujiBook}.
Discrete convex analysis
is a theory of convex functions 
on integer lattice $\ZZ^n$, 
with the goal of providing a unified framework
for polynomially solvable 
combinatorial optimization problems 
including network flows, matroids, and 
submodular functions.
The theory that we are going to develop here 
is, in a sense, {\em a theory of
discrete convex functions on orientable 
modular graphs}, 
with the goal of providing 
a unified framework for polynomially 
solvable 0-extension problems
and related multiflow problems.
We believe that our theory 
establishes a new link between previously unrelated fields,
broadens the scope of discrete convex analysis, 
and opens a new perspective and new research directions.

Let us start with a simple observation 
to illustrate our basic idea.
Consider a path $P_m$ 
of length $m$, 
and consider {\bf 0-Ext}$[P_m]$, where $P_m$ is 
trivially an orientable modular graph.
Then {\bf 0-Ext}$[P_m]$ for input $V,c$
can be regarded as an optimization problem
on the integer lattice $\ZZ^n$ as follows.
Suppose that  $V_{P_m} = \{1,2,3,\ldots, m\}$, and 
$s$ and $s+1$ are adjacent for $s = 1,2,\ldots, m-1$.
Then $d_{P_m}(s,t) = |s - t|$, and {\bf 0-Ext}$[P_m]$ 
is equivalent to the minimization of the function
\begin{equation}\label{eqn:typical}
\sum_{1\leq s \leq m} \sum_{1 < j < n} c(s j) | s - \rho_j | 
+  \sum_{1 \leq i < j \leq n} c(ij) | \rho_i - \rho_j |
\end{equation}
over all $(\rho_1, \rho_2,\ldots, \rho_n) \in [0,m]^n \cap \ZZ^n$.
This function 
is a simple instance of 
{\em L$^\natural$-convex functions},  
one of the fundamental classes of discrete convex functions.
We do not give a formal 
definition of L$^\natural$-convex functions here.
The only important facts for us are 
the following properties of L$^\natural$-convex functions in optimization:
\begin{itemize}
\item[(a)] Local optimality implies global optimality.  
\item[(b)] The local optimality can be checked by 
{\em submodular function minimization}.
\item[(c)] An efficient descent algorithm can be designed 
based on successive application of submodular function minimization.
\end{itemize}
As is well-known, submodular functions can be minimized 
in polynomial time~\cite{GLS, IFF, Schrijver}.
Actually
the function (\ref{eqn:typical}) 
can be minimized by successive application of
minimum-cut computation~\cite{Kolen, PicardRatliff}, 
a special case of submodular function minimization.

Motivated by this observation, 
we regard {\bf 0-Ext}$[\mGamma]$ 
as a minimization of a function 
defined on the vertex set 
of a product of $\mGamma$, 
which is also orientable modular. 
We will introduce a class of functions, 
called {\em L-convex functions},
on an orientable modular graph.
We show that our L-convex function satisfies
analogues of (a), (b) and (c) above, 
and also that a multifacility location function, 
the objective function of {\bf 0-Ext}$[\mGamma]$, 
is an L-convex function, in our sense, 
on the product of $\mGamma$. 
Theorem~\ref{thm:main} 
is a consequence of these properties.

Let us briefly mention how to 
define L-convex functions, 
which constitutes the main body of this paper.
Our definition is based 
on the {\em Lov\'asz extension}~\cite{Lovasz83}, 
a well-known concept in 
submodular function theory~\cite{FujiBook}, 
and a kind of construction of polyhedral complexes, 
due to Karzanov~\cite{Kar98a} and Chepoi~\cite{Chepoi00},
from a class of modular graphs. 
Let $\mGamma$ be an orientable modular graph
with admissible orientation $o$.
We call a pair $(\mGamma, o)$ a {\em modular complex}. 
It turns out that $(\mGamma, o)$ can be 
viewed as a structure glued together from modular lattices, and
gives rise to a simplicial complex as follows.
Consider a cube subgraph $B$ of $\mGamma$.
The digraph $\vec B$ oriented by $o$ 
coincides with the Hasse diagram of 
a Boolean lattice.
Consider the simplicial 
complex ${\mit\Delta}(\mGamma,o)$ 
whose simplices are sets of vertices 
forming a chain of the Boolean 
lattice corresponding to 
some cube subgraph of $\mGamma$; 
see Figure~\ref{fig:modular_complex}.
\begin{figure} 
\begin{center}
\includegraphics[scale=0.65]{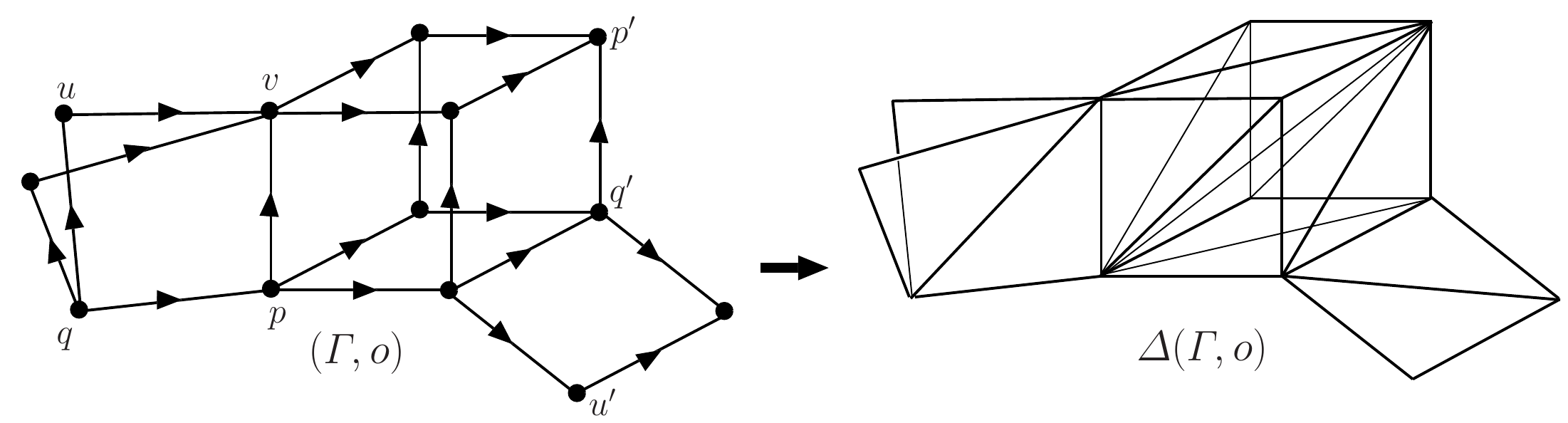} 
\caption{A construction of ${\mit \Delta}(\mGamma, o)$}  
\label{fig:modular_complex}         
\end{center}
\end{figure}
Each (abstract) simplex is naturally regarded 
as a simplex in the Euclidean space.
${\mit\Delta}(\mGamma,o)$ is naturally regarded as 
a metrized simplicial complex.
Then any function $g:V_{\mGamma} \to \RR$ 
is extended to $\overline{g}: {\mit\Delta}(\mGamma,o) \to \RR$
by interpolating $g$ on each simplex linearly;
this is an analogue of the Lov\'asz extension. 
The simplicial 
complex ${\mit\Delta}(\mGamma,o)$ 
enables us to consider 
the {\em neighborhood} ${\cal L}_p^*$
around each vertex $p \in V_{\mGamma}$, 
as well as the local 
behavior of $\overline g$ in ${\cal L}_{p}^*$.
\begin{figure} 
\begin{center} 
\includegraphics[scale=0.7]{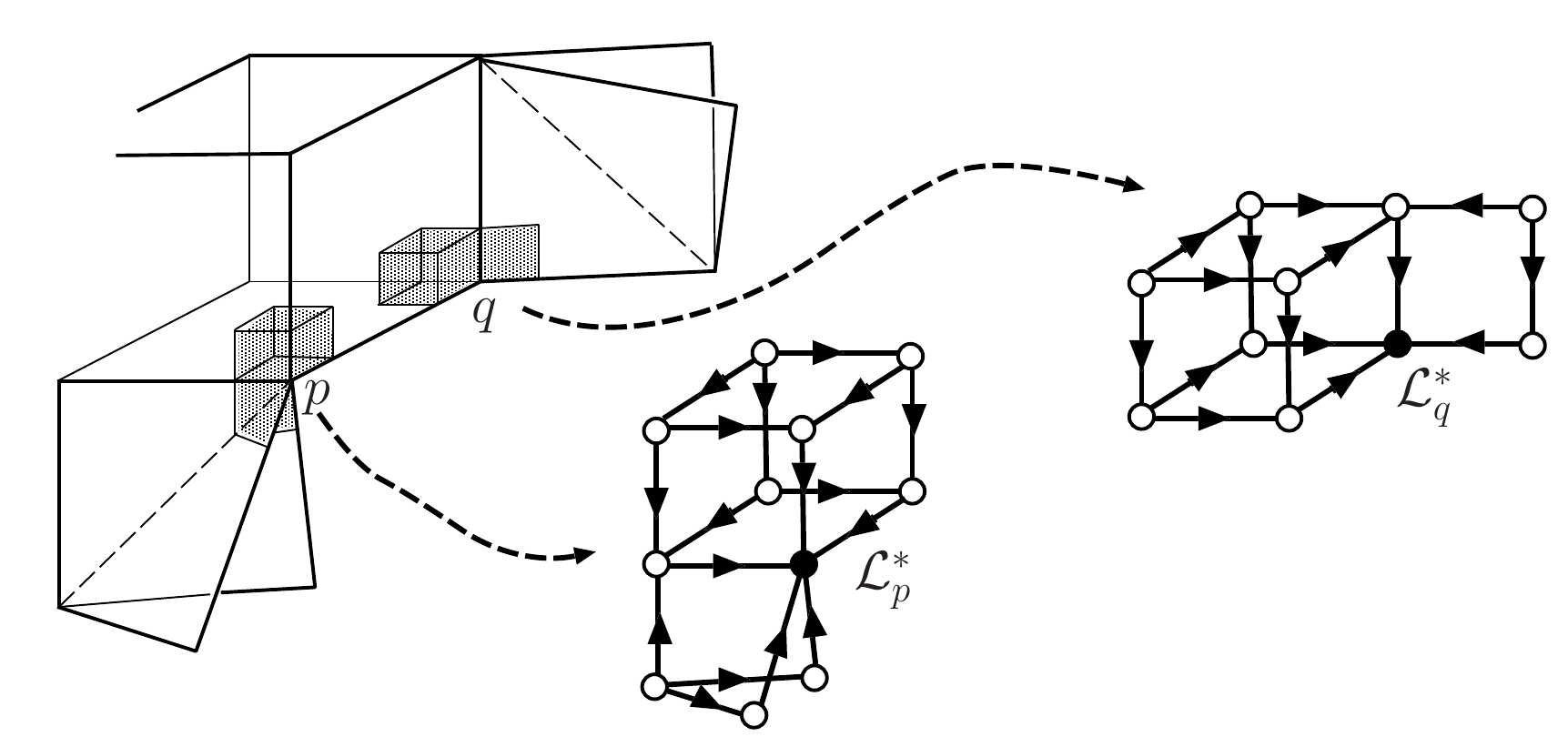}              
\caption{Neighborhood semilattices}  
\label{fig:neighborhood}         
\end{center}
\end{figure}
As in Figure~\ref{fig:neighborhood}, 
neighborhood ${\cal L}^*_p$ 
can be described as a partially ordered set 
with  the unique minimal element $p$.
Then, by restricting $\overline g$ to ${\cal L}^*_p$,
we obtain a function on ${\cal L}^*_p$ 
associated with each vertex $p$.
In fact, the poset ${\cal L}^*_p$ 
is a {\em modular semilattice}, 
a semilattice analogue of a modular lattice introduced by 
Bandelt, van de Vel, and Verheul~\cite{BVV}.  
We first define {\em submodular functions on modular semilattices}, 
and next define {\em L-convex functions on modular complex $(\mGamma,o)$} 
as functions $g$ on $V_{\mGamma}$ such that $\overline g$ is submodular 
on neighborhood semilattice ${\cal L}^*_p$ for each vertex $p$.

Then the multifacility location function, the objective of {\bf 0-Ext}$[\mGamma]$ (see (\ref{eqn:multifac})), 
is indeed an L-convex function on the $n$-fold product of $\mGamma$, and
the optimal solution of {\bf 0-Ext}$[\mGamma]$ can be obtained by 
successive application of submodular function
minimization on the product of $n$ modular semilattices.
Thus our problem reduces to 
the problem of minimizing submodular function $f$ on 
the product of modular 
semilattices ${\cal L}_1, {\cal L}_2,\ldots,{\cal L}_n$,
where the input of the problem is
${\cal L}_1,{\cal L}_2,\ldots,{\cal L}_n$, 
and an evaluating oracle of $f$.
We do not know 
whether this problem in general is tractable in the oracle model,  
but the submodular functions arising 
from {\bf 0-Ext}$[\mGamma]$ take a special form;
they are {\em the sum of submodular functions with 
arity $2$}. 
Here the {\em arity} of a function $f$ is the number of variables of $f$.
Namely, if a function $f$ on 
${\cal L} = {\cal L}_1 \times {\cal L}_2 \times \cdots \times {\cal L}_n$ 
is represented as 
\[
f(x) = h(x_{i_1},x_{i_2},\ldots, x_{i_k}) \quad (x = (x_1,x_2,\ldots,x_n) \in {\cal L})
\]
for some function $h$ on ${\cal L}_{i_1} \times {\cal L}_{i_2} \times \cdots \times {\cal L}_{i_k}$
with $i_1 < i_2 < \cdots < i_k$, 
then the arity of $f$ is (at most) $k$. 
See (\ref{eqn:multifac});
our objective function is a weighted sum of distance functions, which have arity $2$.
This type of optimization problem with bounded arity 
is well-studied in the literature 
of {\em valued CSP} ({\em valued constraint satisfaction problem})~\cite{VCSP1, KolmogorovZivny12, VCSP2, ZivnyBook}.
Valued CSP deals with minimization of a sum of functions $f_i$ $(i=1,2,\ldots,m)$, 
where the arity $k_i$ of each $f_i$ is a part of the input; 
namely the input consists of all values of all functions $f_i$.
Valued CSP admits an integer programming formulation, and 
its natural LP relaxation is called the {\em basic LP-relaxation}.
Recently, Thapper and \v{Z}ivn\'y~\cite{ThapperZivny12FOCS} discovered
a surprising criterion for 
the basic LP-relaxation of valued CSP to exactly solve 
the original valued CSP instance.
They proved that if the class of valued CSP 
(the class of input objective functions)
has a certain nice {\em fractional polymorphism} 
(a certain set of linear inequalities which any input function satisfies), 
then the basic LP-relaxation is exact.
We prove that the class of submodular functions 
on modular semilattice 
admits such a fractional polymorphism. 
Then the sum of submodular functions 
with bounded arity 
can be minimized in polynomial time.
Consequently
we can solve {\bf 0-Ext}$[\mGamma]$ in polynomial time.

We believe that our classes of functions 
deserve to be called submodular and L-convex.
Indeed, they include not only 
(ordinary) submodular/L-convex functions 
but also other submodular/L-convex-type functions.
Examples are
{\em bisubmodular functions}~\cite{CK88, Nakamura88, Qi88}
(see \cite[Section 3.5]{FujiBook}), 
{\em multimatroid rank functions} by Bouchet~\cite{Bouchet97}, 
{\em submodular functions on trees} by Kolmogorov~\cite{Kolmogorov},  
{\em $k$-submodular functions} by 
Huber and Kolmogorov~\cite{HuberKolmogorov12} (also see~\cite{FT13}), 
and {\em skew-bisubmodular functions} by 
Huber, Krokhin and Powell~\cite{HuberKrokhinPowell13} 
(also see~\cite{FTY13, HuberKrokhinPowell13, HuberKrokhin13}).
Moreover, combinatorial dual problems arising from
a large class of (well-behaved) 
multicommodity flow problems, discussed 
in~\cite{HH09JCTB, HH11folder, HHbounded, HHMPA, Kar89, Kar98a, Kar98b}, 
fall into submodular/L-convex function 
minimization in our sense.
This can be understood as a multiflow analogue 
of a fundamental fact in network flow theory: 
the minimum cut problem, the dual of maxflow problem, 
is a submodular function minimization.
The detailed discussion on these topics
will be given in a separate paper~\cite{HHprepar}; 
some of the results were announced by~\cite{HH_HJ}.

\paragraph{Organization.}
In Section~\ref{sec:preliminaries}, 
we first explain basic notions of valued CSP and 
the Thapper-\v{Z}ivn\'{y} criterion (Theorem~\ref{thm:TZ}) 
on the exactness of the basic LP relaxation.
We then describe basic facts on modular graphs 
and modular lattices. 
In Section~\ref{sec:submo}, 
we develop a theory of submodular functions on modular semilattices.
We show that our submodular function satisfies the Thapper-\v{Z}ivn\'{y} criterion, 
and that a sum of submodular functions with bounded arity
can be minimized in polynomial time.
In Section~\ref{sec:L-convex},
we first explore several structural properties of orientable modular graphs.
Based on the above mentioned idea,
we define L-convex functions, 
and prove that our L-convex functions indeed have properties 
analogous to (a), (b) and (c) above.
In Section~\ref{sec:0extension}, 
we formulate {\bf 0-Ext}$[\mGamma]$ 
as an optimization problem on a modular complex.
We show that a multifacility location function, 
the objective function of {\bf 0-Ext}$[\mGamma]$,
is indeed an L-convex function, and 
we prove Theorem~\ref{thm:main}.
Our framework is applicable to 
a certain weighted version of {\bf 0-Ext}$[\mGamma]$.
As a corollary,  
we give a generalization of 
Theorem~\ref{thm:main} to general metrics, 
which completes classification of metrics $\mu$ 
for which the 0-extension problem 
on $\mu$ is polynomial time solvable (Theorem~\ref{thm:main_mu}).
In the last section (Section~\ref{sec:concluding}), 
we discuss a connection to a dichotomy theorem of finite-valued CSP 
obtained by Thapper and \v{Z}ivn\'y~\cite{ThapperZivny13STOC} 
after the first submission of this paper.
In fact, the complexity dichotomy (of form ``either P or NP-hard") of {\bf 0-Ext}, 
established in this paper,  can be viewed as a special case of 
their dichotomy theorem of finite-valued CSP.

\paragraph{Notation.}
Let $\ZZ,\QQ$, and $\RR$ denote 
the sets of integers, rationals, and reals, respectively.
Let $\overline{\RR} := \RR \cup \{\infty\}$ and $\overline{\QQ} := \QQ \cup \{\infty\}$, 
where $\infty$ is an infinity element and is treated as: $\infty \cdot 0 = 0$, 
$x < \infty$ $(x \in \RR)$, $\infty + x = \infty$ $(x \in \overline{\RR})$,  
$x \cdot \infty = \infty$ $(a \in \RR: a > 0)$.
Let $\ZZ_{+},\QQ_{+}$, and $\RR_{+}$ denote 
the sets of nonnegative integers, nonnegative rationals, 
and nonnegative reals, respectively.
For a function $f: X \to \overline{\RR}$ on a set $X$, 
let $\dom f$ denote the set of elements $x \in X$ with $f(x) \neq \infty$.

For a graph $\mGamma$, 
the vertex set and the edge set 
are denoted by $V_{\mGamma}$ and $E_{\mGamma}$, 
respectively.
For a vertex subset $X$, 
$\mGamma[X]$ denotes the subgraph of $\mGamma$ induced by $X$.
For a nonnegative edge-length $h: E_{\mGamma} \to \RR_{+}$, 
$d_{\mGamma,h}$ denotes the shortest path metric on $V_{\mGamma}$ 
with respect to the edge-length $h$.
When $h(e) = 1$ for every edge $e$,
$d_{\mGamma, h}$ is denoted by $d_{\mGamma}$.
A path is represented by
a chain $(p_1,p_2,\ldots,p_n)$ 
of vertices with $p_ip_{i+1} \in E_{\mGamma}$.
The {\em Cartesian product} 
$\mGamma \times \mGamma'$ of graphs $\mGamma$ and $\mGamma'$
is the graph with vertex set $V_\mGamma \times V_{\mGamma'}$ 
and edge set given as:
$(p,p')$ and $(q,q')$ are connected by an edge 
if and only if 
$p = q$ and $p'q' \in E_{\mGamma'}$ or $p' = q'$ and $pq \in E_\mGamma$.
The $n$-fold Cartesian product $\mGamma \times \mGamma \times \cdots \times \mGamma$ 
of $\mGamma$ is denoted by $\mGamma^n$.
In this paper, 
graphs and posets (partially ordered sets)  are supposed to be finite.

\section{Preliminaries}\label{sec:preliminaries}
In this section, we give preliminary
arguments for valued CSP, and modular graphs and modular (semi)lattices. 
Our references are \cite{KTZ13, ZivnyBook} for valued CSP and 
\cite{BandeltChepoi, BVV, Birkhoff, Chepoi00, VanDeVel} 
for modular graphs and lattices. 
A further discussion on valued CSP is given in Section~\ref{sec:concluding}.

\subsection{Valued CSP and fractional polymorphism}\label{subsec:VCSP}
Let $D_1,D_2,\ldots,D_n$ be finite sets, 
and let $D :=D_1 \times D_2 \times \cdots \times D_n$.
A {\em constraint} on $D$ is a function 
$f:D_{i_1} \times D_{i_2} \times \cdots \times D_{i_k} \to \overline \RR$
for some $i_1 < i_2 < \cdots < i_k$, 
where $I_f := \{i_1,i_2,\ldots,i_k\}$ is called the {\em scope} of $f$, 
and $k_f := k$ is called the {\em arity} of $f$.
Let $D_{I_f} := D_{i_1} \times D_{i_2} \times \cdots \times D_{i_f}$, and
for $x = (x_1,x_2,\ldots,x_n) \in D$, 
let $x_{I_f} := (x_{i_1},x_{i_2},\ldots, x_{i_k}) \in D_{I_f}$.
The {\em valued CSP} ({\em valued constraint satisfaction problem}) is:
\begin{description}
\item[VCSP:] Given a set ${\cal F}$ of constraints on $D$,
\item[] \hspace{1cm} minimize $\displaystyle \sum_{f \in {\cal F}} f(x_{I_f})$ over all $x = (x_1,x_2,\ldots,x_n) \in D$.
\end{description}
The input of VCSP is the set of all values of all constraints in ${\cal F}$, 
and hence its size is estimated by $O(|{\cal F}| N^K B)$, 
where $N := \max_{1 \leq i \leq n} |D_i|$, $K := \max_{f \in {\cal F}} k_f$, 
and $B$ is the bit size to represent constraints in ${\cal F}$.
By a {\em constraint language} 
we mean a (possibly infinite) set  $\mLambda$ of constraints.
A constraint in  $\mLambda$ is called a {\em $\mLambda$-constraint}.  
Let {\bf VCSP}$[\mLambda]$ denote the subclass of {\bf VCSP}
such that the input is restricted to a set of $\mLambda$-constraints.
%

The minimum 0-extension problem {\bf 0-Ext}$[\mGamma]$ is formulated 
as an instance of {\bf VCSP}.
Let $D_i := V_{\mGamma}$ for $i=1,2,\ldots,n$.
Define constraints $g_i: D_i \to \RR$ and $f_{ij}: D_i \times D_j \to \RR$ by
\begin{eqnarray}\label{eqn:g_jf_ij}
g_i (\rho_i) &:= & \sum_{s \in V_{\mGamma}} c(si) d_{\mGamma}(s, \rho_i) \quad (\rho_i \in D_i), \\
f_{ij}(\rho_i, \rho_j) & := & c(ij) d_{\mGamma}(\rho_i, \rho_j) \quad ((\rho_i,\rho_j) \in D_i \times D_j). \nonumber
\end{eqnarray}
Define the input ${\cal F}$ of {\bf VCSP} by
\[
{\cal F} := \{ g_i \mid 1 \leq i \leq n\} \cup \{ f_{ij} \mid 1 \leq i < j \leq n\}.
\]
Notice that the size of ${\cal F}$ is polynomial in $n$, $\mGamma$, and 
the bit size representing $c$. 
Hence {\bf 0-Ext}$[\mGamma]$ is a particular subclass of {\bf VCSP}.

{\bf VCSP} admits
the following integer programming formulation:
\begin{eqnarray}\label{eqn:IP}
\mbox{Min.} && \sum_{f \in {\cal F}} \sum_{y \in \dom f} f(y) \lambda_{f,y}  \\
\mbox{s.t.} && \sum_{y \in \dom f: y_i = a} \lambda_{f,y} = \mu_{i, a} \quad (f \in {\cal F}, i \in I_f, a \in D_i) \nonumber \\
   && \sum_{a \in D_i} \mu_{i,a} = 1 \quad (1 \leq i \leq n), \nonumber \\
   && \lambda_{f,y} \in \{0,1\} \quad ( f \in {\cal F}, y \in \dom f),\nonumber \\
   && \mu_{i,a} \in \{0,1\} \quad (1 \leq i \leq n, a \in D_i). \nonumber
\end{eqnarray} 
Indeed,  for each $i$ there uniquely exists $a_i \in D_i$ with $\mu_{i,a_i} = 1$.
Also for $f \in {\cal F}$ there uniquely exists $y \in \dom f$ such that 
$\lambda_{f,y} = 1$ and $y_i = a_i$ for $i \in I_f$.
Define $x = (x_1,x_2,\ldots,x_n)$ by $x_i := a_i$. 
Then $\lambda_{f,y} = 1$ if and only if $x_{I_f} = y$.
Therefore we obtain a solution  $x = (x_1,x_2,\ldots,x_n)$ of {\bf VCSP} with the same objective value.
Conversely, for a solution $x = (x_1,x_2,\ldots,x_n)$ of {\bf VCSP}, 
define $\mu_{i,a} := 1$ if $x_i = a$, and $\lambda_{f,y} := 1$ 
if $x_{I_f} = y$. The other variables are defined as zero.
Then we obtain a solution of (\ref{eqn:IP}) with the same objective value.

Observe that there are $O(|{\cal F}| N^K + n N)$ variables 
and $O(|{\cal F}| K N + n)$ constraints.
Therefore the size of this IP is bounded by a polynomial of the input size.
The {\em basic LP relaxation} (BLP) is the linear problem obtained 
by relaxing the 0-1 constraints $\lambda_{f,y} \in \{0,1\}$ and $\mu_{i,a} \in \{0,1\}$
into $\lambda_{f,y} \geq 0$ and  $\mu_{i,a} \geq 0$, respectively.
In particular BLP can be solved in (strongly) polynomial time.

Recently Thapper and \v{Z}ivn\'y~\cite{ThapperZivny12FOCS} 
discovered a surprisingly powerful criterion for which BLP solves {\bf VCSP}.
To describe their result, let us introduce some notions.
For a constraint language $\mLambda$, 
BLP is said be {\em exact} for ${\mLambda}$ if for every input ${\cal F} \subseteq {\mLambda}$,
the optimal value of BLP coincides with the optimal value of {\bf VCSP}$[\mLambda]$.
An {\em operation} on $D_i$ is a function $D_i \times D_i \to D_i$.
A (separable) operation $\vartheta$ on $D$
is a function $D \times D \to D$
such that $\vartheta$ is represented as
\[
\vartheta(x,y) = (\vartheta_1(x_1,y_1), \vartheta_2(x_2,y_2),\ldots, \vartheta_n(x_n,y_n))
\quad (x,y \in D)
\]
for some operations $\vartheta_i$ on $D_i$ for $i=1,2,\ldots,n$.
A {\em fractional operation} $\omega$ is a function 
from the set of all operations to $\RR_+$ 
such that the total sum $\sum \omega(\vartheta)$ over all operations $\vartheta$ is $1$.
We denote a fractional operation $\omega$ 
by the form of a {\em formal convex combination} 
$\sum \omega(\vartheta) \vartheta$ of operations $\vartheta$.
The {\em support} of $\omega$ is the set of operations $\vartheta$ 
with $\omega(\vartheta) > 0$. 
For a constraint language $\mLambda$, 
a {\em fractional polymorphism}
is a fractional operation $\sum_{\vartheta} \omega({\vartheta}) \vartheta$ on $D$ 
such that it satisfies
\begin{equation}
\frac{f(x) + f(y)}{2} \geq  \sum_{\vartheta} \omega(\vartheta) 
f( \vartheta(x,y) ) \quad (f \in \mLambda, x,y \in D_{I_f}), 
\end{equation}
where $\vartheta$ 
is regarded an operation on $D_{I_f}$
by $(\vartheta(x,y))_{i} := \vartheta_{i}(x_{i},y_{i})$ for $i \in I_f$.
For example, if $D_i$ is a lattice for each $i$, 
then $\frac{1}{2}\wedge + \frac{1}{2}\vee$ is nothing but a fractional polymorphism 
for submodular functions, i.e., 
functions $f$ satisfying $f(p) + f(q) \geq f(p \wedge q) + f(p \vee q)$ for $p,q \in D$.

\begin{Thm}[{Special case of \cite[Theorem 5.1 ]{ThapperZivny12FOCS}}]{\label{thm:TZ}}
If a constraint language $\mLambda$ admits
a fractional polymorphism $\omega$
such that the support of $\omega$ 
contains a semilattice operation, 
then BLP is exact for $\mLambda$, and hence {\bf VCSP}$[\mLambda]$ can be solved in polynomial time.
\end{Thm}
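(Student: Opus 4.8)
The plan is to show directly that the optimum of BLP equals the optimum of {\bf VCSP}$[\mLambda]$. Since BLP is a relaxation, its value never exceeds the integral optimum, so the entire content is to round an optimal fractional solution of BLP to an integral assignment of no larger cost. I would carry this out by first reducing exactness to the existence of \emph{symmetric fractional polymorphisms of all arities}, and then manufacturing these from the given binary fractional polymorphism $\omega$ using the semilattice operation in its support. Here an $m$-ary symmetric fractional polymorphism is a probability distribution $\omega_m$ on separable operations $g:D^m\to D$ that are invariant under every permutation of the $m$ arguments and satisfy
\[
\frac{1}{m}\sum_{j=1}^{m} f(x^j) \ \geq\ \sum_{g} \omega_m(g)\, f\bigl(g(x^1,\dots,x^m)\bigr) \qquad (f\in\mLambda,\ x^1,\dots,x^m\in D_{I_f}),
\]
where $g$ acts coordinatewise on $D_{I_f}$.

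First I would establish the rounding step: \emph{if $\mLambda$ admits a symmetric fractional polymorphism of every arity, then BLP is exact.} Take an optimal solution $(\mu^*,\lambda^*)$ of BLP; as BLP is a rational linear program it has a rational optimum, so I may choose a common denominator $M$ with $M\mu^*_{i,a},\,M\lambda^*_{f,y}\in\ZZ_+$. Read each $\lambda^*_f$ as a multiset $Y_f$ of $M$ tuples from $\dom f$ and each $\mu^*_i$ as a multiset $Z_i$ of $M$ values of $D_i$. The marginal equations of (\ref{eqn:IP}) say exactly that, for every $f$ and every $i\in I_f$, the projection $\pi_i(Y_f)$ of $Y_f$ to the coordinate $i$ equals $Z_i$ as a multiset. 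Now apply a symmetric $M$-ary fractional polymorphism $\omega_M$: for each $g$ in its support define an integral assignment $x^{(g)}$ by $x^{(g)}_i:=g(Z_i)$, which is well defined because $g$ is symmetric. Symmetry together with the identity $\pi_i(Y_f)=Z_i$ forces $g$ applied coordinatewise to $Y_f$ to equal $(x^{(g)})_{I_f}$; hence the arity-$M$ inequality applied to the tuples of $Y_f$ gives $\sum_{y}\lambda^*_{f,y}f(y)\ge\sum_g\omega_M(g)\,f((x^{(g)})_{I_f})$. Summing over $f\in{\cal F}$ and interchanging the two sums shows that the BLP value is at least $\sum_g\omega_M(g)\,\mathrm{cost}(x^{(g)})$, a convex combination of costs of genuine assignments. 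Consequently some $x^{(g)}$ has cost at most the BLP value, and since the integral optimum is sandwiched between the BLP value and $\mathrm{cost}(x^{(g)})$, all three coincide and BLP is exact.

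It remains to construct, from the hypothesis, a symmetric fractional polymorphism of each arity $m$. I would first symmetrize $\omega$ in the binary case by averaging every operation $\vartheta$ in its support with its transpose $(x,y)\mapsto\vartheta(y,x)$; the defining inequality is preserved (swap $x$ and $y$ and average), and since the semilattice operation $s$ is commutative it survives in the support. From this symmetric binary fractional polymorphism I would build arity-$m$ polymorphisms by iterated composition, averaging the resulting operations over all orderings of the inputs; the semilattice operation, being idempotent, commutative and \emph{associative}, is the device that lets such an averaged composition be collapsed into genuinely symmetric operations rather than merely operations whose associated inequality is symmetric. Verifying that this symmetrization can be performed while keeping each operation in the support literally permutation-invariant (a property the rounding step uses in an essential way, through the identity $g(Z_i)=g(\pi_i(Y_f))$) and simultaneously preserving the fractional-polymorphism inequality is the delicate point, and I expect it to be the main obstacle; a direct ``take the meet $\bigsqcap Z_i$'' rounding is \emph{not} enough, precisely because $s$ carries only the weight $\omega(s)<1$ in $\omega$ and cannot in general be isolated from the binary inequality. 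Once the symmetric polymorphisms of all arities are in hand, the two steps combine to prove exactness of BLP; as BLP is solvable in strongly polynomial time and is exact for $\mLambda$, {\bf VCSP}$[\mLambda]$ is solved in polynomial time.
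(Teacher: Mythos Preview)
The paper does not prove Theorem~\ref{thm:TZ}; it is quoted from \cite{ThapperZivny12FOCS} as a black box. So there is no paper proof to compare against, and I evaluate your sketch on its own merits.

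Your first step (rounding via symmetric $m$-ary fractional polymorphisms) is correct and is exactly the argument in \cite{ThapperZivny12FOCS,KTZ13}: a rational BLP optimum with common denominator $M$ is rounded by any symmetric $M$-ary fractional polymorphism, because symmetry lets you apply the operation to the multiset $Z_i$ and the marginal constraints force consistency with each $Y_f$.

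Your second step, however, is not a proof but a declaration of intent. You write that producing genuinely permutation-invariant operations of arbitrary arity from the binary $\omega$ ``is the delicate point, and I expect it to be the main obstacle,'' and you stop there. That is precisely where the entire content of the Thapper--\v{Z}ivn\'y theorem lies. Iterated composition of $\omega$ and averaging over input orderings gives you an $m$-ary fractional polymorphism whose \emph{inequality} is symmetric, but whose individual operations in the support need not be; as you correctly observe, the rounding step requires literal symmetry of each operation. The actual argument in \cite{ThapperZivny12FOCS} (and more explicitly in \cite{KTZ13}) is a nontrivial random-walk/collapsing construction: one iterates the binary polymorphism on pairs of rows of an $m\times k$ tableau, uses the presence of the semilattice operation $s$ (with its positive weight $\omega(s)>0$) to show that with high probability all rows eventually coincide, and extracts from the limiting distribution the desired symmetric $m$-ary polymorphism. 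The associativity of $s$ is what makes the ``all rows merge'' absorbing state reachable with positive probability. Until you supply this (or an equivalent) argument, the proof is incomplete at its crucial point.
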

Here a {\em semilattice operation} is an operation $\vartheta$ 
satisfying $\vartheta(a,a) = a$, $\vartheta(a,b) = \vartheta(b,a)$, 
and $\vartheta( \vartheta(a,b),c) = \vartheta(a,\vartheta(b,c))$ for $a,b,c \in D$.
Although the feasible region of BLP is not necessarily an integral polytope,
we can check whether there exists an optimal solution $x$ with $x_i = a \in D_i$ 
by comparing the optimal values of BLP for the input ${\cal F}$ and for ${\cal F}_{i,a}$, 
which is the set of cost functions
obtained by fixing variable $x_i$ to $a$ for each cost function on ${\cal F}$.
Necessarily BLP is exact for ${\cal F}_{i,a}$ if there is an optimal solution $x$ with $x_i = a$.
Hence, after $n$ fixing procedures, we obtain an optimal solution $x$.

\begin{Rem}
In the setting in~\cite{ThapperZivny12FOCS}, 
$D_i$ is the same set $\tilde D$ for all $i$.
Our setting reduces to this case by 
taking the disjoint union of $D_i$ as $\tilde D$, and
extending each cost function $f : D_{I_f} \to \overline{\RR}$ 
to $f: {\tilde D}^{k_f} \to \overline{\RR}$ 
by $f(x_{i_1}, x_{i_2},\ldots, x_{i_k}) :=  \infty$ 
for $(x_{i_1},x_{i_2}, \ldots, x_{i_k}) 
\not \in D_{i_1} \times D_{i_2} \times \cdots \times D_{i_K}$.
Without such a reduction, 
their proof also works for our setting 
in a straightforward way.
\end{Rem}

\subsection{Modular metric spaces and modular graphs}
For a metric space $(X,d)$, 
the {\em (metric) interval} $I(x,y)$ 
of $x,y \in X$ is defined as
\[
I(x,y) := \{z \in X \mid d(x,z) + d(z,y) = d(x,y)\}.
\]
For two subsets $A,B$, $d(A,B)$ denotes 
the infimum of distances between $A$ and $B$, i.e.,
\[
d(A,B) = \inf_{x \in A, y \in B} d(x,y).
\]
For $x_1,x_2,x_3 \in X$, 
an element $m$ in 
$I(x_1,x_2) \cap I(x_2,x_3) \cap I(x_3,x_1)$
is called {\em a median} of $x_1$, $x_2$, and $x_3$.
%
%
A metric space $(X,d)$ is said to be {\em modular} 
if every triple of elements in $X$ has a median.
In particular, a graph $\mGamma$ is modular 
if and only if the shortest path metric space 
$(V_{\mGamma}, d_{\mGamma})$ is modular.
We will often use 
the following characterization of modular graphs. 
\begin{Lem}[{\cite[Proposition 1.7]{BVV}; see \cite[Proposition 6.2.6, Chapter I]{VanDeVel}}]\label{lem:quadrangle}
A connected graph $\mit\Gamma$ is modular if and only if
\begin{itemize}
\item[{\rm (1)}] $\mit\Gamma$ is bipartite, and
\item[{\rm (2)}] for vertices $p,q$ and 
neighbors $p_1,p_2$ of $p$ with $d_{\mGamma}(p,q) 
= 1 + d_{\mGamma}(p_1,q) = 1 + d_{\mGamma}(p_2,q)$,
there exists a common neighbor $p^*$ of $p_1,p_2$ with
$d_{\mGamma}(p,q) = 2 + d_{\mGamma}(p^*,q)$.
\end{itemize}
\end{Lem}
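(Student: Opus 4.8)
The plan is to prove both implications separately: the forward direction directly from the median axiom, and the backward direction by induction on total interval length, with condition~(2) supplying the inductive descent.

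For the forward direction, assume $(V_{\mGamma},d_{\mGamma})$ is modular. To see that $\mGamma$ is bipartite, suppose not and take a shortest odd cycle $C$; being shortest it is isometric, so for an edge $uv$ of $C$ the vertex $w$ of $C$ at cycle-distance $k$ from both $u$ and $v$ (where $|C|=2k+1$) satisfies $d_{\mGamma}(u,w)=d_{\mGamma}(v,w)=k$. A median $m$ of $u,v,w$ must lie in $I(u,v)=\{u,v\}$, and either choice contradicts $m\in I(v,w)$ (resp.\ $I(u,w)$) by the triangle inequality. For condition~(2), given $p,q$ and neighbors $p_1,p_2$ of $p$ one step closer to $q$ (we may assume $p_1\neq p_2$, so $d_{\mGamma}(p_1,p_2)=2$ by bipartiteness), take a median $m$ of $p_1,p_2,q$. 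From $m\in I(p_1,q)\cap I(p_2,q)$ one gets $d_{\mGamma}(p_1,m)=d_{\mGamma}(p_2,m)$ and $d_{\mGamma}(m,q)=d_{\mGamma}(p,q)-1-d_{\mGamma}(p_1,m)$, while $m\in I(p_1,p_2)$ forces $d_{\mGamma}(p_1,m)=1$; hence $p^*:=m$ is a common neighbor of $p_1,p_2$ with $d_{\mGamma}(p,q)=2+d_{\mGamma}(p^*,q)$.

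For the backward direction, assume $\mGamma$ is bipartite and satisfies~(2), and show every triple $x,y,z$ has a median by induction on $d_{\mGamma}(x,y)+d_{\mGamma}(y,z)+d_{\mGamma}(z,x)$. If one vertex lies on a geodesic between the other two, say $x\in I(y,z)$, then that vertex is itself a median; this settles all base cases, so we may assume all three triangle inequalities are strict (hence all pairwise distances are at least $2$). The engine of the induction is the reduction claim: \emph{some vertex of the triple has a neighbor that is simultaneously one step closer to both of the other two vertices.} Granting this, say $y'\sim y$ with $y'\in I(y,x)\cap I(y,z)$, the triple $x,y',z$ has strictly smaller total distance, so by induction it has a median $m$; using $y'\in I(x,y)\cap I(y,z)$ and interval-transitivity ($u\in I(a,b)$ and $b\in I(a,c)$ imply $u\in I(a,c)$) one checks $m\in I(x,y)\cap I(y,z)\cap I(z,x)$, so $m$ is a median of $x,y,z$.

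I expect the reduction claim to be the main obstacle, and this is exactly where~(2) is used. Suppose it fails, so that for every vertex of the triple, each neighbor moving one step toward one remaining vertex moves one step away from the other. Fix any geodesic $x=q_0,q_1,\ldots,q_r=y$ and set $h(i):=d_{\mGamma}(q_i,z)$, which changes by $\pm1$ at each step. The failure hypothesis forces $h(1)=h(0)+1$ and $h(r-1)=h(r)+1$, so $h$ rises at the start and falls at the end; since its increments are $\pm1$, some interior index gives a strict peak $h(i-1)=h(i+1)=h(i)-1$. Applying~(2) with $p=q_i$, $q=z$, $p_1=q_{i-1}$, $p_2=q_{i+1}$ produces a common neighbor $p^*$ of $q_{i-1},q_{i+1}$ with $d_{\mGamma}(p^*,z)=h(i)-2$; moreover $p^*\in I(q_{i-1},q_{i+1})\subseteq I(x,y)$, so replacing $q_i$ by $p^*$ yields another $x$--$y$ geodesic whose profile sum $\sum_i h(i)$ is strictly smaller. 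Since the endpoint behavior $h(1)=h(0)+1$ and $h(r-1)=h(r)+1$ holds for \emph{every} $x$--$y$ geodesic under the failure hypothesis, choosing at the outset a geodesic minimizing $\sum_i h(i)$ gives a contradiction. Hence the reduction claim holds and the induction goes through. The delicate points to verify carefully are the existence of the peak and the containment $p^*\in I(x,y)$.
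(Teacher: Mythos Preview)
The paper does not give its own proof of this lemma; it is quoted from \cite{BVV} and \cite{VanDeVel} and immediately followed by the next statement. So there is no in-paper argument to compare against.

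Your proof is correct and is essentially the standard argument one finds in the cited sources. The forward direction is routine. For the backward direction, your induction on total perimeter together with the ``peak-flattening'' use of the quadrangle condition is exactly the classical approach. The two points you flagged as delicate both go through: the peak exists because $h(1)=h(0)+1$ and $h(r-1)=h(r)+1$ force a sign change in the increments of $h$; and the containment $p^*\in I(x,y)$ follows since $d_{\mGamma}(x,p^*)\le d_{\mGamma}(x,q_{i-1})+1=i$ and $d_{\mGamma}(x,p^*)\ge d_{\mGamma}(x,q_{i+1})-1=i$, with the symmetric estimate at $y$. One small remark: in the inductive step you use that a median $m$ of $x,y',z$ satisfies $m\in I(x,y)$ and $m\in I(y,z)$; this is the interval-transitivity you invoke, and it is immediate from the triangle inequality, but it is worth writing out once since it is used in both directions (from $m\in I(x,y')$, $y'\in I(x,y)$ to $m\in I(x,y)$, and from $m\in I(y',z)$, $y'\in I(y,z)$ to $m\in I(y,z)$).
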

The condition (2) is 
called the {\em quadrangle condition}~\cite{BandeltChepoi, Chepoi00} 
(or the {\em semimodularity condition} in \cite{BVV, VanDeVel}).
\begin{Lem}\label{lem:acyclic}
For a modular graph, 
every admissible orientation is acyclic.
\end{Lem}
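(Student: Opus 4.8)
The plan is to argue by contradiction: assuming an admissible orientation $o$ of the modular graph $\mGamma$ has a directed cycle, I would extract from a suitably extremal directed cycle a strictly better one, which is impossible. I fix once and for all a reference vertex $r \in V_{\mGamma}$. Among all directed cycles, I consider those of minimum length $2k$; such a cycle has even length at least $4$, since $\mGamma$ is bipartite by Lemma~\ref{lem:quadrangle}(1). Among these shortest directed cycles I would choose one $C = (p_0, p_1, \ldots, p_{2k} = p_0)$ minimizing the potential $\Phi(C) := \sum_{i=0}^{2k-1} d_{\mGamma}(r, p_i)$; a minimizer exists because there are finitely many such cycles and the summands are nonnegative integers.

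The first ingredient is a \emph{sliding} step, which is where admissibility enters. I claim that whenever $a \to b \to c$ is a directed subpath and $b^*$ is a common neighbor of $a$ and $c$ distinct from $b$ (so that $a,b,c,b^*$ span a $4$-cycle), property~(\ref{eqn:frame})(3) forces $a \to b^* \to c$. This would follow by applying admissibility to the two pairs of opposite edges of this $4$-cycle: from $a \to b$ one reads off the orientation of the opposite edge $cb^*$, and from $b \to c$ the orientation of the opposite edge $b^*a$. Tracking the meaning of $\swarrow_o$ in each case yields $b^* \to c$ and $a \to b^*$ respectively. Consequently one may replace $b$ by $b^*$ in a directed cycle and still remain directed.

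The second ingredient produces the vertex $b^*$ via the quadrangle condition, applied at the point of $C$ farthest from $r$. Let $D := \max_i d_{\mGamma}(r,p_i) > 0$ and let $p_j$ attain it. Since $\mGamma$ is bipartite, the two cycle-neighbors $p_{j-1}, p_{j+1}$ of $p_j$ satisfy $d_{\mGamma}(r, p_{j-1}) = d_{\mGamma}(r, p_{j+1}) = D - 1$, so Lemma~\ref{lem:quadrangle}(2), applied with $p = p_j$ and $q = r$, yields a common neighbor $p^*$ of $p_{j-1}$ and $p_{j+1}$ with $d_{\mGamma}(r, p^*) = D - 2$; in particular $p^* \neq p_j$. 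As $p_{j-1} \to p_j \to p_{j+1}$ is a directed subpath of $C$, the sliding step gives $p_{j-1} \to p^* \to p_{j+1}$, and replacing $p_j$ by $p^*$ yields a directed closed walk $C'$ of length $2k$. If $C'$ is a simple cycle, then $\Phi(C') = \Phi(C) - d_{\mGamma}(r,p_j) + d_{\mGamma}(r,p^*) = \Phi(C) - 2 < \Phi(C)$, contradicting the minimality of $\Phi(C)$; if $C'$ is not simple, i.e.\ $p^*$ already lies on $C$, then it contains a directed cycle of length less than $2k$, contradicting the minimality of $2k$. Either way one reaches a contradiction, so $o$ is acyclic.

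I expect the main obstacle to lie not in the local computations but in \emph{choosing the improving modification}: the sliding step is a routine, if error-prone, bookkeeping of the relation $\swarrow_o$ across a $4$-cycle, whereas what makes the descent work is the idea of invoking the quadrangle condition precisely at a vertex of maximum distance from $r$, so that the vertex $p^*$ it supplies is strictly closer to $r$ and the potential $\Phi$ strictly drops. Coupling this with minimality of the cycle length to dispose of the degenerate case in which $p^*$ collides with an existing cycle vertex is the point that needs the most care.
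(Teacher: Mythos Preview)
Your proof is correct and follows essentially the same strategy as the paper's: both argue by contradiction, apply the quadrangle condition at a vertex of maximum distance from a chosen reference point, and use admissibility to slide the cycle through the resulting common neighbor, thereby decreasing a potential. The only cosmetic difference is that the paper takes its reference vertex on the cycle itself and minimizes a single potential $\sum_{u\in V_C} d_{\mGamma}(p,u)$ over directed cycles through $p$, whereas you fix an external reference $r$ and use a two-level (length, then $\Phi$) minimization; your extra layer buys an explicit treatment of the case where $p^*$ already lies on $C$, which the paper glosses over in one sentence.
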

\begin{proof}
Suppose indirectly that the statement is false.
Take a vertex $p$ belonging to a directed cycle, 
and take a directed cycle $C$ containing $p$ 
with $\sum_{u \in V_C} d_{\mGamma}(p,u)$ minimum.
The length $k$ of $C$ is at least four 
(by simpleness and bipartiteness).
By the definition of admissible orientation,
$k=4$ is impossible.
Hence $k > 4$. Take a vertex $q$ in $C$ 
with $d_{\mGamma}(p,q)$ maximum.
Take two neighbors $q',q''$ of $q$ in $C$.
Then $d_{\mGamma}(q,p) = d_{\mGamma}(q',p) + 1 = d_{\mGamma}(q'',p) + 1$ 
(by the maximality of $q$ and the bipartiteness of $\mGamma$).
By the quadrangle condition, 
there is a common neighbor $q^*$ of $q',q''$ 
with $d_{\mGamma}(p,q^*) = d_{\mGamma}(p,q) - 2$.
Here the cycle $C'$ obtained from $C$
by replacing $q$ by $q^*$ is a directed cycle, 
since the orientation is admissible. 
Then we have 
$\sum_{u' \in V_{C'}}d_{\mGamma}(p,u') < \sum_{u \in V_C} d_{\mGamma}(p,u)$. 
This contradicts the minimality of $C$.
\end{proof}

\subsubsection{Orbits and orbit-invariant functions}~\label{subsec:orbits}
Let $\mGamma$ be a modular graph.
Edges $e$ and $e'$ are said to be 
{\em projective} 
if 
there is a sequence $(e = e_0, e_1, e_2, \ldots, e_m = e')$ of edges
such that $e_i$ and $e_{i+1}$ 
belong to a common 4-cycle
and share no common vertex.
We will use
the following criterion for two edges to belong to a common orbit.
\begin{Lem}\label{lem:commonorbit}
Let $\mGamma$ be a modular graph.
For edges $pq$ and $p'q'$, suppose that
$d_\mGamma(p,p') = d_\mGamma (q,q')$ and 
$d_{\mGamma}(p,q') = d_{\mGamma}(p,p') + 1 = d_{\mGamma}(p',q)$. 
\begin{itemize}
\item[{\rm (1)}] $pq$ and $p'q'$ are projective.
\item[{\rm (2)}] In addition, if
$\mGamma$ has an admissible orientation $o$,
then $p \searrow_o q$ implies $p' \searrow_o q'$.
\end{itemize}
\end{Lem}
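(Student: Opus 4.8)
The plan is to prove (1) and (2) simultaneously by induction on $r := d_\mGamma(p,p') = d_\mGamma(q,q')$, building the projectivity chain one $4$-cycle at a time and carrying the orientation along through each cycle via admissibility. First I would record the interval structure forced by the hypotheses: since $d_\mGamma(p,q) = d_\mGamma(p',q') = 1$ and $d_\mGamma(p,q') = d_\mGamma(p,p') + 1 = d_\mGamma(p',q)$, one reads off $p' \in I(p,q')$, $q \in I(p,q')$, $p \in I(p',q)$, and $q' \in I(p',q)$. The base case $r = 0$ is trivial, as then $p = p'$ and $q = q'$ and the two edges coincide.

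For the inductive step ($r \geq 1$), I would pick a neighbor $p''$ of $p$ on a shortest $p$--$p'$ path, so that $d_\mGamma(p'',p') = r - 1$; a short triangle-inequality computation gives $d_\mGamma(p'',q') = r$. I then apply the quadrangle condition (Lemma~\ref{lem:quadrangle}(2)) with base vertex $p$, target $q'$, and the two neighbors $q$ and $p''$ of $p$: the required equalities $d_\mGamma(p,q') = 1 + d_\mGamma(q,q') = 1 + d_\mGamma(p'',q')$ all hold because each side equals $r+1$. This produces a common neighbor $q''$ of $q$ and $p''$ with $d_\mGamma(q'',q') = r-1$. Checking distances shows $p,q,q'',p''$ are four distinct vertices (e.g. $q'' \neq p$ since $d_\mGamma(q'',q') = r-1 \neq r+1 = d_\mGamma(p,q')$), so they form a genuine $4$-cycle $pq,\,qq'',\,q''p'',\,p''p$ in which $pq$ and $p''q''$ are opposite edges sharing no vertex — that is, one projective step. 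Finally I verify that the pair $(p''q'',\,p'q')$ again satisfies the hypotheses with parameter $r-1$: indeed $d_\mGamma(p'',p') = d_\mGamma(q'',q') = r-1$, and the cross distances $d_\mGamma(p'',q')$ and $d_\mGamma(p',q'')$ both equal $r$ by the triangle inequality. By induction $p''q''$ and $p'q'$ are projective, and concatenation proves (1).

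For (2), assume $p \searrow_o q$. Matching the $4$-cycle $p,q,q'',p''$ to the template of property~(\ref{eqn:frame})(3) via $u = p$, $v = q$, $v' = q''$, $u' = p''$ — so that the two ``rungs'' are $pp''$ and $qq''$ — admissibility gives $p \swarrow_o q \iff p'' \swarrow_o q''$, equivalently $p \searrow_o q \iff p'' \searrow_o q''$. Hence $p'' \searrow_o q''$, and since $p''q''$ plays the role of the oriented first edge in the smaller instance $(p''q'',\,p'q')$, the induction hypothesis for (2) yields $p' \searrow_o q'$.

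The delicate but routine part is the distance bookkeeping ensuring the quadrangle condition applies and that the new edge $p''q''$ inherits exactly the same hypotheses with $r$ decreased by one. The genuinely conceptual point, and the step I would be most careful about, is orienting the template correctly so that admissibility transfers $p \searrow_o q$ to $p'' \searrow_o q''$ and \emph{not} to its reverse: this hinges on identifying the parallel correspondence $p \leftrightarrow p''$, $q \leftrightarrow q''$ furnished by the rungs $pp''$ and $qq''$ as precisely the one appearing in property~(\ref{eqn:frame})(3), which must be checked explicitly.
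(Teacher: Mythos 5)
Your proof is correct and follows essentially the same route as the paper's: induction on $d_\mGamma(p,p')$, stepping one vertex toward $p'$, invoking the quadrangle condition to produce the $4$-cycle realizing a single projective step, and letting admissibility carry the orientation across that cycle. The only cosmetic difference is that you base the induction at $r=0$ rather than $r=1$, and you spell out the distance bookkeeping that the paper leaves implicit.
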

\begin{proof}
We use the induction on $k := d_{\mGamma}(p,p') = d_{\mGamma}(q,q')$.
The case of $k=1$ is obvious.
Take a neighbor $p^*$ of $p$ 
with $d_{\mGamma}(p^*,p') = d_{\mGamma}(p,p') - 1 = k-1$.
Then $d_{\mGamma}(p^*,q') = k$.
By the quadrangle condition for $p,q,p^*,q'$, 
there is a common neighbor $q^{*}$ of $q,p^*$ with 
$d_{\mGamma}(q^{*},q') = k-1$. Also $d_{\mGamma}(q^*,p') = k$.
Obviously $pq$ and $p^*q^*$ are projective, 
and $p \searrow_o q$ implies $p^* \searrow_o q^*$.
Apply the induction for $p^*q^*$ and $p'q'$.
\end{proof}
An {\em orbit} is an equivalence class of the projectivity relation.
The (disjoint) union of several orbits is 
called an {\em orbit-union}.
For an orbit-union $U$, $\mGamma/{U}$ 
is the graph obtained 
by contracting all edges {\em not} in $U$
and by identifying multiple edges. 
The vertex in $\mGamma/U$ corresponding to $p \in V_{\mGamma}$ 
is denoted by $p/U$.
The graph $\mGamma/U$ is also modular, and
any shortest path in $\mGamma$ induces a shortest path in $\mGamma/U$ as follows.
\begin{Lem}[\cite{Bandelt85}, also see \cite{Kar04a}]\label{lem:orbits}
Let $\mGamma$ be a modular graph, and
$U$ an orbit-union.
\begin{itemize} 
\item[{\rm (1)}] $\mGamma /U$ is a modular graph.
\item[{\rm (2)}] For every $p,q \in V_{\mGamma}$,
every shortest $(p,q)$-path $P$, 
and every $(p,q)$-path $P'$, 
we have $|P \cap U| \leq |P' \cap U|$. 
\item[{\rm (3)}] 
For every $p,q \in V_{\mGamma}$ and every shortest $(p,q)$-path $P$, the image $P/U$ of $P$ 
is a shortest $(p/U,q/U)$-path in $\mGamma/U$.
\end{itemize}
\end{Lem}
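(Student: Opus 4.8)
The plan is to prove parts (2) and (3) first, via a single \emph{crossing‑number} invariant, and then deduce (1) by projecting medians from $\mGamma$. Throughout, for a path or walk $P$ and an edge set $U$ I write $|P\cap U|$ for the number of edges of $P$ lying in $U$, counted with multiplicity. The whole argument rests on part (2): \emph{a shortest path crosses $U$ the minimum possible number of times}. The essential input is that opposite edges of a $4$-cycle are projective, hence lie in a common orbit; consequently any local modification that exchanges the two opposite edges of a $4$-cycle preserves, for every orbit $O$ (and hence for the orbit-union $U$, a disjoint union of orbits), the multiset of orbits it crosses.

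For (2) I would first show that any two shortest $(p,q)$-paths are connected by a sequence of \emph{square moves}, where a square move replaces a subpath $(a,b,c)$ by $(a,b',c)$ for a $4$-cycle $a,b,c,b'$. This follows by induction on $d_{\mGamma}(p,q)$ from the quadrangle condition (Lemma~\ref{lem:quadrangle}): if two shortest paths leave $p$ by distinct neighbors $p_1,p_2$, both at distance $d_{\mGamma}(p,q)-1$ from $q$, the quadrangle condition yields a common neighbor $w$ at distance $d_{\mGamma}(p,q)-2$, and routing through $w$ links the two paths by one square move. Since a square move exchanges the two opposite-edge pairs of a $4$-cycle, it leaves $|P\cap U|$ unchanged; hence $\nu_U(p,q):=|P\cap U|$ is independent of the shortest $(p,q)$-path $P$. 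It then remains to prove $\nu_U(p,q)\le |P'\cap U|$ for an arbitrary $(p,q)$-walk $P'=(r_0,\ldots,r_\ell)$, which I would do by a double induction on the pair $(\ell,\ \sum_j d_{\mGamma}(r_j,q))$. If $P'$ is not shortest it is non-monotone in $d_{\mGamma}(\cdot,q)$, so it has an interior local maximum $r_j$ with $r_{j-1},r_{j+1}$ both closer to $q$. If $r_{j-1}=r_{j+1}$ this is a backtrack whose removal shortens $P'$ and changes $|P'\cap U|$ by $0$ or $2$; otherwise the quadrangle condition gives a $4$-cycle $r_{j-1},r_j,r_{j+1},w$, and the square move $r_j\mapsto w$ keeps $\ell$, preserves $|P'\cap U|$, and strictly lowers $\sum_j d_{\mGamma}(r_j,q)$. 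Either way the induction applies, and at the base $P'$ is shortest with $|P'\cap U|=\nu_U(p,q)$.

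Parts (3) and (1) then follow rather formally. For (3): the image $P/U$ of a shortest path is a $(p/U,q/U)$-walk of length $\nu_U(p,q)$, so $d_{\mGamma/U}(p/U,q/U)\le \nu_U(p,q)$; conversely a shortest $(p/U,q/U)$-walk in $\mGamma/U$ lifts to a $(p,q)$-walk in $\mGamma$ using exactly $d_{\mGamma/U}(p/U,q/U)$ edges of $U$ (consecutive lifted $U$-edges end in a common blob and are joined there by edges outside $U$), and extracting a simple subpath and applying (2) gives $\nu_U(p,q)\le d_{\mGamma/U}(p/U,q/U)$. Hence the shortest-path metric of $\mGamma/U$ equals $\nu_U$, and $P/U$, a walk whose length equals this distance, is a shortest path. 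For (1) I would verify the defining median property directly. First, (3) shows $\mGamma/U$ has no loop, since a $U$-edge with both endpoints in one blob would be a shortest path of $U$-count $1$ between vertices at $\nu_U$-distance $0$; thus $\mGamma/U$ is a connected simple graph with metric $\nu_U$. Now take $\bar x,\bar y,\bar z\in V_{\mGamma/U}$, lift them to $x,y,z$, and let $m$ be a median of $x,y,z$ in the modular graph $\mGamma$. For the pair $x,y$, concatenating a shortest $x$--$m$ path with a shortest $m$--$y$ path gives a shortest $x$--$y$ path (as $m\in I(x,y)$), so by (3) the $U$-counts add: $\nu_U(x,y)=\nu_U(x,m)+\nu_U(m,y)$, i.e. $m/U\in I(\bar x,\bar y)$. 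The same holds for the other two pairs, so $m/U$ is a median of $\bar x,\bar y,\bar z$ and $\mGamma/U$ is modular.

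I expect the main obstacle to be part (2): establishing the square-connectivity of shortest paths and, above all, the monotonization that reduces an arbitrary walk to a shortest path while never increasing the number of $U$-edges, together with the bookkeeping needed to keep both inductions well-founded (treating intermediate objects as walks and deleting backtracks when a square move produces a repeated vertex). Once (2) is in hand, parts (3) and (1) are essentially formal consequences, and bipartiteness of $\mGamma/U$ comes for free since modular graphs are bipartite by Lemma~\ref{lem:quadrangle}.
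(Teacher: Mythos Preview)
The paper does not prove this lemma; it is quoted from \cite{Bandelt85,Kar04a} and used as a black box. Your argument is correct and is essentially the classical one: square-connectivity of geodesics via the quadrangle condition gives that $|P\cap U|$ is a well-defined invariant $\nu_U(p,q)$ of shortest paths, and the local-maximum monotonization (backtrack removal or a square move, ordered by $(\ell,\sum_j d_{\mGamma}(r_j,q))$) reduces an arbitrary walk to a geodesic without increasing its $U$-count; parts (3) and (1) then follow by projecting geodesics and medians. One small remark: the ``no loops'' step for $\mGamma/U$ is not circular, since it uses only part~(2) --- a $U$-edge $pq$ whose endpoints lie in the same $(E_{\mGamma}\setminus U)$-component would give a $(p,q)$-walk with $U$-count $0$, contradicting $\nu_U(p,q)=1$; you may want to place this before, not inside, the proof of~(3).
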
\noindent
In particular, for any partition ${\cal U}$ of $E_{\mGamma}$ into orbit-unions, we have
\begin{equation*}
d_{\mGamma}(p,q) = \sum_{U \in {\cal U}} d_{\mGamma /U}(p/U,q/U)  = \sum_{Q:\mbox{\footnotesize{orbit}}} d_{\mGamma /Q}(p/Q,q/Q) \quad (p,q \in V_\mGamma).
\end{equation*}

A function $h$ on edge set $E_{\mGamma}$
is called {\em orbit-invariant}
if $h(e) = h(e')$ provided $e$ and $e'$ belong to 
the same orbit.
For an orbit $Q$, let $h_Q$ denote the value of $h$ on $Q$.
An orbit-invariant function $h$ is said to be {\em nonnegative} 
if $h(e) \geq 0$ for $e \in E_{\mGamma}$, 
and is said to be {\em positive} 
if $h(e) > 0$ for $e \in E_{\mGamma}$.
For a constant $c \geq 0$,
if $h(e) = c$ for all edges $e$, 
then $h$ is simply denoted by $c$; 
in particular $d_{\mGamma} = d_{\mGamma, 1}$.
By taking the value of $h$ of the preimage, 
we can define a function on the edge set of $\mGamma/U$ 
for any orbit-union $U$,
which is also orbit-invariant 
in $\mGamma/U$ and is denoted by $h$.  
By Lemma~\ref{lem:orbits}~(2), 
the shortest path structures 
of $(V_{\mGamma}, d_{\mGamma})$ and $(V_{\mGamma}, d_{\mGamma,h})$
are the same in the following sense:
\begin{Lem}\label{lem:shortest}
If an orbit-invariant function $h$ is nonnegative, then 
$(1)$ implies $(2)$, where
\begin{itemize} 
\item[{\rm (1)}] $P$ is a shortest $(p,q)$-path with respect to $1$,
\item[{\rm (2)}] $P$ is a shortest $(p,q)$-path with respect to $h$.
\end{itemize}
If $h$ is positive, then the converse also holds.
\end{Lem}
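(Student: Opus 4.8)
The plan is to express the $h$-length of an arbitrary $(p,q)$-path as a weighted sum over orbits and then reduce everything to Lemma~\ref{lem:orbits}~(2). Since $h$ is orbit-invariant, for any $(p,q)$-path $P'$ its $h$-length decomposes as $\sum_{Q} h_Q |P' \cap Q|$, the sum being taken over all orbits $Q$, because each edge $e$ of $P'$ lies in exactly one orbit and contributes $h_Q = h(e)$ there. This converts a statement about shortest paths for $h$ into a statement about the orbit-counts $|P' \cap Q|$, which are exactly what Lemma~\ref{lem:orbits}~(2) controls.

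For the implication $(1) \Rightarrow (2)$, I would take $P$ to be a shortest $(p,q)$-path with respect to $1$. Applying Lemma~\ref{lem:orbits}~(2) with $U = Q$ for each single orbit $Q$ gives $|P \cap Q| \leq |P' \cap Q|$ for every $(p,q)$-path $P'$. Multiplying by $h_Q \geq 0$ (here nonnegativity enters) and summing over all orbits yields $\sum_Q h_Q |P \cap Q| \leq \sum_Q h_Q |P' \cap Q|$, i.e.\ the $h$-length of $P$ is at most that of any $P'$. Hence $P$ is shortest for $h$.

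For the converse, under the assumption that $h$ is positive, I would argue by comparison with a $1$-shortest path. Let $P$ be shortest for $h$, and let $P^*$ be a shortest $(p,q)$-path with respect to $1$; by the direction already proven, $P^*$ is also $h$-shortest, so $P$ and $P^*$ have equal $h$-length. On the other hand, Lemma~\ref{lem:orbits}~(2) applied to $P^*$ gives $|P^* \cap Q| \leq |P \cap Q|$ for every orbit $Q$, whence $\sum_Q h_Q |P^* \cap Q| \leq \sum_Q h_Q |P \cap Q|$, which must hold with overall equality. Since every $h_Q > 0$, this forces $|P^* \cap Q| = |P \cap Q|$ for each orbit $Q$. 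As the orbits partition $E_{\mGamma}$, summing these equalities shows that $P$ and $P^*$ have the same number of edges, so $P$ is shortest for $1$ as well.

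The whole argument is essentially a one-line consequence of Lemma~\ref{lem:orbits}~(2) once the orbit decomposition of the $h$-length is in place; no further structural facts about modular graphs are needed. The only delicate point is the converse, where positivity of $h$ is precisely what upgrades equality of the weighted sums into equality of each orbit-count, and where one must observe that summing the orbit-counts over all orbits recovers the ordinary unit-length path length.
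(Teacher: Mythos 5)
Your proof is correct and takes essentially the same approach as the paper, which derives Lemma~\ref{lem:shortest} directly from Lemma~\ref{lem:orbits}~(2) via the orbit decomposition $\sum_{Q} h_Q |P \cap Q|$ of the $h$-length; your write-up merely makes explicit the weighted summation over orbits and, for the converse, the term-by-term equality forced by positivity of $h$. Nothing further is needed.
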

As a consequence of Lemmas~\ref{lem:orbits} and \ref{lem:shortest}, 
for any partition ${\cal U}$ of $E_{\mGamma}$ into orbit-unions, we have
\begin{equation}\label{eqn:into}
d_{\mGamma,h}(p,q)  =  \sum_{U \in {\cal U}} d_{\mGamma/U, h} (p/U,q/U) 
=
\sum_{Q:\mbox{\footnotesize{orbit}}} h_Q d_{\mGamma/Q, 1} (p/Q,q/Q). 
\end{equation}

\subsubsection{Convex sets and gated sets}\label{subsec:gated}
Let $(X,d)$ be a metric space.
A subset $Y \subseteq X$ is called {\em convex} 
if $I(p,q) \subseteq Y$ for every $p,q \in Y$.
A subset $Y \subseteq X$ is called {\em gated} 
if for every $p \in X$ there is $p^{*} \in Y$, called a {\em gate} of $p$ at $Y$, 
such that $d(p,q) = d(p,p^{*}) + d(p^{*},q)$ holds for every $q \in Y$.
One can easily see that 
gate $p^{*}$ is uniquely determined for each $p$~\cite[p.~112]{DS87}.
Therefore we obtain a map $\pr_Y: X \to Y$ by 
defining $\pr_Y(p)$ to be the gate of $p$ at $Y$.
\begin{Thm}[{\cite{DS87}}]\label{thm:gatedset}
Let $A$ and $A'$ 
be gated subsets of $(X,d)$ and 
let $B := \pr_{A}(A')$ and $B' := \pr_{A'}(A)$.
\begin{itemize}
\item[{\rm (1)}] $\pr_{A}$ and $\pr_{A'}$ 
induce isometries, inverse to each other, 
between $B'$ and $B$.
\item[{\rm (2)}] For $p \in A$ and $p' \in A'$, 
the following conditions are equivalent:
\begin{itemize}
\item[{\rm (i)}] $d(p,p') = d(A,A')$. 
\item[{\rm (ii)}] $p = \pr_{A}(p')$ and $p' = \pr_{A'}(p)$.
\end{itemize}
\item[{\rm (3)}] $B$ and $B'$ are gated, 
and $\pr_{B} = \pr_{A} \circ \pr_{A'}$ and $\pr_{B'} = \pr_{A'} \circ \pr_{A}$.
\end{itemize}
\end{Thm}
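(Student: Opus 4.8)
The plan is to first isolate two elementary properties of the gate map $\pr_{Y}$ and then to treat the three assertions in the order (2), (1), (3), each building on the previous one. The two properties are: (i) the gate $\pr_{Y}(p)$ is the \emph{unique} nearest point of $Y$ to $p$, since any $q\in Y$ with $d(p,q)=d(p,Y)$ satisfies the gate identity $d(p,q)=d(p,\pr_{Y}(p))+d(\pr_{Y}(p),q)$, forcing $d(\pr_{Y}(p),q)=0$; and (ii) $\pr_{Y}$ is non-expansive, which I would get by writing $y_i:=\pr_{Y}(p_i)$, adding the gate identities $d(p_1,y_2)=d(p_1,y_1)+d(y_1,y_2)$ and $d(p_2,y_1)=d(p_2,y_2)+d(y_2,y_1)$, and comparing with the triangle inequalities $d(p_1,y_2)\le d(p_1,p_2)+d(p_2,y_2)$ and $d(p_2,y_1)\le d(p_1,p_2)+d(p_1,y_1)$ to obtain $d(y_1,y_2)\le d(p_1,p_2)$. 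With $\delta:=d(A,A')$, assertion (2) follows quickly. For (i)$\Rightarrow$(ii): if $d(p,p')=\delta$ then $d(p',A)\le d(p',p)=\delta\le d(p',A)$, so by property (i) $p=\pr_{A}(p')$, and symmetrically $p'=\pr_{A'}(p)$. For (ii)$\Rightarrow$(i): fixing arbitrary $a\in A,\ a'\in A'$, the gate identities give $d(p,a')=d(p,p')+d(p',a')$ and $d(p,a)=d(p',a)-d(p,p')$, and substituting these into $d(a,a')\ge d(p,a')-d(p,a)$ together with $d(p',a')-d(p',a)\ge -d(a,a')$ yields $2d(a,a')\ge 2d(p,p')$; hence $d(p,p')\le\delta$, and since $d(p,p')=d(p',A)\ge\delta$ equality holds.

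For (1) I would prove the ``ping-pong'' fixed-point property: for $a'\in A'$ and $b:=\pr_{A}(a')\in B$, the point $b':=\pr_{A'}(b)$ satisfies $\pr_{A}(b')=b$. Indeed, starting from $d(a',a)=d(a',b)+d(b,a)$ (gate of $A$ at $a'$), substituting $d(a',b)=d(b,b')+d(b',a')$ (gate of $A'$ at $b$, evaluated at $a'$), and comparing with $d(a',a)\le d(a',b')+d(b',a)$ gives $d(b',a)=d(b',b)+d(b,a)$ for all $a\in A$, which is exactly $b=\pr_{A}(b')$. By (2), $d(b,b')=\delta$, and $b'\in B'$ since $b\in A$. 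The symmetric statement shows $\pr_{A}|_{B'}$ and $\pr_{A'}|_{B}$ are mutually inverse bijections $B'\leftrightarrow B$; being non-expansive by property (ii), each is an isometry.

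For (3) I would proceed through $A$. First I would record a \emph{transitivity lemma}: if $A$ is gated in $X$ and $C\subseteq A$ is gated in the subspace $(A,d)$, then $C$ is gated in $X$ with $\pr_{C}=\pr_{C}^{A}\circ\pr_{A}$, where $\pr_{C}^{A}$ is the gate map inside $A$; this follows by splitting $d(x,c)=d(x,\pr_{A}(x))+d(\pr_{A}(x),c)$ and applying the gate property of $C$ inside $A$ to the point $\pr_{A}(x)$. It then suffices to show that $B$ is gated in $(A,d)$ with gate map $\pr_{A}\circ\pr_{A'}|_{A}$. For $a\in A$, $b\in B$, put $c:=\pr_{A'}(a)$, $g:=\pr_{A}(c)=\pr_{A}\pr_{A'}(a)\in B$, and $b':=\pr_{A'}(b)\in B'$; using (1) (so $\pr_{A}(b')=b$ and $d(b,b')=\delta$) I would combine $d(a,b)=d(a,b')-\delta$, $d(a,b')=d(a,c)+d(c,b')$, $d(a,c)=d(a,g)+d(g,c)$, the triangle inequality $d(g,c)+d(c,b')\ge d(g,b')$, and $d(g,b')=\delta+d(b,g)$ to deduce $d(a,b)\ge d(a,g)+d(g,b)$; the reverse inequality is the triangle inequality, so $g$ is the gate of $a$ at $B$.

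The main obstacle is the final reconciliation with the stated formula. The computation above uses $a\in A$ essentially (in the step $d(a,b')=d(b',b)+d(b,a)$, the gate property of $A$ at $b'$, valid only at points of $A$), so for general $x\in X$ the transitivity lemma only delivers $\pr_{B}=\pr_{A}\pr_{A'}\pr_{A}$. To match $\pr_{B}=\pr_{A}\circ\pr_{A'}$ one must show the leading projection onto $A$ is redundant, i.e. $\pr_{A}\pr_{A'}(x)=\pr_{A}\pr_{A'}(\pr_{A}(x))$ for every $x$. I expect this to be the delicate step: since $B$ is now known to be gated and its gate is unique (property (i)), it suffices to verify that $\pr_{A}\pr_{A'}(x)$ lies on a geodesic from $x$ to every $b\in B$, and I would establish this by transporting $\pr_{A'}(x)$ across the bridge $B\leftrightarrow B'$ furnished by (1) and using $d(b,\pr_{A'}(b))=\delta$, together with the symmetric fact that $B'$ is gated in $(A',d)$. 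The companion identity $\pr_{B'}=\pr_{A'}\circ\pr_{A}$ then follows by exchanging the roles of $A$ and $A'$.
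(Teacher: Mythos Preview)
The paper does not prove this theorem; it is quoted from Dress--Scharlau \cite{DS87} and used as a black box. So there is no ``paper's proof'' to compare against, and your task is really to supply a proof of the stated assertions. Your arguments for (2), (1), the transitivity lemma, and the fact that $B$ is gated in $X$ with $\pr_B=\pr_A\circ\pr_{A'}\circ\pr_A$ are all correct and cleanly organised.

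The genuine issue is the very last step, and your instinct that it is ``delicate'' is exactly right --- in fact the identity $\pr_B=\pr_A\circ\pr_{A'}$, read as an equality of maps on all of $X$, is \emph{false} in general metric spaces. Take $X=K_{2,3}$ with parts $\{u_1,u_2\}$ and $\{v_1,v_2,v_3\}$. The edges $A=\{u_1,v_1\}$ and $A'=\{u_2,v_2\}$ are both gated, and one checks $B=\pr_A(A')=A$ and $B'=\pr_{A'}(A)=A'$. For $x=v_3$ we have $\pr_{A'}(v_3)=u_2$ and $\pr_A(u_2)=v_1$, so $\pr_A\pr_{A'}(v_3)=v_1$; but $\pr_B(v_3)=\pr_A(v_3)=u_1$. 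Thus $\pr_A\pr_{A'}\neq\pr_B$ at $v_3$, whereas $\pr_A\pr_{A'}\pr_A(v_3)=\pr_A\pr_{A'}(u_1)=\pr_A(v_2)=u_1=\pr_B(v_3)$, confirming your three-fold formula.

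So your sketch for removing the leading $\pr_A$ cannot be completed: the formula in part~(3) as literally stated is a misquotation. What survives, and what you have proved, is that $B$ and $B'$ are gated with $\pr_B=\pr_A\pr_{A'}\pr_A$ and $\pr_{B'}=\pr_{A'}\pr_A\pr_{A'}$; equivalently, $\pr_B=\pr_A\circ\pr_{A'}$ holds on $A\cup A'$ (on $A$ because $\pr_A|_A=\mathrm{id}$, and on $A'$ by your ping-pong argument $\pr_A\pr_{A'}\pr_A|_{A'}=\pr_A|_{A'}$). This weaker statement is all the paper ever uses: the only invocation of (3), inside the proof of Lemma~4.11, evaluates $\pr_B$ at a point of $A$. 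Your proof therefore covers everything the paper needs.
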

As remarked in \cite{DS87}, 
every gated set is convex 
(see the proof of Lemma~\ref{lem:2convexity} below). 
The converse is not true in general, 
but is true for modular graphs.
The following useful characterization 
of convex (gated) sets in a modular graph
is due to Chepoi~\cite{Chepoi89}.
Here, for a graph $\mGamma$, 
a subset $Y$ of vertices 
is said to be convex (resp. gated)
if $Y$ is convex (resp. gated) in $(V_{\mGamma}, d_{\mGamma})$.  
\begin{Lem}[\cite{Chepoi89}]\label{lem:2convexity}
Let $\mit\Gamma$ be a modular graph. 
For $Y \subseteq V_\mGamma$, 
the following conditions are equivalent:
\begin{itemize} 
\item[{\rm (1)}] $Y$ is convex.
\item[{\rm (2)}] $Y$ is gated.
\item[{\rm (3)}] $\mGamma [Y]$ 
is connected and
$I(p,q) \subseteq Y$ holds
for every $p,q \in Y$ with $d_{\mit\Gamma}(p,q) = 2$.
\end{itemize}
\end{Lem}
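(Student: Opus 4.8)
The plan is to prove the cycle $(2)\Rightarrow(1)\Rightarrow(3)$ together with $(1)\Rightarrow(2)$ and the one genuinely substantial implication $(3)\Rightarrow(1)$; the first three are short, and almost all the work is in the last. The implication $(2)\Rightarrow(1)$ holds in any metric space: for $p,q\in Y$ and $z\in I(p,q)$, writing $z^{*}:=\pr_Y(z)$ and adding the gate identities $d_{\mGamma}(p,z)=d_{\mGamma}(p,z^{*})+d_{\mGamma}(z^{*},z)$ and $d_{\mGamma}(q,z)=d_{\mGamma}(q,z^{*})+d_{\mGamma}(z^{*},z)$, the relation $z\in I(p,q)$ together with $d_{\mGamma}(p,z^{*})+d_{\mGamma}(z^{*},q)\ge d_{\mGamma}(p,q)$ forces $d_{\mGamma}(z,z^{*})=0$, so $z\in Y$. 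The implication $(1)\Rightarrow(3)$ is immediate, since convexity gives the distance-$2$ condition verbatim and any shortest path between two points of $Y$ lies in their interval, hence in $Y$, so $\mGamma[Y]$ is connected.

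For $(1)\Rightarrow(2)$ I would use modularity through medians. Fix $p\in V_{\mGamma}$ and pick $p^{*}\in Y$ attaining the distance $d_{\mGamma}(p,Y)$. For any $q\in Y$, let $m$ be a median of $p,p^{*},q$, which exists because $\mGamma$ is modular. Then $m\in I(p^{*},q)\subseteq Y$ by convexity, while $m\in I(p,p^{*})$ gives $d_{\mGamma}(p,m)\le d_{\mGamma}(p,p^{*})=d_{\mGamma}(p,Y)$; as $m\in Y$ this is only possible if $m=p^{*}$, and then $m\in I(p,q)$ yields $p^{*}\in I(p,q)$. Thus $p^{*}$ is a gate for $p$, and since gates are unique, $Y$ is gated.

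The heart of the matter is $(3)\Rightarrow(1)$, that a connected, locally convex set is convex, which I would prove by induction on $k:=d_{\mGamma}(x,y)$ for $x,y\in Y$. The cases $k\le 1$ are trivial and $k=2$ is exactly the hypothesis $(3)$, so assume $k\ge 3$ with the inductive hypothesis that $I(a,b)\subseteq Y$ whenever $a,b\in Y$ and $d_{\mGamma}(a,b)<k$. I would first reduce to the \emph{first-step claim} $(\star)$: every neighbor $y_{1}$ of $y$ with $d_{\mGamma}(x,y_{1})=k-1$ lies in $Y$. Indeed, granting $(\star)$, any $z\in I(x,y)$ lies on a geodesic whose last edge $y_{1}y$ satisfies $d_{\mGamma}(x,y_{1})=k-1$; then $y_{1}\in Y$ by $(\star)$, $z\in I(x,y_{1})$, and the inductive hypothesis places $z\in Y$. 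To prove $(\star)$ I would combine two ingredients. First, using connectivity I produce \emph{some} neighbor $v$ of $y$ with $v\in Y$ and $d_{\mGamma}(x,v)=k-1$: take a path from $y$ to $x$ in $\mGamma[Y]$ minimizing $\sum_{i}d_{\mGamma}(x,u_{i})$ and observe that at any interior strict maximum of $d_{\mGamma}(x,\cdot)$ the quadrangle condition (Lemma~\ref{lem:quadrangle}) supplies a common neighbor two steps closer to $x$, which $(3)$ forces into $Y$, contradicting minimality; hence the path is a geodesic in $Y$ and its first edge gives $v$. Second, I take a median $m$ of $x,y_{1},v$; a short computation shows $d_{\mGamma}(x,m)=k-2$ and that $m$ is a common neighbor of $y_{1}$ and $v$, whence $m\in I(x,v)\subseteq Y$ by the inductive hypothesis, and then $d_{\mGamma}(m,y)=2$ with $y_{1}\in I(m,y)$, so $(3)$ gives $y_{1}\in Y$.

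The step I expect to be the main obstacle is exactly the proof of $(\star)$, and within it the passage from the merely connected set $\mGamma[Y]$ to an in-$Y$ geodesic toward $x$: connectivity offers only an arbitrary path, and converting it into a monotone descent is precisely where the quadrangle condition and the distance-$2$ convexity are indispensable. Once a single neighbor $v$ of $y$ is secured inside $Y$, the median argument is routine modular bookkeeping, so the entire difficulty of the lemma is concentrated in this ``pulling geodesics into $Y$'' step.
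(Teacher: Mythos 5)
Your proposal is correct. The implications $(2)\Rightarrow(1)$, $(1)\Rightarrow(3)$, and $(1)\Rightarrow(2)$ are essentially identical to the paper's: in particular your $(1)\Rightarrow(2)$ is exactly the paper's argument (nearest point $p^*$, median of $p,p^*,q$ forced into $Y$ by convexity and then forced to equal $p^*$ by minimality). The only real divergence is in $(3)\Rightarrow(1)$. Both proofs rest on the same key trick --- take a path in $\mGamma[Y]$ minimizing the sum of distances to a reference vertex, and use the quadrangle condition together with the distance-$2$ convexity to eliminate interior peaks --- but you deploy it differently. The paper applies it once, with the reference vertex being an arbitrary $a\in I(p,q)$: the resulting peak-free path has a unique distance-minimizing vertex $p_i$, and the identity $d(p,q)=d(p,p_i)+d(p_i,q)+2d(p_i,a)$ forces $a=p_i\in Y$ directly, with no induction and no median. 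You instead induct on $d(x,y)$, use the path argument only to secure one descent neighbor $v\in Y$ of $y$ (taking $x$ itself as the reference vertex, so the peak-free path is a geodesic), and then promote $v$ to every descent neighbor $y_1$ via a median of $x,y_1,v$ and a final application of $(3)$ to $I(m,y)$. Your route is valid --- the median computation ($d(x,m)=k-2$, $m$ a common neighbor of $y_1$ and $v$, bipartiteness giving $d(m,y)=2$) checks out --- but it is longer; the paper's choice of reference vertex makes the induction and the median step unnecessary. The one point worth making explicit in your write-up is that when killing a peak $u_i$ you need $u_{i-1}\neq u_{i+1}$ to speak of $I(u_{i-1},u_{i+1})$ with $d(u_{i-1},u_{i+1})=2$; if they coincide the path can simply be shortcut, which also contradicts minimality.
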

We give a proof for the convenience of readers 
as the original paper is in Russian.  
\begin{proof}
$d_{\mGamma}$ is denoted by $d$.
(1) $\Rightarrow$ (3) is obvious.
We show (3) $\Rightarrow$ (1).
Take $p,q \in Y$, and take $a \in I(p,q)$. 
We are going to show $a \in Y$.
Since ${\mit\Gamma}[Y]$ is connected, 
we can take a path $P = (p = p_0,p_1,\ldots, p_k = q)$ 
with $p_i \in Y$. 
Take such a path $P$ with
$\kappa_P := \sum_{i=0}^{k} d(a,p_i)$ minimum.
If $d(a,p_{i-1}) < d(a, p_i) > d(a,p_{i+1})$
for some $i$, then, 
by the quadrangle condition in Lemma~\ref{lem:quadrangle},
there is a common neighbor $p^*$ of $p_{i-1}, p_{i+1}$ with 
$d(a,p^*) = d(a,p_i) - 2$.
Since $I(p_{i-1},p_{i+1}) \subseteq Y$ by (3), 
$p^*$ belongs to $Y$.
Then we can replace $p_i$ by $p^*$ in $P$ 
to obtain another path $P'$ connecting $p,q$ 
with $\kappa_{P'}= \kappa_{P} - 2$;
a contradiction to the minimality.
Therefore there is no index $j$ with 
$d(a,p_{j-1}) < d(a, p_j) > d(a,p_{j+1})$.
Thus there is a unique 
index $i$ with $d(a,p_i)$ minimum.
Then we have $d(p,p_i) + d(p_i,a) = d(p,a)$ and
$d(q,p_i) + d(p_i,a) = d(q,a)$.
By $a \in I(p,q)$, 
we have $d(p,q) = d(p,a) + d(a,q)= d(p,p_i) + d(p_i,q) + 2d(p_i,a) \geq d(p,q) + 2d(p_i,a)$.
Hence we must have $d(p_i,a) = 0$, implying $a = p_i \in Y$.

We show (2) $\Rightarrow$ (1).
As already mentioned, 
any gated set is convex.
Indeed, suppose that
$Y$ is gated.
Take $p,q \in Y$, and take $a \in I(p,q)$.
Consider the gate $a^*$ of $a$ in $Y$.
Then $d(p,a) = d(p,a^*) + d(a^*,a)$ 
and $d(q,a) = d(q,a^*) + d(a^*,a)$.
Since $a \in I(p,q)$, we have
$d(p,q) = d(p,a) + d(a,q) = d(p,a^*) + d(a^*,q) + 2 d(a^*,a) 
\geq d(p,q) + 2d(a^*,a)$,
implying $d(a^*,a) = 0$ and $a = a^* \in Y$.
Thus we get (2) $\Rightarrow$ (1).

Finally we show (1) $\Rightarrow$ (2).
Suppose that $Y$ is convex. 
Let $p$ be an arbitrary vertex.
Let $p^*$ be a point in $Y$ satisfying $d(p,Y) = d(p,p^*)$. 
We show that $p^{*}$ is a gate of $p$ at $Y$.
Take arbitrary $q \in Y$. 
Consider a median $m$ of $p,q,p^{*}$. 
By convexity, $m$ belongs to $Y$, and also $m \in I(p^*,p)$.
By definition of $p^*$,  it must hold $p^* = m$.
Thus $d(p,q) = d(p,p^*) + d(p^*,q)$ holds for every $q \in Y$.
This means that $p^{*}$ is the gate of $p$, 
and therefore $Y$ is gated.
\end{proof}

\subsection{Modular lattices and modular semilattices}\label{subsec:lattice}
Let ${\cal L}$ be a partially ordered set (poset)
with partial order $\preceq$. 
For $a,b \in {\cal L}$, 
the (unique) minimum common upper bound, if it exists, 
is denoted by $a \vee b$, 
and the (unique) maximum common lower bound, if it exists, 
is denoted by $a \wedge b$.
${\cal L}$ is said to be a {\em lattice} if
both $a \vee b$ and $a \wedge b$ exist for every $a,b \in {\cal L}$, 
and said to be a {\em (meet-)semilattice} if $a \wedge b$ exists 
for every $a,b \in {\cal L}$.
In a semilattice, 
if $a$ and $b$ have a common upper bound, 
then $a \vee b$ exists.
Such $(a,b)$ is said to be {\em bounded}.
By the expression``$a \vee b \in {\cal L}$" 
we mean that $a \vee b$ exists.
A pair $(a,b)$ is said to be {\em comparable} 
if $a \preceq b$ or $b \preceq a$, and {\em incomparable} otherwise.
We say ``{\em $b$ covers $a$}'' 
if $a \prec b$ and there is no 
$c \in {\cal L}$ with $a \prec c \prec b$, 
where $a \prec b$ means $a \preceq b$ and $a \neq b$.
The maximum element (universal upper bound) 
and the minimum element (universal lower bound), if they exist, 
are denoted by ${\bf 1}$ and
${\bf 0}$, respectively. 
For $a \preceq b$, 
the interval $\{ c \in {\cal L} \mid a \preceq c \preceq b\}$
is denoted by $[a,b]$.
A {\em chain} from $a$ to $b$ 
is a sequence $(a = u_0,u_1, u_2,\ldots, u_k =b)$ 
with $u_{i-1} \prec u_{i}$ for $i=1,2,\ldots,k$; 
the number $k$ is the length of the chain.
The length $r[a,b]$ 
of the interval $[a,b]$ is defined as
the maximum length of a chain from $a$ to $b$.
The {\rm rank} $r(a)$ of an element $a$ 
is defined by $r(a) = r[{\bf 0}, a]$.
An {\em atom} is an element of rank $1$.
The {\em covering graph} of a poset ${\cal L}$ is 
the underlying undirected graph of 
the Hasse diagram of ${\cal L}$.

A lattice ${\cal L}$ is called {\em modular} if 
$a \vee (b \wedge c) = (a \vee b) \wedge c$ 
for every $a,b,c \in {\cal L}$ with $a \preceq c$.
Modular lattices are also characterized 
by the modular equality of the rank function.
\begin{Lem}[{See \cite[Chapter III, Corollary 1]{Birkhoff}}]\label{lem:modular} 
A lattice ${\cal L}$ is modular if and only if
\[
r(a) + r(b) = r(a \vee b) + r(a \wedge b) \quad (a,b \in {\cal L}).
\]
\end{Lem}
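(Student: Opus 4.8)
The plan is to treat the two implications separately, using as the central tool the \emph{diamond isomorphism} of intervals in a modular lattice. For the forward direction (modular $\Rightarrow$ rank identity), I would first establish that for any $a,b \in {\cal L}$ the maps $x \mapsto x \vee b$ and $y \mapsto y \wedge a$ give mutually inverse, order-preserving bijections between the intervals $[a \wedge b, a]$ and $[b, a \vee b]$. The verification is a direct application of the modular law: for ${a \wedge b \preceq x \preceq a}$ one has $(x \vee b) \wedge a = x \vee (b \wedge a) = x$, and dually $(y \wedge a) \vee b = (a \vee b) \wedge y = y$ for $b \preceq y \preceq a \vee b$. Since isomorphic finite posets have equal maximal-chain lengths, this already yields $r[a \wedge b, a] = r[b, a \vee b]$.

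The second step is to upgrade this to the rank identity, which requires that $r$ be additive along comparable triples, i.e. $r[{\bf 0}, z] = r[{\bf 0}, y] + r[y, z]$ whenever ${\bf 0} \preceq y \preceq z$. This is exactly the Jordan--Dedekind chain condition, and I expect it to be the main obstacle, since the defining formula $r[a,b] = (\text{maximum chain length})$ only gives the inequality $r[{\bf 0}, z] \geq r[{\bf 0}, y] + r[y, z]$ for free. To close the gap I would prove by induction that in a modular lattice all maximal chains between two comparable elements have the same length, the inductive step using the interval isomorphism above to transport one maximal chain onto a perspective interval. Granting this, additivity gives $r(a) - r(a \wedge b) = r[a \wedge b, a] = r[b, a \vee b] = r(a \vee b) - r(b)$, which rearranges to $r(a) + r(b) = r(a \vee b) + r(a \wedge b)$.

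For the converse (rank identity $\Rightarrow$ modular), I would argue by contradiction using Dedekind's classical characterization that a non-modular lattice contains a pentagon sublattice, say on five elements $\{{\bf 0}', {\bf 1}', u, v, w\}$ with ${\bf 0}' \prec v \prec w \prec {\bf 1}'$ and ${\bf 0}' \prec u \prec {\bf 1}'$, where $u$ is incomparable to both $v$ and $w$; here $u \vee v = u \vee w = {\bf 1}'$ and $u \wedge v = u \wedge w = {\bf 0}'$. Applying the hypothesised identity to the pairs $(w, u)$ and $(v, u)$ gives $r(w) + r(u) = r({\bf 1}') + r({\bf 0}') = r(v) + r(u)$, hence $r(v) = r(w)$.

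This contradicts the strict monotonicity $r(v) < r(w)$, which holds automatically for the maximum-chain-length rank: taking a chain from ${\bf 0}$ to $v$ of length $r(v)$ and appending $v \prec w$ produces a chain to $w$ of length $r(v)+1$, so $r(w) \geq r(v) + 1$. Therefore ${\cal L}$ contains no pentagon and is modular, completing the proof. The only genuinely delicate point is the inductive establishment of the Jordan--Dedekind condition in the forward direction; everything else reduces to routine manipulation of the modular law and the monotonicity of $r$.
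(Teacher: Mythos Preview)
The paper does not supply its own proof of this lemma; it is quoted as a classical fact with a reference to Birkhoff's \emph{Lattice Theory}, and the assertion is then used freely throughout the paper. So there is nothing in the paper to compare your argument against.

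That said, your proposal is a correct and standard proof. The forward direction is exactly the textbook route: the diamond isomorphism $[a\wedge b,a]\cong[b,a\vee b]$ via $x\mapsto x\vee b$, $y\mapsto y\wedge a$ is verified by two applications of the modular law, and the Jordan--Dedekind chain condition (all maximal chains in an interval have the same length) then converts equality of interval lengths into the additive identity $r(a)+r(b)=r(a\vee b)+r(a\wedge b)$. Your identification of the chain condition as the only non-automatic step is accurate; the inductive proof you sketch is the standard one and goes through without difficulty in a finite lattice (and finiteness is a standing hypothesis in the paper).

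For the converse, your pentagon argument is also correct. One small point worth making explicit: Dedekind's theorem furnishes $N_5$ as a \emph{sublattice}, so the joins and meets $u\vee v$, $u\vee w$, $u\wedge v$, $u\wedge w$ are computed in $\cal L$ itself, which is what you need to apply the hypothesised rank identity to the pairs $(u,v)$ and $(u,w)$. With that clarification, $r(v)=r(w)$ follows, contradicting the strict monotonicity $r(v)<r(w)$ you derive from $v\prec w$.
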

A lattice ${\cal L}$ is called {\em complemented} 
if for every $p \in {\cal L}$ 
there is an element $q$, called a {\em complement} of $p$, such that $p \vee q = {\bf 1}$ 
and $p \wedge q = {\bf 0}$, 
and {\em relatively complemented}
if $[a,b]$ is complemented for every $a,b \in {\cal L}$ 
with $a \preceq b$.
\begin{Thm}[{See \cite[Chapter IV, Theorem 4.1]{Birkhoff}}]
\label{thm:complemented}
Let ${\cal L}$ be a modular lattice. 
The following conditions are equivalent:
\begin{itemize}
\item[{\rm (1)}] ${\cal L}$ is complemented.
\item[{\rm (2)}] ${\cal L}$ is relatively complemented.
\item[{\rm (3)}] Every element is the join of atoms.
\item[{\rm (4)}] ${\bf 1}$ is the join of atoms.
\end{itemize}
\end{Thm}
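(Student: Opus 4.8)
The plan is to establish the cycle of implications $(1) \Rightarrow (2) \Rightarrow (3) \Rightarrow (4) \Rightarrow (1)$, which visits all four conditions and hence proves them equivalent. Throughout I would exploit modularity in its two guises: the defining identity $a \vee (b \wedge c) = (a \vee b) \wedge c$ valid whenever $a \preceq c$, and the rank equality of Lemma~\ref{lem:modular}. I would also use finiteness of ${\cal L}$, so that every nonzero element lies above some atom.

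For $(1) \Rightarrow (2)$, given $a \preceq x \preceq b$, I would take a complement $y$ of $x$ in ${\cal L}$ and set $z := (y \vee a) \wedge b$, then verify by a short modular computation that $z$ complements $x$ inside $[a,b]$. Indeed, since $a \preceq x$, modularity gives $x \wedge (y \vee a) = (x \wedge y) \vee a = a$, so $x \wedge z = a$; and since $x \preceq b$, modularity gives $x \vee \bigl((y \vee a) \wedge b\bigr) = (x \vee y \vee a) \wedge b = b$, so $x \vee z = b$. For $(2) \Rightarrow (3)$ I would induct on $r(x)$: the base case $x = {\bf 0}$ is the empty join, and for $r(x) \geq 1$ I pick an atom $p \preceq x$ and use relative complementation of the interval $[{\bf 0}, x]$ to find $y$ with $p \vee y = x$ and $p \wedge y = {\bf 0}$. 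Lemma~\ref{lem:modular} then forces $r(y) = r(x) - 1$, so the induction hypothesis writes $y$ as a join of atoms, whence $x = p \vee y$ is as well. The implication $(3) \Rightarrow (4)$ is immediate.

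The substantive step is $(4) \Rightarrow (1)$. Writing ${\bf 1} = p_1 \vee \cdots \vee p_k$ with each $p_i$ an atom, and given an arbitrary $x$, I would choose a subset $S \subseteq \{p_1, \ldots, p_k\}$ maximal subject to the condition that $y := \bigvee_{p \in S} p$ satisfies $x \wedge y = {\bf 0}$, and claim that this $y$ is a complement of $x$. The only thing to check is $x \vee y = {\bf 1}$. If this failed, some atom $p_j$ would satisfy $p_j \not\preceq x \vee y$; as $p_j$ is an atom, this forces $p_j \wedge (x \vee y) = {\bf 0}$. Applying modularity with $y \preceq x \vee y$ yields $(y \vee p_j) \wedge (x \vee y) = y \vee \bigl(p_j \wedge (x \vee y)\bigr) = y$. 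Consequently $x \wedge (y \vee p_j) \preceq (x \vee y) \wedge (y \vee p_j) = y$, and combined with $x \wedge (y \vee p_j) \preceq x$ this gives $x \wedge (y \vee p_j) \preceq x \wedge y = {\bf 0}$. Since $p_j \not\preceq y$, adjoining $p_j$ to $S$ strictly enlarges the independent join while preserving the meet-condition, contradicting the maximality of $S$.

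The main obstacle is precisely this last augmentation argument: one must verify that enlarging the atom set preserves the independence $x \wedge y = {\bf 0}$. This is exactly where the modular identity is indispensable and where a merely complemented (non-modular) lattice would break down, so it is the step that deserves the most care.
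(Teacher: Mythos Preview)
The paper does not prove this theorem: it is quoted as a classical fact with a reference to Birkhoff's \emph{Lattice Theory}, and no argument is supplied. Your cycle $(1)\Rightarrow(2)\Rightarrow(3)\Rightarrow(4)\Rightarrow(1)$ is correct and is essentially the standard textbook proof; each step uses modularity exactly where it must, and your care in the $(4)\Rightarrow(1)$ augmentation step (showing $x\wedge(y\vee p_j)={\bf 0}$ via the modular law) is the right emphasis.
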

\paragraph{Modular semilattice.}
The modularity concept 
has been extended for semilattices by 
Bandelt, van de Vel, and Verheul~\cite{BCK00}. 
A semilattice ${\cal L}$
is said to be {\em modular} if $[{\bf 0},p]$ 
is a modular lattice for every $p \in {\cal L}$, 
and $a \vee b \vee c \in {\cal L}$ provided 
$a \vee b, b \vee c, c \vee a \in {\cal L}$.
A modular semilattice is said to be {\em complemented} 
if $[{\bf 0}, p]$ is a complemented modular lattice 
for every $p \in {\cal L}$.

It is known that
a lattice is modular if and only 
if its covering graph is modular; see \cite[Proposition 6.2.1]{VanDeVel}.
A modular semilattice is characterized by an analogous property as follows.
\begin{Thm}[{\cite[Theorem 5.4]{BVV}}]\label{thm:BVV}
A semilattice is modular if and only 
if its covering graph is modular.
\end{Thm}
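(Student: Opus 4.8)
The plan is to prove the two implications separately, in each case reducing to the classical lattice version (\cite[Proposition 6.2.1]{VanDeVel}) that a \emph{lattice} is modular if and only if its covering graph is modular. Throughout I write $d$ for the covering-graph distance and use that an edge of the covering graph is exactly a covering relation, so adjacent elements are comparable.

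\emph{From modular semilattice to modular covering graph.} I would first note that each $[{\bf 0},p]$ is a graded modular lattice and that these gradings agree on common elements, so $\mathcal L$ carries a global rank function $r$ with $r(y)=r(x)+1$ whenever $y$ covers $x$; the parity of $r$ two-colours the covering graph, giving bipartiteness, i.e.\ condition (1) of Lemma~\ref{lem:quadrangle}. The engine for condition (2) is the distance formula $d(a,b)=r(a)+r(b)-2r(a\wedge b)$. The upper bound comes from the path $a\to a\wedge b\to b$ assembled from saturated chains in the modular lattices $[{\bf 0},a]$ and $[{\bf 0},b]$; the lower bound follows by showing that $\phi(x):=r(x)+r(b)-2r(x\wedge b)$ changes by exactly $1$ along each edge, which by Lemma~\ref{lem:modular} applied inside $[{\bf 0},y]$ reduces to the elementary fact that $r(y\wedge b)-r(x\wedge b)\in\{0,1\}$ when $y$ covers $x$.

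I would then verify the quadrangle condition by a case analysis on whether the neighbours $p_1,p_2$ of $p$ lie below or above $p$. If both lie below, the witness is $p^*=p_1\wedge p_2$, of rank $r(p)-2$ because $p_1\vee p_2=p$ in the modular lattice $[{\bf 0},p]$; in the mixed case $p_1\prec p\prec p_2$ the witness is $p^*=p_1\vee(p_2\wedge q)$, which exists merely because both joinands lie under $p_2$; in each case the distance formula gives $d(p^*,q)=d(p,q)-2$ after computing the relevant meets with $q$. \textbf{The main obstacle is the remaining case}, where $p_1,p_2$ both cover $p$: here one must produce $p_1\vee p_2$, and this is precisely where the semilattice join axiom is indispensable. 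Writing $m_i:=p_i\wedge q$, one checks that $p\vee m_1$, $m_1\vee m_2$, $m_2\vee p$ all exist (they are bounded by $p_1$, $q$, $p_2$ respectively), so the axiom furnishes $p\vee m_1\vee m_2$; since $p_i=p\vee m_i$, this is a common upper bound of $p_1$ and $p_2$, so $p_1\vee p_2$ exists and serves, after a final rank computation, as the required $p^*$.

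\emph{From modular covering graph to modular semilattice.} Each $[{\bf 0},p]$ is already a lattice (meets exist, and joins exist since every pair is bounded by $p$). The first step is to show that $[{\bf 0},p]$ is a \emph{convex} vertex set: it is clearly connected, and for $a,b\preceq p$ at distance two, every common neighbour $z$ is comparable to both $a$ and $b$, so a short case check shows $z$ is one of $a\wedge b$, $a\vee b$, or strictly between $a$ and $b$, hence in $[{\bf 0},p]$; convexity then follows from Lemma~\ref{lem:2convexity}. A convex subgraph of a modular graph inherits bipartiteness, and its quadrangle witnesses (lying on geodesics between two of its own vertices) remain inside it, so it is again modular; the lattice version then yields that $[{\bf 0},p]$ is a modular lattice. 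For the join axiom, assume $a\vee b$, $b\vee c$, $c\vee a$ exist and consider the filters $U_a,U_b,U_c$ of upper bounds; the same distance-two argument shows each $U_x$ is convex, and they pairwise meet because the pairwise joins exist. Taking a median $m$ of witnesses $u\in U_b\cap U_c$, $v\in U_c\cap U_a$, $w\in U_a\cap U_b$, convexity forces $m\in I(v,w)\cap I(w,u)\cap I(u,v)\subseteq U_a\cap U_b\cap U_c$, so $a,b,c$ have a common upper bound and $a\vee b\vee c$ exists. This median-based Helly step, together with the convexity reductions, is the technical heart of the converse.
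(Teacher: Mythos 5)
The paper does not prove Theorem~\ref{thm:BVV} at all: it is imported verbatim from Bandelt--van de Vel--Verheul \cite{BVV} (Theorem 5.4 there), so there is no in-paper argument to compare yours against. Judged on its own, your proof is correct and self-contained modulo the routine verifications you defer, and its overall shape (reduce both directions to the lattice case via a global rank function and via convexity of principal ideals/filters) is a reasonable reconstruction of how such a result is proved.

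The points that carry the real weight all check out. In the forward direction, the grading argument and the distance formula $d(a,b)=r(a)+r(b)-2r(a\wedge b)$ are sound: the upper bound comes from the path through $a\wedge b$, and the lower bound from the fact that $r(y\wedge b)-r(x\wedge b)\in\{0,1\}$ when $y$ covers $x$, which follows from Lemma~\ref{lem:modular} in $[{\bf 0},y]$ exactly as you say. Your three-way case split on the positions of $p_1,p_2$ relative to $p$ is exhaustive, and you correctly identify that the only place the ternary join axiom of a modular semilattice is needed is the case where both $p_1,p_2$ cover $p$: there $p_i=p\vee(p_i\wedge q)$, the three pairwise joins among $p$, $p_1\wedge q$, $p_2\wedge q$ exist, and the axiom produces the common upper bound; the rank computation $r(p_1\vee p_2\wedge q)\geq r(p\wedge q)+2$ combined with the triangle inequality then pins down $d(p^*,q)=d(p,q)-2$. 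In the converse direction, the convexity of $[{\bf 0},p]$ and of the principal filters via the distance-two criterion of Lemma~\ref{lem:2convexity}, the reduction to the lattice statement \cite[Proposition 6.2.1]{VanDeVel} (which the paper itself quotes just before Theorem~\ref{thm:BVV}), and the median-based Helly argument for the ternary join axiom are all valid; the last of these is the genuinely nontrivial step and you have it right: the median $m$ of the three pairwise joins lies in all three convex filters, hence is a common upper bound of $a,b,c$. If you wanted to tighten the write-up, the two spots deserving a fully written-out argument are the claim that every common neighbour of two elements of $[{\bf 0},p]$ at distance two again lies in $[{\bf 0},p]$ (the case of a common upper cover needs the observation that such a cover must equal $a\vee b$, which exists and sits below $p$), and the final rank computation in the ``both above'' quadrangle case.
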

The Hasse diagram of ${\cal L}$ is 
admissibly oriented since every 4-cycle is a form of $(p, p \wedge q, q, p \wedge q)$.
\begin{Cor}
The covering graph of a modular semilattice 
is orientable modular.
\end{Cor}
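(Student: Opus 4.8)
The modularity half of the statement is free: by Theorem~\ref{thm:BVV} the covering graph of a modular semilattice ${\cal L}$ is modular, so the only thing to establish is orientability. My plan is to exhibit one explicit admissible orientation, namely the Hasse orientation $o$ that directs every covering edge from the larger to the smaller endpoint (so that, whenever $b$ covers $a$, the edge $ab$ is oriented from $b$ to $a$, i.e.\ $a \swarrow_o b$). Since admissibility, property~(\ref{eqn:frame})(3), is imposed separately on each $4$-cycle, it suffices to understand all $4$-cycles of the covering graph and to verify the condition on each one.

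The key structural step is to show that every $4$-cycle of the covering graph is a \emph{diamond}: there are incomparable $p,q$ for which the cycle reads $(p,\, p\wedge q,\, q,\, p\vee q)$, with $p\wedge q$ covered by both $p$ and $q$ and $p\vee q$ covering both. To prove this I would read off the rank function $r$ along a $4$-cycle $(a,b,c,d)$. Each edge is a covering relation, and since every principal ideal $[{\bf 0},p]$ is a modular, hence graded, lattice, each such edge changes $r$ by exactly $\pm 1$; as the total change around the cycle is $0$, there are precisely two $+1$ steps and two $-1$ steps. Up to rotation this leaves two patterns, $(+,+,-,-)$ and the alternating $(+,-,+,-)$. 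The first is exactly the diamond, with the unique bottom $a=p\wedge q$ and the unique top $c=p\vee q$. The alternating pattern is the one to exclude, and this is where the semilattice hypothesis is essential: it would produce two minimal vertices $a,c$ lying below two (equal-rank, hence incomparable) maximal vertices $b,d$; then, using that $b\wedge d$ exists, the coverings $a\prec b$ and $a\prec d$ force $b\wedge d=a$, whence $c\preceq b\wedge d=a$, contradicting that $a$ and $c$ are distinct and incomparable. I expect this exclusion of the alternating configuration, relying on the existence of meets, to be the main (and essentially the only) obstacle.

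Finally I would verify admissibility on a diamond $(p,\,p\wedge q,\,q,\,p\vee q)$, which is routine. Under $o$ each of the four edges points from its larger to its smaller endpoint, so opposite edges are oriented in parallel; concretely, in the labelling $u=p$, $v=p\wedge q$, $v'=q$, $u'=p\vee q$ of condition~(\ref{eqn:frame})(3) both statements $u\swarrow_o v$ and $u'\swarrow_o v'$ are false, so the required equivalence $u\swarrow_o v \Leftrightarrow u'\swarrow_o v'$ holds. Since admissibility is a per-$4$-cycle condition and every $4$-cycle is such a diamond, $o$ is admissible and the covering graph is orientable. Together with Theorem~\ref{thm:BVV}, this shows the covering graph is orientable modular.
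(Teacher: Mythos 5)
Your proof is correct and takes essentially the same route as the paper: the paper's entire justification is the observation that every $4$-cycle of the covering graph is a diamond of the form $(p,\,p\wedge q,\,q,\,p\vee q)$, so the Hasse orientation is admissible, with modularity supplied by Theorem~\ref{thm:BVV}. You simply fill in the details the paper leaves implicit, namely the rank-parity argument and the exclusion of the alternating $(+,-,+,-)$ pattern using the existence of meets.
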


Let ${\cal L}$ be a modular semilattice and 
let $\mGamma$ be the covering graph of ${\cal L}$, which is orientable modular.
An immediate consequence of the Jordan-Dedekind chain condition 
for modular lattices is: 
\begin{myitem}
For $p,q \in {\cal L}$ with $p \preceq q$, 
we have $I(p,q) = [p,q]$ and $d_{\mGamma}(p,q) = r[p,q]$. \label{eqn:I(pq)=[pq]}
\end{myitem}\noindent
A (positive) {\em valuation} of ${\cal L}$ is
is a function on ${\cal L}$ satisfying
\begin{eqnarray}
v(q) - v(p) > 0 &&  (p,q \in {\cal L}: p \prec q), \label{eqn:valuation1}\\
v(p) + v(q) = v(p \wedge q) + v(p \vee q) && (p,q \in {\cal L}: \mbox{bounded}). \label{eqn:valuation2}
\end{eqnarray}
This is a natural extension of a valuation of a modular lattice;
see \cite[Chapter III, 50]{Birkhoff}
(we follow the terminology in the third edition of this book).
In particular, the rank function $r$ is a valuation.
For $p,q$ with $p \preceq q$, let $v[p,q]$ denote $v(q) - v(p)$.
Valuations and orbit-invariant functions are related in the following way.
\begin{Lem}\label{lem:valuation}
\begin{itemize}
\item[{\rm (1)}] For a valuation $v$ on ${\cal L}$, 
the edge-length $h$ on $\mGamma$ defined by
\[
h(pq) := v(q) - v(p) \quad (p,q \in {\cal L}: \mbox{$q$ covers $p$})
\]
is a positive orbit-invariant function, and satisfies
\[
v[p,q] = d_{\mGamma,h}(p,q) \quad (p,q \in {\cal L}: p \preceq q).
\]
\item[{\rm (2)}] For a positive orbit-invariant function $h$ on $\mGamma$, a function $v$ on ${\cal L}$ defined by
\[
v(p) := d_{\mGamma,h}({\bf 0}, p) \quad (p \in {\cal L})
\]
is a valuation.
\end{itemize}
\end{Lem}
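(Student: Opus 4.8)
The plan is to prove the two parts separately, exploiting the correspondence between covering edges of $\mathcal{L}$ and shortest-path structure established in (\ref{eqn:I(pq)=[pq]}) together with the modular rank equality of Lemma~\ref{lem:modular}.

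For part (1), I would first verify that $h$ is orbit-invariant. Recall that two edges in a common $4$-cycle sharing no vertex are projective, and Theorem~\ref{thm:BVV} tells us every $4$-cycle of $\mGamma$ has the form $(p, p\wedge q, q, p\vee q)$. So I would check that for such a $4$-cycle the two opposite covering edges have equal $h$-value: the edges $\{p\wedge q, p\}$ and $\{q, p\vee q\}$ give $h$-values $v(p)-v(p\wedge q)$ and $v(p\vee q)-v(q)$, and the valuation identity (\ref{eqn:valuation2}) forces these to be equal. Since orbits are generated by the projectivity relation, $h$ is orbit-invariant, and positivity is immediate from (\ref{eqn:valuation1}). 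For the distance formula, fix $p \preceq q$. By (\ref{eqn:I(pq)=[pq]}) every shortest $(p,q)$-path with respect to the unit length stays inside $[p,q]$ and corresponds to a maximal chain; by Lemma~\ref{lem:shortest} such a path is also shortest for $h$. Along any covering chain $p = u_0 \prec u_1 \prec \cdots \prec u_k = q$ the telescoping sum $\sum_i h(u_{i-1}u_i) = \sum_i (v(u_i)-v(u_{i-1})) = v(q)-v(p) = v[p,q]$ is independent of the chosen chain, so $d_{\mGamma,h}(p,q)=v[p,q]$.

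For part (2), I define $v(p) := d_{\mGamma,h}(\mathbf{0},p)$ and verify the two valuation axioms. The strict-monotonicity condition (\ref{eqn:valuation1}) follows because if $p \prec q$ then, by (\ref{eqn:I(pq)=[pq]}), $p \in I(\mathbf{0},q)$, so $d_{\mGamma,h}(\mathbf{0},q) = d_{\mGamma,h}(\mathbf{0},p) + d_{\mGamma,h}(p,q)$, and the second term is a sum of positive $h$-values along a covering chain, hence strictly positive. The modular equality (\ref{eqn:valuation2}) is the main point: for bounded $p,q$ I must show $v(p)+v(q) = v(p\wedge q)+v(p\vee q)$. Here I would work inside the modular lattice $[\mathbf{0}, p\vee q]$ and use the correspondence of part (1) in reverse. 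Writing $h_Q$ for the common $h$-value on each orbit $Q$ and using (\ref{eqn:into}), each $d_{\mGamma,h}(\mathbf{0},\cdot)$ decomposes over orbits as a weighted count of orbit-crossings along a shortest path. The rank function $r$ satisfies the modular equality by Lemma~\ref{lem:modular}, and restricting to a single orbit $Q$ (i.e.\ taking $h = $ the indicator of $Q$) realizes the orbit-wise crossing-count as a rank in the contracted modular lattice $[\mathbf{0},p\vee q]/Q$, which again satisfies the modular equality. Summing these orbit-wise equalities weighted by $h_Q$ yields (\ref{eqn:valuation2}).

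I expect the verification of the modular equality in part (2) to be the main obstacle, since it is the one step that genuinely uses the lattice structure of $[\mathbf{0},p\vee q]$ rather than purely metric reasoning. The cleanest route is to reduce to the rank function via the orbit decomposition (\ref{eqn:into}): each orbit contributes a modular lattice quotient whose rank obeys Lemma~\ref{lem:modular}, and the valuation identity is the positive-linear combination of these. The bipartiteness and quadrangle condition of Lemma~\ref{lem:quadrangle}, encoded in Theorem~\ref{thm:BVV}, guarantee that the $4$-cycles have the required join/meet form so that the orbit structure is compatible with the lattice operations.
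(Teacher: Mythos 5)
Your part (1) is correct and is essentially the paper's own proof: orbit-invariance follows from the valuation identity (\ref{eqn:valuation2}) applied to the $4$-cycles $(p,p\wedge q,q,p\vee q)$, positivity from (\ref{eqn:valuation1}), and the distance formula by telescoping $h$ along a maximal chain, which is a shortest path for $h$ by (\ref{eqn:I(pq)=[pq]}) and Lemma~\ref{lem:shortest}. The monotonicity half of part (2) is likewise fine.

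The modular equality (\ref{eqn:valuation2}) in part (2) is where your argument has a genuine gap. You reduce, via (\ref{eqn:into}), to proving the equality for each single-orbit crossing count $r_Q(x):=d_{\mGamma/Q,1}({\bf 0}/Q,x/Q)$, and then assert that $r_Q$ is ``a rank in the contracted modular lattice $[{\bf 0},p\vee q]/Q$, which again satisfies the modular equality'' by Lemma~\ref{lem:modular}. Nothing you cite delivers this: Lemma~\ref{lem:orbits} only says that $\mGamma/Q$ is a modular \emph{graph} and that shortest paths project to shortest paths; it does not say that contracting the non-$Q$ edges of the lattice $[{\bf 0},p\vee q]$ yields a modular \emph{lattice}, that the contraction preserves $\vee$ and $\wedge$ (so that the images of $p\vee q$ and $p\wedge q$ are the join and meet of the images, which is what the modular rank equality of Lemma~\ref{lem:modular} would then be applied to), or that $r_Q$ is the rank function of that quotient. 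These facts are true, but they amount to the congruence theory of finite modular lattices, and proving them requires exactly the transposition argument you are trying to bypass; as written the step assumes what is to be shown. The paper instead proves (\ref{eqn:valuation2}) directly: take maximal chains $(p\wedge q=p_0,\ldots,p_k=p)$ and $(p\wedge q=q_0,\ldots,q_l=q)$, set $a_{i,j}:=p_i\vee q_j$, note by modularity that each elementary square $(a_{i,j},a_{i+1,j},a_{i+1,j+1},a_{i,j+1})$ is a $4$-cycle whose opposite edges are projective and hence carry the same $h$-value, and telescope $v(p)+v(q)-v(p\wedge q)-v(p\vee q)$ into a sum of terms $h(a_{i,j}a_{i+1,j})-h(a_{i,j+1}a_{i+1,j+1})$, each of which vanishes. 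You should either adopt this grid argument or supply the missing lattice-congruence facts explicitly.
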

\begin{proof}
(1). The positivity of $h$ follows from (\ref{eqn:valuation1}).
The orbit invariance of $h$ follows from (\ref{eqn:valuation2}) and the observation 
that every 4-cycle of $\mGamma$ 
is the form of $(p, p \wedge q, q, p \vee q)$, 
where $p \vee q$ covers $p$ and $q$, and $p \wedge q$ is covered by $p$ and $q$.
We show the latter part by induction on $r[p,q]$; the case $r[p,q] = 1$ is obvious.
Take $p' \in [p,q] = I(p,q)$ such that $p'$ covers $p$.
By induction, we have $d_{\mGamma, h}(p',q) = v[p',q]$.
By Lemma~\ref{lem:shortest} and (\ref{eqn:I(pq)=[pq]}), we have 
$d_{\mGamma,h}(p,q) =  h(pp') + d_{\mGamma, h}(p',q)  = v[p,q]$.

(2). By Lemma~\ref{lem:shortest} and (\ref{eqn:I(pq)=[pq]}), 
if $q$ covers $p$, then $v(q) - v(p) = h(pq) > 0$, implying (\ref{eqn:valuation1}).
For a bounded pair $(p,q)$, take maximal chains 
$(p \wedge q = p_0,p_1,\ldots,p_k = p)$ and $(p \wedge q = q_0,q_1,\ldots,q_l = q)$.
Let $a_{i,j} := p_{i} \vee q_{j}$. 
By modularity, we see that $a_{i+1,j+1}$ covers $a_{i+1,j}$ and $a_{i,j+1}$, 
and $a_{i,j}$ is covered by $a_{i+1,j}$ and $a_{i,j+1}$; 
in particular $a_{i+1,j+1} = a_{i+1,j} \vee a_{i,j+1}$ and 
$a_{i,j} = a_{i+1,j} \wedge a_{i,j+1}$.
Therefore
$v(p) + v(q) - v(p \wedge q) - v(p \vee q)
= \sum_{i,j} (v(a_{i+1,j}) + v(a_{i,j+1}) - v(a_{i+1,j+1}) - v(a_{i,j})) 
= \sum_{i,j} (h(a_{i+1,j}a_{i,j}) - h(a_{i,j+1}a_{i+1,j+1}))$.
By the orbit invariance of $h$, all summands are zero, implying (\ref{eqn:valuation2}).
\end{proof}

Consider the case where ${\cal L}$ 
is the product ${\cal L}_1 \times {\cal L}_2$ of two modular semilattices ${\cal L}_1, {\cal L}_2$.
For a valuation $v$ on ${\cal L}$, define $v_i:{\cal L}_i \to \RR$ $(i=1,2)$ by
 \begin{equation}\label{eqn:valuation_summand}
 v_1(p_1) := v(p_1, {\bf 0}) \quad (p_1 \in {\cal L}_1), 
 \quad  v_2(p_2) := v({\bf 0},p_2) \quad (p_2 \in {\cal L}_2).
 \end{equation}
Then $v_i$ is a valuation on ${\cal L}_i$ for $i=1,2$, and satisfies
 \begin{equation}\label{eqn:valuation_summand1}
 v(p) = v_1(p_1) + v_2(p_2) - v({\bf 0}) \quad (p = (p_1,p_2) \in {\cal L}).
 \end{equation}
Conversely, for a valuation $v_i$ on ${\cal L}_i$ $(i=1,2)$, 
define $v:{\cal L} \to \RR$ by
\begin{equation}\label{eqn:valuation_sum}
v(p) := v_1(p_1) + v_2(p_2) \quad (p = (p_1,p_2) \in {\cal L}). 
\end{equation}
Then $v$ is a valuation on ${\cal L}$.

In the sequel,
a modular semilattice ${\cal L}$ is supposed 
to be endowed with some valuation $v$.
If ${\cal L}$ is the product of modular semilattices ${\cal L}_i$, 
then the valuation of each ${\cal L}_i$ is defined by (\ref{eqn:valuation_summand}), 
and is also denoted by $v$.
For modular semilattices ${\cal L}_1$ and ${\cal L}_2$, 
the valuation $v$ of ${\cal L}_1 \times {\cal L}_2$ 
is defined to be the sum of valuations of ${\cal L}_1$ and ${\cal L}_2$ 
according to (\ref{eqn:valuation_sum}).
Also a modular semilattice ${\cal L}$ is regarded as a metric space 
by the shortest path metric of its covering graph $\mGamma$
with respect to a positive orbit-invariant function in Lemma~\ref{lem:valuation}~(1).
The corresponding metric function is denoted by $d = d_{{\cal L}}$.
We give basic properties of metric intervals of ${\cal L}$.
\begin{Lem}~\label{lem:I(p,q)}
For $p,q \in {\cal L}$, we have the following.
\begin{itemize}
\item[{\rm (1)}] $d(p,q) = v[p \wedge q, p] + v[p \wedge q, q]$.
\item[{\rm (2)}] $
I(p,q) = \{ a \vee b \mid a \in [p \wedge q,p], b \in [p \wedge q, q]: \mbox{$a \vee b$ exists} \}$.
\item[{\rm (3)}] If $c = a \vee b$ for 
$a \in [p \wedge q, p], b \in [p \wedge q, q]$,
then $a = p \wedge c$ and $b = q \wedge c$.
\item[{\rm (4)}] For $u,u' \in I(p,q)$, it holds $u \wedge u' =
 (u \wedge u' \wedge p) \vee (u \wedge u' \wedge q)$; in particular $u \wedge u' \in I(p,q)$.
\end{itemize}
\end{Lem}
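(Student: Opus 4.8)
The plan is to prove the four assertions in the order $(1),(3),(2),(4)$, deriving $(2)$ from $(1)$ and $(3)$ by a valuation computation, and keeping $(4)$ for last as the crux. Throughout write $a:=p\wedge c$ etc.\ as needed, and recall that $v[x,y]=d(x,y)$ for $x\preceq y$ by Lemma~\ref{lem:valuation}(1), together with $I(x,y)=[x,y]$ from (\ref{eqn:I(pq)=[pq]}).

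For $(1)$ the key point is that $p\wedge q$ lies on a geodesic. Since the covering graph is modular, the triple $p,q,p\wedge q$ has a median $t\in I(p,q)\cap I(q,p\wedge q)\cap I(p\wedge q,p)$. As $p\wedge q\preceq p$ and $p\wedge q\preceq q$, identity (\ref{eqn:I(pq)=[pq]}) gives $I(p\wedge q,p)=[p\wedge q,p]$ and $I(p\wedge q,q)=[p\wedge q,q]$, so $t$ is a common lower bound of $p,q$ dominating $p\wedge q$, forcing $t=p\wedge q$. Hence $p\wedge q\in I(p,q)$, and $d(p,q)=v[p\wedge q,p]+v[p\wedge q,q]$, which is $(1)$. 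For $(3)$, given $c=a\vee b$ with $a\in[p\wedge q,p]$, $b\in[p\wedge q,q]$, I work inside the modular lattice $[{\bf 0},c]$: putting $x:=p\wedge c$ we have $a\preceq x\preceq c=a\vee b$, so the modular law yields $x=x\wedge(a\vee b)=a\vee(x\wedge b)$; since $b\preceq c$ gives $x\wedge b=p\wedge b\preceq p\wedge q\preceq a$, we get $x=a$, and symmetrically $q\wedge c=b$.

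For $(2)$ I combine $(1)$ and $(3)$. In the $\supseteq$ direction, for $c=a\vee b$ as above, $(3)$ gives $p\wedge c=a$, $q\wedge c=b$, so applying $(1)$ to the pairs $(p,c)$ and $(c,q)$ and using $a\wedge b=p\wedge q$ together with the valuation identity (\ref{eqn:valuation2}) $v(a)+v(b)=v(a\wedge b)+v(a\vee b)$, a direct computation gives $d(p,c)+d(c,q)=v[p\wedge q,p]+v[p\wedge q,q]=d(p,q)$, i.e.\ $c\in I(p,q)$. For $\subseteq$, take $c\in I(p,q)$ and set $a:=p\wedge c$, $b:=q\wedge c$; then $a\vee b\preceq c$ and $a\wedge b=p\wedge q\wedge c$. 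Applying $(1)$ to $(p,c),(c,q),(p,q)$ and using $c\in I(p,q)$ yields $v(c)=v(a)+v(b)-v(p\wedge q)$; substituting (\ref{eqn:valuation2}) gives $\bigl(v(c)-v(a\vee b)\bigr)+\bigl(v(p\wedge q)-v(a\wedge b)\bigr)=0$, and since $a\vee b\preceq c$ and $a\wedge b\preceq p\wedge q$ both summands are nonnegative, hence both vanish. Strict monotonicity of $v$ then forces $c=a\vee b$ and $p\wedge q\preceq c$, so $a\in[p\wedge q,p]$, $b\in[p\wedge q,q]$, proving $(2)$; note in particular that $p\wedge q\preceq u$ for every $u\in I(p,q)$.

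For $(4)$, writing $a=u\wedge p$, $b=u\wedge q$, $a'=u'\wedge p$, $b'=u'\wedge q$ and $w:=u\wedge u'$, one has $w\wedge p=a\wedge a'$ and $w\wedge q=b\wedge b'$, both $\succeq p\wedge q$; by $(2)$ it therefore suffices to prove the single identity $w=(a\wedge a')\vee(b\wedge b')$. The basic device is the gate identity $u\wedge a'=(u\wedge p)\wedge a'=a\wedge a'$ (as $a'\preceq p$) and its companions. Working in $[{\bf 0},u]$, the modular law gives $w\vee a=a\vee\bigl((w\vee a)\wedge q\bigr)$, so $w\vee a\in I(p,q)$ by $(2)$, and likewise $w\vee b\in I(p,q)$; moreover $(w\vee a)\wedge u'=w$ since $w\vee a\preceq u$ forces $(w\vee a)\wedge u'\preceq u\wedge u'=w$. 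I would then induct on $d(u,u')$: if $w\vee a\prec u$ (or $w\vee b\prec u$), the pair $(w\vee a,u')$ lies in $I(p,q)$, has meet $w$, and satisfies $d(w\vee a,u')<d(u,u')$, so induction gives $w\in I(p,q)$ and the formula follows from $(2)$. \textbf{The main obstacle} is the degenerate case $w\vee a=w\vee b=u$ together with the symmetric $w\vee a'=w\vee b'=u'$, where the descent stalls; here one must exclude an $M_{3}$-type configuration in $[{\bf 0},u]$, and this is precisely where the defining axiom of a modular semilattice (joinability of triples, Theorem~\ref{thm:BVV} and \cite{BVV}) must be invoked, since $u\vee u'$ need not exist and so one cannot reduce the computation to a single modular lattice. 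I expect this stalling case to be the hardest point of the lemma.
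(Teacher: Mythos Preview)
Your proofs of $(1)$, $(3)$, and $(2)$ are correct and in some respects cleaner than the paper's. For $(1)$ the paper argues by induction on $d(p,q)$, tracking a neighbour $q'$ of $q$ in $I(p,q)$ through two cases; your median argument is shorter. For $(3)$ the paper uses the modular rank equality $r(a)+r(b)=r(c)+r(p\wedge q)=r(p\wedge c)+r(q\wedge c)$ to force $r[a,p\wedge c]=0$; your direct application of the modular law is equivalent and more transparent. Your treatment of $(2)$ via the valuation identity is essentially the paper's argument.

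Your proof of $(4)$, however, has a genuine gap: the stalling case $w\vee a=w\vee b=u$, $w\vee a'=w\vee b'=u'$ is not handled, and you are right that it is not vacuous (an $M_3$ sitting inside $[p\wedge q,u]$ with $a,b$ as two of the atoms and $w$ the third shows the induction can indeed stall). The paper avoids this difficulty entirely by a different mechanism. First it observes that $I(p,q)$ is an isometric subspace of $\mathcal L$: for $u,u'\in I(p,q)$ a median $m$ of $p,u,u'$ lies in $I(p,q)$, so $u$ and $u'$ are joined by a geodesic inside $I(p,q)$. Second, by $(2)$ and $(3)$, each covering step $w'\prec w$ along such a geodesic moves exactly one of $w\wedge p$, $w\wedge q$ by a covering step and fixes the other. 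Hence the projections $w\mapsto w\wedge p$ and $w\mapsto w\wedge q$ send a geodesic from $u$ to $u'$ to walks from $a$ to $a'$ and from $b$ to $b'$ whose lengths sum to $d(u,u')$, giving
\[
d(u,u')\ \ge\ d(a,a')+d(b,b')
\ =\ r(u)+r(u')-2\,r\bigl((a\wedge a')\vee(b\wedge b')\bigr),
\]
the last equality from the modular rank identity together with $a\wedge b=a'\wedge b'=(a\wedge a')\wedge(b\wedge b')=p\wedge q$. Comparing with $d(u,u')=r(u)+r(u')-2r(u\wedge u')$ from $(1)$ forces $r[(a\wedge a')\vee(b\wedge b'),\,u\wedge u']=0$, i.e.\ $u\wedge u'=(a\wedge a')\vee(b\wedge b')$. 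The point is that the geodesic-projection argument supplies a \emph{global} inequality $d(u,u')\ge d(a,a')+d(b,b')$ which your local descent cannot see; once you have that inequality, the rest is a one-line rank computation with no case analysis at all.
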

The properties (1), (2), and (3) appeared (implicitly) in \cite{BVV}.
\begin{proof} 
By Lemmas~\ref{lem:shortest} and \ref{lem:valuation}, 
we can assume that valuation $v$ is equal to the rank function $r$.

(3). Necessarily $a \preceq p \wedge c$ and
$b \preceq q \wedge c$, 
implying $a \vee b \preceq (p \wedge c) \vee (q \wedge c) \preceq c = a \vee b$.
Hence $(p \wedge c) \vee (q \wedge c) = c$.
Also $(p \wedge c) \wedge (q \wedge c) = p \wedge q$ 
(since $p \wedge q \preceq c$).
By the modularity equality, we have
$r(a) + r(b) = r(c) + r(p \wedge q) = r(p \wedge c) + r(q \wedge c)$,
which implies $r[a, p \wedge c] = r[b, q \wedge c] = 0$.
Thus $p \wedge c = a$ 
and $q \wedge c = a$ must hold.

(1). We use the induction on $d(p,q)$.
Take a neighbor $q'$ of $q$ in $I(p,q)$. 
By induction $d(p,q') = r[p \wedge q',p] + r[p \wedge q',q']$,  
and either (i) $q$ covers $q'$ or (ii) $q'$ covers $q$.
In the first case (i), we must have $p \wedge q \preceq q'$. 
Suppose not. Then $(p \wedge q) \vee q' = q$, and $(p \wedge q) \wedge q' = p \wedge q'$.
The modularity equality yields
$r[p \wedge q,q] = r[p \wedge q',q']$, which 
means that there is a $(p,q)$-path
passing through $p \wedge q$
with the length shorter than $d(p,q')$, 
contradicting $d(p,q) = d(p,q') + 1$.
It follows from $p \wedge q \preceq q'$ that $p \wedge q = p \wedge q'$, 
and the claim follows.
In the second case (ii) where $q'$ covers $q$, 
$p \wedge q'$ covers $p \wedge q$; since 
otherwise $p \wedge q' = p \wedge q$ which leads to a contradiction  
$d(p,q') > d(p,q)$.
By the modularity equality 
$r[p \wedge q',q'] = r[p \wedge q,q]$ and
the claim follows.

(2).  By (1), $p \wedge q \in I(p,q)$. 
By the modularity equality 
we have $(\supseteq)$.
We show the reverse inclusion. Take $u \in I(p,q)$.
Let $a := u \wedge p$ and $b := u \wedge q$.
By (1), $d(p,q) = d(p,u) + d(u,q) = d(p,a) + d(a,u) + d(u,b) + d(b,q)$.
Necessarily $u \in I(a,b)$, and $d(a,b) = r[a,u] + r[b,u]$.
Since $d(a,b) = r[a \wedge b, a] + r[a \wedge b, b] 
= r[a, a \vee b] + r[b, a \vee b] = d(a,b) - 2 r[a \vee b, u]$,
we have $r[a \vee b, u] = 0$, implying $a \vee b = u$.
Also $d(p,q) = d(p,a) + d(a,b) + d(b,q)$ must hold.
Hence $d(p,q) = d(p,a) + d(a,a \wedge b) + d(a \wedge b,b)+ d(b,q) 
= r[a \wedge b, p] + r[a \wedge b,q] = d(p,q) + 2 r[a \wedge b, p \wedge q]$.
 This implies $a \wedge b = p \wedge q$, and $a \in [p \wedge q,p]$ and $b \in [p \wedge q, p]$.

(4). First we note that $I(p,q)$ is an isometric subspace (with respect to $r$).
Indeed, 
for $w,w' \in I(p,q)$, by Theorem~\ref{thm:BVV}, there is a median $m$ of $p,w,w'$.
In particular $m \in I(p,q)$. So $I(w,m) \cup I(m,w') \subseteq I(p,q)$.
This means that $w$ and $w'$ are joined by a path in $I(p,q)$ of length $d(w,w')$.
By (2) and (3),
if $w$ covers $w'$, 
then $w \wedge p$ covers $w' \wedge p$ and $w \wedge q = w' \wedge q$, 
or $w \wedge q$ covers $w' \wedge q$ and $w \wedge p = w' \wedge p$. 
Thus a shortest path $P$ between $u$ and $u'$ in $I(p,q)$ induces a path $P'$
between $a := u \wedge p$ and $a' := u' \wedge p$ 
(by map $w \mapsto w \wedge p$)
and a path $P''$ 
between $b := u \wedge q$ and $b' := u' \wedge q$ (by map $w \mapsto w \wedge q$).
The length of $P$ is the sum of lengths of $P'$ and of $P''$.
This implies that 
\begin{eqnarray*}
d(u,u') &\geq & d(a,a') + d(b, b') = r(a) + r(a') - 2 r(a \wedge a') + r(b) + r(b') - 2 r (b \wedge b') \\
&= & r(a \vee b) + r(a' \vee b') - 2 r((a \wedge a') \vee (b \wedge b')) \\
& = & d(u,u') + 2 r[(a \wedge a') \vee (b \wedge b'), u \wedge u'],
\end{eqnarray*}
where the second equality follows from the modularity equality with 
$a \wedge b = a' \wedge b' = p \wedge q$ and the third follows from (1).
Hence $u \wedge u' = (a \wedge a') \vee (b \wedge b')$, as required.
\end{proof}

A subset $X$ of ${\cal L}$ is called a {\em subsemilattice} if $p \wedge q \in X$ 
for any $p,q \in X$, and is called {\em convex} if $X$ is a convex set in $\mGamma$.
For an edge-set $U$, 
define ${\cal L}|U \subseteq {\cal L}$ by
\begin{equation}\label{eqn:L|U}
{\cal L}|U := \{ p \in {\cal L} \mid 
\mbox{any shortest path from ${\bf 0}$ to $p$ belongs to $U$} \}.
\end{equation}
\begin{Lem}\label{lem:interval}
\begin{itemize}
\item[{\rm (1)}] Any convex set in ${\cal L}$
is a modular subsemilattice of ${\cal L}$.
\item[{\rm (2)}] Suppose that ${\cal L}$ is a lattice. 
Then a subset $C$ is convex 
if and only if $C = [a,b]$ 
for some $a,b \in {\cal L}$ with $a \preceq b$.
\item[{\rm (3)}] Suppose that ${\cal L}$ is complemented.
For an orbit-union $U$,  ${\cal L}|U$ is convex, 
and is a complemented modular subsemilattice of ${\cal L}$.
For $p \in {\cal L}$, define $p|U \in {\cal L}|U$ by
\[
p|U := \mbox{the gate of $p$ at ${\cal L}|U$}.
\]
Then $p|U \preceq p$, and any shortest path between $p$ and $p|U$ does not meet $U$.
\end{itemize}
\end{Lem}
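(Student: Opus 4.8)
The plan is to handle the three parts in order, extracting (2) and (3) from the meet/join closure in (1) together with the interval description of Lemma~\ref{lem:I(p,q)}. For (1) I would first check that a convex set $X$ is a $\wedge$-subsemilattice: given $p,q\in X$, Lemma~\ref{lem:I(p,q)}(1) gives $d(p,q)=v[p\wedge q,p]+v[p\wedge q,q]$, so $p\wedge q\in I(p,q)\subseteq X$, and the meet of $X$ agrees with that of ${\cal L}$. To see that $X$ is \emph{modular} I would apply Theorem~\ref{thm:BVV}, for which it suffices that the covering graph of the poset $X$ is exactly $\mGamma[X]$ and that $\mGamma[X]$ is modular. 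The identification of graphs uses that for comparable $a\preceq b$ in $X$ we have $[a,b]=I(a,b)\subseteq X$ by convexity, so a covering pair of $X$ is a covering pair of ${\cal L}$, i.e. an edge of $\mGamma[X]$, and conversely; modularity of $\mGamma[X]$ holds because a median in $\mGamma$ of any triple of $X$ lies in $X$ by convexity. For (2), the implication ``$[a,b]$ is convex'' is immediate from Lemma~\ref{lem:I(p,q)}(2), since $I(p,q)\subseteq[p\wedge q,p\vee q]\subseteq[a,b]$ whenever $p,q\in[a,b]$. Conversely, if ${\cal L}$ is a lattice and $C$ is convex, then $C$ is closed under $\wedge$ by (1) and also under $\vee$, because choosing $a=p$, $b=q$ in Lemma~\ref{lem:I(p,q)}(2) gives $p\vee q\in I(p,q)\subseteq C$; hence $a:=\bigwedge C$ and $b:=\bigvee C$ lie in $C$, and $C\subseteq[a,b]=I(a,b)\subseteq C$.

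For (3), the first step is to describe ${\cal L}|U$ via the contraction $\mGamma/\bar U$, where $\bar U:=E_{\mGamma}\setminus U$. By Lemma~\ref{lem:orbits}, on any shortest $({\bf 0},p)$-path the number of $\bar U$-edges equals $d_{\mGamma/\bar U}({\bf 0}/\bar U,p/\bar U)$ and is path-independent; hence $p\in{\cal L}|U$ if and only if $p/\bar U={\bf 0}/\bar U$. Convexity of ${\cal L}|U$ then follows: for $p,q\in{\cal L}|U$ and $m\in I(p,q)$, Lemma~\ref{lem:orbits}(3) maps a shortest path through $m$ to a shortest path through $m/\bar U$ joining $p/\bar U=q/\bar U={\bf 0}/\bar U$, so $m/\bar U={\bf 0}/\bar U$ and $m\in{\cal L}|U$. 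By (1), ${\cal L}|U$ is a modular subsemilattice, and since ${\bf 0}\in{\cal L}|U$ and convexity gives $[{\bf 0},p]=I({\bf 0},p)\subseteq{\cal L}|U$ for each $p\in{\cal L}|U$, every interval $[{\bf 0},p]$ of ${\cal L}|U$ coincides with the complemented modular lattice $[{\bf 0},p]$ of ${\cal L}$; thus ${\cal L}|U$ is complemented.

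It remains to study the gate $p|U$, which exists because ${\cal L}|U$ is convex, hence gated (Lemma~\ref{lem:2convexity}). Writing $g:=p|U$ and applying the gate identity to ${\bf 0}\in{\cal L}|U$ gives $d(p,{\bf 0})=d(p,g)+d(g,{\bf 0})$, so $g\in I({\bf 0},p)=[{\bf 0},p]$ and $g\preceq p$. For the last assertion I would argue by contradiction: suppose a covering pair $c\prec c'$ inside $[g,p]$ is a $U$-edge. Since $[{\bf 0},c']$ is a complemented modular lattice it is relatively complemented (Theorem~\ref{thm:complemented}), so $[g,c']$ is complemented and I may take a complement $z$ of $c$ there, giving $c\vee z=c'$, $c\wedge z=g$, and by the modular rank identity $r(c)+r(z)=r(c')+r(g)$ we get $r(z)=r(g)+1$, i.e. $z$ covers $g$ with $z\preceq c'\preceq p$. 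The same rank identities show that the transposed edges $gz$ and $cc'$ satisfy the hypotheses of Lemma~\ref{lem:commonorbit}(1), so they are projective and $gz\in U$; since $g/\bar U={\bf 0}/\bar U$ and the $U$-edge $gz$ is contracted in $\mGamma/\bar U$, we obtain $z/\bar U={\bf 0}/\bar U$, that is $z\in{\cal L}|U$. But $z$ covers $g$ and $z\preceq p$, whence $d(p,z)=v(p)-v(z)<v(p)-v(g)=d(p,g)$, contradicting that the gate minimizes the distance to $p$. Therefore every covering pair of $[g,p]$ lies in $\bar U$, and as a shortest $(p,g)$-path is a maximal chain of $[g,p]$, it avoids $U$.

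I expect (1), (2) and the convexity/complementedness in (3) to be routine given Lemmas~\ref{lem:I(p,q)} and~\ref{lem:orbits}; the genuine obstacle is the final statement, that the geodesic from $p$ to its gate avoids $U$. The crux is to push a hypothetical $U$-edge lying anywhere in the interval $[g,p]$ down to a $U$-edge $gz$ \emph{issuing from the gate}: this requires the relative complementation of modular intervals together with the orbit-invariance supplied by Lemma~\ref{lem:commonorbit}, after which the contraction description of ${\cal L}|U$ and the minimality of the gate close the argument.
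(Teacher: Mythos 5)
Your proposal is correct and takes essentially the same route as the paper: (1) and (2) from Lemma~\ref{lem:I(p,q)}, convexity and complementedness of ${\cal L}|U$ from the orbit machinery of Lemma~\ref{lem:orbits}, and, for the crucial final assertion, the identical device of transposing a hypothetical $U$-edge of $[p|U,p]$ down to an edge issuing from the gate via relative complementation (Theorem~\ref{thm:complemented}) and Lemma~\ref{lem:commonorbit}, then contradicting the minimality of the gate. The only cosmetic difference is that you characterize membership in ${\cal L}|U$ through the contraction $\mGamma/\bar{U}$, whereas the paper argues directly with the interval description in Lemma~\ref{lem:I(p,q)}.
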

\begin{proof}
(1) follows from Lemma~\ref{lem:I(p,q)}.
The if part of (2) also follows from Lemma~\ref{lem:I(p,q)}. 
To see the only if part of (2), 
consider $a:=\bigwedge_{u \in C} u$ 
and $b := \bigvee_{u \in C} u$. Then $C \subseteq [a,b]$.
From $a,b \in C$, we have $[a,b]  = I(a,b) \subseteq C$.

(3). For $p,q \in {\cal L}|U$,
there is a shortest path from ${\bf 0}$ to $p$ (or $q$) 
passing through $p \wedge q$. 
This means $[p \wedge q,p], [p \wedge q, q] \subseteq {\cal L}|U$.
By Lemma~\ref{lem:I(p,q)} and Lemma~\ref{lem:orbits}~(2), 
it holds that $I(p,q) \subseteq {\cal L}|U$. Hence ${\cal L}|U$ is convex, 
and is a modular subsemilattice by (1).
Since $[p,q] = I(p,q) \subseteq {\cal L}|U$ for $p,q \in {\cal L}|U$ with $p \preceq q$, 
every interval of  ${\cal L}|U$ is complemented.%

By ${\bf 0} \in {\cal L}|U$ and the definition of gates, 
we have $p|U \in I({\bf 0},p) = [{\bf 0}, p]$. Hence $p|U \preceq p$.
Suppose that there is a shortest path from $p|U$ to $p$ 
having an edge $st$ in $U$.
Suppose that $s$ is covered by $t$. 
By the relative complementarity of $[p|U, p]$, there is $s'$ 
such that $s' \vee s = t$ and $s \wedge s' = p|U$.
Then $s'$ covers $p|U$. So $d(p|U, t) = d(s',t) + 1$ and
$d(p,p|U) = d(p,s') + 1$ hold.
In particular, edges $st$ and $(p|U)s'$ are projective (Lemma~\ref{lem:commonorbit}).
Thus $(p|U)s' \in U$, and $s' \in {\cal L}|U$.
By definition of gates,
we have $d(p,s') = d(p,p|U) + 1$, contradicting $d(p,p|U) = d(p,s') + 1$.
\end{proof}

\section{Submodular function on modular semilattice}\label{sec:submo}
In this section, we develop a theory of submodular functions 
on modular semilattices.
A modular semilattice ${\cal L}$ is not necessarily a lattice. 
Join $p \vee q$ of elements $p, q$ may or may not exist. 
Interestingly, we can define a certain kind of a join, 
called a {\em fractional join}, 
which is a formal convex combination of elements of a set 
${\cal E}(p, q) \subseteq {\cal L}$ determined by $(p, q)$:
\[
\sum_{u \in {\cal E}(p, q)} c(u; p,q) u.
\]
If $p, q$ have the join $p \vee q$, 
then the fractional join is equal to $1(p \vee q)$.
In Section~\ref{subsec:frac_join}, 
the set ${\cal E}(p,q)$ and the coefficient $c(u;p,q)$ are introduced.
%
%
Then, in Section~\ref{subsec:submodular}, 
a function $f:{\cal L} \to \overline{\RR}$ is defined to be 
a {\em submodular function} if it satisfies
\[
f(p) + f(q) \geq f(p \wedge q) + \sum_{u \in {\cal E}(p,q)} c(u;p,q) f(u) \quad (p,q \in {\cal L}).
\]
The main properties of our submodular functions are:
\begin{itemize}
\item The distance function $d = d_{{\cal L}}$ on ${\cal L}$ 
is submodular on ${\cal L} \times {\cal L}$ (Theorem~\ref{thm:d_is_submodular}).
\item Submodular functions admit a fractional polymorphism containing semilattice operation $\wedge$, and hence {\bf VCSP}$[\mLambda]$ for submodular language $\mLambda$ 
can be solved in polynomial time by the basic LP relaxation
(Theorem~\ref{thm:minimized}).
\end{itemize}
For readability, less obvious theorems will be proved in Section~\ref{subsec:submo_proof}.

Although our framework for submodularity 
was motivated by its application to 0-extension problems,
it turned out that several other submodular-type functions, mentioned in the introduction, 
fall into our framework; 
see \cite{HH_HJ, HHprepar} for detail. 

\subsection{Fractional join}\label{subsec:frac_join}
Let ${\cal L}$ be a modular semilattice, where its valuation  is denoted by $v$.
We begin by sketching
the construction of the fractional join of $p,q$; see Figure~\ref{fig:ConvI(p,q)}. 
By valuation $v$ and the expression in Lemma~\ref{lem:I(p,q)}~(3), 
the metric interval $I(p,q)$ is naturally mapped to the plane $\RR^2$.
Consider the convex hull $\Conv I(p,q)$ of the image of $I(p,q)$.
Then the fractional join is a formal sum of elements $u \in I(p,q)$ mapped to 
maximal extreme points of $\Conv I(p,q)$; 
the set of such elements is called the {\em $(p,q)$-envelope}.
The coefficient of $u$ is determined by the normal cone 
$C(u;p,q)$ at (the image of) $u$.
\begin{figure} 
\begin{center}
\includegraphics[scale=0.55]{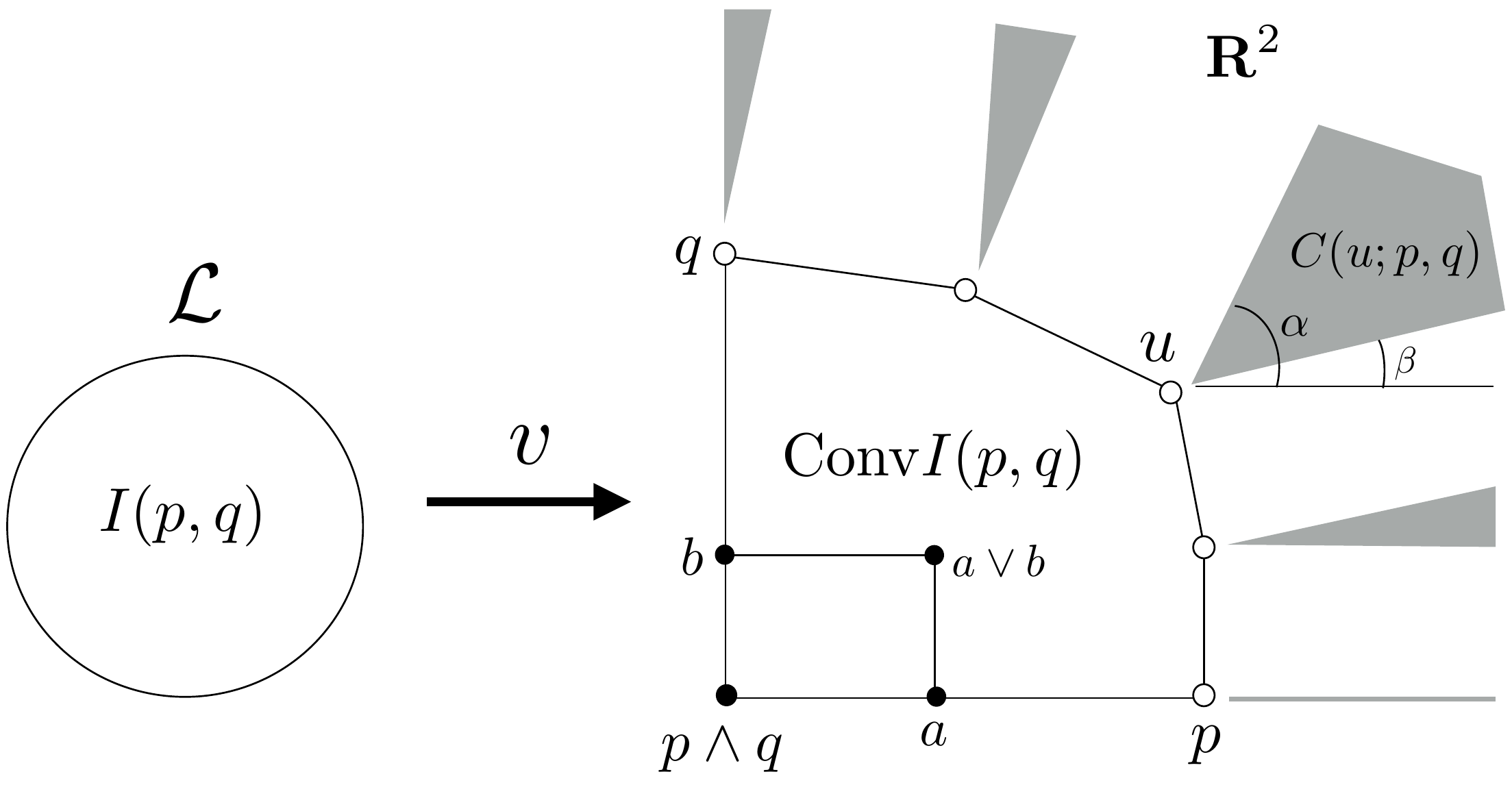}                
\caption{Construction of fractional join}  
\label{fig:ConvI(p,q)}
\end{center}
\end{figure}

\paragraph{$(p,q)$-envelope.}
First we introduce the concept of the $(p,q)$-envelope.
Let $(p,q)$ be a pair of elements in ${\cal L}$.
Define vector $v (u; p,q)$ in $\RR^2_+$ by 
\begin{equation}
v(u; p,q) := (v[p \wedge q, u \wedge p], v[p \wedge q, u \wedge q]).
\end{equation}
Let $\Conv I(p,q)$ denote the convex hull 
of $\{ v(u; p,q) \mid u \in I(p,q)\}$ in $\RR^2_+$.
The polygon $\Conv I(p,q)$ contains
$v(p \wedge q; p,q) = (0,0)$, 
$v(p;p,q) = (v[p \wedge q,p], 0)$, 
and $v(q;p,q) = (0, v[p \wedge q,q])$ as extreme points, 
and contains horizontal segment $[v(p \wedge q; p,q), v(p;p,q)]$ 
and vertical segment $[v(p \wedge q; p,q), v(q;p,q)]$
as edges. Also $\Conv I(p,q)$ is contained 
in the rectangle of four vertices 
\[
(0,0), (v[p \wedge q,p], 0),  (v[p \wedge q,p], 0),  (v[p \wedge q,p], v[p \wedge q,q]).
\]

The {\em $(p,q)$-envelope} ${\cal E}({p,q})$ is 
the set of elements $u \in I(p,q)$
such that $v(u;p,q)$ is 
a maximal extreme point of $\Conv I(p,q)$, where
a {\em maximal} extreme point is an extreme point $z$ in $\Conv I(p,q)$ 
such that for every positive vector $\epsilon$
it holds $z + \epsilon \not \in \Conv I(p,q)$.
Observe that  ${\cal E}({p,q})$ always contains $p$ and $q$.
\begin{Lem}\label{lem:unique}
The map $u \mapsto v (u; p,q)$ is injective on ${\cal E}(p,q)$. 
\end{Lem}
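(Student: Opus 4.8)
The plan is to show that every maximal extreme point of $\Conv I(p,q)$ has exactly one preimage under $u \mapsto v(u;p,q)$; since the elements of ${\cal E}(p,q)$ are by definition those mapping to maximal extreme points, and distinct points of $\RR^2$ are distinct vectors, the only way injectivity could fail is that two distinct elements share a common (maximal extreme) image, so it suffices to exclude this. Suppose then $u,u' \in {\cal E}(p,q)$ with $v(u;p,q) = v(u';p,q) =: z = (z_1,z_2)$, and set $a := u \wedge p$, $b := u \wedge q$, $a' := u' \wedge p$, $b' := u' \wedge q$. By Lemma~\ref{lem:I(p,q)}~(2),(3) the map $w \mapsto (w \wedge p, w \wedge q)$ is a bijection from $I(p,q)$ onto the pairs $(c,d) \in [p \wedge q, p] \times [p \wedge q, q]$ for which $c \vee d$ exists; hence $u \neq u'$ forces $(a,b) \neq (a',b')$, i.e.\ $a \neq a'$ or $b \neq b'$. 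Using Lemmas~\ref{lem:shortest} and \ref{lem:valuation} I may assume the valuation $v$ is the rank function, so that the modular equality of Lemma~\ref{lem:modular} is available in the modular lattices $[p \wedge q, p]$ and $[p \wedge q, q]$.

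The idea is to produce two elements of $I(p,q)$ whose images straddle $z$. Put $\alpha := a \vee a'$ and $\beta := b \vee b'$ (these exist, being bounded by $p$ and by $q$), and set $\delta := (\delta_1,\delta_2)$ with $\delta_1 := v(\alpha) - v(a) \geq 0$ and $\delta_2 := v(\beta) - v(b) \geq 0$. First, $m := u \wedge u'$ lies in $I(p,q)$ by Lemma~\ref{lem:I(p,q)}~(4), with $m \wedge p = a \wedge a'$ and $m \wedge q = b \wedge b'$; the modular equality (together with $v(a)=v(a')$, $v(b)=v(b')$) gives $v[p \wedge q, a \wedge a'] = z_1 - \delta_1$ and $v[p \wedge q, b \wedge b'] = z_2 - \delta_2$, so $v(m;p,q) = z - \delta$. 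Moreover $\delta \neq 0$: if $a \neq a'$ then $a \wedge a' \prec a$ and $\delta_1 = v(a) - v(a \wedge a') > 0$, and symmetrically for the second coordinate. Second — the crucial point — I claim $w := \alpha \vee \beta$ exists and lies in $I(p,q)$. Granting this, Lemma~\ref{lem:I(p,q)}~(3) yields $w \wedge p = \alpha$, $w \wedge q = \beta$, whence $v(w;p,q) = z + \delta$. Then $z = \tfrac12\bigl(v(m;p,q) + v(w;p,q)\bigr)$ exhibits $z$ as the midpoint of two distinct points of $\{v(x;p,q) \mid x \in I(p,q)\}$ (distinct since $\delta \neq 0$), contradicting that $z$ is an extreme point of $\Conv I(p,q)$. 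Hence $u = u'$. (When both $\delta_1,\delta_2 > 0$ one may argue more directly: $z + \delta$ strictly dominates $z$, contradicting the \emph{maximality} of $z$; the midpoint argument is what covers the case in which only one coordinate of $\delta$ is positive.)

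The main obstacle is the existence of $w = \alpha \vee \beta$. Since $\alpha$ is the least upper bound of $\{a,a'\}$ and $\beta$ of $\{b,b'\}$, a common upper bound of $\alpha$ and $\beta$ is exactly a common upper bound of $\{a,a',b,b'\}$, and thus of $u = a \vee b$ and $u' = a' \vee b'$; so the claim is equivalent to the existence of $u \vee u'$, after which $\alpha \vee \beta = u \vee u' \in I(p,q)$ follows. I would establish this through the defining property of a modular semilattice that $x \vee y \vee z$ exists whenever $x \vee y$, $y \vee z$, $z \vee x$ all exist. The difficulty is that the ``cross'' joins $a \vee b'$ and $a' \vee b$ are not given in advance, so the join axiom cannot be applied verbatim. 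To supply them I would argue by induction on $d(u,u')$, exploiting that $I(p,q)$ is an isometric, meet-closed subspace (Lemma~\ref{lem:I(p,q)}~(4)): passing to a neighbour of $u$ along a shortest $u$--$u'$ path inside $I(p,q)$ alters exactly one of the coordinates $w \wedge p$, $w \wedge q$ by a single covering step, reducing matters to the covering case, where the required pairwise joins are produced from the quadrangle/median structure of the modular covering graph (Theorem~\ref{thm:BVV}); the join axiom then applies and propagates existence back up the path. This inductive construction of $u \vee u'$ is the technical heart; the extremality/midpoint argument above is then a short finish.
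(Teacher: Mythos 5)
Your overall strategy --- realize $z := v(u;p,q) = v(u';p,q)$ as the average of two distinct points of $\{v(x;p,q) \mid x\in I(p,q)\}$ and contradict extremality --- is the right one, and your computations for $m = u\wedge u'$ (via Lemma~\ref{lem:I(p,q)}~(4) and the modular equality) are correct. But there is a genuine gap at the step you yourself flag as ``the technical heart'': the existence of $w = \alpha\vee\beta$, equivalently of $u\vee u'$. Your proposed induction on $d(u,u')$ along a shortest $u$--$u'$ path in $I(p,q)$ cannot work as sketched, because the intermediate vertices of such a path are arbitrary elements of $I(p,q)$, and the statement ``two elements of $I(p,q)$ at distance less than $k$ have a join'' is simply false: already $p$ and $q$ themselves lie in $I(p,q)$ and need not be bounded. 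Moreover, a shortest path between incomparable $u,u'$ first descends toward $u\wedge u'$, so knowing that a lower neighbour $u''\prec u$ satisfies ``$u''\vee u'$ exists'' gives no information about $u\vee u'$; and the special hypotheses ($u,u'\in{\cal E}(p,q)$ with equal images) are not inherited by the intermediate pairs, so there is no usable inductive statement. In effect the existence of $u\vee u'$ is exactly as hard as the lemma itself --- it follows once you know $u=u'$, but I see no independent route to it.

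The paper sidesteps this by choosing a different pair of points averaging to $z$. With $a,b,a',b'$ as in your notation, it forms $\eta := (a\vee a')\vee(b\wedge b')$ and $\xi := (a\wedge a')\vee(b\vee b')$. These exist by the triple-join axiom for modular semilattices, because all three pairwise joins are available: $(a,a')$ is bounded by $p$, $(a,\,b\wedge b')$ by $u$, and $(a',\,b\wedge b')$ by $u'$ (symmetrically for $\xi$); no ``cross join'' such as $a\vee b'$ is ever needed. By Lemma~\ref{lem:I(p,q)} both lie in $I(p,q)$, with $v(\eta;p,q) = z+(\delta_1,-\delta_2)$ and $v(\xi;p,q) = z+(-\delta_1,\delta_2)$, which average to $z$ and differ from $z$ whenever $\delta\neq 0$ --- the same contradiction you were aiming for, with the hard existence question dissolved. (The paper in fact proves the sharper Lemma~\ref{lem:extreme}, of which injectivity is the ``in particular'' case, by exactly this device.) If you replace your pair $(m,w)$ by $(\eta,\xi)$, the rest of your argument goes through.
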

This lemma will be proved in Section~\ref{subsub:unique}.
Hence the map $u \mapsto v (u; p,q)$ is a bijection 
between ${\cal E}({p,q})$ 
and the set of maximal extreme points of $\Conv I(p,q)$.

\paragraph{Valuation of convex cones in $\RR^2_+$.}
To define the coefficient, we consider a valuation of convex cones in $\RR^2_+$.
Every closed convex cone $C (\neq \{0\})$ in $\RR^2_{+}$ 
is uniquely represented as
\[
C = \{ (x,y) \in \RR^{2}_{+} \mid 
- x \sin \alpha + y \cos \alpha \leq 0,\ 
- x \sin \beta  + y \cos \beta  \geq 0 \} 
\]
for some $0 \leq \beta \leq \alpha \leq \pi/2$.
Define $[C]$ by
\begin{equation}\label{eqn:nu(C)}
[C] :=  \frac{\sin \alpha}{ \cos \alpha + \sin \alpha } 
- \frac{\sin \beta }{ \cos \beta + \sin \beta }.
\end{equation}
For convention, we let
$
[\{0\}] := 0. 
$
The following property is easily verified, 
where $C,C'$ are closed convex cones in $\RR^2_+$:
\begin{myitem1}
\item[(1)] $[C] \geq 0$, and $[C] > 0$ if and only if $C$ is full dimensional.
\item[(2)] $[C] + [C'] = [C \cap C'] + [C \cup C']$ 
for $C \cap C' \neq \emptyset$.
\item[(3)] $[\RR^2_+] = 1$. \label{eqn:valuation}
\end{myitem1}\noindent
We are now ready to define the fractional join.
\paragraph{Fractional join.}
%
For $u \in I(p,q)$, 
let $C(u;p,q)$ denote the set of 
nonnegative vectors $w \in \RR^2_{+}$
with $\langle w, v (u;p,q) \rangle  
= \max_{u' \in  I(p,q)} \langle w, v (u';p,q) \rangle$, 
where $\langle , \rangle$ is the standard inner product.
Namely $C(u;p,q)$ is the intersection of $\RR^2_+$ and 
the normal cone of $\Conv I(p,q)$ at extreme point $v (u;p,q)$.
In particular, $C(u;p,q)$ forms 
a closed convex cone in $\RR^2_{+}$.
The {\em fractional join} of $(p,q)$ is the formal convex combination 
of $u \in {\cal E}(p,q)$ with coefficient $[C(u;p,q)]$:
\[
\sum_{u \in {\cal E}(p,q)} [C(u;p,q)] u.
\]
Obviously it holds $[C(u;p,q)] = 0$ for $u \in I(p,q) \setminus {\cal E}(p,q)$  
since $C(u;p,q) = \{0\}$. So the fractional join is also equal to 
$\sum_{u \in I(p,q)} [C(u;p,q)] u$.
Note that the set ${\cal E}(p,q)$ and the coefficient $[C(u_i;p,q)]$
depend on valuation $v$.
Since the set of cones $C(u;p,q)$ $(u \in {\cal E}(p,q))$ forms 
the intersection of $\RR^2_+$ and
the normal fan of $\Conv I(p,q)$, we have
\begin{myitem1}
\item[(1)] $\RR^2_{+}  =  \bigcup_{u \in {\cal E}(p,q)} C(u; p,q)$. 
\item[(2)] For distinct $u, u' \in {\cal E}(p,q)$, 
the intersection $C(u;p,q) \cap C(u'; p,q)$ has no interior point, 
and hence $[C(u;p,q) \cap C(u'; p,q)] = 0$.
\item[(3)] $\bigcup_{u \in {\cal E}(p,q)} [C(u; p,q)] = 1$. \label{eqn:normal_fan}
\end{myitem1}\noindent
Therefore the fractional join of $p,q$ 
is a formal convex combination of elements in ${\cal E}(p,q)$.
An explicit formula of $[C(u;p,q)]$ is given as follows.
\begin{Lem}\label{lem:[C]}
Suppose that ${\cal E}(p,q) = \{p = u_0,u_1,u_2,\ldots,u_m = q\}$,  
and $v(u_{i};p,q)$ and $v(u_{i+1};p,q)$ are adjacent extreme points in 
$\Conv I(p,q)$. Then we have 
\[
[C(u_i;p,q)] = \delta_i - \delta_{i-1} \quad (i=0,1,2,\ldots,m),
\]
where $\delta_i$ is defined by $\delta_{-1} := 0$, $\delta_m := 1$, and
\[
\delta_i := \frac{v[u_i \wedge u_{i+1}, u_i]}{v[u_i \wedge u_{i+1}, u_i] + v[u_i \wedge u_{i+1}, u_{i+1}]} \quad (i=1,2,\ldots,m-1).
\]
\end{Lem}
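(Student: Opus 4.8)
The plan is to reduce the claim to a direct computation on the planar convex polygon $\Conv I(p,q)$ together with the definition of $[C]$ in (\ref{eqn:nu(C)}). By Lemma~\ref{lem:unique} the map $u \mapsto v(u;p,q)$ is a bijection from ${\cal E}(p,q)$ onto the set of maximal extreme points of $\Conv I(p,q)$, and these are naturally ordered along the ``upper-right'' boundary of the polygon from $v(p;p,q)=(v[p\wedge q,p],0)$ to $v(q;p,q)=(0,v[p\wedge q,q])$. So I may label them $z_i := v(u_i;p,q)$ with $z_0,z_1,\ldots,z_m$ occurring consecutively, $z_i$ adjacent to $z_{i+1}$. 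The cone $C(u_i;p,q)$ is the intersection of $\RR^2_+$ with the normal cone of $\Conv I(p,q)$ at $z_i$; for an interior vertex $z_i$ $(1\le i\le m-1)$ this normal cone is bounded by the outer normals of the two incident edges $[z_{i-1},z_i]$ and $[z_i,z_{i+1}]$.

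\textbf{Computing $[C(u_i;p,q)]$ via edge normals.}
First I would fix notation for the edge directions. The edge from $z_i$ to $z_{i+1}$ has direction $z_{i+1}-z_i = (v[u_i\wedge u_{i+1}, u_{i+1}] - \text{(something)}, \ldots)$; more usefully, using Lemma~\ref{lem:I(p,q)}~(3) the displacement along the boundary edge $[z_i,z_{i+1}]$ is governed by the meet $u_i \wedge u_{i+1}$, and the outward normal to this edge makes an angle $\theta_i$ with the horizontal axis where $\tan\theta_i = v[u_i\wedge u_{i+1},u_i] / v[u_i\wedge u_{i+1},u_{i+1}]$. Observe that the quantity $\sin\theta/(\cos\theta+\sin\theta)$ appearing in (\ref{eqn:nu(C)}) equals $\tan\theta/(1+\tan\theta)$, which for the edge-normal angle $\theta_i$ is exactly
\[
\frac{v[u_i\wedge u_{i+1},u_i]}{v[u_i\wedge u_{i+1},u_i] + v[u_i\wedge u_{i+1},u_{i+1}]} = \delta_i.
\]
Thus $\delta_i$ is precisely the value $\sin\theta_i/(\cos\theta_i+\sin\theta_i)$ attached to the normal direction of the edge $[z_i,z_{i+1}]$. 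Since $C(u_i;p,q)$ is the cone spanned by the two edge-normals at $z_i$, whose angles are exactly those defining $\delta_{i-1}$ and $\delta_i$, formula (\ref{eqn:nu(C)}) gives directly $[C(u_i;p,q)] = \delta_i - \delta_{i-1}$ for interior $i$.

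\textbf{Boundary cases and orientation.}
Next I would handle the endpoints $i=0$ and $i=m$. At $z_0 = v(p;p,q)$ the polygon has the horizontal edge $[v(p\wedge q;p,q),v(p;p,q)]$ below it, so the normal cone is bounded on one side by the downward/axis direction, giving $\beta=0$ and hence $\delta_{-1}=0$; this matches the convention $\delta_{-1}:=0$ and yields $[C(u_0;p,q)] = \delta_0 - 0 = \delta_0$. Symmetrically at $z_m = v(q;p,q)$ the vertical edge forces $\alpha=\pi/2$, so that term equals $1$, giving $\delta_m := 1$ and $[C(u_m;p,q)] = 1 - \delta_{m-1}$. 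The main thing to verify carefully is the bookkeeping that the $\alpha,\beta$ in (\ref{eqn:nu(C)}) for the cone at $z_i$ are, respectively, the normal angle of the edge $[z_i,z_{i+1}]$ (the more counterclockwise bounding ray, giving $\delta_i$) and the normal angle of $[z_{i-1},z_i]$ (the clockwise ray, giving $\delta_{i-1}$), so that the subtraction structure of $[C]$ telescopes correctly; the consistency check is that $\sum_i [C(u_i;p,q)] = \delta_m - \delta_{-1} = 1$, exactly matching (\ref{eqn:normal_fan})~(3). The one genuine obstacle I anticipate is confirming the identification of the edge-displacement angle with $\arctan$ of the valuation ratio: this requires invoking Lemma~\ref{lem:I(p,q)}~(2)--(3) to show that moving from $u_i$ to the adjacent envelope element $u_{i+1}$ changes the $p$-coordinate by $v[u_i\wedge u_{i+1},u_i]$ and the $q$-coordinate by $-v[u_i\wedge u_{i+1},u_{i+1}]$ (up to the sign and slope convention of the outer normal), so that the edge slope, and therefore its normal angle, is exactly as claimed. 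Once this geometric identification is pinned down, the formula is immediate from the explicit expression for $[C]$.
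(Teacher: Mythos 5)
Your proposal is correct and follows essentially the same route as the paper: identify the normal angle of each boundary edge of $\Conv I(p,q)$, observe that $\sin\theta/(\sin\theta+\cos\theta)=\tan\theta/(1+\tan\theta)$ equals the valuation ratio $\delta_i$, and read off $[C(u_i;p,q)]=\delta_i-\delta_{i-1}$ from the definition of $[\cdot]$, with the axes supplying the boundary cases. The one step you defer --- that the displacement from $v(u_i;p,q)$ to $v(u_{i+1};p,q)$ equals $(-v[u_i\wedge u_{i+1},u_i],\,v[u_i\wedge u_{i+1},u_{i+1}])$ --- is exactly what the paper establishes as Lemma~\ref{lem:antipodal_u_i-u_j}~(2) (a translation identity $v(u;p,q)=v(u;s,t)+v(s\wedge t;p,q)$ derived from Lemma~\ref{lem:I(p,q)}), so your plan to obtain it from Lemma~\ref{lem:I(p,q)}~(2)--(3) is the right one.
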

This lemma will be proved in Section~\ref{subsub:unique}.
We next define the {\em fractional join operation}. 
Let ${\cal O}({\cal L})$ denote the set 
of all binary operations ${\vartheta}: {\cal L} \times {\cal L} \to {\cal L}$.
Regard ${\cal O}({\cal L})$ as a poset by the order: $\vartheta \preceq \vartheta'$ 
if $\vartheta(p,q) \preceq \vartheta'(p,q)$ for all $p,q$.
Then ${\cal O}({\cal L})$ is isomorphic to the product 
${\cal L}^{{\cal L} \times {\cal L}}$ of ${\cal L}$ by the correspondence:
\begin{equation}\label{eqn:correspondence}
{\cal O}({\cal L}) \ni \vartheta \longleftrightarrow  (\vartheta(p,q): p,q \in {\cal L}) \in {\cal L}^{{\cal L} \times {\cal L}}.
\end{equation}
In particular ${\cal O}({\cal L})$ is also a modular semilattice, where 
the meet $\wedge$ is
given by $(\vartheta \wedge \vartheta' )(p,q) := \vartheta (p,q) \wedge \vartheta' (p,q)$.
The valuation of ${\cal O}({\cal L})$ is given according to (\ref{eqn:valuation_sum}), and is also denoted by $v$.
Let $L, R$ be the {\em projection operations} defined by
\begin{eqnarray*}
L(p,q) := p, \quad R(p,q) := q \quad (p,q \in {\cal L}).
\end{eqnarray*}
Let $I({\cal L}) := I(L,R)$, and
let ${\cal E}({\cal L})$ 
denote the $(L,R)$-envelope ${\cal E}({L,R})$ in ${\cal O}({\cal L})$.
An operation in ${\cal E}({\cal L})$ is called {\em extremal}.
For an operation $\vartheta$ in $I({\cal L})$, 
the cone $C(\vartheta; L,R)$ is denoted simply by $C({\vartheta})$.
The {\em fractional join operation} is the formal sum of extremal operations $\vartheta$ 
with coefficient $[C(\vartheta)]$:
\[
\sum_{\vartheta \in {\cal E}({\cal L})} [C(\vartheta)] \vartheta.
\]
The fractional join operation is nothing but the fractional join of $L,R$ in ${\cal O}({\cal L})$, and
indeed gives fractional joins in ${\cal L}$.
\begin{Prop}\label{prop:frac_join}
$\displaystyle
\sum_{u \in {\cal E}(p,q)} [C(u;p,q)] u = \sum_{\vartheta \in {\cal E}({\cal L})} [C(\vartheta)] \vartheta (p,q) \quad (p,q \in {\cal L}).
$
\end{Prop}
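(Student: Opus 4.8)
The plan is to recognize the right-hand side as literally the fractional join of the pair $(L,R)$ computed inside the modular semilattice ${\cal O}({\cal L})$, and then to exploit the pointwise (product) structure of ${\cal O}({\cal L}) \cong {\cal L}^{{\cal L}\times{\cal L}}$ to match the two sides coordinate by coordinate. The conceptual backbone is that $\Conv I(L,R)$ in ${\cal O}({\cal L})$ is a \emph{Minkowski sum} of the polygons $\Conv I(p,q)$, and that the coefficient functional $[\,\cdot\,]$ behaves like an additive measure on the space of directions in $\RR^2_+$.

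First I would establish the product description of the metric interval $I(L,R)$. Since meets and joins in ${\cal O}({\cal L})$ are taken pointwise, applying Lemma~\ref{lem:I(p,q)}~(2),(3) in each coordinate shows that an operation $\vartheta$ lies in $I(L,R)$ if and only if $\vartheta(p,q) \in I(p,q)$ for every $(p,q)$, and that $\vartheta \wedge L$, $\vartheta \wedge R$ are given coordinatewise by $\vartheta(p,q)\wedge p$ and $\vartheta(p,q)\wedge q$. Because the valuation of ${\cal O}({\cal L})$ is the sum of the coordinate valuations (the product rule \eqref{eqn:valuation_sum}), this yields the key additivity
\[
v(\vartheta; L,R) \;=\; \sum_{(p,q)\in{\cal L}\times{\cal L}} v(\vartheta(p,q); p,q) \;\in\; \RR^2_+,
\]
where the first coordinate equals $\sum_{(p,q)} v[p\wedge q,\ \vartheta(p,q)\wedge p]$ and similarly for the second. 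As the operations $\vartheta\in I(L,R)$ range independently over the $I(p,q)$ in each coordinate, it follows that $\Conv I(L,R)$ is the Minkowski sum $\sum_{(p,q)} \Conv I(p,q)$ in $\RR^2_+$.

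Next I would use the normal-fan description of Minkowski sums. The normal fan of $\Conv I(L,R)$ (intersected with $\RR^2_+$), whose maximal cones are the $C(\vartheta)$ for $\vartheta \in {\cal E}({\cal L})$, is the common refinement of the normal fans of the summands; in particular it refines the fan $\{C(u;p,q)\}_{u\in{\cal E}(p,q)}$ of any single coordinate. Concretely, for a full-dimensional cone $C(\vartheta)$ (so that $[C(\vartheta)]>0$) and any $w$ in its interior, $w$ maximizes $\langle w,v(\cdot;L,R)\rangle$ uniquely at $\vartheta$ (using Lemma~\ref{lem:unique} for ${\cal O}({\cal L})$); by the additivity above this maximum is attained coordinatewise, so $\vartheta(p,q)$ maximizes $\langle w,v(\cdot;p,q)\rangle$ over $I(p,q)$. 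Hence $\vartheta(p,q)\in{\cal E}(p,q)$ and $C(\vartheta)\subseteq C(\vartheta(p,q);p,q)$ for every coordinate. Fixing a pair $(p,q)$, the cones $C(\vartheta)$ with $\vartheta(p,q)=u$ thus tile $C(u;p,q)$, and since $[\,\cdot\,]$ is additive over fan decompositions—this follows from property \eqref{eqn:valuation}~(2), as adjacent cones meet in a measure-zero ray, and is made transparent by viewing $[\,\cdot\,]$ as the probability measure on angles $\phi\in[0,\pi/2]$ with distribution $\phi\mapsto \sin\phi/(\cos\phi+\sin\phi)$—we obtain $[C(u;p,q)] = \sum_{\vartheta\in{\cal E}({\cal L}):\,\vartheta(p,q)=u}[C(\vartheta)]$, using \eqref{eqn:normal_fan}.

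Finally I would regroup the right-hand sum of the proposition by the value $\vartheta(p,q)$: terms with $[C(\vartheta)]=0$ drop out, the remaining $\vartheta$ satisfy $\vartheta(p,q)\in{\cal E}(p,q)$, and collecting those with a common value $u$ and applying the displayed identity gives exactly $\sum_{u\in{\cal E}(p,q)}[C(u;p,q)]\,u$, the left-hand side. The routine parts (the product description of $I(L,R)$ and the valuation additivity) are immediate from Lemma~\ref{lem:I(p,q)} and \eqref{eqn:valuation_sum}; the step I expect to require the most care is the normal-fan argument of the third paragraph, namely justifying that the fan $\{C(\vartheta)\}$ refines each coordinate fan $\{C(u;p,q)\}$ and that $[\,\cdot\,]$ is additive across this refinement, so that the cone coefficients aggregate correctly.
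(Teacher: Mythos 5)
Your proposal is correct and follows essentially the same route as the paper's proof: realizing $\Conv I(L,R)$ as the Minkowski sum of the polygons $\Conv I(p,q)$, deducing that each extremal operation evaluates into ${\cal E}(p,q)$ with $C(\vartheta)\subseteq C(\vartheta(p,q);p,q)$, and aggregating the cone coefficients via the tiling $C(u;p,q)=\bigcup_{\vartheta:\vartheta(p,q)=u}C(\vartheta)$ together with the additivity of $[\,\cdot\,]$. The only cosmetic difference is that the paper states the coordinatewise maximization directly as the equality $C(u;p,q)=\bigcap_i C(u_i;p_i,q_i)$ from the unique Minkowski decomposition of extreme points, whereas you argue it from interior points of full-dimensional cones; both are adequate.
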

The proof is given in Section~\ref{subsub:frac_join}.
Consider the case where
${\cal L}$ is the product of modular semilattices ${\cal L}_i$ for $i=1,2,\ldots,n$.
For operations $\vartheta_i$ on ${\cal L}_i$ $(i=1,2,\ldots,n)$, 
the {\em componentwise extension} $(\vartheta_1,\vartheta_2,\ldots,\vartheta_n)$ 
is an operation 
on ${\cal L}$ defined by 
\begin{eqnarray*}
(\vartheta_1,\vartheta_2,\ldots,\vartheta_n)(p,q) &= &
(\vartheta_1(p_1,q_1),\vartheta_2(p_2,q_2),\ldots,\vartheta_n (p_n,q_n)) \\
&& \quad \quad \quad (p,q \in {\cal L} = {\cal L}_1 \times {\cal L}_2 \times \cdots \times {\cal L}_n).
\end{eqnarray*}
\begin{Prop}\label{prop:frac_join_prod} 
\[
\sum_{\vartheta \in {\cal E}({\cal L})} [C(\vartheta)] \vartheta =
\sum_{\vartheta_1,\vartheta_2,\ldots,\vartheta_n}  [C(\vartheta_1) \cap C(\vartheta_2) \cap \cdots \cap C(\vartheta_n)] (\vartheta_1,\vartheta_2,\ldots, \vartheta_n),
\]
where $\vartheta_i$ is taken over all extremal operations in ${\cal L}_i$ $(i=1,2,\ldots,n)$.
Moreover, 
if ${\cal L}_i = {\cal L}_j$ and $\vartheta_i \neq \vartheta_j$ for some $i,j$, 
then  $[C(\vartheta_1) \cap C(\vartheta_2) \cap \cdots \cap C(\vartheta_n)] = 0$.
\end{Prop}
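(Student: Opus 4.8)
The plan is to exploit the fact that the convex polygon $\Conv I(L,R) \subseteq \RR^2_+$ governing the fractional join operation on ${\cal L}$ is a Minkowski sum whose normal fan decouples over the factors ${\cal L}_i$. First I would record an additive decomposition of the defining vectors. Since ${\cal O}({\cal L}) \cong {\cal L}^{{\cal L}\times{\cal L}}$ carries the sum valuation (see (\ref{eqn:valuation_sum})), and since for $\vartheta \in I(L,R)$ one has $\vartheta(p,q)\in I(p,q)$ with $(\vartheta\wedge L)(p,q)=\vartheta(p,q)\wedge p \in [p\wedge q,p]$ and $(\vartheta\wedge R)(p,q)\in[p\wedge q,q]$ by Lemma~\ref{lem:I(p,q)}(3), a direct computation gives
\[
v(\vartheta; L,R) = \sum_{(p,q)\in{\cal L}\times{\cal L}} v(\vartheta(p,q); p,q).
\]
Because the valuation on ${\cal L}=\prod_i{\cal L}_i$ is the sum of the component valuations and $I(p,q)=\prod_i I(p_i,q_i)$ (the interval in a product splits, as $d$ is additive over factors), each per-pair vector splits further as $v(u;p,q)=\sum_i v(u_i;p_i,q_i)$. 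Hence $\Conv I(L,R)$ is the Minkowski sum of the polygons $\Conv I(p_i,q_i)$ over all pairs $(p,q)$ and all $i$; equivalently it is a Minkowski sum of \emph{positive} multiples of the $\Conv I(L_i,R_i)$.

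The heart of the argument is the decoupling of linear maximization. For $w\in\RR^2_+$, the operations $\vartheta$ maximizing $\langle w, v(\vartheta;L,R)\rangle$ are obtained by choosing, independently for every pair $(p,q)$ and every coordinate $i$, an element $\vartheta(p,q)_i\in I(p_i,q_i)$ maximizing $\langle w, v(\cdot;p_i,q_i)\rangle$; the independence is exactly the product structure $I(p,q)=\prod_i I(p_i,q_i)$. For generic $w$ this maximizer is unique and its $i$-th coordinate depends only on $(p_i,q_i)$ and $w$. Writing $\vartheta_i$ for the resulting operation on ${\cal L}_i$, the maximizer is the componentwise extension $(\vartheta_1,\ldots,\vartheta_n)$, and each $\vartheta_i$ is precisely the extremal operation on ${\cal L}_i$ with $w$ in the interior of $C(\vartheta_i)=C(\vartheta_i;L_i,R_i)$. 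This shows that every maximal extreme point of $\Conv I(L,R)$, i.e. every $\vartheta\in{\cal E}({\cal L})$ (these being the vertices with full-dimensional normal cone in $\RR^2_+$, by (\ref{eqn:valuation})(1)), is a componentwise extension of extremal operations; that the extension map on tuples is injective (recover $\vartheta_i$ by evaluating at arguments equal to ${\bf 0}$ outside coordinate $i$); and that, conversely, a componentwise extension is extremal exactly when $\bigcap_i C(\vartheta_i)$ is full dimensional.

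It remains to identify the coefficients. The same decoupling shows that $(\vartheta_1,\ldots,\vartheta_n)$ maximizes direction $w$ over $\Conv I(L,R)$ if and only if each $\vartheta_i$ maximizes $w$ over $\Conv I(L_i,R_i)$; since positive Minkowski scalars do not move the maximizing vertex or alter its normal cone, the identity $C((\vartheta_1,\ldots,\vartheta_n)) = \bigcap_i C(\vartheta_i)$ follows, whence $[C((\vartheta_1,\ldots,\vartheta_n))]=[C(\vartheta_1)\cap\cdots\cap C(\vartheta_n)]$. Summing over all tuples, with the terms of lower-dimensional (hence $[\cdot]=0$) intersection contributing nothing on either side, yields the displayed equation. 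The \emph{moreover} clause then follows from the valuation properties (\ref{eqn:valuation})(1)–(2) together with (\ref{eqn:normal_fan})(2): if ${\cal L}_i={\cal L}_j$ with $\vartheta_i\neq\vartheta_j$, then $C(\vartheta_i)$ and $C(\vartheta_j)$ are distinct cones of the \emph{same} normal fan on $\RR^2_+$, so their intersection, and a fortiori $\bigcap_k C(\vartheta_k)$, has empty interior and valuation $0$.

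I expect the main obstacle to be the bookkeeping that makes the decoupling rigorous: one must verify the additive formula $v(\vartheta;L,R)=\sum_{(p,q)}\sum_i v(\vartheta(p,q)_i;p_i,q_i)$ with the correct use of Lemma~\ref{lem:I(p,q)}(3), and then check that the maximizer produced by independent coordinatewise choices really is a \emph{componentwise extension} rather than an arbitrary operation on ${\cal L}$. This is exactly where the product structure $I(p,q)=\prod_i I(p_i,q_i)$ and the observation that the optimal $i$-th coordinate depends only on $(p_i,q_i)$ are essential, and is the step most prone to error.
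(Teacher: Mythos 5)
Your proposal is correct in substance and reaches the same structural conclusions as the paper (Minkowski-sum decomposition of $\Conv I(L,R)$, componentwise decomposition of extremal operations, $C(\vartheta)=\bigcap_i C(\vartheta_i)$), but the key step is argued by a genuinely different technique. Where you prove that an extremal $\vartheta$ is a componentwise extension by picking a generic direction $w$ in the interior of its normal cone and decoupling the linear maximization over the product $I(p,q)=\prod_i I(p_i,q_i)$, the paper uses a midpoint/exchange argument: if $\vartheta_1(p,q)\neq\vartheta_1(p',q')$ for two pairs with equal first components, swapping these two values produces operations $\vartheta',\vartheta''\in I({\cal L})$ with $v(\vartheta;L,R)=\frac{1}{2}\bigl(v(\vartheta';L,R)+v(\vartheta'';L,R)\bigr)$ and $v(\vartheta';L,R)\neq v(\vartheta'';L,R)$ (by Lemma~\ref{lem:unique}), contradicting extremality; the \emph{moreover} clause is handled by the same midpoint trick, whereas you derive it from the normal-fan property (\ref{eqn:normal_fan})~(2). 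Your route buys a cleaner conceptual picture (common refinement of normal fans) and a one-line \emph{moreover} clause; the paper's exchange argument avoids any genericity discussion and directly establishes the stronger remark, stated after the proposition, that \emph{every} extremal operation is a componentwise extension. That stronger remark is the one place where your argument as written is slightly short: ${\cal E}({\cal L})$ consists of all \emph{maximal} extreme points, and these need not have full-dimensional cone $C(\vartheta)\subseteq\RR^2_+$ (e.g.\ $L$ and $R$ themselves, cf.\ the bounded case where $[C(p;p,q)]=0$), so your parenthetical identification of ${\cal E}({\cal L})$ with the vertices of full-dimensional normal cone, and the ``exactly when $\bigcap_i C(\vartheta_i)$ is full dimensional'' characterization, are not quite right. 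This does not affect the displayed identity of formal convex combinations, since such terms carry coefficient $0$ on both sides; and your generic-$w$ argument extends to cover them by taking $w$ in the relative interior of the full normal cone in $\RR^2$ rather than in $\RR^2_+$.
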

The proof is given in Section~\ref{subsub:frac_join_prod}.
In particular, any extremal operation $\vartheta$ in ${\cal L}$ is 
the componentwise extension of extremal operations $\theta_i$ in ${\cal L}_i$ 
for $i=1,2,\ldots,n$.

\subsection{Submodular function}\label{subsec:submodular}
Let ${\cal L}$ be a modular semilattice with valuation $v$.
A function $f: {\cal L} \to \overline{\RR}$ 
is called {\em submodular} on ${\cal L}$ (with respect to $v$)
if it satisfies 
\begin{equation}\label{eqn:gsubmo}
f(p) + f(q) \geq f(p \wedge q) + \sum_{u \in {\cal E}(p,q)} [C(u;p,q)] f(u) \quad ( p, q \in {\cal L}).
\end{equation}
By Proposition~\ref{prop:frac_join},  
a submodular function may also be characterized as a function $f$ satisfying
\begin{equation}\label{eqn:polymor}
f(p) + f(q) \geq f(p \wedge q) + \sum_{\vartheta \in {\cal E}({\cal L})} 
[C(\vartheta)] f(\vartheta(p, q)) 
\quad (p,q \in {\cal L}).
\end{equation}
In the case where a pair $(p,q)$ is bounded, 
the join $(p,q)$ exists, $\Conv I(p,q)$ is a square
of vertices $(0, 0), (0, v[p \wedge q, q]), (v[p \wedge q,p], 0), 
(v[p \wedge q,p], v[p \wedge q, q])$, ${\cal E}(p,q) = \{p, p \vee q, q\}$, and 
the fractional join is equal to $0 p + 1 (p \vee q)  + 0 q = p \vee q$.
See Figure~\ref{fig:bounded_antipodal}.
\begin{figure} 
\begin{center}
\includegraphics[scale=0.35]{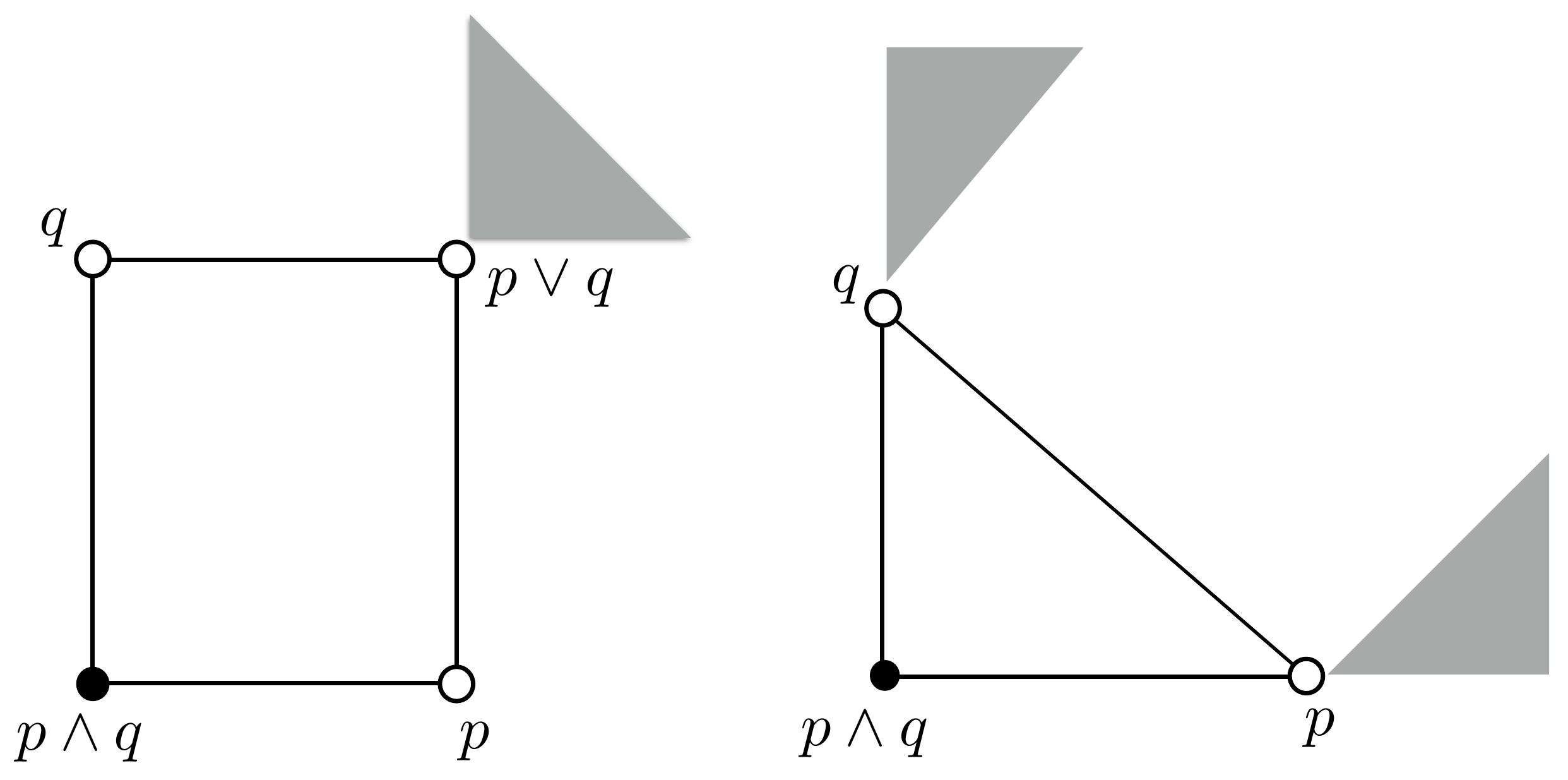}                
\caption{$\Conv I(p,q)$ for a bounded pair (left) and an antipodal pair (right)}  
\label{fig:bounded_antipodal}
\end{center}
\end{figure}
Hence the corresponding inequality in (\ref{eqn:gsubmo}) is equal to  
the usual submodularity inequality:
\begin{equation}\label{eqn:submodular}
f(p) + f(q) \geq f(p \wedge q) + f(p \vee q).
\end{equation}
Another extremal case is the case of ${\cal E}(p,q) = \{p,q\}$,  
which implies that $\Conv I(p,q)$ is the triangle with vertices 
$(0,0), (v[p \wedge q,p],0), (0, v[p \wedge q,q])$.
Such a pair $(p,q)$ is called  {\em antipodal}. 
By definition, a pair $(p,q)$ is antipodal if and only if 
every bounded pair $(a,b)$ with $p \succeq a \succeq p \wedge q \preceq b \preceq q$
satisfies
\begin{equation}\label{eqn:antipodal}
v[a,p]v[b,q] \geq v[p \wedge q, a]v[p \wedge q, b].
\end{equation}
This condition rephrases that 
the point $v(a \vee b; p,q)$  is lower than 
the line through the points $v(p;p,q)$ and $v(q;p,q)$.
In this case, the fractional join of $(p,q)$ 
is equal to 
\[
\frac{v[p \wedge q, p]}{v[p \wedge q,p] + v[p \wedge q, q]} p + 
\frac{v[p \wedge q, q]}{v[p \wedge q,p] + v[p \wedge q, q]} q.
\]
The inequality in (\ref{eqn:gsubmo}) corresponds to
\begin{equation}\label{eqn:^-convexity}
v[p \wedge q,q] f(p) +  v[p \wedge q,p] f(q) \geq 
(v[p \wedge q,p] + v[p \wedge q,q])f(p \wedge q).
\end{equation} 
We call this inequality the {\em $\wedge$-convexity inequality}.
Then inequalities (\ref{eqn:submodular}) and (\ref{eqn:^-convexity}) 
suffice to characterize the submodularity: 
\begin{Thm}\label{thm:submo}
$f:{\cal L} \to \overline{\RR}$ is submodular if and only if it satisfies 
\begin{itemize}
\item [{\rm (1)}] ${\cal E}(p,q) \subseteq \dom f$ for $p,q \in \dom f$,
\item [{\rm (2)}] the submodularity inequality for every bounded pair $(p,q)$, and
\item [{\rm (3)}] the $\wedge$-convexity inequality for every antipodal pair $(p,q)$.
\end{itemize}
\end{Thm}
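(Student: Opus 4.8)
The plan is to treat the two directions very asymmetrically: the forward implication is essentially a restatement of the remarks preceding the theorem, while the converse carries all the content.

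For the forward direction, suppose $f$ is submodular in the sense of (\ref{eqn:gsubmo}). Condition (1) follows because every $u\in{\cal E}(p,q)$ is a maximal extreme point, hence has a full-dimensional cone and positive weight $[C(u;p,q)]>0$; so if $p,q\in\dom f$ the finite left-hand side of (\ref{eqn:gsubmo}) forces $f(p\wedge q)$ and every $f(u)$ with $u\in{\cal E}(p,q)$ to be finite. Condition (2) is exactly the bounded case, where the fractional join collapses to $p\vee q$, and condition (3) is exactly the antipodal case, where it is the displayed convex combination of $p$ and $q$. Thus I would dispose of this direction quickly.

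For the converse, assume (1)--(3) and prove (\ref{eqn:gsubmo}) for every pair $(p,q)$ by induction on $m:=|{\cal E}(p,q)|-1$. Writing ${\cal E}(p,q)=\{p=u_0,u_1,\dots,u_m=q\}$ with $v(u_i;p,q)$ and $v(u_{i+1};p,q)$ adjacent, the base case $m=1$ is precisely the antipodal case, i.e.\ condition (3). For $m\ge 2$ I would \emph{split the interval at the second envelope element $u_1$}. Put $o:=p\wedge q$, $a:=u_1\wedge p$, $b:=u_1\wedge q$, so that Lemma~\ref{lem:I(p,q)} gives $u_1=a\vee b$ and $a\wedge b=o$. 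This yields three facts: (I) the $\wedge$-convexity inequality for the antipodal pair $(p,u_1)$, which in normalized form reads $(1-\delta_0)f(p)+\delta_0 f(u_1)\ge f(a)$; (II) the inequality (\ref{eqn:gsubmo}) for the pair $(u_1,q)$, available from the induction hypothesis since $|{\cal E}(u_1,q)|=m$; and (III) the ordinary submodularity inequality (\ref{eqn:submodular}) for the bounded pair $(a,b)$, whose meet is $o$ and whose join is $u_1$. Adding (I), (II) and (III) cancels the terms $f(a)$ and $f(b)$, and --- with the coefficients supplied by Lemma~\ref{lem:[C]} --- the remaining terms telescope exactly into (\ref{eqn:gsubmo}) for $(p,q)$.

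The combination only makes sense once the combinatorial data entering (I) and (II) are pinned down, and this is where the real work lies. I would isolate it in a \emph{splitting lemma} showing how $\Conv I(p,q)$ restricts to the two subintervals. The key point is that, using Lemma~\ref{lem:I(p,q)} and modularity, passing from the $(p,q)$-coordinates $v(\cdot;p,q)$ to the $(p,u_1)$-coordinates on $I(p,u_1)$, and to the $(u_1,q)$-coordinates on $I(u_1,q)$, is in each case merely a \emph{translation} of the planar picture. Since $[v(u_0;p,q),v(u_1;p,q)]$ is an edge of $\Conv I(p,q)$ and $I(p,u_1)\subseteq I(p,q)$, this translation invariance immediately yields that $(p,u_1)$ is antipodal (its hull is the triangle below that edge; cf.\ (\ref{eqn:antipodal})), giving (I); and it identifies ${\cal E}(u_1,q)$ with $\{u_1,\dots,u_m\}$, leaves the normal cones at $u_2,\dots,u_m$ unchanged so that $[C(u_i;u_1,q)]=[C(u_i;p,q)]$ for $i\ge 2$, and turns the weight at the new corner $u_1$ into $\delta_1$. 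These are exactly the coefficients the telescoping needs.

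The main obstacle is therefore the splitting lemma itself: verifying the valuation identities (for instance $v[a,u\wedge u_1]=v[o,u\wedge q]$ for $u\in I(p,u_1)$, and the symmetric identity on $I(u_1,q)$) that make each coordinate change a genuine translation, and hence that the sub-envelopes and their cone-weights are the expected restrictions of the originals. A secondary, routine preliminary I would dispatch first is finiteness: if $p,q\in\dom f$, then (1) together with the \emph{assumed} inequalities (2)--(3) forces $o,a,b$ and all $u_i$ into $\dom f$ (otherwise an assumed inequality would read ``finite $\ge\infty$''), so that the entire linear combination manipulates only finite quantities.
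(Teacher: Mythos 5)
Your converse argument, once the induction is unrolled, uses exactly the same set of inequalities as the paper's direct telescoping proof --- the submodularity inequalities for the bounded pairs $(u_{i-1}\wedge u_i,\, u_i\wedge q)$ with join $u_i$ and meet $u_{i-1}\wedge q$, and the $\wedge$-convexity inequalities for the consecutive antipodal pairs $(u_{i-1},u_i)$ --- and your ``splitting lemma'' is precisely the paper's Lemma~\ref{lem:antipodal_u_i-u_j} (translation of $v(\cdot\,;s,t)$ against $v(\cdot\,;p,q)$ on sub-intervals, antipodality of adjacent envelope elements), so this is essentially the paper's proof repackaged as an induction. One small slip in the easy direction: it is not true that every $u\in{\cal E}(p,q)$ has $[C(u;p,q)]>0$ (for a bounded pair one has $[C(p;p,q)]=[C(q;p,q)]=0$), but the only envelope elements that can have zero weight are the endpoints $u_0=p$ and $u_m=q$, which lie in $\dom f$ by hypothesis, so condition (1) still follows.
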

The proof is given in Section~\ref{subsub:unique}.
One of the main properties of our submodularity is the following:
\begin{Thm}\label{thm:d_is_submodular}
Let ${\cal L}$ be a modular semilattice.
The distance function $d_{{\cal L}}$ on ${\cal L}$ is submodular on ${\cal L} \times {\cal L}$.
\end{Thm}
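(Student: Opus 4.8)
The plan is to verify submodularity through the characterization in Theorem~\ref{thm:submo}. Writing elements of ${\cal L}\times{\cal L}$ as $P=(p_1,p_2)$ and $Q=(q_1,q_2)$, the function in question is $d(P):=d_{\cal L}(p_1,p_2)$. Since $d$ is finite everywhere, condition~(1) of Theorem~\ref{thm:submo} is automatic, so it suffices to establish (2) the submodularity inequality for bounded pairs and (3) the $\wedge$-convexity inequality for antipodal pairs. The basic tool throughout is the identity $d_{\cal L}(s,t)=v(s)+v(t)-2v(s\wedge t)$, which follows from Lemma~\ref{lem:I(p,q)}~(1); it turns each instance into an inequality purely among valuations, so I may freely use the modular valuation identity~(\ref{eqn:valuation2}) and the monotonicity of $v$.

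For the bounded case, let $P\vee Q=(p_1\vee q_1,p_2\vee q_2)$ exist and put $a:=p_1\wedge p_2$, $b:=q_1\wedge q_2$. Expanding $d(P)+d(Q)-d(P\wedge Q)-d(P\vee Q)$ with the identity above and cancelling the single-variable terms by applying~(\ref{eqn:valuation2}) to $(p_1,q_1)$ and to $(p_2,q_2)$, the inequality reduces to $v\big((p_1\vee q_1)\wedge(p_2\vee q_2)\big)+v(a\wedge b)\ge v(a)+v(b)$. Now $a\preceq p_i\preceq p_i\vee q_i$ and $b\preceq q_i\preceq p_i\vee q_i$ for $i=1,2$, so both $a$ and $b$ lie below $m:=(p_1\vee q_1)\wedge(p_2\vee q_2)$; hence $a\vee b$ exists, $a\vee b\preceq m$, and~(\ref{eqn:valuation2}) together with monotonicity gives $v(a)+v(b)=v(a\vee b)+v(a\wedge b)\le v(m)+v(a\wedge b)$, which is exactly what is needed.

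The antipodal case is the crux. Here a computation of the same kind collapses the $\wedge$-convexity inequality to the single scalar inequality $x\beta+y\alpha\le\alpha\beta$, where $\alpha:=v[P\wedge Q,P]$, $\beta:=v[P\wedge Q,Q]$, and $x:=v[a\wedge b,a]$, $y:=v[a\wedge b,b]$ with $a,b$ as above; equivalently $x/\alpha+y/\beta\le1$. Geometrically this says the diagonal-meet data $(x,y)$ must lie on or below the hypotenuse of the triangle $\Conv I(P,Q)$. The structural input I would exploit is that, by the Minkowski-additivity underlying Proposition~\ref{prop:frac_join_prod}, $\Conv I(P,Q)$ is the Minkowski sum $\Conv I(p_1,q_1)+\Conv I(p_2,q_2)$; antipodality of $(P,Q)$ then forces each non-degenerate $(p_i,q_i)$ to be antipodal in ${\cal L}$ with a common hypotenuse slope, i.e.\ $\beta_i/\alpha_i$ independent of $i$, where $\alpha_i:=v[p_i\wedge q_i,p_i]$ and $\beta_i:=v[p_i\wedge q_i,q_i]$ (so $\alpha=\alpha_1+\alpha_2$, $\beta=\beta_1+\beta_2$). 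The antipodal condition~(\ref{eqn:antipodal}) in each coordinate states precisely that every admissible point lies below the corresponding line, and $x/\alpha+y/\beta\le1$ would follow by exhibiting an element $u\in I(P,Q)$ whose image $v(u;P,Q)$ dominates $(x,y)$ coordinatewise and invoking antipodality for that $u$.

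The main obstacle is exactly the construction of this witness $u$ (equivalently, a bounded pair $A\in[P\wedge Q,P]$, $B\in[P\wedge Q,Q]$ with $v[P\wedge Q,A]\ge x$ and $v[P\wedge Q,B]\ge y$). The naive choice $A_i=a\vee(p_i\wedge q_i)$, $B_i=b\vee(p_i\wedge q_i)$ produces the correct valuations but requires $a\vee b$ to exist, and in a genuine semilattice this join can fail precisely in tight antipodal configurations (for instance when $a,b$ are incomparable atoms with no common upper bound, where the inequality holds with equality). Overcoming this is where the modular-semilattice axioms must be used in full: one has to build $u$ componentwise so that the required joins are forced by the three-element join axiom ($a\vee b\vee c\in{\cal L}$ whenever the pairwise joins exist) and by the fact that the cross-coordinate meets $a\wedge q_1$ and $b\wedge p_1$ lie below $p_1\wedge q_1$ and are therefore automatically bounded. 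I expect the cleanest route either performs this delicate componentwise construction or, alternatively, argues by induction on $v[P\wedge Q,P]+v[P\wedge Q,Q]$, peeling off one covering step at a time while maintaining the common-slope normalization supplied by Proposition~\ref{prop:frac_join_prod}.
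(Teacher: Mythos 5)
Your reduction via Theorem~\ref{thm:submo} and the identity $d_{\cal L}(s,t)=v(s)+v(t)-2v(s\wedge t)$ is sound, and your treatment of the bounded case is complete and correct: the cancellation via~(\ref{eqn:valuation2}) and the observation that $a,b\preceq m$ (so that $a\vee b$ exists below $m$) genuinely closes that case. But the antipodal case --- which you yourself call the crux --- is not proved. You correctly reduce it to $x/\alpha+y/\beta\le 1$ and correctly observe that this would follow from antipodality of $(P,Q)$ once one exhibits an element $u\in I(P,Q)$ with $v(u;P,Q)\ge(x,y)$ componentwise; you then show that the natural candidate fails (it needs $a\vee b$ to exist, which it need not), and at that point the argument stops, offering only two speculative strategies. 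Since condition~(\ref{eqn:antipodal}) is a statement about \emph{bounded} pairs only, no conclusion about $(x,y)$ can be drawn without an actual witness, so as written the proof of the $\wedge$-convexity inequality is missing its essential step. A concrete instance of the difficulty: in the three-element semilattice $\{\mathbf{0},s,t\}$ with $s\vee t$ nonexistent, for $P=(s,s)$, $Q=(t,t)$ one has $a=s$, $b=t$, $a\vee b\notin{\cal L}$, and the inequality holds with equality, so the witness must be chosen with care (here $u=(s,t)$ works, obtained by distributing $a$ and $b$ across the two coordinates rather than taking them jointly).

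For comparison, the paper sidesteps exactly this difficulty in two ways. First, it does not verify the antipodal inequality for arbitrary antipodal pairs of the product: Proposition~\ref{prop:2section} shows that for a finite-valued function on a product of two modular semilattices it suffices to check $2$-bounded pairs and antipodal pairs in which \emph{one coordinate is fixed}. Second, for the remaining case (an antipodal pair $(p,q)$ in one factor against a fixed $u$ in the other) the witness is produced by taking a \emph{median} $m$ of $p,q,u$ --- which exists by Theorem~\ref{thm:BVV} --- and decomposing $m=p'\vee q'$ with $p'\in[p\wedge q,p]$, $q'\in[p\wedge q,q]$ via Lemma~\ref{lem:I(p,q)}~(2); antipodality applied to the bounded pair $(p',q')$ then gives exactly the needed inequality. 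If you want to salvage your direct route through Theorem~\ref{thm:submo}, you should either import Proposition~\ref{prop:2section} to shrink the antipodal case to the one-coordinate-fixed situation and then use the median construction, or actually carry out the componentwise witness construction you allude to; as it stands the proposal identifies the right target inequality but does not prove it.
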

We will prove this theorem and a more general version (Theorem~\ref{thm:d_is_L-convex}) 
in Section~\ref{subsub:d_is_L-convex}.
We note some basic properties of our submodular functions concerning 
addition and restriction.
\begin{Lem}\label{lem:properties_submo}
Let ${\cal L}, {\cal L}', {\cal M}$ and ${\cal M}'$ be modular semilattices, and let
$f$ and $f'$ be submodular functions on ${\cal L}$.
\begin{itemize}
\item[{\rm (1)}] For $b \in \RR$ and $c,c' \in \RR_+$,  
$b + c f + c' f'$ is a submodular function on ${\cal L}$.
\item[{\rm (2)}] A function $\tilde f$ defined by 
\[
\tilde f(p,p') := f(p) \quad ( (p,p') \in {\cal L} \times {\cal L}')
\]
is a submodular function on ${\cal L} \times {\cal L}'$.
\item[{\rm (3)}] Suppose ${\cal L} = {\cal M} \times {\cal M}'$. 
For any $p' \in {\cal M}'$, a function $f_{p'}$ defined by 
\[
f_{p'}(p)  :=  f(p,p') \quad (p \in {\cal M})
\] 
is a submodular function on ${\cal M}$.
\item[{\rm (4)}] For a convex set ${\cal N}$ of ${\cal L}$, 
the restriction of $f$ to ${\cal N}$ is a submodular function on~${\cal N}$ 
(regarded as a modular semilattice). 
\end{itemize}
\end{Lem}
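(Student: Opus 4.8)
The plan is to reduce, in each of the four cases, the submodularity inequality~(\ref{eqn:gsubmo}) for the new function to the inequalities already assumed for $f$ (and $f'$). Two facts are used throughout: the envelope ${\cal E}(p,q)$ and the coefficients $[C(u;p,q)]$ depend only on the pair $(p,q)$ and the valuation $v$, not on the function being tested; and these coefficients sum to $1$ over ${\cal E}(p,q)$ by~(\ref{eqn:normal_fan})(3). Part~(1) is then immediate: the constant $b$ contributes $2b$ to the left-hand side of~(\ref{eqn:gsubmo}) and $b + b\sum_{u}[C(u;p,q)] = 2b$ to the right, so it cancels; multiplying~(\ref{eqn:gsubmo}) for $f$ by $c\ge 0$ and for $f'$ by $c'\ge 0$ and adding gives the inequality for $b + cf + c'f'$, since all terms keep the same orientation. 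The domain condition of Theorem~\ref{thm:submo}(1) is inherited because on the domain of the combination every summand with positive weight is finite, so ${\cal E}(p,q)$ lies in the relevant domain by submodularity of $f$ and $f'$.

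For parts~(2) and~(3) I would work with the product form~(\ref{eqn:polymor}) together with Proposition~\ref{prop:frac_join_prod}, which writes every extremal operation on a product as a componentwise extension $(\vartheta_1,\vartheta_2)$ with coefficient $[C(\vartheta_1)\cap C(\vartheta_2)]$. In~(2), evaluating $\tilde f$ at $(\vartheta_1(p,q),\vartheta_2(p',q'))$ yields $f(\vartheta_1(p,q))$, which does not involve $\vartheta_2$; summing the coefficients over all extremal $\vartheta_2$ collapses to $\sum_{\vartheta_2}[C(\vartheta_1)\cap C(\vartheta_2)] = [C(\vartheta_1)]$, because the cones $C(\vartheta_2)$ tile $\RR^2_+$ and $[\cdot]$ is additive over such a tiling by~(\ref{eqn:valuation}). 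After this marginalization, Proposition~\ref{prop:frac_join} identifies $\sum_{\vartheta_1}[C(\vartheta_1)]f(\vartheta_1(p,q))$ with $\sum_{u\in{\cal E}(p,q)}[C(u;p,q)]f(u)$, so the inequality for $\tilde f$ is precisely submodularity of $f$ on ${\cal L}$. Part~(3) applies the same marginalization to the pair $(p,p'),(q,p')$ in ${\cal M}\times{\cal M}'$; here the second arguments coincide, and since every operation in $I({\cal L})$ satisfies $\vartheta(a,a)=a$ (because $I(a,a)=\{a\}$), the second coordinate of every envelope element, and of the meet $(p\wedge q,p')$, equals $p'$. After collapsing the $\vartheta_2$-sum, the inequality for $f$ specializes verbatim to submodularity of $f_{p'}$ on ${\cal M}$.

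For part~(4) the key observation is that all data entering~(\ref{eqn:gsubmo}) are intrinsic to ${\cal N}$. By Lemma~\ref{lem:interval}(1) a convex set ${\cal N}$ is a modular subsemilattice, so $p\wedge q$ is the same in ${\cal N}$ and in ${\cal L}$; by Lemma~\ref{lem:2convexity} it is gated, hence isometric, so $d_{\cal N}=d_{\cal L}|_{\cal N}$ and $I_{\cal N}(p,q)=I_{\cal L}(p,q)\subseteq{\cal N}$ for $p,q\in{\cal N}$. One then checks that $v$ restricts to a valuation on ${\cal N}$: covers in ${\cal N}$ are covers in ${\cal L}$, and a bounded pair in ${\cal N}$ has its join in ${\cal N}$ by~(\ref{eqn:I(pq)=[pq]}) and convexity. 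Consequently the vectors $v(u;p,q)$, the polygon $\Conv I(p,q)$, the envelope ${\cal E}(p,q)$, and the coefficients $[C(u;p,q)]$ coincide whether computed in ${\cal N}$ or in ${\cal L}$, so~(\ref{eqn:gsubmo}) for $f|_{\cal N}$ is literally the inequality for $f$, and the domain condition descends as well.

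The main obstacle I anticipate is the marginalization underlying~(2) and~(3): one must justify $\sum_{\vartheta_2}[C(\vartheta_1)\cap C(\vartheta_2)]=[C(\vartheta_1)]$ cleanly from~(\ref{eqn:valuation}), namely that $[\cdot]$ is additive over the tiling of $\RR^2_+$ given by the normal fan of the second factor, with lower-dimensional overlaps carrying value $0$. A secondary point needing care is the verification in~(4) that $v$ restricts to a valuation on ${\cal N}$, which is exactly what makes the envelope data genuinely intrinsic; the rest is bookkeeping with the formal convex combinations of Propositions~\ref{prop:frac_join} and~\ref{prop:frac_join_prod}.
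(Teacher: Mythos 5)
Your proposal is correct and follows essentially the same route as the paper: part (1) via nonnegative combinations and the fact that the coefficients $[C(u;p,q)]$ sum to $1$, parts (2) and (3) via the product decomposition of extremal operations (Proposition~\ref{prop:frac_join_prod}) together with the marginalization $\sum_{\vartheta'}[C(\vartheta)\cap C(\vartheta')]=[C(\vartheta)]$ from (\ref{eqn:normal_fan}), and part (4) from the invariance of $I(p,q)$ (hence of all envelope data) under restriction to a convex set. The marginalization step you flag as a potential obstacle is exactly how the paper argues, so no gap remains.
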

\begin{proof}
(1) follows from the facts that the submodularity is closed under nonnegative sum, and
that any constant function is submodular (by (\ref{eqn:normal_fan})~(3)).

(2) follows from (\ref{eqn:polymor}), Proposition~\ref{prop:frac_join_prod}, and
\begin{eqnarray*}
&& \tilde f(p,p') + \tilde f(q,q') = f(p) + f(q)  \geq f(p \wedge q) + \sum_{\vartheta \in {\cal E}({\cal L})} [C(\vartheta)] f(\vartheta(p,q)) \\
&& = \tilde f(p \wedge q, p' \wedge q') + \sum_{\vartheta \in {\cal E}({\cal L})} 
\sum_{\vartheta' \in {\cal E}({\cal L}')} [C(\vartheta) \cap C(\vartheta')] \tilde f(\vartheta(p,q), \vartheta'(p',q')),
\end{eqnarray*}
where we use $[C(\vartheta)] = \sum_{\vartheta' \in {\cal E}({\cal L}')}[C(\vartheta) \cap C(\vartheta')]$ 
(obtained from (\ref{eqn:normal_fan})).

(3). Notice that the $(p,p)$-envelope is $\{p\}$, 
and any extremal operation $\vartheta$ is idempotent, i.e., $\vartheta(p,p) = p$.
Thus, by Proposition~\ref{prop:frac_join_prod}, we have
\begin{eqnarray*}
&& f_{p'}(p) + f_{p'}(q) = f(p,p') + f(q,p') \\  
&& \geq f(p \wedge q, p' \wedge p')  
+ \sum_{\vartheta \in {\cal E}({\cal M})} \sum_{\vartheta' \in {\cal E}({\cal M'})} [C(\vartheta) \cap C(\vartheta')] f(\vartheta(p,q), \vartheta'(p',p')) \\
&& = f(p \wedge q, p') + \sum_{\vartheta \in {\cal E}({\cal M})} \sum_{\vartheta' \in {\cal E}({\cal M'})} [C(\vartheta) \cap C(\vartheta')]  f(\vartheta(p,q), p') \\
&& = f_{p'} (p \wedge q) + \sum_{\vartheta \in {\cal E}({\cal M})} [C(\vartheta)] f_{p'}(\vartheta(p,q)). 
\end{eqnarray*}
(4) follows from the fact that for $p,q \in {\cal N}$
the metric interval $I(p,q)$ is the same on ${\cal L}$ and on ${\cal N}$.
\end{proof}

We finally give a useful criterion of the submodularity.
A bounded pair $(p,q)$ in ${\cal L}$ is said to be {\em $2$-bounded}
if $p \vee q$ covers $p$ and $q$ 
(in which case both $p$ and $q$ cover $p \wedge q$).
\begin{Prop}\label{prop:2section}
Suppose that ${\cal L}$ is the product of two modular semilattices ${\cal L}_1 \times {\cal L}_2$.
$f:{\cal L} \to \RR$ is submodular if and only if it satisfies 
\begin{itemize}
\item [{\rm (1)}] the submodularity inequality for every $2$-bounded pair $p,q$, and
\item [{\rm (2)}] the $\wedge$-convexity inequality 
for every pair $(p,q) = ((p_1,p_2), (q_1,q_2))$ 
such that $p_1 = q_1$ and $(p_2,q_2)$ is antipodal in ${\cal L}_2$ or
$p_2 = q_2$ and $(p_1,q_1)$ is antipodal in ${\cal L}_1$.
\end{itemize}
\end{Prop}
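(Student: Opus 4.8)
The plan is to reduce everything to Theorem~\ref{thm:submo}. Since $f:{\cal L}\to\RR$ is real-valued, its domain condition is vacuous, so that theorem tells us $f$ is submodular if and only if the submodularity inequality holds for \emph{every} bounded pair and the $\wedge$-convexity inequality holds for \emph{every} antipodal pair. The ``only if'' direction is then immediate: a submodular $f$ satisfies these two families, and $2$-bounded pairs are bounded while the pairs in (2) are antipodal, so (1) and (2) hold. For ``if'' I would prove two reductions: (a) hypothesis (1) forces the submodularity inequality on all bounded pairs; and (b) hypotheses (1) and (2) together force the $\wedge$-convexity inequality on all antipodal pairs.

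For (a), given a bounded pair $(p,q)$ I would work in the modular lattice $[p\wedge q,p\vee q]$, take maximal chains $(p\wedge q=p_0,\dots,p_k=p)$ and $(p\wedge q=q_0,\dots,q_l=q)$, and form the grid $a_{i,j}:=p_i\vee q_j$ exactly as in the proof of Lemma~\ref{lem:valuation}. There each elementary square has $a_{i,j}=a_{i+1,j}\wedge a_{i,j+1}$ and $a_{i+1,j+1}=a_{i+1,j}\vee a_{i,j+1}$ covering both, so $(a_{i+1,j},a_{i,j+1})$ is a $2$-bounded pair. Applying hypothesis (1) to every square and summing, the interior terms telescope and leave precisely $f(p)+f(q)\ge f(p\wedge q)+f(p\vee q)$. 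This step uses nothing about the product structure.

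For (b), the key observation is that in the product one has $I(p,q)=I(p_1,q_1)\times I(p_2,q_2)$ together with $v(u;p,q)=v(u_1;p_1,q_1)+v(u_2;p_2,q_2)$, so $\Conv I(p,q)$ is the Minkowski sum $\Conv I(p_1,q_1)+\Conv I(p_2,q_2)$ of two planar polygons in $\RR^2_+$. A pair is antipodal exactly when this hull is the triangle with its two legs on the axes, and a Minkowski sum of two such polygons is a triangle if and only if each summand is itself a triangle (i.e.\ each factor pair is antipodal) and their hypotenuses are parallel. If one of the coordinates agrees, $(p,q)$ is already one of the special antipodal pairs in (2) and there is nothing to do. Otherwise, writing $m_i:=p_i\wedge q_i$ and $\alpha:=v[m_1,p_1]$, $\beta:=v[m_1,q_1]$, $\gamma:=v[m_2,p_2]$, $\delta:=v[m_2,q_2]$ (all positive), the parallel-hypotenuse condition is the identity $\alpha\delta=\beta\gamma$.

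I would then derive the target inequality $(\beta+\delta)f(p)+(\alpha+\gamma)f(q)\ge(\alpha+\beta+\gamma+\delta)f(p\wedge q)$ as an explicit positive combination routed through the corner $w:=(q_1,p_2)$. Concretely I combine the $\wedge$-convexity of the pair $((p_1,p_2),(q_1,p_2))$ (second coordinate fixed, $(p_1,q_1)$ antipodal) and of the pair $((q_1,p_2),(q_1,q_2))$ (first coordinate fixed, $(p_2,q_2)$ antipodal), both supplied by (2), with the submodularity inequality for the bounded pair $((q_1,m_2),(m_1,p_2))$, whose meet is $p\wedge q$ and whose join is $w$, supplied by (a). Taken with multipliers $(\beta+\delta)/\beta$, $(\alpha+\gamma)/\gamma$, and $\alpha+\beta+\gamma+\delta$, all the intermediate values $f(w)$, $f(q_1,m_2)$, $f(m_1,p_2)$ cancel, precisely because $\alpha\delta=\beta\gamma$, leaving the desired inequality. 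The main obstacle is exactly this part (b): both recognizing the Minkowski-sum/parallel-hypotenuse description of antipodal pairs in a product, and finding the correct nonnegative combination, whose cancellation is what pins down the slope-matching identity $\alpha\delta=\beta\gamma$. Part (a) is routine once the grid of Lemma~\ref{lem:valuation} is available.
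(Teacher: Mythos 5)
Your proposal is correct and follows essentially the same route as the paper: reduce to Theorem~\ref{thm:submo}, telescope over the grid $a_{i,j}=p_i\vee q_j$ to pass from $2$-bounded to all bounded pairs, and for a general antipodal pair use the Minkowski-sum identity $\Conv I(p,q)=\Conv I(p_1,q_1)+\Conv I(p_2,q_2)$ to see that each factor pair is antipodal with matching normal cones, then combine the two coordinate-wise $\wedge$-convexity inequalities through the corner $(q_1,p_2)$ with the submodularity of the bounded pair $((q_1,p_2\wedge q_2),(p_1\wedge q_1,p_2))$. Your explicit multipliers $(\beta+\delta)/\beta$, $(\alpha+\gamma)/\gamma$, $\alpha+\beta+\gamma+\delta$ are exactly the paper's combination written in unnormalized form (the paper uses $[C_p]+[C_q]=1$, which encodes your slope condition $\alpha\delta=\beta\gamma$), so the two arguments coincide.
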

The proof is given in Section~\ref{subsub:2section}.
Notice that this criterion does not work when $f$ has infinite values.

\paragraph{Minimizing a sum of submodular functions 
with bounded arity.}

Here we consider the problem of minimizing 
submodular function $f$ on the product of modular semilattices ${\cal L}_1, {\cal L}_2, \ldots, {\cal L}_n$, 
where the input of the problem is 
${\cal L}_1, {\cal L}_2, \ldots, {\cal L}_n$ 
and an evaluating oracle of $f$.
In the case where 
each ${\cal L}_i$ is a lattice of rank $1$, 
this problem is 
the submodular set function minimization in the ordinary sense, 
and can be solved in polynomial time~\cite{GLS, IFF, Schrijver}.
However, we do not know whether this problem in general is 
polynomial time solvable or not.
One notable result in this direction, 
due to Kuivinen~\cite{Kuivinen},
is that 
if each ${\cal L}_i$ is a complemented 
modular lattice of rank $2$ (a {\em diamond lattice}), 
then this problem has a {\em good characterization.}

So we restrict our investigation to the problem of minimizing a sum of 
submodular functions with bounded arity, 
i.e., valued CSP for submodular functions.
See Section~\ref{subsec:VCSP} for notions in valued CSP.
Let ${\cal L} := {\cal L}_1 \times {\cal L}_2 \times \cdots \times {\cal L}_n$.
A constraint $f$ on ${\cal L}$ is called  {\em submodular} if 
$f$ is a submodular function on ${\cal L}_{I_f}$. 
The {\em submodular language} ${\cal S}_{\cal L}$ is 
the set of all submodular constraints on ${\cal L}$.
\begin{Thm}\label{thm:minimized}
Let ${\cal L}$ be the product of 
modular semilattices ${\cal L}_1, {\cal L}_2, \ldots, {\cal L}_n$.
Then {\bf VCSP}$[{\cal S}_{\cal L}]$ can be solved in polynomial time. 
\end{Thm}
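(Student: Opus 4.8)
The plan is to apply the Thapper--\v{Z}ivn\'y criterion (Theorem~\ref{thm:TZ}) by exhibiting, for the submodular language ${\cal S}_{\cal L}$, an explicit fractional polymorphism whose support contains a semilattice operation. Write $D := {\cal L}_1 \times \cdots \times {\cal L}_n = {\cal L}$, and define the fractional operation
\begin{equation*}
\omega := \frac{1}{2}\, \wedge\ +\ \frac{1}{2} \sum_{\vartheta \in {\cal E}({\cal L})} [C(\vartheta)]\, \vartheta,
\end{equation*}
where $\wedge$ is the componentwise meet operation on $D$ and the second term is the fractional join operation of Section~\ref{subsec:frac_join}. First I would check that $\omega$ is a genuine fractional operation: its coefficients are nonnegative, and by (\ref{eqn:normal_fan})~(3) the coefficients of the join part sum to $1$, so the total mass is $\tfrac12 + \tfrac12 = 1$. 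Moreover every operation in the support of $\omega$ is separable: this is clear for $\wedge$, and Proposition~\ref{prop:frac_join_prod} shows that each extremal $\vartheta \in {\cal E}({\cal L})$ is the componentwise extension $(\vartheta_1,\ldots,\vartheta_n)$ of extremal operations on the factors. Finally $\wedge$ is a semilattice operation on $D$ (idempotent, commutative, and associative, being the componentwise meet of meet-semilattices), and it lies in the support of $\omega$ with weight $\tfrac12 > 0$.

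The heart of the argument is to verify that $\omega$ is a fractional polymorphism for ${\cal S}_{\cal L}$, i.e.\ that
\begin{equation*}
\frac{f(x) + f(y)}{2} \geq \sum_{\vartheta} \omega(\vartheta)\, f(\vartheta(x,y)) \quad (x,y \in {\cal L}_{I_f})
\end{equation*}
holds for every submodular constraint $f$ with scope $I_f$. By definition $f$ is a submodular function on the factor product ${\cal L}_{I_f}$, so by (\ref{eqn:polymor}) it already satisfies the corresponding inequality with respect to $\wedge$ and the fractional join operation of ${\cal L}_{I_f}$. It therefore suffices to show that restricting $\omega$ to the coordinates in $I_f$ reproduces exactly this local fractional operation. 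Since each operation in the support is separable, its restriction to $I_f$ depends only on the components indexed by $I_f$; grouping the terms of the join part by their restriction $(\vartheta_{i})_{i \in I_f}$ and using the valuation identity (\ref{eqn:valuation})~(2) together with the fan properties (\ref{eqn:normal_fan}), I would sum out the free coordinates $j \notin I_f$ via $\sum_{\vartheta_j}[C \cap C(\vartheta_j)] = [C]$ for every cone $C \subseteq \RR^2_+$. By Proposition~\ref{prop:frac_join_prod} this collapses the coefficient of $(\vartheta_i)_{i\in I_f}$ to $\big[\bigcap_{i \in I_f} C(\vartheta_i)\big]$, which is precisely its weight in the fractional join operation of ${\cal L}_{I_f}$. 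Hence the restriction of $\omega$ to $I_f$ equals $\tfrac12\, \wedge + \tfrac12(\text{fractional join on } {\cal L}_{I_f})$, and the desired inequality is exactly the submodularity inequality (\ref{eqn:polymor}) for $f$ divided by $2$.

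With $\omega$ established as a fractional polymorphism whose support contains the semilattice operation $\wedge$, Theorem~\ref{thm:TZ} immediately yields that BLP is exact for ${\cal S}_{\cal L}$ and hence that {\bf VCSP}$[{\cal S}_{\cal L}]$ is solvable in polynomial time. (The Thapper--\v{Z}ivn\'y theorem is stated for a common domain, but as noted in the Remark following Theorem~\ref{thm:TZ} the argument applies verbatim to our product $D = \prod_i {\cal L}_i$.) I expect the main obstacle to be the marginalization step of the middle paragraph: one must argue carefully that summing the cone-valuations over the coordinates outside the scope leaves the scope's own valuations unchanged, which is where the additivity (\ref{eqn:valuation})~(2) of $[\cdot]$ over the normal fan, together with the product description in Proposition~\ref{prop:frac_join_prod}, does the essential work.
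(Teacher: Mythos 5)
Your proposal is correct and follows essentially the same route as the paper: the paper's proof of Theorem~\ref{thm:minimized} defines exactly the fractional operation $\omega = \frac{1}{2}\wedge + \frac{1}{2}\sum_{\vartheta \in {\cal E}({\cal L})}[C(\vartheta)]\vartheta$, invokes Proposition~\ref{prop:frac_join_prod} for separability and (\ref{eqn:normal_fan})~(3) for the total mass, and concludes via Theorem~\ref{thm:TZ}. Your middle paragraph merely makes explicit the marginalization over coordinates outside the scope, which the paper leaves implicit in declaring $\omega$ a fractional polymorphism for ${\cal S}_{\cal L}$.
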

Indeed, the submodular language ${\cal S}_{\cal L}$ satisfies 
the Thapper-\v{Z}ivn\'y criterion (Theorem~\ref{thm:TZ}).
Define a fractional operation $\omega$ on ${\cal L}$ by
\begin{equation}\label{eqn:polymorphism_submo}
\omega = \frac{1}{2} \wedge + \frac{1}{2} \sum_{\vartheta \in {\cal E}({\cal L})} [C(\vartheta)] \vartheta.
\end{equation}
By Proposition~\ref{prop:frac_join_prod}, 
an extremal operation is the componentwise 
extension of operations in ${\cal L}_i$, and is separable.
Also, by (\ref{eqn:normal_fan})~(3),
the total sum of coefficients $[C(\vartheta)]$ is equal to $1$.
Therefore $\omega$ is a fractional polymorphism 
for the submodular language ${\cal S}_{\cal L}$.
Obviously $\wedge$ is a semilattice operation.
Hence Theorem~\ref{thm:minimized} follows from Theorem~\ref{thm:TZ}.
\begin{figure} 
\begin{center}
\includegraphics[scale=0.45]{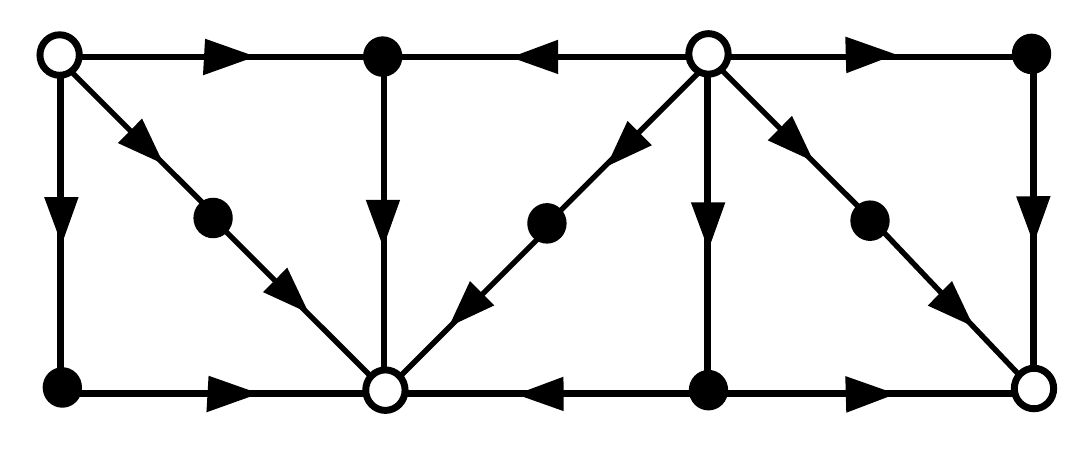}                
\caption{A nonsemilattice orientable modular graph}  
\label{fig:nonsemilattice}
\end{center}
\end{figure}
\begin{Rem}\label{rem:nonsemilattice}
Suppose that $\mGamma$ is the covering graph of a modular semilattice ${\cal L}$.
By Theorem~\ref{thm:d_is_submodular} and Lemma~\ref{lem:properties_submo}, 
$g_j$ and $f_{ij}$ (defined in (\ref{eqn:g_jf_ij})) are submodular constraints on ${\cal L}^n$.
Then {\bf 0-Ext}$[\mGamma]$ is a subclass of {\bf VCSP}$[{\cal S}_{{\cal L}^n}]$.
Therefore, by Theorem~\ref{thm:minimized}, 
{\bf 0-Ext}$[\mGamma]$ can be solved by the basic LP relaxation.
This observation, however, does not get us to the main result (Theorem~\ref{thm:main})
since there is an orientable modular graph that cannot be 
represented as the covering graph of a modular semilattice.
See the graph of Figure~\ref{fig:nonsemilattice},  where there are exactly 
two admissible orientations: one is the reverse of the other, 
and both orientations have two sinks and two sources.
Nevertheless, the basic LP is expected to solve  {\bf 0-Ext}$[\mGamma]$ 
for an arbitrary orientable modular graph $\mGamma$; see Section~\ref{sec:concluding}.
\end{Rem}

\subsection{Proofs}\label{subsec:submo_proof}

\subsubsection{Proof of Lemmas~\ref{lem:unique} and \ref{lem:[C]} 
and Theorem~\ref{thm:submo}}\label{subsub:unique}
Let $(p,q)$ be a pair of elements in modular semilattice ${\cal L}$.
First we prove
a general version of Lemma~\ref{lem:unique}.
\begin{Lem}\label{lem:extreme}
For $s,s' \in {\cal E}(p,q)$, 
if $v(s \wedge p) \geq v(s' \wedge p)$ 
and $v(s \wedge q) \leq v(s' \wedge q)$, then
$s \wedge p \succeq s' \wedge p$ 
and $s \wedge q \preceq s' \wedge q$. 
In particular, $v(s; p,q) = v(s'; p,q)$ implies $s = s'$.
\end{Lem}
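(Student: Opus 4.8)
The plan is to reduce the statement to two scalar equalities and then certify them by comparing $s$ and $s'$ against two explicitly constructed auxiliary elements of $I(p,q)$. Write $a := s\wedge p$, $a' := s'\wedge p \in [p\wedge q,p]$ and $b := s\wedge q$, $b' := s'\wedge q \in [p\wedge q,q]$; since $s,s'\in {\cal E}(p,q)\subseteq I(p,q)$, Lemma~\ref{lem:I(p,q)} gives $s = a\vee b$ and $s' = a'\vee b'$. As $[p\wedge q,p]$ and $[p\wedge q,q]$ are intervals of ${\cal L}$, hence modular lattices, the joins $a\vee a'$, $b\vee b'$ and meets $a\wedge a'$, $b\wedge b'$ all exist. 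I would set $x := v[a,\,a\vee a']$ and $y := v[b',\,b\vee b']$, noting $x\ge 0$ with $x=0$ iff $a'\preceq a$ (i.e. $s\wedge p \succeq s'\wedge p$), and $y\ge 0$ with $y=0$ iff $b\preceq b'$ (i.e. $s\wedge q\preceq s'\wedge q$). Thus the lemma reduces to proving $x=y=0$; the ``in particular'' clause then follows by applying this to $(s,s')$ and to the swapped pair $(s',s)$ and using $s=(s\wedge p)\vee(s\wedge q)$.

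The heart of the argument is the auxiliary pair
\[
u_1 := (a\vee a')\vee(b\wedge b'), \qquad u_2 := (a\wedge a')\vee(b\vee b').
\]
First I would verify that these exist and lie in $I(p,q)$. For $u_2$, the three pairwise joins $(a\wedge a')\vee b$, $(a\wedge a')\vee b'$, $b\vee b'$ are bounded (by $s$, by $s'$, and by $q$), so the modular-semilattice axiom yields the triple join $u_2=(a\wedge a')\vee b\vee b'$; by Lemma~\ref{lem:I(p,q)}(2),(3) it lies in $I(p,q)$ with $u_2\wedge p = a\wedge a'$ and $u_2\wedge q = b\vee b'$, and symmetrically for $u_1$. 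Using the valuation identity (\ref{eqn:valuation2}), namely $v(a\vee a')+v(a\wedge a')=v(a)+v(a')$ on $[p\wedge q,p]$ and its analogue on $[p\wedge q,q]$, I then record the crucial fact
\[
v(u_1;p,q)+v(u_2;p,q) = v(s;p,q)+v(s';p,q),
\]
so the chords $[v(u_1;p,q),v(u_2;p,q)]$ and $[v(s;p,q),v(s';p,q)]$ of $\Conv I(p,q)$ share a midpoint, while the hypotheses $v(a)\ge v(a')$ and $v(b)\le v(b')$ make $v(u_1;p,q)$ the coordinatewise more extreme point toward the lower right and $v(u_2;p,q)$ toward the upper left.

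Finally I would exploit that $v(s;p,q)$ and $v(s';p,q)$ are \emph{maximal} extreme points of $\Conv I(p,q)$. Excluding the degenerate cases $s\in\{p,q\}$ or $s'\in\{p,q\}$ (there the hypotheses force the relevant valuations to coincide and the conclusion is immediate), each of $v(s;p,q)$ and $v(s';p,q)$ admits a strictly positive normal $w$ and $w'$ for which it is the \emph{unique} maximizer of the inner product over $\{v(u;p,q):u\in I(p,q)\}$. Testing $v(s;p,q)$ against $v(u_1;p,q)$ and $v(u_2;p,q)$, and $v(s';p,q)$ against the same two points, converts the midpoint identity into four strict inequalities of the shape $w_1 x < w_2 y$, $w_1 x_2 > w_2 y_3$, $w'_1 x_2 < w'_2 y_3$, $w'_1 x > w'_2 y$ (with $x_2 := v[a\wedge a',a]\ge x$ and $y_3 := v[b,\,b\vee b']\ge y$). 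A short case analysis then closes the argument: if $x>0,\,y=0$ the first inequality reads $w_1 x<0$; if $x=0,\,y>0$ the fourth reads $0>w'_2 y$; and if $x,y>0$ all four hold and force $w_2'/w_1' < x/y \le x_2/y_3$ and $x_2/y_3 < w_2'/w_1'$, a contradiction. Hence $x=y=0$.

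The main obstacle I anticipate is the extreme-point bookkeeping in this last step: one must be certain a \emph{strictly} positive supporting functional with a \emph{unique} maximizer genuinely exists, which fails exactly at $p$ and $q$ (the left end of a horizontal top edge, or the bottom of a vertical right edge of $\Conv I(p,q)$). Isolating these as the only exceptions and checking that the hypotheses make the lemma trivial there is the delicate point; by contrast, the construction of $u_1,u_2$ and the midpoint identity are comparatively routine once the modular-semilattice join axiom and (\ref{eqn:valuation2}) are invoked correctly.
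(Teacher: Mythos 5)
Your proof is correct, and its core is exactly the paper's: you build the same two auxiliary elements ($u_1=(a\vee a')\vee(b\wedge b')$ and $u_2=(a\wedge a')\vee(b\vee b')$, which the paper calls $\eta$ and $\xi$), justify their existence in $I(p,q)$ by the same appeal to the modular-semilattice join axiom and Lemma~\ref{lem:I(p,q)}, and derive the same identity $v(u_1;p,q)+v(u_2;p,q)=v(s;p,q)+v(s';p,q)$ from (\ref{eqn:valuation2}). Where you diverge is the finish. The paper first reduces to the case where $v(s;p,q)$ and $v(s';p,q)$ are equal or \emph{adjacent} extreme points (chaining through intermediate maximal extreme points and using transitivity of $\preceq$); then the segment between them is a face of $\Conv I(p,q)$, so both auxiliary images must lie on it, and the coordinatewise domination $v(a\vee a')\ge v(a)$, $v(b\wedge b')\le v(b)$ pins $v(u_1;p,q)$ to the endpoint $v(s;p,q)$ in two lines. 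You instead handle an arbitrary pair directly via strictly positive supporting functionals with unique maximizers at $v(s;p,q)$ and $v(s';p,q)$, at the cost of a sign case analysis and of isolating the degenerate positions $s,s'\in\{p,q\}$ (which you correctly dispose of, and which are genuinely the only places where such a functional can fail to exist, since $v(s;p,q)=v(p;p,q)$ already forces $s=p$). Both routes work; the paper's face argument is shorter, yours avoids the adjacency reduction. One small repair: in the case $x,y>0$ you assert $x/y\le x_2/y_3$, which does not follow from $x\le x_2$ and $y\le y_3$ alone; it should instead be read off from your first two inequalities as $x/y<w_2/w_1<x_2/y_3$, after which the last two give $x_2/y_3<w_2'/w_1'<x/y$ and the contradiction stands.
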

\begin{proof}
We can assume that $v(p \wedge q) = 0$ by adding a constant to $v$
(for notational simplicity).
It suffices to consider the case where 
$v(s; p,q) = v(s';p,q)$ or
$v(s; p,q)$ and $v(s';p,q)$ are adjacent extreme points in $\Conv I(p,q)$. 
Let $(a,b) := (s \wedge p, s \wedge q)$ and $(a',b') := (s' \wedge p, s' \wedge q)$.
All pairs $(a,  a')$, $(a,  b \wedge b')$,  
and $(a', b \wedge b')$ from among the
triple $(a, a', b \wedge b')$
are bounded.
By definition of modular semilattices, 
their join
$\eta := (a \vee a') \vee (b \wedge b')$ 
exists and belongs to $I(p,q)$ (see Lemma~\ref{lem:I(p,q)}). 
Similarly the join $\xi := (a \wedge a' ) \vee (b \vee b')$ 
of triple $(a \wedge a', b, b')$
exists and belongs to $I(p,q)$.
Therefore $v(\eta;p,q) =  (v(a \vee a'),  v(b \wedge b'))$ and 
$v(\xi;p,q) = ( v(a \wedge a'),  v(b \vee b') )$.
By modularity equality (\ref{eqn:valuation2}) for $v$, we have
\begin{equation*}
v(s; p,q) + v(s'; p,q) = (v(a) + v(a'), v(b) + v(b'))  = v(\eta; p,q) + v(\xi; p,q).
\end{equation*}
Then both $v(\eta;p,q)$ and $v(\xi;p,q)$ must belong to 
$[v(u;p,q), v(u';p,q)]$ 
since it is an edge (or an extreme point) of $\Conv I(p,q)$.
Necessarily $v(\eta;p,q)  = v(s;p,q)$
and $v(\xi;p,q)  = v(s';p,q)$.
This means that $a \vee a' = a$ and
$b \vee b' = b'$.
Hence the claim follows.
\end{proof}

\begin{Lem}\label{lem:antipodal_u_i-u_j}
For $s,t \in {\cal E}(p,q)$ with $t \wedge p \preceq s \wedge p$ 
(and $t \wedge q \succeq s \wedge q$), the following hold:
\begin{itemize}
\item[{\rm (1)}] $d(p,q) = d(p,s) + d(s,t) + d(t,q)$; in particular $I(s,t) \subseteq I(p,q)$.
\item[{\rm (2)}] $v(u;p,q) = v(u;s,t) + v(s \wedge t; p,q)$ for $u \in I(s,t) \subseteq I(p,q)$.
\item[{\rm (3)}] If $v(s;p,q)$ and $v(t;p,q)$ are adjacent extreme points, 
then $(s,t)$ is antipodal.
\end{itemize}
\end{Lem}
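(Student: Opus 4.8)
The plan is to prove the three parts in order, reducing throughout to the case where the valuation $v$ equals the rank function (equivalently, a positive orbit-invariant edge-length, via Lemmas~\ref{lem:shortest} and \ref{lem:valuation}), so that $v[\cdot,\cdot]$ agrees with the path metric $d = d_{\cal L}$ on comparable pairs. Writing $a := s \wedge p$, $b := s \wedge q$, $a' := t \wedge p$, $b' := t \wedge q$, the hypothesis reads $a' \preceq a$ and $b \preceq b'$, while Lemma~\ref{lem:I(p,q)}(2),(3) gives $s = a \vee b$, $t = a' \vee b'$ with $a \wedge b = a' \wedge b' = p \wedge q$, and Lemma~\ref{lem:I(p,q)}(4) yields $s \wedge t = a' \vee b$.

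For (1), I would evaluate each of $d(p,s)$, $d(s,t)$, $d(t,q)$ using the single identity $v(x \vee y) = v(x) + v(y) - v(p\wedge q)$, valid whenever $x \wedge y = p \wedge q$ (valuation modularity (\ref{eqn:valuation2})). This expresses the three distances as sums of valuation increments along the two chains $p\wedge q \preceq a' \preceq a \preceq p$ and $p \wedge q \preceq b \preceq b' \preceq q$; concretely $d(p,s) = v[a,p] + v[p\wedge q, b]$, $d(s,t) = v[a',a] + v[b,b']$, and symmetrically for $d(t,q)$. Adding the three distances, the increments telescope along each chain to $v[p\wedge q,p] + v[p\wedge q,q] = d(p,q)$ by Lemma~\ref{lem:I(p,q)}(1). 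The triangle inequality then forces $d(p,u)+d(u,q)=d(p,q)$ for every $u\in I(s,t)$, i.e. $I(s,t)\subseteq I(p,q)$; it also yields the betweenness relations $s\in I(p,u)$ and $t\in I(q,u)$, which are exactly what I will need below.

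The heart of the argument, and the step I expect to be the main obstacle, is the structural identity behind (2):
\[
u \wedge s = (u \wedge p) \vee (s \wedge q), \qquad u \wedge t = (u \wedge q) \vee (t \wedge p) \qquad (u \in I(s,t)).
\]
To prove the first, I note that $u \wedge s \in I(p,q)$, so $u\wedge s = ((u\wedge s)\wedge p)\vee((u\wedge s)\wedge q)$; here $(u\wedge s)\wedge q = u\wedge b = b$ (since $b\preceq u\wedge s$) and $(u\wedge s)\wedge p = u\wedge a$. It then remains to show $u\wedge a = u\wedge p$, i.e. $u\wedge p \preceq a$: applying Lemma~\ref{lem:I(p,q)}(2),(3) to the pair $(p,u)$ together with the betweenness $s\in I(p,u)$ from (1) gives $a = s\wedge p \succeq p\wedge u = u\wedge p$, as desired. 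Granting these identities, both coordinate equalities in (2) follow at once from valuation modularity, since $(u\wedge p)\wedge(s\wedge q)=p\wedge q$ and $(u\wedge q)\wedge (t\wedge p)=p\wedge q$; this is precisely $v(u;p,q)=v(u;s,t)+v(s\wedge t;p,q)$.

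Finally, (3) is a geometric consequence of (1) and (2). By (2) the map $u \mapsto v(u;p,q)$ on $I(s,t)$ is the map $u\mapsto v(u;s,t)$ translated by the fixed vector $v(s\wedge t;p,q)$, so $\Conv I(s,t)$ is a translate of the sub-polygon $\{v(u;p,q)\mid u\in I(s,t)\}$ of $\Conv I(p,q)$. When $v(s;p,q)$ and $v(t;p,q)$ are adjacent extreme points of $\Conv I(p,q)$, every such $v(u;p,q)$ lies on or below the supporting line through them while dominating $v(s\wedge t;p,q)$ coordinatewise; translating back, $\Conv I(s,t)$ is the triangle with vertices the origin, $v(s;s,t)$, and $v(t;s,t)$. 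Hence ${\cal E}(s,t)=\{s,t\}$, which is exactly the definition of $(s,t)$ being antipodal (see (\ref{eqn:antipodal})). I expect part (2)'s meet--join identity to be the only delicate point; parts (1) and (3) are then bookkeeping with the valuation and with planar convex geometry.
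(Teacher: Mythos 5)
Your proof is correct and follows essentially the same route as the paper: part (1) rests on the decomposition of $s$, $t$, $s\wedge t$ via Lemma~\ref{lem:I(p,q)}, part (2) on the identity $u\wedge s=(u\wedge p)\vee(s\wedge q)$ (equivalent to the paper's $u\wedge s=(s\wedge t)\vee(u\wedge p)$, since $t\wedge p\preceq u\wedge p$), and part (3) on the translation/supporting-line observation, stated in the contrapositive in the paper. The only cosmetic difference is that in (1) you telescope the valuation increments explicitly, whereas the paper shows $t\in I(s,q)$ directly from $(s\wedge t)\vee(q\wedge t)=t$.
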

\begin{proof}
(1). By Lemma~\ref{lem:I(p,q)}~(4),  we have
$(s \wedge t) \vee (q \wedge t) = (s \wedge t \wedge p) \vee (s \wedge t \wedge q) \vee (q \wedge t) = (t \wedge p) \vee (t \wedge q) = t$.
This means $t \in I(s,q)$. Hence $d(p,q) = d(p,s) + d(s,q) = d(p,s) + d(s,t) + d(t,q)$.

(2). By $u \wedge s = (s \wedge t) \vee (u \wedge p)$ 
(Lemma~\ref{lem:I(p,q)}~(4) for $I(p,t)$), 
we have
$v[t \wedge p, p \wedge u] = v[s \wedge t, u \wedge s]$, and
\[
v[p \wedge q, p \wedge u] =  v[p \wedge q, t \wedge p] + v[t \wedge p, p \wedge u] 
 =  v[p \wedge q, s \wedge t \wedge p] + v[s \wedge t, u \wedge s].
\]
Similarly (for $I(s,q)$), we have $v[p \wedge q, q \wedge u] = v[p \wedge q, s \wedge t \wedge q] + v[s \wedge t, u \wedge t]$.

(3). If $(s,t)$ is not antipodal, then 
there is $u \in I(s,t)$ such that 
$v(u;s,t)$ goes beyond the line segment between $v(s;s,t)$ and $v(t;s,t)$.
Then, by (2), $v(u;p,q)$ is in the outside of $\Conv I(p,q)$. This is a contradiction.
\end{proof}
Suppose that
${\cal E}({p,q}) = \{p = u_0,u_1,\ldots,u_m = q\}$, 
and  $v(u_i;p,q)$ and $v(u_{i+1};p,q)$ are adjacent extreme points. 
Let $\theta_i$ be
the angle of the line normal 
to the line segment connecting 
$v(u_{i-1};p,q)$ and $v(u_{i}; p,q)$.
By Lemma~\ref{lem:antipodal_u_i-u_j}~(2), $\theta_i$ is equal to
the angle of the line normal 
to the line segment connecting 
$v(u_{i-1};u_{i},u_{i-1})$ and $v(u_{i}; u_{i},u_{i-1})$.
Therefore
\begin{equation}
\frac{\sin \theta_i}{\sin \theta_i + \cos \theta_i}
= \frac{v[u_i \wedge u_{i-1}, u_{i-1}]}{v[u_i \wedge u_{i-1},u_{i-1}] 
+ v[u_i \wedge u_{i-1},u_i]} = \delta_{i-1}.
\end{equation}
Therefore we obtain the formula of Lemma~\ref{lem:[C]}.

Next we prove Theorem~\ref{thm:submo}.
It suffices to prove the if part. 
Let $(p,q)$ be a pair of (incomparable) elements in ${\cal L}$.
Suppose that ${\cal E}({p,q}) = \{p = u_0,u_1,\ldots,u_m = q\}$ is given as above.
Let $p_i := u_i \wedge p$ and $q_i := u_i \wedge q$ for $i=0, 1,2,\ldots,m$.
By Lemma~\ref{lem:extreme}, it holds
$p_{i}  \succeq p_{j}$ and
$q_{i}   \preceq q_{j}$ for $i \leq j$.
\begin{figure} 
\begin{center}
\includegraphics[scale=0.6]{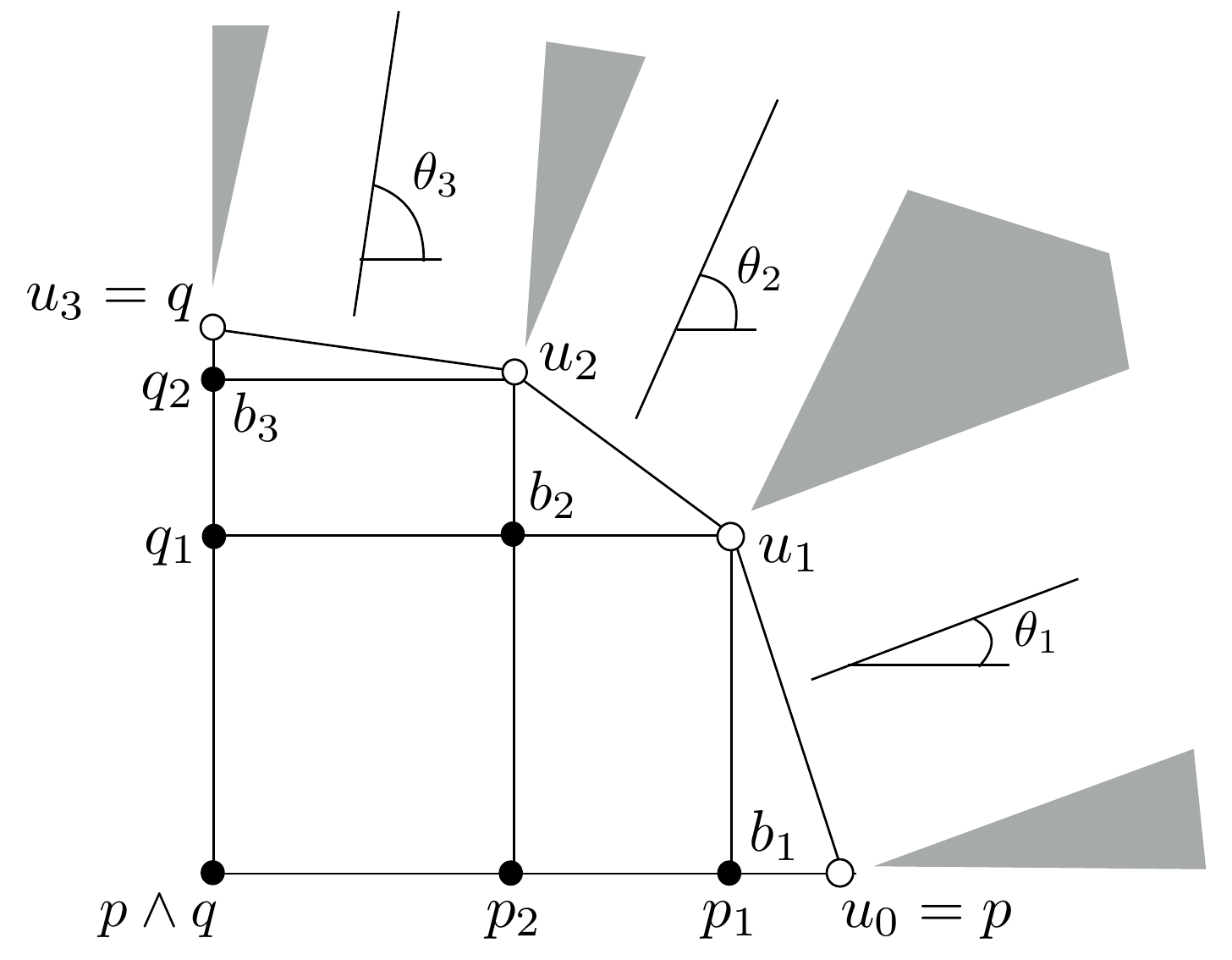}                
\caption{$\Conv I(p,q)$ and ${\cal E}(p,q)$}  
\label{fig:envelope}
\end{center}
\end{figure}
Let $b_i := u_{i-1} \wedge u_i$ for $i=1,2,\ldots,m$; 
see Figure~\ref{fig:envelope}.
Then
we have
\begin{equation}\label{eqn:wedge}
b_i \wedge q_i = u_{i-1} \wedge u_i \wedge u_i \wedge q = q_{i-1}.
\end{equation}
Also, by Lemma~\ref{lem:I(p,q)}~(3) and (4), we have
\begin{eqnarray}\label{eqn:vee}
b_i \vee q_i & = & (u_{i-1} \wedge u_i) \vee q_i  =   (u_{i-1} \wedge u_i \wedge p) \vee (u_{i-1} \wedge u_i \wedge q) \vee q_i  \\
& = &   (p_{i-1} \wedge p_i ) \vee (q_{i-1} \wedge q_{i}) \vee q_i  =  p_i \vee q_i = u_i.\nonumber
\end{eqnarray}

Let $f$ be a function on ${\cal L}$ satisfying 
the conditions (1), (2), and (3) in Theorem~\ref{thm:submo}.
We show that $f$ satisfies the inequality (\ref{eqn:gsubmo}) for $p,q$.
We may assume that $p,q \in \dom f$. 
By condition (1), ${\cal E}(p,q) \subseteq \dom f$. 
Namely $u_i \in \dom f$. By (3), we have $b_i =  u_i \wedge u_{i-1} \in \dom f$.
By (2), we have 
$b_i \wedge b_{i-1} = u_{i} \wedge u_{i-1} \wedge u_{i-2} = u_i \wedge u_{i-2}  \in \dom f$. 
Consequently all $p_i, q_i, b_i$ belong to  $\dom f$. 
By (\ref{eqn:wedge}), (\ref{eqn:vee}), and the condition (2) (submodularity), we have
\begin{equation}\label{eqn:adding}
f(b_{i}) + f(q_{i}) \geq f(q_{i-1}) + f(u_{i}) \quad (i = 1,2,\ldots,m).
\end{equation}
By adding (\ref{eqn:adding}) for $i=1,2,\ldots, m$ 
and $f(p) = f(u_0)$ (recall $(p,q) = (u_0,q_m)$), we obtain
\begin{equation}\label{eqn:ba}
f(p) + f(b_1) + f(b_2) + \cdots + f(b_{m}) + f(q) 
\geq f(p \wedge q) + f(u_0) + f(u_1) + \cdots + f(u_{m}).
\end{equation}
By Lemma~\ref{lem:antipodal_u_i-u_j}~(3), 
pair $(u_{i},u_{i+1})$ is antipodal. 
By condition (3), $f$ satisfies the $\wedge$-convexity inequality (\ref{eqn:^-convexity}) for $(u_{i},u_{i+1})$, 
which is rewritten as 
\begin{equation}
f(u_{i}) \geq f(b_{i+1}) + 
\delta_i f(u_{i}) 
- \delta_i f(u_{i+1}).
\end{equation}
Substituting (\theequation) to (\ref{eqn:ba}) for $i=0,1,2,\ldots,m-1$, we obtain
\[
f(p) + f(q)  \geq  f(p \wedge q) + \sum_{i=0}^m (\delta_i - \delta_{i-1})f(u_i) 
 =  f(p \wedge q) + \sum_{i=0}^m [C(u_i;p,q)] f(u_i).
\]

\subsubsection{Proof of Proposition~\ref{prop:frac_join}}\label{subsub:frac_join}
We start with preliminary arguments.
Suppose that ${\cal L}$  
is the product ${\cal L}_1 \times {\cal L}_2$ of 
modular semilattices ${\cal L}_1$ and ${\cal L}_2$.
The valuations of ${\cal L}_1$ and ${\cal L}_2$ are given as (\ref{eqn:valuation_summand}). 
They are also denoted by $v$.  
Let $(p,q) = ((p_1,p_2), (q_1,q_2))$ be a pair of elements in ${\cal L}$.
By (\ref{eqn:valuation_summand1}), we have
\begin{equation}
v(u; p,q) = v(u_1; p_1,q_1) +  v(u_2; p_2,q_2) \quad (u \in I(p,q)).
\end{equation}
By this equation together with $I(p,q) = I(p_1,q_1) \times I(p_2,q_2)$,  we have
\begin{equation}\label{eqn:Minkowski}
\Conv I(p,q) = \Conv I(p_1,q_1) + \Conv I(p_2,q_2),
\end{equation}
where the sum means the Minkowski sum.

In general, if a polytope $P$ is the Minkowski sum 
of two polytopes $Q$ and $Q'$, 
then every extreme point of $P$ is uniquely represented as 
the sum of extreme points of $Q$ and of $Q'$.
By this fact and the injectivity of $v(\cdot;p,q)$ (Lemma~\ref{lem:unique}) on ${\cal E}(p,q)$, 
we obtain:
\begin{myitem}
For $u = (u_1,u_2) \in {\cal E}(p,q)$, 
there uniquely exist maximal extreme points $x_i \in \Conv I(p_i, q_i)$ for $i=1,2$ 
such that $x_i = v(u_i; p_i,q_i)$ for $i=1,2$, and
$v(u; p,q) = x_1 + x_2$. In particular, $u_i$ belongs to ${\cal E}(p_i,q_i)$ for $i=1,2$. 
\label{eqn:x1+x2}
\end{myitem}\noindent
Moreover, $v(u;p,q)$ 
maximizes $\langle c, x \rangle$ over $x \in \Conv I(p,q)$
if and only if $v(u_i;p_i,q_i)$ maximizes
 $\langle c, x \rangle$ over $x \in \Conv I(p_i,q_i)$ for $i=1,2$.
Therefore we have
\begin{equation}\label{eqn:C1^C2}
C(u; p,q) = C(u_1;p_1,q_1) \cap C(u_2; p_2,q_2).
\end{equation}

We are ready to prove Proposition~\ref{prop:frac_join}.
Regard ${\cal O}({\cal L})$ as 
a modular semilattice ${\cal L}^{{\cal L} \times {\cal L}}$.
Then $I({\cal L})$ 
is the product of $I (L(p,q), R(p,q)) = I(p,q)$ over all $(p,q) \in {\cal L} \times {\cal L}$.
By (\ref{eqn:Minkowski}) we have
\[
\Conv I({\cal L}) = \sum_{(p,q) \in {\cal L} \times {\cal L}} \Conv I(p,q).
\] 
By (\ref{eqn:x1+x2}), for every extremal operation $\vartheta$, 
each $\vartheta(p,q)$ belongs to ${\cal E}(p,q)$. 
Also, by (\ref{eqn:C1^C2}), we have
\begin{equation}
C(\vartheta)  =  \bigcap_{(p,q) \in {\cal L} \times {\cal L}} C(\vartheta(p,q) ; p,q) \quad 
(\vartheta \in {\cal E}({\cal L})).
\end{equation}
From (\ref{eqn:normal_fan}), 
$C(\vartheta) \subseteq C(u; p,q)$ if and only 
if $\vartheta(p,q) = u$, and
\begin{equation}
C(u;p,q)   =  \bigcup_{\vartheta \in {\cal E}({\cal L}): \vartheta(p,q) = u} 
C(\vartheta) \quad (p,q \in {\cal L}, u \in {\cal E}(p,q)),
\end{equation}
where any two of the cones in the union have no common interior points.
Therefore 
\[
\sum_{\vartheta \in {\cal E}({\cal L})} [C(\vartheta)] \vartheta(p,q) = 
\sum_{u \in {\cal E}(p,q)} \left( \sum_{\vartheta \in {\cal E}({\cal L}): 
\vartheta(p,q) = u} [C(\vartheta)]  \right) u 
 =  \sum_{u \in {\cal E}(p,q)} [C(u; p,q)] u.
\]
This proves Proposition~\ref{prop:frac_join}.

\subsubsection{Proof of Proposition~\ref{prop:frac_join_prod}}\label{subsub:frac_join_prod}

It suffices to consider the case where ${\cal L} = {\cal L}_1 \times {\cal L}_2$.
If $\vartheta \in I({\cal L})$ is the componentwise extension of 
$\vartheta_i \in I({\cal L}_i)$ for $i=1,2$, then 
\begin{eqnarray} \label{eqn:v(theta;L,R)}
v(\vartheta;L,R) &= & \sum_{(p,q) \in {\cal L} \times {\cal L}} v(\vartheta(p,q); p,q)  \\ 
& = & \sum_{(p,q) \in {\cal L} \times {\cal L}} v(\vartheta_1(p_1,q_1); p_1,q_1 ) +  v(\vartheta_2 (p_2,q_2); p_2,q_2) \nonumber \\
& = & |{\cal L}_2|^2 v (\vartheta_1; L,R) + |{\cal L}_1|^2 v(\vartheta_2; L,R). \nonumber
\end{eqnarray}
Therefore it holds 
\begin{equation}\label{eqn:ConvL1+ConvL2}
\Conv I ({\cal L}) \supseteq 
| {\cal L}_2 |^2 \Conv I ({\cal L}_1) + | {\cal L}_1 |^2 \Conv I ({\cal L}_2).
 \end{equation}
We are going to show that every extremal operation in ${\cal L}$ is 
the componentwise extension of operations in ${\cal L}_i$ for $i=1,2$, and 
the equality holds in (\ref{eqn:ConvL1+ConvL2}).
Take an extremal operation $\vartheta$ on ${\cal L}$.
There are $\vartheta_i :{\cal L} \times {\cal L} \to {\cal L}_i$ for $i=1,2$ such that
$\vartheta(p,q) = (\vartheta_1(p,q), \vartheta_2(p,q))$ for $(p,q) \in {\cal L} \times {\cal L}$.
By Proposition~\ref{prop:frac_join}, $\vartheta(p,q)$ belongs to ${\cal E}(p,q)$.
By (\ref{eqn:x1+x2}), 
$\vartheta_1(p,q)$ and $\vartheta_2(p,q)$ belong to ${\cal E}(p_1,q_1)$ and ${\cal E}(p_2,q_2)$, respectively.
For $(p,q),(p',q') \in {\cal L} \times {\cal L}$ with $(p_1,q_1) = (p'_1,q'_1)$,
suppose (indirectly) that $\vartheta_1(p,q) \neq \vartheta_1(p',q')$.
Let $\vartheta'$ be the operation in $I({\cal L})$ obtained from $\vartheta$ 
by replacing $\vartheta_1(p',q')$ with $\vartheta_1(p,q)$, 
and let $\vartheta''$ be the operation in $I({\cal L})$ obtained from $\vartheta$  
by replacing $\vartheta_1(p,q)$ with $\vartheta_1(p',q')$. Then we have
\begin{eqnarray*}
&& v(\vartheta(p,q); p,q) + v(\vartheta(p',q'); p',q') = \sum_{i=1,2} v(\vartheta_i(p,q); p_i,q_i) +  v(\vartheta_i(p',q'); p'_i,q'_i) \\
&& = \frac{1}{2} \left\{ (v(\vartheta'(p,q); p,q) + v(\vartheta'(p',q'); p',q')) + (v(\vartheta''(p,q); p,q) + v(\vartheta''(p',q'); p',q')) \right\}.
\end{eqnarray*}
By Lemma~\ref{lem:unique}, $v(\vartheta_1(p,q);p_1,q_1)$ and $v(\vartheta_1(p',q');p_1,q_1)$
are distinct, and
consequently $v(\vartheta;L,R)$ is the midpoint of segment 
between distinct points $v(\vartheta';L,R)$ and $v(\vartheta'';L,R)$, 
contradicting the fact that $\vartheta$ is extremal.
Therefore $\vartheta_1(p,q) = \vartheta_1(p',q')$ must hold.
This means that $\vartheta_1(p,q)$ does not depend on 
the second component of each of $p,q$. 
So we can regard $\vartheta_1 \in I({\cal L}_1)$.
Similarly $\vartheta_2 \in I({\cal L}_2)$, and
$\vartheta$ is equal to 
the componentwise extension of $\vartheta_1$ and $\vartheta_2$.
Hence the equality holds in (\ref{eqn:ConvL1+ConvL2}), 
both $\vartheta_1$ and $\vartheta_2$ must be extremal, and
$
[C(\vartheta)] = [C(\vartheta_1) \cap C(\vartheta_2)]$.
Thus we have
\[
\sum_{\vartheta \in {\cal E}({\cal L})} [C(\vartheta)] \vartheta
= \sum_{\vartheta_1 \in {\cal E}({\cal L}_1), \vartheta_2 \in {\cal E}({\cal L}_2)} [C(\vartheta_1) \cap C(\vartheta_2) ] (\vartheta_1,\vartheta_2).
\]

Suppose that ${\cal L}_1 = {\cal L}_2$ holds. 
Suppose that $\vartheta_1$ and $\vartheta_2$ are different.
We see from (\ref{eqn:v(theta;L,R)}) that
$v( (\vartheta_1,\vartheta_2); L,R )$ is the midpoint 
of the segment between distinct points $v((\vartheta_1,\vartheta_1);L,R)$ and 
$v((\vartheta_2,\vartheta_2);L,R)$.
Hence $(\vartheta_1,\vartheta_2)$ is never extremal.
The proof of Proposition~\ref{prop:frac_join_prod} is now complete.

\subsubsection{Proof of Proposition~\ref{prop:2section}}\label{subsub:2section}
We use the characterization of Theorem~\ref{thm:submo}.
So the only if part is obvious. 
We prove the if part.
We first show that the submodularity inequality for an arbitrary bounded pair
is implied by submodularity inequalities for $2$-bounded pairs.
For a bounded pair $(p,q)$, take maximal chains 
$(p \wedge q = p_0,p_1,\ldots,p_k = p)$ and $(p \wedge q = q_0,q_1,\ldots,q_l = q)$.
Let $a_{i,j} := p_{i} \vee q_{j}$. Then
$f(p) + f(q) - f(p \wedge q) - f(p \vee q)
= \sum_{i,j} (f(a_{i+1,j}) + f(a_{i,j+1}) - f(a_{i+1,j+1}) - f(a_{i,j})) \geq 0$.
Here we use the fact seen from modularity
that $(a_{i+1,j},a_{i,j+1})$ is a $2$-bounded pair 
with $a_{i+1,j+1} = a_{i+1,j} \vee a_{i,j+1}$ 
and $a_{i,j} = a_{i+1,j} \wedge a_{i,j+1}$.

Next we show the $\wedge$-convexity inequality.
Take an (incomparable) 
antipodal pair $(p,q) = ((p_1,p_2), (q_1,q_2))$ in ${\cal L}$.
Then ${\cal E}({p,q}) = \{p, q\}$.
Then $\Conv I(p,q)$ is a triangle. 
By (\ref{eqn:Minkowski}), it holds
$\Conv I(p, q) = \Conv I(p_1,q_1) + \Conv I(p_2,q_2)$.
Therefore both $\Conv I(p_1,q_1)$ and $\Conv I(p_2,q_2)$
are triangles congruent to a dilation of $\Conv I(p,q)$.
Hence $(p_i,q_i)$ is antipodal in ${\cal L}_i$, and 
$C(p_i; p_i, q_i) = C(p; p,q)$ and $C(q_i; p_i, q_i) = C(q; p,q)$ for $i=1,2$.
In particular, both $((q_1,q_2), (q_1,p_2))$ and $((q_1,p_2), (p_1,p_2))$
are antipodal pairs in ${\cal L} = {\cal L}_1 \times {\cal L}_2$.
Letting $C_p := C(p; p,q) = C(p_i; p_i, q_i)$ and $C_q := C(q; p,q) = C(q_i; p_i, q_i)$, we have
\begin{eqnarray*}
&& (1 - [C_q]) f(q_1,q_2) + (1 - [C_p]) f(q_1,p_2) \geq f(q_1, p_2 \wedge q_2), \\
&& (1 - [C_q]) f(q_1,p_2) + (1 - [C_p]) f(p_1,p_2) \geq f(p_1 \wedge q_1, p_2). 
\end{eqnarray*}
Also, 
by submodularity inequality (shown above), we have
\begin{equation*}
f(q_1, p_2 \wedge q_2) + f(p_1 \wedge q_1, p_2) \geq f(p_1 \wedge q_1,p_2 \wedge q_2) + f(q_1,p_2).
\end{equation*}
From the three inequalities, we obtain
\begin{eqnarray*}
&& (1 - [C_q]) f(q_1,q_2) + (2 - [C_p] - [C_q]) f(q_1,p_2) + (1 - [C_p])f(p_1,p_2)  \\
&& \quad \quad \geq f(p_1 \wedge q_1,p_2 \wedge q_2) + f(q_1,p_2).
\end{eqnarray*}
By using $[C_p] + [C_q] =1$, 
we obtain the $\wedge$-convexity inequality for $(p,q)$.

\section{L-convex function on modular complex}\label{sec:L-convex}
A {\em modular complex} $\pmb{\mGamma}$
is a triple $(\mGamma,o,h)$ of an orientable modular graph $\mGamma$, its admissible orientation $o$, 
and its positive orbit-invariant function $h$.
The goal of this section is to introduce 
a class of discrete convex functions, called {\em L-convex functions}, on $\pmb{\mGamma}$, 
and show that L-convex functions have several nice properties 
for optimization, analogous to L$^\natural$-convex functions in discrete convex analysis.

The main properties of our L-convex functions are:
\begin{itemize}
\item The distance function $d_{{\mGamma},h}$ is an L-convex function 
on $\pmb{\mGamma} \times \pmb{\mGamma}$ (Theorem~\ref{thm:d_is_L-convex}).
\item In the minimization of an L-convex function,
checking optimality and finding a descent direction
can be done by submodular function minimization 
on modular semilattices (Theorem~\ref{thm:L-optimality}).
\end{itemize}
In Section~\ref{subsec:complex}, 
we explore several structural properties of modular complexes.
In particular, a modular complex can be regarded as 
a structure obtained by gluing modular semilattices (Theorem~\ref{thm:lattice}),  
and admits a subdivision operation (Theorem~\ref{thm:2subdivision}).
This operation produces a fine modular complex $\pmb{\mGamma}^*$ into which 
the original modular complex $\pmb{\mGamma}$ is embedded, 
and also enables us to define the {\em neighborhood semilattice} ${\cal L}_p^*$ 
around each vertex $p$, which is also a modular semilattice.
Based on this investigation as well as the idea mentioned in the introduction, 
in Section~\ref{subsec:L-convex}, 
we introduce L-convex functions on $\pmb{\mGamma}$, and present their properties.
Again less obvious theorems will be proved in 
Section~\ref{subsec:L-convex_proof}.
A further geometric study on orientable modular graphs is given in \cite{CCHO}.

\subsection{Modular complex}\label{subsec:complex}
Let $\pmb{\mGamma} = (\mGamma,o,h)$ be a modular complex, 
where a modular complex is denoted by the bold style $\pmb{\mGamma}$ 
of the underlying graph $\mGamma$.

\paragraph{Boolean pairs.}
%
Consider a cube subgraph $B$ of ${\mGamma}$, 
and consider the digraph $\vec{B}$ of $B$ 
oriented by $o$.
One can easily see from the definition of 
an admissible orientation
that $\vec{B}$ 
is isomorphic to the Hasse diagram of a Boolean lattice.
Hence $\vec{B}$ determines
the maximum element and the minimum element of 
the corresponding Boolean lattice.
A pair $(p,q)$ of vertices is 
called {\em $o$-Boolean}, or simply, {\em Boolean} 
if $p$ and $q$ are the minimum element and  the maximum element, 
respectively,
of the Boolean lattice associated 
with some cube subgraph of ${\mGamma}$.
By convention,
$(p,p)$ is defined to be Boolean.
The set of Boolean pairs is denoted by ${\cal B}(\pmb{\mGamma})$.
In Figure~\ref{fig:modular_complex} in the introduction, for example,
$(p',p)$, $(v, q)$, $(p', v)$ are Boolean, and $(q, p')$ is not Boolean.

Recall that any admissible orientation is acyclic 
(Lemma~\ref{lem:acyclic}).
Let $\preceq (=\ \preceq_o)$ be 
the transitive closure of relation
$\swarrow (=\ \swarrow_o)$ on $V_{\mGamma}$. 
Then $V_{\mGamma}$ is regarded as
a partially ordered set according to this relation; so $p \swarrow q$ implies $p \prec q$.
For any Boolean pair $(p,q)$, 
necessarily $p \preceq q$ holds.
\begin{Prop}\label{prop:modularlattice}
Let $\pmb{\mGamma}$ be a modular complex. 
For $p,q \in V_{\mGamma}$ with $p \preceq q$,  we have the following:
\begin{itemize}
\item[{\rm (1)}] $[p,q]$ is a modular lattice, 
is convex in $\mGamma$, and is equal to $I(p,q)$.
\item[{\rm (2)}] $(p,q)$ is Boolean if and only if $[p,q]$ is a complemented modular lattice.
\end{itemize}
In particular we can check whether a given pair is Boolean in time polynomial in $|V_{\mGamma}|$.
\end{Prop}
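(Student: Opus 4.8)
The plan is to reduce the whole proposition to one structural fact: when $p \preceq q$, the poset interval $[p,q]$ coincides with the metric interval $I(p,q)$ and, ordered by $\preceq$, is a modular lattice graded by $d_\mGamma(p,\cdot)$. The first and most reliable step is to show that \emph{every covering chain between comparable vertices is a geodesic}. If $p = u_0 \prec u_1 \prec \cdots \prec u_k = q$ (consecutive vertices joined by edges) were a non-geodesic chain of minimum length, then, walking from $p$, the first index $i$ with $d_\mGamma(p,u_{i+1}) = d_\mGamma(p,u_i) - 1$ makes $u_i$ a strict local maximum of $d_\mGamma(p,\cdot)$, with $d_\mGamma(p,u_{i-1}) = d_\mGamma(p,u_{i+1}) = d_\mGamma(p,u_i)-1$. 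The quadrangle condition (Lemma~\ref{lem:quadrangle}) then yields a common neighbour $u^*$ of $u_{i-1},u_{i+1}$ with $d_\mGamma(p,u^*) = d_\mGamma(p,u_i)-2$, and two applications of the defining property (\ref{eqn:frame}) of the admissible orientation $o$ to the $4$-cycle $u_{i-1}u_iu_{i+1}u^*$ force $u_{i-1} \prec u^* \prec u_{i+1}$. Hence $p \prec \cdots \prec u_{i-1} \prec u^*$ is a covering chain of length $i \le k-1$ that is again non-geodesic ($d_\mGamma(p,u^*) = i-2 < i$), contradicting minimality. Concatenating geodesic chains through an intermediate vertex gives $[p,q] \subseteq I(p,q)$ and shows every maximal chain of $[p,q]$ has length $d_\mGamma(p,q)$.

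For the rest of (1) I would argue, by induction on $d_\mGamma(p,q)$, that for comparable $p \preceq q$ the set $I(p,q)$ is convex (this fails for incomparable pairs, e.g.\ in $K_{2,3}$), that the orientation is \emph{graded}, i.e.\ $a \prec b$ for adjacent $a,b \in I(p,q)$ implies $d_\mGamma(p,b) = d_\mGamma(p,a)+1$, and hence that $p$ is the unique $\preceq$-minimal and $q$ the unique maximal element of $I(p,q)$. Grading is proved by the same minimal-counterexample technique: a ``misaligned'' adjacent pair $a \prec b$ with $d_\mGamma(p,b) = d_\mGamma(p,a)-1$ of smallest $d_\mGamma(p,b)$ is impossible for $d_\mGamma(p,b)=0$ and, via the quadrangle condition together with (\ref{eqn:frame}) applied to a $4$-cycle through a second lower neighbour, produces a misaligned pair strictly closer to $p$. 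Once $I(p,q)$ is known to be a graded, convex, hence modular induced subgraph with unique bottom $p$ and top $q$, I would realise the lattice operations by medians, taking $a \wedge b$ to be the median of $p,a,b$ and $a \vee b$ the median of $q,a,b$; grading makes these medians unique and identifies them with the poset infimum and supremum, so $I(p,q) = [p,q]$ is a lattice, and the rank equality $r(a)+r(b) = r(a\wedge b)+r(a\vee b)$ (with $r = d_\mGamma(p,\cdot)$) gives modularity via Lemma~\ref{lem:modular}. This establishes (1), including convexity and $[p,q]=I(p,q)$.

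For (2), the forward direction is immediate from (1): if $(p,q)$ is Boolean via a cube subgraph $B$ with $\dim B = d_\mGamma(p,q) =: r$, then the $r$ neighbours of $p$ in $B$ are atoms of $[p,q]$ whose join is $q$, so $q$ is a join of atoms and Theorem~\ref{thm:complemented}(4) makes $[p,q]$ complemented. Conversely, if $[p,q]$ is complemented modular, Theorem~\ref{thm:complemented} gives $q = \bigvee(\text{atoms})$; choosing an irredundant family $a_1,\dots,a_s$ of atoms with $\bigvee a_i = q$, modularity forces $r(\bigvee_{i\in S}a_i) = |S|$, so $s=r$ and $\{\bigvee_{i\in S}a_i : S \subseteq \{1,\dots,s\}\}$ is a Boolean sublattice of rank $r$ with bottom $p$ and top $q$. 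Since rank equals $d_\mGamma(p,\cdot)$ by (1), each covering pair of this sublattice is a $\mGamma$-edge, so its covering graph is an $r$-cube subgraph of $\mGamma$ with extreme corners $p,q$; thus $(p,q)$ is Boolean. (Note $I(p,q)$ may be strictly larger than this cube, as $M_3 = I(p,q)$ inside $K_{2,3}$ shows, so the definition correctly requires only some witnessing cube.)

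The complexity claim then follows from (2): to test whether $(p,q)$ is Boolean, first check $p \preceq q$ by reachability in $o$; if so, compute $I(p,q)$ from two shortest-path computations using $I(p,q) = \{c : d_\mGamma(p,c)+d_\mGamma(c,q) = d_\mGamma(p,q)\}$; by (1) this is automatically a modular lattice whose meet and join are medians, so it remains only to test complementation, e.g.\ by computing the join of the atoms (the neighbours of $p$ in $I(p,q)$) and checking that it equals $q$ — all in time polynomial in $|V_\mGamma|$. The main obstacle is the grading step of the second paragraph: showing that within $I(p,q)$ the admissible orientation is compatible with $d_\mGamma(p,\cdot)$ (equivalently, that $I(p,q)$ has a unique source and sink, so that the medians defining $\wedge$ and $\vee$ are unique) is precisely where the global admissibility of $o$, rather than mere modularity of $\mGamma$, must be invoked, and the delicate case is a vertex having a unique neighbour towards $p$.
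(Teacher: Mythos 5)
Your route is essentially the paper's: first show that ascending (covering) chains between comparable vertices are geodesics --- your minimal-counterexample argument, locating the first descent and pushing it down with the quadrangle condition plus the $4$-cycle orientation rule, is exactly the paper's proof that a $(p,q)$-path is shortest iff it is ascending, and it is correct --- then identify $[p,q]$ with $I(p,q)$, realise $\wedge$ and $\vee$ as medians with $p$ resp.\ $q$, get modularity from the rank identity, and handle (2) and the complexity claim via Theorem~\ref{thm:complemented} and a Boolean sublattice generated by an independent family of atoms. Parts (2) and the algorithmic claim are at the same level of rigour as the paper's own treatment and are fine.

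The gap is in the step you yourself flag as ``the main obstacle'', and it is not only the delicate unique-lower-neighbour case of grading: the parenthetical reduction ``$I(p,q)$ has a unique source and sink, so that the medians defining $\wedge$ and $\vee$ are unique'' is a non sequitur. Grading (compatibility of the orientation with $d_{\mGamma}(p,\cdot)$) and uniqueness of source and sink are local statements; they do not by themselves exclude the configuration in which $a,b\in I(p,q)$ have two incomparable maximal common lower bounds $c\neq c'$ of equal rank, both lying on geodesics between $a$ and $b$ and hence both medians of $p,a,b$ --- precisely the two-median phenomenon of Figure~\ref{fig:median_graph}(b). If that configuration occurs, $[p,q]$ is a graded poset with $\mathbf{0}$ and $\mathbf{1}$ but not a lattice, and your whole construction of $\wedge$, $\vee$, and the rank identity collapses. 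Excluding it is the real content of the proposition, and it is where the paper works hardest: its Lemma~\ref{lem:filter} takes two putative medians $c,c'$ of $p,a,b$, takes a median $m$ of $p,c,c'$, and propagates the quadrangle condition together with the $4$-cycle orientation rule step by step along an ascending path from $m$ to $c$; at each step the two upper covers produced by the two quadrangle applications (one for $a$, one for $b$) are forced to coincide by admissibility, and after $d(c,m)$ steps one obtains a vertex shortcutting $d(a,b)$ by $2$, a contradiction. Nothing in your outline performs this global propagation, so the existence and uniqueness of $a\wedge b$ --- and with it the lattice structure of $[p,q]$, on which the convexity argument (via the semilattice structure of $(p)^{\uparrow}$) also leans --- remains unproved. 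Everything downstream is fine once this lemma is supplied.
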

We prove this proposition in Section~\ref{subsub:proof_of_lattice}.
We define the relation $\sqsubseteq ( =\ \sqsubseteq_o)$ as: 
$p \sqsubseteq q$ if $(p,q)$ is a Boolean pair.
This relation $\sqsubseteq$ coarsens $\preceq$, 
and is not transitive in general. 
Since a complemented modular lattice is relatively complemented (Theorem~\ref{thm:complemented}), 
we have:
\begin{myitem}
If $p \sqsubseteq q$ and $p \preceq u \preceq v \preceq q$,
then $p \sqsubseteq u \sqsubseteq v \sqsubseteq q$. \label{eqn:v}
\end{myitem}\noindent
For a vertex $p$, 
define subsets ${\cal L}^+_{p}(\pmb{\mGamma})$ and ${\cal L}^-_{p}(\pmb{\mGamma})$ 
of vertices by
\begin{equation}
{\cal L}^+_{p}(\pmb{\mGamma}) :=  \{ q \in V_{\mGamma} \mid p \sqsubseteq q \},  \quad
{\cal L}^-_{p}(\pmb{\mGamma}) :=  \{ q \in V_{\mGamma} \mid q \sqsubseteq p \}.
\end{equation}
In the sequel, ${\cal L}^+_{p}(\pmb{\mGamma})$ and ${\cal L}^-_{p}(\pmb{\mGamma})$ 
are often denoted by ${\cal L}^+_p$ and ${\cal L}^-_p$, respectively.
Regard ${\cal L}^+_p$ as a poset by the partial order $\preceq$, 
and regard ${\cal L}^-_p$ as a poset by the reverse of $\preceq$.
\begin{Thm}\label{thm:lattice} 
Let $\pmb{\mGamma}$ be a modular complex.
For every vertex $p$, both ${\cal L}^+_p$ and ${\cal L}^-_p$ are complemented modular semilattices, 
and convex in $\mGamma$.
\end{Thm}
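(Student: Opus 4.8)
The plan is to treat ${\cal L}^+_p$ only, since reversing the admissible orientation $o$ yields again an admissible orientation for which $\preceq$ and the Boolean relation $\sqsubseteq$ are reversed, so that ${\cal L}^-_p$ for $o$ is exactly ${\cal L}^+_p$ for the reversed orientation. As setup I would record, using Proposition~\ref{prop:modularlattice} together with (\ref{eqn:v}), that for every $q\in{\cal L}^+_p$ the order interval $[p,q]$ equals $I(p,q)$, is a complemented modular lattice, and is contained in ${\cal L}^+_p$. In particular ${\cal L}^+_p$ is order-convex, its covering relation coincides with that of $\preceq$ (hence with the edge set of $\mGamma[{\cal L}^+_p]$), and each principal ideal $[{\bf 0},q]=[p,q]$ is already a complemented modular lattice; thus, once ${\cal L}^+_p$ is shown to be a modular semilattice it will automatically be complemented.

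Next I would prove that ${\cal L}^+_p$ is a meet-semilattice by a gate construction. For $a,b\in{\cal L}^+_p$ the interval $[p,b]$ is convex by Proposition~\ref{prop:modularlattice}, hence gated by Lemma~\ref{lem:2convexity}; let $a^{*}:=\pr_{[p,b]}(a)$. Taking $z=p$ in the gate identity $d(a,z)=d(a,a^{*})+d(a^{*},z)$ shows $a^{*}\in I(a,p)=[p,a]$, so $p\preceq a^{*}\preceq a$ and $p\preceq a^{*}\preceq b$; thus $a^{*}$ is a common lower bound, and it lies in ${\cal L}^+_p$ by (\ref{eqn:v}). For any common lower bound $z\in{\cal L}^+_p$ we have $z\in[p,b]$, so the gate identity gives $a^{*}\in I(a,z)=[z,a]$ and hence $z\preceq a^{*}$; therefore $a^{*}=a\wedge b$ in ${\cal L}^+_p$. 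I will also use the standard fact, recalled in Section~\ref{subsec:lattice}, that a finite meet-semilattice possesses $a\vee b$ whenever $a,b$ have a common upper bound.

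The main obstacle is convexity of ${\cal L}^+_p$ in $\mGamma$, which I would verify through Lemma~\ref{lem:2convexity}. Connectivity is immediate since each $[p,q]\subseteq{\cal L}^+_p$ joins $q$ to $p$. For the distance-two condition take $x,y\in{\cal L}^+_p$; if they are $\preceq$-comparable then $I(x,y)=[x,y]\subseteq{\cal L}^+_p$ by (\ref{eqn:v}), so assume $x,y$ incomparable with $d(x,y)=2$. Writing $w:=x\wedge y\in{\cal L}^+_p$ from the previous step, a median of $p,x,y$ must be a common lower bound $\preceq w$ lying in $I(x,y)$; a short rank computation in the modular lattices $[p,x],[p,y]$ forces this median to equal $w$ and forces $x,y$ to cover $w$. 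Every common neighbour of $x,y$ is $\preceq$-comparable to each of them and cannot lie strictly between them, so it lies below both---and then coincides with $w$---or above both; in the latter case it completes a $4$-cycle $x,w,y,t$ on which admissibility of $o$ (the frame condition) forces $t$ to cover both $x$ and $y$, i.e.\ $t=x\vee y$. Hence every element of $I(x,y)$ is one of $w,x,y$ or such a $t$. It remains to place $t$ in ${\cal L}^+_p$: since $[p,x]$ and $[p,y]$ are complemented, Theorem~\ref{thm:complemented} writes $x$ and $y$ as joins of atoms above $p$, so inside the modular lattice $[p,t]$ the element $t=x\vee y$ is again a join of atoms; Theorem~\ref{thm:complemented} then makes $[p,t]$ complemented, whence $(p,t)$ is Boolean by Proposition~\ref{prop:modularlattice} and $t\in{\cal L}^+_p$. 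This yields $I(x,y)\subseteq{\cal L}^+_p$ and hence convexity.

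Finally I would assemble the pieces. Being convex in the modular graph $\mGamma$, the set ${\cal L}^+_p$ is isometric and every triple has a median inside it, so $\mGamma[{\cal L}^+_p]$ is modular; by the setup paragraph this induced subgraph is precisely the covering graph of the poset ${\cal L}^+_p$. Since ${\cal L}^+_p$ is a meet-semilattice whose covering graph is modular, Theorem~\ref{thm:BVV} shows it is a modular semilattice, and it is complemented because each principal ideal $[p,q]$ is a complemented modular lattice. Together with the convexity just established, this proves the assertion for ${\cal L}^+_p$, and the symmetry noted at the outset gives it for ${\cal L}^-_p$. The step I expect to require the most care is the distance-two analysis, where both the rank identity pinning down the lower median $w$ and the use of admissibility to orient the fourth vertex $t$ upward are essential: without admissibility (e.g.\ a badly oriented $K_{2,3}$) the second common neighbour can fall below both $x,y$ and convexity fails.
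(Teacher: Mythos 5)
Your proof is correct and follows essentially the same route as the paper: convexity is checked via the distance-two criterion of Lemma~\ref{lem:2convexity} with the same three-way case analysis on a common neighbour (the admissibility of $o$ ruling out a second neighbour below $x,y$, and the join-of-atoms argument with Theorem~\ref{thm:complemented} placing the upper neighbour in ${\cal L}^+_p$), after which Theorem~\ref{thm:BVV} and the complementedness of the intervals $[p,q]$ finish the job. The only real variation is that you obtain the meet $a\wedge b$ as the gate $\pr_{[p,b]}(a)$ rather than via the median argument of Lemma~\ref{lem:filter}; that is a valid and slightly slicker derivation, though not an independent one, since Lemma~\ref{lem:filter} already underlies the Proposition~\ref{prop:modularlattice} you invoke.
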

Theorem~\ref{thm:lattice} will be proved in Section~\ref{subsub:proof_of_lattice}.
Therefore
$\pmb{\mGamma}$ is a structure obtained by gluing modular lattices and semilattices.
Moreover $\pmb{\mGamma}$ gives rise to 
a simplicial complex ${\mit\Delta}(\pmb{\mGamma})$ as follows.
For each Boolean pair $(p,q)$ and 
each ascending path 
$(p= p_0, p_1, p_2,\ldots, p_k = q)$ from $p$ to $q$, 
fill a $k$-dimensional simplex  
as in Figure~\ref{fig:modular_complex}.
Then we obtain a simplicial complex ${\mit\Delta}(\pmb{\mGamma})$, and 
we can define an analogue of Lov\'asz extension for 
any function on $V_{\mGamma}$.
We however do not use 
this complex ${\mit\Delta}(\pmb{\mGamma})$ 
in the sequel, 
although our argument is based on
this geometric view.
Instead of dealing with ${\mit\Delta}(\pmb{\mGamma})$,
we use a graph-theoretic operation, 
the $2$-subdivision $\pmb{\mGamma}^*$ of $\pmb{\mGamma}$, 
which comes from the barycentric subdivision 
of ${\mit\Delta}(\pmb{\mGamma})$.

\paragraph{2-subdivision and neighborhood semilattices.}
The $2$-subdivision $\pmb{\mGamma}^*$ of $\pmb{\mGamma}$
is constructed as follows.
A Boolean pair $(p,q) \in {\cal B}(\pmb{\mGamma})$ is denoted by $q / p$.
The {\em $2$-subdivision} ${\mGamma}^*$ of ${\mGamma}$ 
is a simple undirected graph on 
the set ${\cal B}(\pmb{\mGamma})$ of all Boolean pairs
with edges given as:
$q/p$ and $q'/p'$ are 
adjacent if and only if 
$p = p'$ and $qq' \in E_{\mit\Gamma}$ 
or $q = q'$ and $pp' \in E_{\mit\Gamma}$.
The orientation $o^*$ for $\mGamma^*$ is given as:
$q/p \swarrow_{o^*} q'/p'$ if $p = p'$ and $q \swarrow_{o} q'$ 
or if $q = q'$ and $p' \swarrow_{o} p$.
See Figure~\ref{fig:orientation}.
\begin{figure} 
\begin{center}
\includegraphics[scale=0.8]{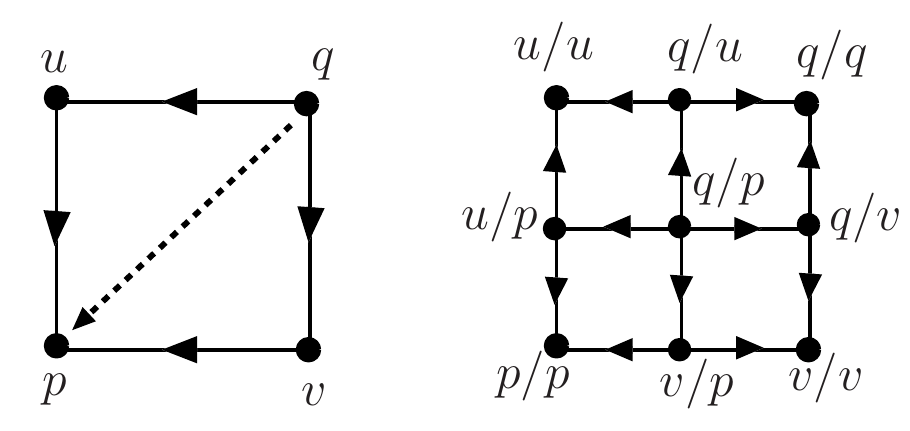}                
\caption{Orientations $o$ and $o^*$}  
\label{fig:orientation}
\end{center}
\end{figure}
In fact, $\mGamma^*$ does not depend on the choice of an admissible orientation; see \cite{CCHO}.

An edge joining $q/p$ and $q'/p$ (resp. $q/p$ and $q/p'$) 
is denoted by $qq'/p$ (resp. $q/pp'$).
A function $h^*$ on $E_{\mGamma^*}$ is defined as
$h^*(qq'/p) := h(qq')/2$ and $h^*(q/pp') := h(pp')/2$.
Let $\pmb{\mGamma}^* := (\mGamma^*, o^*, h^*)$, which is called the {\em $2$-subdivision} of 
$\pmb{\mGamma}$.
\begin{Thm}\label{thm:2subdivision}
For a modular complex $\pmb{\mit\Gamma}$, 
the $2$-subdivision ${\pmb{\mit\Gamma}}^*$ is also a modular complex.
\end{Thm}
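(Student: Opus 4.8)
The plan is to verify, one by one, the three defining properties of a modular complex for $\pmb{\mGamma}^{*} = (\mGamma^{*}, o^{*}, h^{*})$: that $\mGamma^{*}$ is modular, that $o^{*}$ is admissible, and that $h^{*}$ is positive and orbit-invariant. Throughout I would identify a vertex $q/p$ of $\mGamma^{*}$ with the interval $[p,q]$, which is a complemented modular lattice by Proposition~\ref{prop:modularlattice}, so that an edge of $\mGamma^{*}$ is a one-step enlargement or shrinking of the interval on exactly one end. The first easy observation is that the function $N(q/p) := d_{\mGamma}(p,q) = r[p,q]$ changes by exactly $\pm 1$ along every edge of $\mGamma^{*}$, since each edge alters one endpoint by a single covering step; hence $N$ is a proper $2$-colouring and $\mGamma^{*}$ is bipartite.

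The combinatorial backbone I would establish first is a classification of the $4$-cycles of $\mGamma^{*}$. Labelling each edge as a \emph{top-move} (bottom fixed) or a \emph{bottom-move} (top fixed) and tracking the net change of each coordinate around a cycle, a non-alternating pattern is seen to force a repeated vertex, so every $4$-cycle is of one of three types: (i) four top-moves, which is the image of a $4$-cycle of $\mGamma$ lying in the convex modular semilattice ${\cal L}^{+}_{p}$ (Theorem~\ref{thm:lattice}); (ii) four bottom-moves, a $4$-cycle of $\mGamma$ lying in ${\cal L}^{-}_{q}$; and (iii) a mixed cycle $q/p,\ q'/p,\ q'/p',\ q/p'$ built from one edge $qq'$ and one edge $pp'$ of $\mGamma$. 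From this classification both admissibility and orbit-invariance follow. For admissibility, in a type-(iii) cycle the two top-edges carry the same underlying edge $qq'$ and the two bottom-edges the same edge $pp'$, so opposite edges receive identical $o^{*}$-orientations by construction; in types (i) and (ii) the orientation $o^{*}$ restricts to $o$ (up to reversal on the bottom, which preserves the property), so admissibility is inherited from that of $o$. For $h^{*}$, positivity is immediate from $h>0$; and since opposite edges of any $4$-cycle are always both top-moves or both bottom-moves, projectivity never mixes the two kinds, so a top-edge $qq'/p$ is projective only to top-edges whose underlying $\mGamma$-edge is projective to $qq'$ (type (i)) or equal to $qq'$ (type (iii)). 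Thus the whole $\mGamma^{*}$-orbit of $qq'/p$ lies over a single $\mGamma$-orbit, and $h^{*}(qq'/p)=h(qq')/2$ is constant on it; the bottom-edges are symmetric.

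The main obstacle is modularity, for which I would use Lemma~\ref{lem:quadrangle} and hence must control distances. The coordinate map $q/p \mapsto (q,p)$ embeds $\mGamma^{*}$ into the Cartesian product $\mGamma \times \mGamma$, and projecting any $\mGamma^{*}$-walk onto its two coordinates gives at once the lower bound
\[
d_{\mGamma^{*}}(q/p,\, q'/p') \ \ge\ d_{\mGamma}(q,q') + d_{\mGamma}(p,p').
\]
The substantial step is to prove equality, i.e. that a shortest top-geodesic and a shortest bottom-geodesic can be realised simultaneously while every intermediate pair stays Boolean (the image of $\mGamma^{*}$ is \emph{not} convex in $\mGamma\times\mGamma$, so this does not come for free). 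I would prove it by induction on $d_{\mGamma}(p,p')$: when $p=p'$ (or $q=q'$) both endpoints lie in the convex semilattice ${\cal L}^{+}_{p}$ (resp.\ ${\cal L}^{-}_{q}$) by Theorem~\ref{thm:lattice}, so a $\mGamma$-geodesic already consists of Boolean pairs; in general one advances the bottom toward $p'$ only as far as the gate of $p'$ in ${\cal L}^{-}_{q}$ (Lemma~\ref{lem:2convexity}, Theorem~\ref{thm:gatedset}), then advances the top, using relative complementation ((\ref{eqn:v}), Theorem~\ref{thm:complemented}) to certify that the modified intervals stay complemented, and recurses.

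Granting the additive distance formula, the quadrangle condition would be verified by cases on the types of the two closer neighbours $\pi_{1},\pi_{2}$ of $\pi=q/p$ relative to $\sigma=q'/p'$. If both are top-moves, then the formula forces them to move the top strictly toward $q'$, and the required common neighbour is produced by the quadrangle condition inside the modular semilattice ${\cal L}^{+}_{p}$; the two-bottom case is symmetric inside ${\cal L}^{-}_{q}$. The delicate mixed case is where $\pi_{1}=q_{1}/p$ with $q_{1}\in I(q,q')$ and $\pi_{2}=q/p_{2}$ with $p_{2}\in I(p,p')$, and the cycle is to be closed by the combined pair $\pi^{*}=q_{1}/p_{2}$. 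The hard part, and the crux of the whole argument, will be to show that $q_{1}/p_{2}$ is again Boolean: here I expect to use crucially that both $\pi$ and $\sigma$ are Boolean (so $p'\preceq q'$ as well as $p\preceq q$), together with the complemented modular structure of $[p,q]$, to force $p_{2}\preceq q_{1}$ and the complementedness of $[p_{2},q_{1}]$.
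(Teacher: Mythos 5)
Your plan follows the paper's proof almost step for step: the same classification of $4$-cycles of $\mGamma^{*}$ (from which admissibility of $o^{*}$ and orbit-invariance of $h^{*}$ are read off), the same parity argument for bipartiteness, the same additive distance formula $d_{\mGamma^{*}}(q/p,q'/p') = \tfrac{1}{2}(d_{\mGamma}(p,p')+d_{\mGamma}(q,q'))$ as the key lemma, and the same three-way case split for the quadrangle condition, with the two one-sided cases handled inside the convex semilattices ${\cal L}^{+}_{p}$ and ${\cal L}^{-}_{q}$.

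The problem is that the step you yourself call ``the crux of the whole argument'' is exactly the step you do not carry out, and it does not follow from what you invoke. In the genuinely hard mixed case the two neighbours \emph{enlarge} the interval at both ends ($p_{1}\swarrow p$ and $q\swarrow q_{2}$ in the paper's notation), and you must show $p_{1}\sqsubseteq q_{2}$, i.e.\ that $[p_{1},q_{2}]\supsetneq[p,q]$ is still a complemented modular lattice. This cannot be forced by ``the complemented modular structure of $[p,q]$'' together with the Booleanness of $q'/p'$ alone: enlarging a Boolean pair by one cover at each end does not in general preserve Booleanness, and the statement is only true because of the distance hypotheses $d_{\mGamma}(p_{1},p')=d_{\mGamma}(p,p')-1$ and $d_{\mGamma}(q_{2},q')=d_{\mGamma}(q,q')-1$ coming from the quadrangle condition being verified. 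The paper's argument here is a separate induction on $d_{\mGamma}(p_{1},q_{2})$ that analyses the gate interval $[u,v]=\pr_{[p_{1},q_{2}]}([p',q'])$, shows $p\not\preceq u$, and then manufactures enough atoms of $[p_{1},q_{2}]$ (via joins, the modularity equality, and one more application of the quadrangle condition in $\mGamma$) to write $q_{2}$ as a join of atoms and invoke Theorem~\ref{thm:complemented}. Nothing in your sketch points at this mechanism, so as it stands the proof has a hole precisely where the theorem's difficulty is concentrated; the sketch of the distance formula (advancing to gates and recursing) is also thinner than the paper's median-based induction, but that part is at least plausibly completable along the lines you indicate.
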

This theorem will be proved in Section~\ref{subsub:proof_2subdivision}.
Figure~\ref{fig:neighborhood1} illustrates 
the $2$-subdivision of $\pmb{\mGamma}$ in 
Figure~\ref{fig:modular_complex}.
\begin{figure} 
\begin{center}
\includegraphics[scale=0.85]{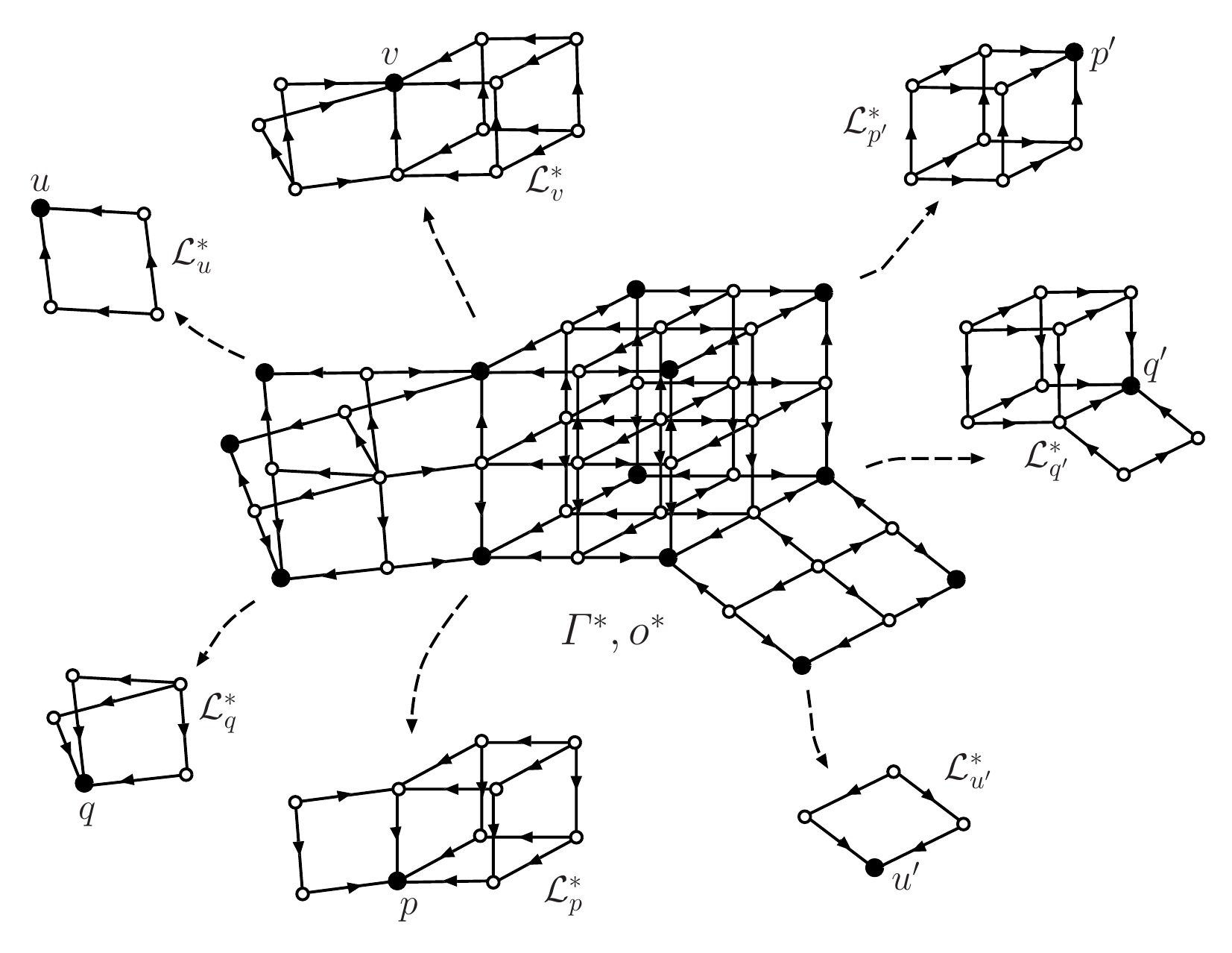}     
\caption{Construction of $\pmb{\mGamma}^*$ and neighborhood semilattices}  
\label{fig:neighborhood1}         
\end{center}
\end{figure}
By embedding $p \mapsto p/p$, 
we can regard $V_{\mGamma} \subseteq V_{\mGamma^*}$.
The admissible orientation $o^*$ is oriented so 
that the vertices in $V_{\mGamma}$ are all sinks.
The partial order $\preceq_{o^*}$ on $V_{\mGamma^*}$ induced by $o^*$ 
is denoted by $\preceq_{*}$, and $\sqsubseteq_{o^*}$ is also denoted by $\sqsubseteq_{*}$.
In fact, one can show that two relations  $\preceq_{*}$ and $\sqsubseteq_{*}$ are the same.
Here we only note the following obvious relation:
\begin{equation}\label{eqn:p'pqq'}
q/p \preceq_* q'/p'   \Longleftrightarrow p' \preceq p \preceq q \preceq q' \quad (q/p,q'/p' \in {\cal B}(\pmb{\mGamma})).
\end{equation}

For each vertex $p \in V_{\mGamma}$, 
define the {\em neighborhood semilattice} 
${\cal L}^*_p := {\cal L}_{p/p}^+(\pmb{\mGamma}^*)$.
By Theorems~\ref{thm:lattice} and \ref{thm:2subdivision}, we obtain:
\begin{Prop}
Let $\pmb{\mGamma}$ be a modular complex. 
For each vertex $p$,
the neighborhood semilattice ${\cal L}^*_p$ is a complemented modular semilattice 
with the minimum element $p$.
\end{Prop}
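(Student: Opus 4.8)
The plan is to read this off directly from Theorem~\ref{thm:2subdivision} and Theorem~\ref{thm:lattice}, since ${\cal L}^*_p$ is by definition ${\cal L}^+_{p/p}(\pmb{\mGamma}^*)$ and the latter is exactly the kind of set to which Theorem~\ref{thm:lattice} applies --- provided $\pmb{\mGamma}^*$ is itself a modular complex.

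First I would apply Theorem~\ref{thm:2subdivision} to conclude that the $2$-subdivision $\pmb{\mGamma}^* = (\mGamma^*, o^*, h^*)$ is again a modular complex. This supplies the one hypothesis needed: every structural result established for arbitrary modular complexes is now available for $\pmb{\mGamma}^*$. In particular, viewing $p/p$ as a vertex of $\mGamma^*$ through the embedding $p \mapsto p/p$, an application of Theorem~\ref{thm:lattice} to $\pmb{\mGamma}^*$ at the vertex $p/p$ yields at once that ${\cal L}^*_p = {\cal L}^+_{p/p}(\pmb{\mGamma}^*)$ is a complemented modular semilattice (and convex in $\mGamma^*$).

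The only remaining point is to verify that its minimum element is $p$, identified with $p/p$. By the convention that $(a,a)$ is Boolean we have $(p/p, p/p) \in {\cal B}(\pmb{\mGamma}^*)$, so $p/p \in {\cal L}^*_p$; and for every $q/p' \in {\cal L}^*_p$ the pair $(p/p, q/p')$ is Boolean in $\pmb{\mGamma}^*$, whence $p/p \preceq_* q/p'$ because any Boolean pair $(a,b)$ satisfies $a \preceq_* b$. Thus $p/p$ is a lower bound lying in ${\cal L}^*_p$, hence the minimum element, which necessarily coincides with the distinguished ${\bf 0}$ of the complemented modular semilattice. I expect no genuine obstacle here: the statement is a corollary of the two quoted theorems, and the only care required is to confirm that the ${\bf 0}$ furnished abstractly by Theorem~\ref{thm:lattice} is this concrete vertex $p/p$, which the comparability of Boolean pairs settles immediately.
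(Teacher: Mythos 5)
Your proposal is correct and is exactly the paper's argument: the proposition is stated as an immediate consequence of Theorems~\ref{thm:lattice} and \ref{thm:2subdivision}, applied to the vertex $p/p$ of the modular complex $\pmb{\mGamma}^*$. Your extra check that the minimum element is $p/p$ (via the convention that $(a,a)$ is Boolean and the fact that Boolean pairs are comparable) is a detail the paper leaves implicit, and it is handled correctly.
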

See Figure~\ref{fig:neighborhood1}.
Neighborhood semilattice ${\cal L}^*_p$ 
has more information about the local property of $p$ 
than ${\cal L}^+_p$ and ${\cal L}^-_p$ have.

\paragraph{Valuation of local semilattices.}
A positive orbit-invariant 
function $h$ naturally gives valuation $v_p$ on ${\cal L}_p^s$ for $s \in \{-,+\}$, and 
valuation $v_p^*$ on ${\cal L}_p^*$ by 
\begin{eqnarray}\label{eqn:valuations}
v_p(q) & :=& d_{\mGamma,h}(q,p) \quad (q \in {\cal L}_p^{s}, s \in \{-,+\}), \\
v^*_p(v/u) &:=& d_{\mGamma^*,h^*}(v/u,p/p) \quad (v/u \in {\cal L}_p^*). \nonumber
\end{eqnarray}
See Lemma~\ref{lem:valuation}.
In the sequel, semilattices ${\cal L}_p^+$, ${\cal L}_p^-$, 
and ${\cal L}_p^*$ are supposed to be endowed with these valuations.

\paragraph{Embedding of $\pmb{\mGamma}$ into $\pmb{\mGamma}^*$.}
The distances on $\mGamma$ and $\mGamma^*$ are related as follows.
\begin{Prop}\label{prop:1/2} 
Let $\pmb{\mGamma} = (\mGamma,o,h)$ be a modular complex, and $\pmb{\mGamma}^*$ the $2$-subdivision of $\pmb{\mGamma}$. Then we have
\begin{equation}\label{eqn:isometric_embedding}
d_{\mGamma^*,h^*}(q/p,q'/p') = \frac{d_{\mGamma,h}(p,p') + d_{\mGamma,h}(q,q')}{2} 
\quad (q/p, q'/p' \in {\cal B}(\pmb{\mGamma}) = V_{\mGamma^*}).
\end{equation}
In particular, $(V_\mGamma, d_{\mGamma,h})$ is isometrically embedded into 
$(V_{\mGamma^*}, d_{\mGamma^*, h^*})$
by $p \mapsto p/p$. 
\end{Prop}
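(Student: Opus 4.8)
The plan is to avoid constructing geodesics in $\mGamma^*$ by hand and instead read the distance off the orbit decomposition guaranteed by Lemma~\ref{lem:orbits} and formula (\ref{eqn:into}). Recall that by Theorem~\ref{thm:2subdivision} the $2$-subdivision $\pmb{\mGamma}^* = (\mGamma^*,o^*,h^*)$ is again a modular complex; in particular $\mGamma^*$ is modular and $h^*$ is a positive orbit-invariant function, so (\ref{eqn:into}) applies to $\mGamma^*$. First I would split $E_{\mGamma^*}$ into the set $N$ of \emph{numerator edges} $qq'/p$ (common denominator) and the set $D$ of \emph{denominator edges} $q/pp'$ (common numerator), so that $E_{\mGamma^*} = N \sqcup D$. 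The whole proof reduces to showing that $\{N,D\}$ is a partition of $E_{\mGamma^*}$ into \emph{orbit-unions}, after which (\ref{eqn:into}) computes the distance as a sum of two quotient distances.

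The key step is to show that projectivity in $\mGamma^*$ never mixes the two types. For this I would analyse an arbitrary $4$-cycle $C$ of $\mGamma^*$. Along $C$ the numerator moves by one $\mGamma$-edge on each $N$-edge and is fixed on each $D$-edge, and it must return to its start; since $\mGamma$ is bipartite this closed numerator walk has even length, so the number of $N$-edges in $C$ is even, and likewise the number of $D$-edges. As $|C|=4$, this leaves only the monochromatic cases (four $N$-edges over a common denominator $p$, with numerators forming a $4$-cycle inside ${\cal L}_p^+$, and the symmetric all-$D$ case) and the mixed case. In the mixed case the two $N$-edges cannot be adjacent — otherwise the numerator would traverse two $\mGamma$-edges and then stay fixed, forcing it to backtrack and repeat a vertex of $C$ — so the two $N$-edges are opposite, and likewise the two $D$-edges. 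Hence in every $4$-cycle opposite edges have the same type; propagating this along a projectivity sequence (consecutive edges being opposite edges of a common $4$-cycle) shows projective edges share their type. Therefore each orbit of $\mGamma^*$ lies entirely in $N$ or entirely in $D$, and $\{N,D\}$ is the required partition into orbit-unions.

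Next I would identify the two quotient graphs. Contracting the edges not in $N$ (that is, the $D$-edges) merges, for each fixed numerator $q$, the set $\{q/p : p \sqsubseteq q\}$, which is spanned by $D$-edges and is connected because ${\cal L}_q^-$ is convex, hence connected, by Theorem~\ref{thm:lattice}. Thus the vertices of $\mGamma^*/N$ correspond to numerators $q \in V_{\mGamma}$, each numerator edge $qq'/p$ descends to $qq'$ (multiple edges being identified), and $\mGamma^*/N \cong \mGamma$; since $h^*(qq'/p) = h(qq')/2$, the induced length is $h/2$, giving $d_{\mGamma^*/N,h^*}\bigl((q/p)/N,(q'/p')/N\bigr) = \tfrac12 d_{\mGamma,h}(q,q')$. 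The symmetric computation, contracting $N$ and using convexity of ${\cal L}_p^+$, yields $\mGamma^*/D \cong \mGamma$ and $d_{\mGamma^*/D,h^*}\bigl((q/p)/D,(q'/p')/D\bigr) = \tfrac12 d_{\mGamma,h}(p,p')$. Applying (\ref{eqn:into}) with the partition $\{N,D\}$ then gives
\[
d_{\mGamma^*,h^*}(q/p,q'/p') = d_{\mGamma^*/N,h^*}\bigl((q/p)/N,(q'/p')/N\bigr) + d_{\mGamma^*/D,h^*}\bigl((q/p)/D,(q'/p')/D\bigr) = \tfrac12 d_{\mGamma,h}(q,q') + \tfrac12 d_{\mGamma,h}(p,p'),
\]
which is exactly (\ref{eqn:isometric_embedding}); the ``in particular'' claim follows by setting $p=q$ and $p'=q'$. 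The main obstacle is the orbit-union claim, i.e.\ that projectivity preserves the numerator/denominator type, which is where the $4$-cycle analysis is needed; everything else is bookkeeping with the two contraction isomorphisms, and the whole argument rests on Theorem~\ref{thm:2subdivision} for the modularity and orbit-invariance of $\pmb{\mGamma}^*$.
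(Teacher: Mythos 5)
Your argument is correct, but it takes a genuinely different route from the paper's. The paper first establishes the unit-weight case of (\ref{eqn:isometric_embedding}) as Lemma~\ref{lem:atleast}, by a direct induction on $d(p,p')+d(q,q')$ that builds explicit paths in $\mGamma^*$ using gates, medians and the quadrangle condition; it then deduces the weighted case from Lemma~\ref{lem:shortest} (shortest paths for the unit length remain shortest for any orbit-invariant $h$). You instead take Theorem~\ref{thm:2subdivision} as given, verify via the $4$-cycle classification that the numerator edges $N$ and denominator edges $D$ form a partition of $E_{\mGamma^*}$ into orbit-unions (your parity argument is a slightly roundabout version of the paper's observation, in the proof of Theorem~\ref{thm:2subdivision}, that every $4$-cycle of $\mGamma^*$ has one of three explicit forms, in each of which opposite edges have the same type), identify $\mGamma^*/N$ and $\mGamma^*/D$ with $\mGamma$ carrying length $h/2$ using the convexity of ${\cal L}^-_q$ and ${\cal L}^+_p$ from Theorem~\ref{thm:lattice}, and then read off the formula from (\ref{eqn:into}). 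The checks all go through: $D$-edges preserve numerators and connect all of $\{q/p : p \sqsubseteq q\}$, every edge $qq'$ of $\mGamma$ lifts to at least one $N$-edge (e.g.\ $qq'/q$ when $q\swarrow q'$), and $h^*$ is constant on the preimage of each quotient edge. Your approach is more conceptual and explains *why* the answer is the average of the two coordinate distances — the two coordinates live in complementary orbit-unions — but note that it cannot dispense with Lemma~\ref{lem:atleast} altogether, since that lemma is the engine of the paper's proof of Theorem~\ref{thm:2subdivision}, on which your argument (modularity and orbit-invariance of $\pmb{\mGamma}^*$, hence the applicability of (\ref{eqn:into})) crucially rests.
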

This proposition will be proved in Section~\ref{subsub:proof_2subdivision}.

\paragraph{Product of modular complexes.}
Suppose that 
we are given two modular complexes 
$\pmb{\mGamma} = (\mGamma,o,h)$ and $\pmb{\mGamma}' =(\mGamma',o',h')$.
Then the Cartesian product $\mGamma \times \mGamma'$ 
is also modular.
Furthermore, define 
the orientation $o \times o'$ of $\mGamma \times \mGamma'$
as: 
$(p,p') \swarrow_{o \times o'} (p,q')$ if 
$p' \swarrow_{o'} q'$
and $(p,p') \swarrow_{o \times o'} (q,p')$ 
if $p \swarrow_{o} q$.
Then $o \times o'$ is an admissible orientation.
Similarly define $h \times h'$ 
by $(h \times h')((p,p')(q,p')) := h(pq)$ 
and $(h \times h')((p,p')(p,q')) := h'(p'q')$, 
which is orbit-invariant in $\mGamma \times \mGamma'$.
Thus we obtain a new modular complex 
$\pmb{\mGamma} \times \pmb{\mGamma}' := (\mGamma \times \mGamma', o \times o', h \times h')$, 
which is called the {\em product} 
of $\pmb{\mGamma}$ and $\pmb{\mGamma}'$. 
\begin{Lem}
$(p,p') \sqsubseteq_{o \times o'} (q,q')$ 
if and only if $p \sqsubseteq_{o} q$ and $p' \sqsubseteq_{o'} q'$.
\end{Lem}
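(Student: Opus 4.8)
The plan is to reduce the statement to the lattice-theoretic characterization of Boolean pairs provided by Proposition~\ref{prop:modularlattice}~(2), namely that $a \sqsubseteq b$ holds exactly when $a \preceq b$ and the interval $[a,b]$ is a complemented modular lattice. First I would record that the order $\preceq_{o\times o'}$ on $V_{\mGamma \times \mGamma'}$ is simply the componentwise order: since $\swarrow_{o\times o'}$ relates only vertices differing in one coordinate, and does so according to $\swarrow_{o}$ or $\swarrow_{o'}$ in that coordinate, its transitive closure satisfies $(a,a')\preceq_{o\times o'}(b,b')$ if and only if $a \preceq_o b$ and $a' \preceq_{o'} b'$. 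In particular $(p,p')\preceq_{o\times o'}(q,q')$ iff $p\preceq_o q$ and $p'\preceq_{o'} q'$, so the ``$\preceq$'' prerequisite implicit on both sides of the claimed equivalence already matches.

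Next I would show that the interval factorizes, $[(p,p'),(q,q')] = [p,q]\times[p',q']$ as lattices. Because the Cartesian product metric adds, $d_{\mGamma\times\mGamma'}((a,a'),(b,b')) = d_{\mGamma}(a,b)+d_{\mGamma'}(a',b')$, the metric interval factorizes: membership $(z,z')\in I((p,p'),(q,q'))$ forces both triangle-inequality slacks to vanish, whence $I((p,p'),(q,q')) = I(p,q)\times I(p',q')$. By Proposition~\ref{prop:modularlattice}~(1), applied to the product complex and to each factor, these metric intervals coincide with the order intervals $[(p,p'),(q,q')]$, $[p,q]$, and $[p',q']$, which are modular lattices; combined with the componentwise description of $\preceq_{o\times o'}$ this identifies the product lattice structure with $[p,q]\times[p',q']$.

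Finally I would invoke the elementary fact that a product $L_1\times L_2$ of lattices is a complemented modular lattice if and only if both $L_1$ and $L_2$ are. Modularity passes to and from products componentwise. For complementation, the top and bottom of $L_1\times L_2$ are $(\mathbf 1,\mathbf 1)$ and $(\mathbf 0,\mathbf 0)$, and $(a,b)$ has a complement exactly when $a$ and $b$ have complements in their respective factors; taking $b=\mathbf 0$ (resp.\ $a=\mathbf 0$) shows that every element of each factor must then be complemented. Chaining the three steps gives $(p,p')\sqsubseteq_{o\times o'}(q,q')$ $\iff$ $[(p,p'),(q,q')]=[p,q]\times[p',q']$ is a complemented modular lattice $\iff$ $[p,q]$ and $[p',q']$ are both complemented modular lattices $\iff$ $p\sqsubseteq_o q$ and $p'\sqsubseteq_{o'} q'$, which is the assertion (the degenerate cases $p=q$ or $p'=q'$ being covered since a one-element interval is trivially a complemented modular lattice).

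The main obstacle I anticipate is the bookkeeping in the second step: carefully verifying that the restriction of $\preceq_{o\times o'}$ to the product interval agrees with the product of the two factor orders, so that the metric factorization $I(p,q)\times I(p',q')$ is genuinely the same \emph{lattice} as the order interval $[(p,p'),(q,q')]$ on which Proposition~\ref{prop:modularlattice}~(2) is applied. Once this identification is pinned down, the complemented-modular product lemma finishes the argument with no further difficulty.
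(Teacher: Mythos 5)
Your proposal is correct and follows essentially the same route as the paper: the paper's proof is precisely the observation that $[(p,p'),(q,q')] \simeq [p,q]\times[p',q']$ together with Proposition~\ref{prop:modularlattice}~(2) and the fact that a product of lattices is complemented modular iff both factors are. You merely spell out the details the paper leaves implicit (the componentwise description of $\preceq_{o\times o'}$ and the metric factorization of the interval), all of which check out.
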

\begin{proof}
Since $[(p,p'),(q,q')] \simeq [p,q] \times [p',q']$,
$[(p,p'),(q,q')]$ is complemented modular if and only if 
both $[p,q]$ and $[p',q']$ are complemented modular.
Thus, by Proposition~\ref{prop:modularlattice}, we have the claim.  
\end{proof}
In particular the correspondence 
${\cal B}(\pmb{\mGamma} \times \pmb{\mGamma}') \ni 
(q,q')/(p,p') \mapsto (q/p,q'/p') \in 
{\cal B}(\pmb{\mGamma}) \times {\cal B}(\pmb{\mGamma}')$
is bijective, and we can regard 
\[
{\cal B}(\pmb{\mGamma} \times \pmb{\mGamma}') 
= {\cal B}(\pmb{\mGamma}) \times {\cal B}(\pmb{\mGamma}').
\]
Under this correspondence, the product operation 
and the 2-subdivision operation commute in the following sense.
\begin{Lem}\label{lem:product}
\begin{itemize}
\item[{\rm (1)}] ${\cal L}^s_{(p,p')}(\pmb{\mGamma} \times \pmb{\mGamma}') 
= {\cal L}^s_p(\pmb{\mGamma}) \times {\cal L}^s_{p'}( \pmb{\mGamma}')$ 
for $s \in \{-,+\}$.
\item[{\rm (2)}] $(\pmb{\mGamma} \times \pmb{\mGamma}')^* = \pmb{\mGamma}^* \times {\pmb{\mGamma}'}^*$.
\item[{\rm (3)}] ${\cal L}^*_{(p,p')}(\pmb{\mGamma} \times \pmb{\mGamma}') 
= {\cal L}^*_p(\pmb{\mGamma}) \times {\cal L}^*_{p'}( \pmb{\mGamma}')$.
\end{itemize}
\end{Lem}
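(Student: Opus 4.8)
The plan is to establish the three parts in the order (1), (2), (3): part (1) is essentially a reformulation of the immediately preceding lemma, part (2) is the only part carrying real content, and part (3) then follows by unwinding definitions and invoking (1) and (2). For part (1), take $s=+$ (the case $s=-$ being symmetric, via the reversed order). By definition $(q,q') \in {\cal L}^+_{(p,p')}(\pmb{\mGamma} \times \pmb{\mGamma}')$ iff $(p,p') \sqsubseteq_{o \times o'} (q,q')$, which by the preceding lemma characterizing $\sqsubseteq_{o \times o'}$ holds iff $p \sqsubseteq_o q$ and $p' \sqsubseteq_{o'} q'$, i.e.\ iff $q \in {\cal L}^+_p(\pmb{\mGamma})$ and $q' \in {\cal L}^+_{p'}(\pmb{\mGamma}')$. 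This identifies the underlying sets; the partial order on each side is the restriction of $\preceq_{o \times o'}$, which is the product of $\preceq_o$ and $\preceq_{o'}$, and the valuations agree since the valuation of a product complex is the sum of the factor valuations. Hence the equality holds as valued modular semilattices.

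For part (2) the vertex-set identification ${\cal B}(\pmb{\mGamma} \times \pmb{\mGamma}') = {\cal B}(\pmb{\mGamma}) \times {\cal B}(\pmb{\mGamma}')$ via $(q,q')/(p,p') \mapsto (q/p, q'/p')$ is already in place (it was recorded just before the lemma). It remains to check that the edge set, the orientation $o^*$, and the length $h^*$ of $(\pmb{\mGamma} \times \pmb{\mGamma}')^*$ coincide with those of $\pmb{\mGamma}^* \times {\pmb{\mGamma}'}^*$ under this identification. Each edge of $\mGamma^* \times {\mGamma'}^*$ fixes one of the two coordinates $q/p$, $q'/p'$ and moves the other along an edge of $\mGamma^*$ (resp.\ ${\mGamma'}^*$), and each such factor-edge is either a numerator edge $qr/p$ (with $qr \in E_{\mGamma}$) or a denominator edge $q/ps$ (with $ps \in E_{\mGamma}$); this yields four edge types. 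I would expand the defining conditions of adjacency, of $\swarrow_{o^*}$, and of $h^*$ in $(\pmb{\mGamma} \times \pmb{\mGamma}')^*$ for each type and match them term by term. For instance, a first-coordinate numerator edge from $(q,q')/(p,p')$ to $(r,q')/(p,p')$ is an edge of $(\mGamma \times \mGamma')^*$ with $(q,q')(r,q') \in E_{\mGamma \times \mGamma'}$, carries length $(h \times h')(\,(q,q')(r,q')\,)/2 = h(qr)/2 = h^*(qr/p)$, and is oriented by whether $q \swarrow_o r$ --- exactly as the corresponding edge of $\mGamma^* \times {\mGamma'}^*$. The remaining three types are verified identically after interchanging numerator with denominator (which flips the relevant orientation comparison, matching the $s \swarrow_o p$ convention in $o^*$) and interchanging the two coordinates.

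Part (3) then follows by unwinding the definition ${\cal L}^*_q := {\cal L}^+_{q/q}((\,\cdot\,)^*)$: we get ${\cal L}^*_{(p,p')}(\pmb{\mGamma} \times \pmb{\mGamma}') = {\cal L}^+_{(p,p')/(p,p')}((\pmb{\mGamma} \times \pmb{\mGamma}')^*)$, which by part (2) equals ${\cal L}^+_{(p/p,\, p'/p')}(\pmb{\mGamma}^* \times {\pmb{\mGamma}'}^*)$, and applying part (1) to the complexes $\pmb{\mGamma}^*$ and ${\pmb{\mGamma}'}^*$ at the vertex $(p/p, p'/p')$ gives ${\cal L}^+_{p/p}(\pmb{\mGamma}^*) \times {\cal L}^+_{p'/p'}({\pmb{\mGamma}'}^*) = {\cal L}^*_p(\pmb{\mGamma}) \times {\cal L}^*_{p'}(\pmb{\mGamma}')$. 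The main obstacle is entirely bookkeeping in part (2): one must keep straight the two-level indexing (a Boolean pair over a product is a pair of Boolean pairs), the two kinds of subdivision edges in each coordinate, and the sign convention in $o^*$ together with the halving in $h^*$. No structural input beyond the preceding $\sqsubseteq_{o \times o'}$ lemma and the already-recorded identification of Boolean pairs is required.
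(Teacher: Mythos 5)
Your proposal is correct and follows essentially the same route as the paper: (1) from the preceding characterization of $\sqsubseteq_{o \times o'}$, (2) by matching the adjacency (equivalently, the condition that exactly one of the four component vertices moves along one edge) together with the orientation and length conventions, and (3) by combining (2) with (1) applied to the subdivided complexes. You are in fact somewhat more explicit than the paper, which for (2) only records the adjacency condition and for (3) simply writes ``follows from (2)''; your added checks of $o^*$, $h^*$, and the use of (1) on $\pmb{\mGamma}^*$, ${\pmb{\mGamma}'}^*$ are exactly the routine details being suppressed there.
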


\begin{proof}
(1) follows from the previous lemma.
(2) follows from the fact that 
$(q,q')/(p,p')$ and $(v,v')/(u,u')$ have an edge in 
$(\mGamma \times \mGamma')^{*}$ if and only if
$d_{\mGamma}(q,v) + d_{\mGamma'}(q',v') + d_{\mGamma}(p,u) + d_{\mGamma'}(p',u') = 1$, 
which is equivalent to the condition that
$(q/p, q'/p')$ and $(v/u, v'/u')$ have an edge in 
$\mGamma^{*} \times \mGamma'^{*}$.
(3) follows from (2).
\end{proof}

\subsection{L-convex function on modular complex}\label{subsec:L-convex}
We are ready to introduce 
the concept of an L-convex function on 
a modular complex $\pmb{\mGamma} =  (\mGamma,o,h)$.
Consider the $2$-subdivision $\pmb{\mGamma}^{*}$ of $\pmb{\mGamma}$.
For a function $g: V_{\mGamma} \to \RR$, 
define $\overline{g}: V_{\mGamma^*} \to \RR$ by
\begin{equation}\label{eqn:Lovasz}
\overline{g}(q/p) := \frac{g(p) + g(q)}{2}  \quad 
(q/p \in {\cal B}(\pmb{\mGamma}) = V_{\mGamma^*}).
\end{equation}
This is the restriction of 
the Lov\'asz extension of $g$; 
see the introduction for the Lov\'asz extension.
By restricting $\overline{g}$ to 
neighborhood semilattices,
we obtain functions on (complemented) modular semilattices 
${\cal L}^*_p$ for each vertex $p$.

An {\em L-convex function} on $\pmb{\mGamma}$ 
is a function $g: V_{\mGamma} \to \RR$ 
such that for each vertex $p \in V_{\mGamma}$
the restriction of $\overline{g}$ to ${\cal L}^*_p$ is submodular on ${\cal L}^*_{p}$ 
(with respect to the valuation $v^*_p$).
Corresponding to Theorem~\ref{thm:d_is_submodular}, 
the distance function $d_{\mGamma,h}$ is L-convex on $\pmb{\mGamma} \times \pmb{\mGamma}$,
which is one of the most important properties for our application to 0-extension problem.
\begin{Thm}\label{thm:d_is_L-convex}
For a modular complex $\pmb{\mGamma} = (\mGamma,o,h)$, 
the distance function $d_{\mGamma,h}$ is an L-convex function 
on $\pmb{\mGamma} \times \pmb{\mGamma}$.
\end{Thm}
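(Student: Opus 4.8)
The plan is to verify the definition of L-convexity directly: I must show that for every vertex $(p,p')$ of $\mGamma\times\mGamma$ the restriction of $\overline{d_{\mGamma,h}}$ to the neighborhood semilattice ${\cal L}^*_{(p,p')}(\pmb{\mGamma}\times\pmb{\mGamma})$ is submodular with respect to the valuation $v^*_{(p,p')}$. The first step is to rewrite this local object purely inside $\pmb{\mGamma}^*$. By Lemma~\ref{lem:product}(2),(3) we have $(\pmb{\mGamma}\times\pmb{\mGamma})^* = \pmb{\mGamma}^*\times\pmb{\mGamma}^*$ and ${\cal L}^*_{(p,p')}(\pmb{\mGamma}\times\pmb{\mGamma}) = {\cal L}^*_p(\pmb{\mGamma})\times{\cal L}^*_{p'}(\pmb{\mGamma})$, and the valuation splits as the corresponding sum. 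Moreover, unfolding (\ref{eqn:Lovasz}) and applying Proposition~\ref{prop:1/2} to the two Boolean pairs that constitute a vertex of $(\mGamma\times\mGamma)^*$ yields the identity
\[
\overline{d_{\mGamma,h}}(w,w') = d_{\mGamma^*,h^*}(w,w') \quad (w,w' \in V_{\mGamma^*}),
\]
so that the Lov\'asz extension of $d_{\mGamma,h}$ is exactly the two-variable distance function of $\pmb{\mGamma}^*$. Writing $A := {\cal L}^*_p(\pmb{\mGamma})$ and $A' := {\cal L}^*_{p'}(\pmb{\mGamma})$, which are complemented modular subsemilattices, convex in $\mGamma^*$, the desired statement becomes: the restriction of $d_{\mGamma^*,h^*}$ to the product $A\times A'$ is submodular.

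Second, since this restriction is finite-valued, I would apply Proposition~\ref{prop:2section}, which reduces submodularity on the product $A\times A'$ to (i) the submodularity inequality on $2$-bounded pairs and (ii) the $\wedge$-convexity inequality on pairs with one coordinate fixed and the other antipodal. All of the ``one-coordinate'' cases follow from a single observation: for a fixed $c \in A'$, the function $z \mapsto d_{\mGamma^*,h^*}(z,c)$ is submodular on $A$. Indeed $A$ is convex, hence gated, in the modular graph $\mGamma^*$ (Lemma~\ref{lem:2convexity}, Theorem~\ref{thm:2subdivision}); with $c^* := \pr_A(c)$ the gate property gives $d_{\mGamma^*,h^*}(z,c) = d_{\mGamma^*,h^*}(z,c^*) + d_{\mGamma^*,h^*}(c^*,c)$, where the last term is constant and $z \mapsto d_{\mGamma^*,h^*}(z,c^*)$ is submodular on $A$ by Theorem~\ref{thm:d_is_submodular} applied to the modular semilattice $A$ together with Lemma~\ref{lem:properties_submo}(3),(1). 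The symmetric statement holds with the roles of $A,A'$ exchanged. This settles (ii) completely, as well as every $2$-bounded pair in (i) in which only one coordinate moves.

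The remaining case, which I expect to be the main obstacle, is a $2$-bounded pair $((x,x'),(y,y'))$ in which both coordinates move. A product rank computation forces one coordinate to rise by a covering step and the other to fall by a covering step, say $y$ covers $x$ in $A$ and $x'$ covers $y'$ in $A'$; then the meet is $(x,y')$, the join is $(y,x')$, and the required inequality is the four-point inequality
\[
d_{\mGamma^*,h^*}(x,x') + d_{\mGamma^*,h^*}(y,y') \geq d_{\mGamma^*,h^*}(x,y') + d_{\mGamma^*,h^*}(y,x').
\]
Because $\mGamma^*$ is bipartite and $h^*$ is orbit-invariant, for the edge $xy$ exactly one of $x,y$ lies on a shortest path to any prescribed vertex, so each side differs from the other by a controlled multiple of $h^*(xy)$; a short check shows the inequality can fail only in the single configuration where $x$ is strictly closer to $x'$ while $y$ is strictly closer to $y'$. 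The plan is to exclude precisely this configuration. For this I would pass to the gate maps $\pr_{A'}$ and $\pr_A$, which by Theorem~\ref{thm:gatedset} restrict to mutually inverse isometries between the contact sets $B' := \pr_{A'}(A)$ and $B := \pr_A(A')$, and combine them with the order structure: $A$ and $A'$ are the principal up-sets of $\pmb{\mGamma}^*$ at $p/p$ and $p'/p'$, and $x \preceq_* y$, $y' \preceq_* x'$ are comparabilities induced by the admissible orientation $o^*$. Tracking how the covering edges $xy$ and $x'y'$ behave under projection, and using that projection onto a convex set respects the order and the edge orbits (Lemmas~\ref{lem:commonorbit} and \ref{lem:interval}(3)), one sees that the offending configuration would make the images of $xy$ and $x'y'$ incompatible with the two comparabilities. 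This contradiction yields the four-point inequality and, together with the cases above, completes the proof.
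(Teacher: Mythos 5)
Your proof follows the same skeleton as the paper's: pass to the $2$-subdivision, use Lemma~\ref{lem:product} and Proposition~\ref{prop:1/2} to identify $\overline{d_{\mGamma,h}}$ with $d_{\mGamma^*,h^*}$ under ${\cal B}(\pmb{\mGamma}\times\pmb{\mGamma})={\cal B}(\pmb{\mGamma})\times{\cal B}(\pmb{\mGamma})$, and reduce via Proposition~\ref{prop:2section} to checking the two-variable distance on ${\cal L}^*_p\times{\cal L}^*_{p'}$. The differences lie in how the individual inequalities are verified. For the single-coordinate cases you use gatedness of $A={\cal L}^*_p$ to write $d(z,c)=d(z,\pr_A(c))+\mbox{const}$ and then cite Theorem~\ref{thm:d_is_submodular} with Lemma~\ref{lem:properties_submo}(1),(3); the paper instead proves these inequalities directly (a median/interval computation via Lemma~\ref{lem:I(p,q)} for the antipodal case, and a case analysis on $\pr_{[p\wedge q,p\vee q]}(u)$ for the $2$-bounded case). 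Your route is shorter, but be aware that in the paper Theorem~\ref{thm:d_is_submodular} is itself \emph{derived} from essentially the lemma you are proving, so the shortcut is non-circular only because that theorem has an independent proof; you are relocating the work rather than avoiding it. For the mixed $2$-bounded case, your observation that bipartiteness forces $d(\cdot,z)$ to change by exactly $\pm h^*(xy)$ across the edge $xy$, so that the four-point inequality can fail only when $x$ is strictly closer to $x'$ and $y$ strictly closer to $y'$, is a clean repackaging of the paper's case $(2')$: in that configuration one gets $d(x,x')=d(y,y')$ and $d(x,y')=d(x,x')+h^*(xy)=d(y,x')$ (in the unit-length metric, $\pm1$), which are precisely the hypotheses of Lemma~\ref{lem:commonorbit}(2), and that lemma forces $x'y'$ to be oriented parallel to $xy$, contradicting $x\prec y$ and $y'\prec x'$. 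The one execution detail to fix is your choice of gated sets: projecting the full semilattices $A$ and $A'$ onto each other via $\pr_A,\pr_{A'}$ does not control where the particular edges $xy$ and $x'y'$ land; you should either project between the two-element gated sets $\{x,y\}$ and $\{x',y'\}$ as the paper does, or simply apply Lemma~\ref{lem:commonorbit}(2) directly to the distance data above. With that adjustment the argument is complete and correct.
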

The proof of this theorem is given in Section~\ref{subsub:d_is_L-convex}.
Corresponding to Lemma~\ref{lem:properties_submo}, we obtain:
\begin{Lem}\label{lem:properties_L-convex}
Let $\pmb{\mGamma}, \pmb{\mGamma}', \pmb{\mLambda}$, and  $\pmb{\mLambda}'$
be modular complexes, and let $g$ and $g'$ be L-convex functions on $\pmb{\mGamma}$.
\begin{itemize}
\item[{\rm (1)}] For $b \in \RR$ and $c,c' \in \RR_+$, 
$b + c g + c' g'$ is an L-convex function on 
$\pmb{\mGamma}$. 
\item[{\rm (2)}] A function $\tilde g$ defined by
\[
\tilde g(p,p') := g(p) \quad ((p,p') \in V_{\mGamma \times \mGamma'})
\]
is an L-convex function on ${\pmb{\mGamma}} \times {\pmb{\mGamma}}'$.
\item[{\rm (3)}] Suppose that ${\pmb{\mGamma}} = {\pmb{\mLambda}} \times {\pmb{\mLambda}}'$.
For $p' \in V_{\mLambda'}$, a function $g_{p'}$ 
defined by 
\[
g_{p'}(p) := g(p,p') \quad (p \in V_{\mLambda})
\]
is an L-convex function on $\pmb{\mLambda}$.
\end{itemize}
\end{Lem}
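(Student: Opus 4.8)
The plan is to reduce each of the three assertions to the corresponding statement for submodular functions on modular semilattices, namely Lemma~\ref{lem:properties_submo}, applied to the neighborhood semilattices ${\cal L}^*_p$ that enter the definition of L-convexity. The bridge is that for a product of modular complexes the $2$-subdivision, the neighborhood semilattices, and their valuations all factor through the product: by Lemma~\ref{lem:product}~(2),(3) we have $(\pmb{\mGamma}\times\pmb{\mGamma}')^* = \pmb{\mGamma}^* \times {\pmb{\mGamma}'}^*$ and ${\cal L}^*_{(p,p')}(\pmb{\mGamma}\times\pmb{\mGamma}') = {\cal L}^*_p(\pmb{\mGamma}) \times {\cal L}^*_{p'}(\pmb{\mGamma}')$, and since shortest-path distances add over Cartesian products, the valuation $v^*_{(p,p')}$ defined in (\ref{eqn:valuations}) is exactly the sum of $v^*_p$ and $v^*_{p'}$ in the sense of (\ref{eqn:valuation_sum}). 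This compatibility of valuations is the one nonroutine point: it guarantees that submodularity on ${\cal L}^*_{(p,p')}$ and submodularity on each factor refer to the matching valuations, so that Lemma~\ref{lem:properties_submo} can be invoked verbatim.

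For (1), I would first observe that the extension $g \mapsto \overline{g}$ of (\ref{eqn:Lovasz}) is affine, so that $\overline{b + cg + c'g'} = b + c\overline{g} + c'\overline{g'}$, and hence its restriction to each ${\cal L}^*_p$ equals $b + c(\overline{g}|_{{\cal L}^*_p}) + c'(\overline{g'}|_{{\cal L}^*_p})$. Since $g$ and $g'$ are L-convex, both restrictions are submodular on ${\cal L}^*_p$, and Lemma~\ref{lem:properties_submo}~(1) shows that the nonnegative combination plus constant is again submodular there. As $p$ is arbitrary, $b+cg+c'g'$ is L-convex.

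For (2) and (3) the common device is a direct computation of $\overline{\tilde g}$ and $\overline{g_{p'}}$ under the identification ${\cal B}(\pmb{\mGamma}\times\pmb{\mGamma}')={\cal B}(\pmb{\mGamma})\times{\cal B}(\pmb{\mGamma}')$. For (2), $\overline{\tilde g}((q,q')/(p,p')) = (g(p)+g(q))/2 = \overline{g}(q/p)$, so on ${\cal L}^*_{(p,p')} = {\cal L}^*_p \times {\cal L}^*_{p'}$ the function $\overline{\tilde g}$ depends only on the first coordinate and coincides with $\overline{g}|_{{\cal L}^*_p}$; it is thus the ``ignore-a-factor'' lift of a submodular function, and Lemma~\ref{lem:properties_submo}~(2), together with the product valuation noted above, gives submodularity on ${\cal L}^*_{(p,p')}$, so $\tilde g$ is L-convex. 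For (3), writing $\pmb{\mGamma} = \pmb{\mLambda}\times\pmb{\mLambda}'$ and fixing $p'$, the same computation gives $\overline{g_{p'}}(q/p) = \overline{g}(q/p,\, p'/p')$, where $p'/p'$ is the minimum element of ${\cal L}^*_{p'}(\pmb{\mLambda}')$. Hence $\overline{g_{p'}}|_{{\cal L}^*_p(\pmb{\mLambda})}$ is obtained from the submodular function $\overline{g}|_{{\cal L}^*_{(p,p')}}$ by fixing the second coordinate to $p'/p'$; since the summand valuation in the sense of (\ref{eqn:valuation_summand}) agrees with $v^*_p(\pmb{\mLambda})$ (its value at the minimum $p'/p'$ being $0$), Lemma~\ref{lem:properties_submo}~(3) applies and yields L-convexity of $g_{p'}$.

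The main obstacle is not any single inequality---those are supplied by Lemma~\ref{lem:properties_submo}---but the bookkeeping that the valuation $v^*_{(p,p')}$ on the product neighborhood semilattice decomposes as the sum of the factor valuations in precisely the form demanded by the product and restriction statements of Lemma~\ref{lem:properties_submo}. This rests on Lemma~\ref{lem:product}~(2) and the additivity of the metric $d_{\mGamma^*,h^*}$ over Cartesian products; once it is in place, all three parts follow immediately.
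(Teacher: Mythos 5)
Your proposal is correct and follows essentially the same route as the paper: all three parts are reduced to Lemma~\ref{lem:properties_submo} via the identity $\overline{\tilde g}((q,q')/(p,p'))=\overline{g}(q/p)$ for (2), $\overline{g_{p'}}(q/p)=\overline{g}(q/p,p'/p')$ for (3), and the identification ${\cal L}^*_{(p,p')}={\cal L}^*_p\times{\cal L}^*_{p'}$ from Lemma~\ref{lem:product}. Your explicit check that the valuation $v^*_{(p,p')}$ decomposes as the sum of the factor valuations in the sense of (\ref{eqn:valuation_sum}) is a detail the paper leaves implicit, and it is correctly justified.
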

\begin{proof}
(1) follows from Lemma~\ref{lem:properties_submo}~(1).

(2). $\overline{(\tilde g)}(q/p, q'/p') 
= \{ \tilde g(p,p') + \tilde g(q,q') \}/2 = \{g(p) + g(q)\}/2 = \overline{g}(q/p)$.
Therefore, by Lemma~\ref{lem:properties_submo}~(2), 
function $(q/p, q'/p') \mapsto \overline{g}(q/p)$ is submodular on 
${\cal L}^*_{p}(\pmb{\mGamma}) \times {\cal L}^*_{p'}(\pmb{\mGamma}')
={\cal L}^*_{(p,p')}(\pmb{\mGamma} \times \pmb{\mGamma}')$.

(3). $\overline{g_{p'}}(q/p) = \{ g(p,p') + g(q, p') \}/2 = \overline{g} (q/p, p'/p')$.
Since $\overline{g}$ is submodular on 
${\cal L}_{p}^*(\pmb{\mLambda}) \times {\cal L}_{p'}^*(\pmb{\mLambda}')$,
by Lemma~\ref{lem:properties_submo}~(3),  
$\overline{g_{p'}}$ is submodular on ${\cal L}_{p}^*(\pmb{\mLambda})$.
\end{proof}
The restrictions of an L-convex function to ${\cal L}_p^{+}$ and to ${\cal L}_p^{-}$ are submodular.
\begin{Lem}\label{lem:submo_L_p}
An L-convex function $g$ on $\pmb{\mGamma}$
is submodular on ${\cal L}^+_p$  and on ${\cal L}_p^-$ for each vertex $p$.
\end{Lem}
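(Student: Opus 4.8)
The plan is to realize ${\cal L}^+_p$ as a convex subsemilattice of the neighborhood semilattice ${\cal L}^*_p$, on which $\overline{g}$ is submodular directly from the definition of L-convexity, and then to pull submodularity back through this identification by means of the restriction principle in Lemma~\ref{lem:properties_submo}~(4). The case of ${\cal L}^-_p$ will follow by the mirror-image construction.

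First I would consider the map $\iota\colon {\cal L}^+_p \to {\cal L}^*_p$ defined by $\iota(q) := q/p$. For $q \in {\cal L}^+_p$ we have $p \sqsubseteq q$, so $q/p$ is a Boolean pair and, by (\ref{eqn:p'pqq'}), $p/p \preceq_* q/p$; hence $\iota(q) \in {\cal L}^*_p$. Using (\ref{eqn:p'pqq'}) one checks that $\iota$ is order preserving and order reflecting and that the meet of $q/p$ and $q'/p$ inside ${\cal L}^*_p$ is $(q \wedge q')/p$, so that $\iota$ is an isomorphism of semilattices onto its image. To see that this image is convex, I would invoke Proposition~\ref{prop:1/2}: a vertex $w/z$ lies in the metric interval $I(q/p,q'/p)$ of $\mGamma^*$ precisely when $2d_{\mGamma,h}(p,z) + d_{\mGamma,h}(q,w) + d_{\mGamma,h}(w,q') = d_{\mGamma,h}(q,q')$, which by the triangle inequality forces $z = p$ and $w \in I(q,q')$ (the interval taken in $\mGamma$). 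Since ${\cal L}^+_p$ is convex in $\mGamma$ by Theorem~\ref{thm:lattice}, we get $I(q,q') \subseteq {\cal L}^+_p$, so the whole interval stays inside $\iota({\cal L}^+_p)$; thus $\iota({\cal L}^+_p)$ is convex in $\mGamma^*$, hence convex in the convex subgraph ${\cal L}^*_p$.

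Next I would transport submodularity. By the definition of L-convexity, $\overline{g}$ is submodular on ${\cal L}^*_p$, so by Lemma~\ref{lem:properties_submo}~(4) its restriction to the convex set $\iota({\cal L}^+_p)$ is submodular. Now (\ref{eqn:Lovasz}) gives $\overline{g}(q/p) = \tfrac{1}{2}g(q) + \tfrac{1}{2}g(p)$, an affine function of $g|_{{\cal L}^+_p}$ with positive leading coefficient, while Proposition~\ref{prop:1/2} gives $v^*_p(q/p) = \tfrac{1}{2} v_p(q)$, so $\iota$ carries the valuation $v_p$ to $\tfrac{1}{2} v^*_p$ up to an additive constant. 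The $(p,q)$-envelope ${\cal E}(p,q)$ and the coefficients $[C(u;p,q)]$ depend on the valuation only through $\Conv I(p,q)$ up to a positive dilation, hence are unchanged by scaling the valuation by $\tfrac{1}{2}$; and submodularity is preserved under positive scaling and the addition of a constant by Lemma~\ref{lem:properties_submo}~(1). Combining these, submodularity of $\overline{g}$ on $\iota({\cal L}^+_p)$ is equivalent to submodularity of $g$ on ${\cal L}^+_p$, which is the claim. For ${\cal L}^-_p$ I would run the identical argument with the embedding $u \mapsto p/u$ and the order reversed, using $v^*_p(p/u) = \tfrac{1}{2} v_p(u)$ in place of the above.

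The main obstacle is the convexity step: verifying that the metric interval in $\mGamma^*$ between two image points never leaves the image. This is exactly where both Proposition~\ref{prop:1/2} and the convexity of ${\cal L}^+_p$ in $\mGamma$ are indispensable. A secondary point requiring care is confirming that dilating the valuation by the factor $\tfrac{1}{2}$ leaves the submodularity inequality (\ref{eqn:gsubmo}) genuinely unchanged, which reduces to the scale-invariance of the normal-fan data of $\Conv I(p,q)$.
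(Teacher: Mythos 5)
Your argument is correct and follows essentially the same route as the paper's proof: identify ${\cal L}^+_p$ with the set $\{q/p\}\subseteq{\cal L}^*_p$, use Proposition~\ref{prop:1/2} to see that every vertex on a shortest path between $q/p$ and $q'/p$ has the form $u/p$ (hence the image is convex), apply Lemma~\ref{lem:properties_submo}~(4), and pull submodularity back through the affine relation $g(q)=2\overline{g}(q/p)-g(p)$. Your explicit check that rescaling the valuation by a positive constant leaves ${\cal E}(p,q)$ and the coefficients $[C(u;p,q)]$ unchanged is a detail the paper leaves implicit, and it is verified correctly.
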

\begin{proof}
Consider the set ${\cal L}_p^{*+}$ of elements in ${\cal L}_p^*$ of the form $q/p$. 
By Proposition~\ref{prop:1/2},
any vertex in any shortest path between $q/p$ and $q'/p$ is of the form $u/p$.
Hence ${\cal L}_p^{*+}$ is convex in $\mGamma$ and 
in ${\cal L}_p^{*}$. 
By Lemma~\ref{lem:properties_submo}~(4), 
$\overline{g}$ is submodular on ${\cal L}_p^{*+}$. 
Obviously ${\cal L}_p^{*+}$ is isomorphic to ${\cal L}_p^+$ by $q/p \mapsto q$.
By using relation $g(q) = 2 \overline{g}(q/p) - g(p)$  
$(q \in {\cal L}^+_p)$, 
we see the submodularity of $g$ on ${\cal L}^+_p$.
\end{proof}

\paragraph{L-optimality criterion.}

Consider minimization of L-convex functions 
on a modular complex $\pmb{\mGamma}$.
There is an optimality criterion that extends the L-optimality 
criterion of L$^{\natural}$-convex function in discrete convex analysis;
see \cite[Theorem 7.14]{MurotaBook}.
\begin{Thm}[{L-optimality criterion}]\label{thm:L-optimality}
Let $g$ be an L-convex function 
on a modular complex $\pmb{\mGamma}$.
For a vertex $p \in V_\mGamma$, 
the following conditions are equivalent:
\begin{itemize}
\item[{\rm (1)}] $g(p) \leq g(q)$ holds for every $q \in V_{\mGamma}$.
\item[{\rm (2)}] $g(p) \leq g(q)$ holds for every $q \in V_{\mGamma}$ 
with $p \sqsubseteq q$ or $q \sqsubseteq p$. That is
\[
g(p) = \min \{g(q) \mid q \in {\cal L}_p^+\} = \min \{ g(q) \mid  q \in {\cal L}_p^-\}.
\]
\end{itemize}   
\end{Thm}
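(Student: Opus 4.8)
The implication (1)$\Rightarrow$(2) is immediate. For (2)$\Rightarrow$(1) I would first reformulate (2) as a statement about the Lov\'asz extension $\overline g$ on the neighborhood semilattice. For any $v/u \in {\cal L}^*_p$ we have $p/p \sqsubseteq_* v/u$, so by (\ref{eqn:p'pqq'}) (recall $\sqsubseteq_*$ and $\preceq_*$ coincide) $u \preceq p \preceq v$; moreover $u \sqsubseteq v$ since $v/u$ is a Boolean pair. Applying (\ref{eqn:v}) to the Boolean pair $u \sqsubseteq v$ and the chain $u \preceq p \preceq p \preceq v$ yields $u \sqsubseteq p \sqsubseteq v$, that is $u \in {\cal L}^-_p$ and $v \in {\cal L}^+_p$. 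Hence (2) is exactly the assertion that $\overline g(v/u) = \tfrac{g(u)+g(v)}{2} \geq g(p) = \overline g(p/p)$ for every $v/u \in {\cal L}^*_p$; in words, (2) says that $p/p$ minimizes $\overline g$ over the neighborhood semilattice ${\cal L}^*_p$.

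Now assume (2) and suppose, for contradiction, that (1) fails; choose $q$ with $g(q) < g(p)$. Since ${\cal L}^-_p$ and ${\cal L}^+_p$ are convex, hence gated (Lemmas~\ref{lem:2convexity} and Theorem~\ref{thm:lattice}), put $u := \pr_{{\cal L}^-_p}(q)$ and $v := \pr_{{\cal L}^+_p}(q)$. From $p \in {\cal L}^-_p \cap {\cal L}^+_p$ and the gate property, $u,v \in I(p,q)$ and $u \preceq p \preceq v$; using the isometric embedding of Proposition~\ref{prop:1/2} and the geodesic structure of $\mGamma^*$ one checks that $v/u$ is a Boolean pair, so $v/u \in {\cal L}^*_p$ and is precisely the point at which a geodesic from $p/p$ to $q/q$ first leaves the neighborhood. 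By the reformulation above, $g(u) \geq g(p)$ and $g(v) \geq g(p)$.

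The heart of the argument is the geodesic midpoint inequality
$$ g(u) + g(v) \leq g(p) + g(q), $$
equivalently $\overline g(v/u) \leq \tfrac{g(p)+g(q)}{2}$. Granting it, combining with $g(u),g(v) \geq g(p)$ gives $2g(p) \leq g(u)+g(v) \leq g(p)+g(q)$, so $g(p) \leq g(q)$, contradicting $g(q) < g(p)$; this proves (2)$\Rightarrow$(1). I would establish the inequality by induction on $d_\mGamma(p,q)$. When $d_\mGamma(p,q) \leq 2$ the imprint $v/u$ already lies within ${\cal L}^*_p$ and the inequality is a single instance of submodularity of $\overline g$ on ${\cal L}^*_p$: the ordinary submodularity inequality (\ref{eqn:submodular}) if the relevant pair is bounded, or the $\wedge$-convexity inequality (\ref{eqn:^-convexity}) if it is antipodal. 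For the inductive step I would take a neighbor $q'$ of $q$ on a shortest $(p,q)$-path, produced by the quadrangle condition (Lemma~\ref{lem:quadrangle}), relate its imprint $v'/u'$ to $v/u$, apply the induction hypothesis to $(p,q')$, and absorb the last edge by one further submodular (or $\wedge$-convexity) inequality on a local semilattice containing the corresponding $2$-cell.

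The main obstacle is exactly this inductive step: confirming that the imprint $v/u$ is genuinely a Boolean pair of ${\cal L}^*_p$, and arranging the recursion so that passing from $q$ to $q'$ changes the imprint by a single covering step whose contribution is controlled by one inequality of the form (\ref{eqn:submodular}) or (\ref{eqn:^-convexity}). The case analysis---whether the last edge is an up- or down-edge for $o$, and whether the pair in play is bounded or antipodal---is where the modular-semilattice machinery of Section~\ref{sec:submo} (the fractional join and the characterization of Theorem~\ref{thm:submo}) is really needed, and one must track the positivity and orbit-invariance of $h$ (which make $v^*_p$ a valuation, Lemma~\ref{lem:valuation}) so that the $\wedge$-convexity coefficients match consistently across the induction.
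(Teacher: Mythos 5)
Your reduction of (2) to the statement that $p/p$ minimizes $\overline g$ on ${\cal L}^*_p$ is correct, and the overall plan (project a better point $q$ onto the local structure at $p$ and derive a contradiction from a midpoint inequality) is a legitimate alternative strategy. But the two claims that carry all the weight are not established. First, the ``imprint'': you set $u:=\pr_{{\cal L}^-_p}(q)$ and $v:=\pr_{{\cal L}^+_p}(q)$ and assert that $v/u$ is a Boolean pair, i.e.\ a vertex of ${\cal L}^*_p$. From $u\sqsubseteq p$ and $p\sqsubseteq v$ you cannot conclude $u\sqsubseteq v$ --- the paper stresses that $\sqsubseteq$ is not transitive, and (\ref{eqn:v}) only lets you refine a single Boolean pair, not concatenate two of them --- and the appeal to ``the isometric embedding of Proposition~\ref{prop:1/2} and the geodesic structure of $\mGamma^*$'' is not an argument. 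Second, the geodesic midpoint inequality $g(u)+g(v)\le g(p)+g(q)$ is exactly the nontrivial local-to-global content of the theorem; your induction on $d_{\mGamma}(p,q)$ is only sketched, and the inductive step is silent on the two points that matter: why the imprint of a geodesic neighbour $q'$ of $q$ differs from $v/u$ by a single covering step, and why the $\wedge$-convexity inequalities (whose coefficients depend on the valuation $v^*_p$ through (\ref{eqn:^-convexity})) telescope so that the last edge can be ``absorbed'' by one further local inequality. You flag both issues yourself as ``the main obstacle'', which is an accurate self-assessment: without them the proof does not go through.

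For comparison, the paper avoids both difficulties by proving a different statement, Proposition~\ref{prop:connected}: for every $\alpha\ge\min g$ the sublevel set of $g$ is connected in the $1$-skeleton $\bar{\mGamma}$ of ${\mit\Delta}(\pmb{\mGamma})$, and every vertex on the boundary level has a strictly better neighbour there. The engine is Lemma~\ref{lem:2term}, a purely order-theoretic descent property of submodular functions on a modular semilattice (sublevel sets are connected by comparable sequences), applied to $\overline g$ on the neighbourhood semilattices along a minimal bad path; no projection onto ${\cal L}^{\pm}_p$ and no global convexity inequality along geodesics is needed. If you want to pursue your route, the Booleanity of the imprint and the midpoint inequality are genuine structural facts about orientable modular graphs (analogues appear in the frame setting of \cite{HH11folder}), but establishing them requires a separate induction of roughly the same difficulty as the paper's Proposition~\ref{prop:connected}.
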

We prove this theorem in Section~\ref{subsub:L-optimality}.
The condition (2) implies that
$g$ can be minimized 
by tracing the 1-skeleton graph 
of ${\mit\Delta}(\pmb{\mGamma})$.
By Lemma~\ref{lem:submo_L_p}, checking the condition ($2$) reduces to 
the submodular function minimization on modular semilattices,
analogous to the case of L$^{\natural}$-convex function in
discrete convex analysis~\cite[Section 10.3]{MurotaBook}.
Suppose that $\pmb{\mGamma}$ is the product
of modular complexes $\pmb{\mGamma}_i$ 
for $i=1,2,\ldots,n$.
Again we say nothing about 
the complexity of the minimization under the oracle model.
So we consider the VSCP situation.
Recall Section~\ref{subsec:VCSP}. 
By an {\em L-convex constraint} on $\pmb{\mGamma}$ we mean 
an L-convex function $g$ on 
$\pmb{\mGamma}_{i_1} \times \pmb{\mGamma}_{i_2} \times \cdots \times \pmb{\mGamma}_{i_k}$
for some $i_1 < i_2 < \cdots < i_k$.
By Lemma~\ref{lem:properties_L-convex}, 
the sum of L-convex constraints ${\cal G}$ 
is an L-convex function on $\pmb{\mGamma}$.
By L-optimality criterion, 
the optimality check of given vertex $p$ 
reduces to the minimization of 
the sum of submodular constraints
over modular semilattices 
${\cal L}^s_{p}(\pmb{\mGamma}) = {\cal L}^s_{p_1} \times {\cal L}^s_{p_2} \times \cdots \times {\cal L}^s_{p_n}$ for $s \in \{-,+\}$.
By Theorem~\ref{thm:minimized}, we obtain:
\begin{Thm}
Let $\pmb{\mGamma}$ be 
the product of modular complexes 
$\pmb{\mGamma}_i$ $(i=1,2,\ldots,n)$, and 
let ${\cal G}$ be a set of L-convex constraints on $\pmb{\mGamma}$.
Define $g: V_{\mGamma} \to \RR$ by
\[
g(p) := \sum_{h \in {\cal G}} h( p_{I_{h}}) \quad (p = (p_1,p_2,\ldots,p_n) \in V_{\mGamma}).
\] 
For a given vertex $p$,
there exists an algorithm, in time polynomial in $n$ and $|{\cal G}|L^{K}$, 
to find $q \in {\cal L}^-_p \cup {\cal L}^+_p$ with $g(q) < g(p)$ 
or conclude that $p$ is a global minimizer of $g$,  where $L := \max_{1 \leq i \leq n, s\in \{+,-\}} |{\cal L}_{p_i}^{s}|$ and $K := \max_{g \in {\cal G}} k_g$.
\end{Thm}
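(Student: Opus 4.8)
The plan is to assemble the machinery developed earlier into a single descent/optimality subroutine. First I would note that, by Lemma~\ref{lem:properties_L-convex}, the sum $g(p) = \sum_{h \in {\cal G}} h(p_{I_h})$ is an L-convex function on the product complex $\pmb{\mGamma} = \pmb{\mGamma}_1 \times \cdots \times \pmb{\mGamma}_n$. Hence the L-optimality criterion (Theorem~\ref{thm:L-optimality}) applies: the given vertex $p$ is a global minimizer of $g$ if and only if
\[
g(p) = \min\{ g(q) \mid q \in {\cal L}_p^+ \} = \min\{ g(q) \mid q \in {\cal L}_p^- \}.
\]
Thus it suffices to minimize $g$ over each of the two semilattices ${\cal L}_p^+$ and ${\cal L}_p^-$, compare the two optimal values with $g(p)$, and either report a strictly better point $q \in {\cal L}_p^- \cup {\cal L}_p^+$ or certify optimality of $p$.

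Second, I would recast each of these two minimizations as a {\bf VCSP} instance over a submodular language so that Theorem~\ref{thm:minimized} applies. Fix $s \in \{+,-\}$. By Lemma~\ref{lem:product}~(1) we have ${\cal L}_p^s(\pmb{\mGamma}) = {\cal L}_{p_1}^s(\pmb{\mGamma}_1) \times \cdots \times {\cal L}_{p_n}^s(\pmb{\mGamma}_n)$, a product of $n$ complemented modular semilattices each of size at most $L$. For each constraint $h \in {\cal G}$ with scope $I_h$, Lemma~\ref{lem:submo_L_p} shows that the restriction of the L-convex function $h$ to ${\cal L}_{p_{I_h}}^s(\pmb{\mGamma}_{I_h})$ is submodular; and by Lemma~\ref{lem:product}~(1) this semilattice equals $\prod_{i \in I_h} {\cal L}_{p_i}^s$. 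Hence the restriction of $h$ is a submodular constraint of scope $I_h$ and arity $k_h \leq K$ on the product ${\cal L}_p^s$. Summing over ${\cal G}$, the restriction of $g$ to ${\cal L}_p^s$ is precisely a {\bf VCSP}$[{\cal S}_{{\cal L}_p^s}]$ instance with $n$ variables, $|{\cal G}|$ constraints, domain sizes at most $L$, and arities at most $K$. By Theorem~\ref{thm:minimized} this instance is solved, yielding both the optimal value and a minimizer, in time polynomial in its input size $O(|{\cal G}| L^K)$.

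Third, I would deal with construction and bookkeeping. The semilattices ${\cal L}_{p_i}^s$ are obtained by enumerating the Boolean pairs involving $p_i$, which is polynomial by Proposition~\ref{prop:modularlattice}; building the two {\bf VCSP} instances requires evaluating each $h$ on its scope-product domain, i.e. $O(|{\cal G}| L^K)$ oracle evaluations. Running the solver of Theorem~\ref{thm:minimized} for $s = +$ and $s = -$ produces minimizers $q^+ \in {\cal L}_p^+$ and $q^- \in {\cal L}_p^-$; if $g(q^+) < g(p)$ or $g(q^-) < g(p)$ we output the corresponding $q$, and otherwise the displayed equalities hold and Theorem~\ref{thm:L-optimality} certifies that $p$ is a global minimizer of $g$. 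The total running time is polynomial in $n$ and $|{\cal G}| L^K$, as claimed.

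The argument is essentially an assembly of previously established facts, so I do not expect a genuine technical obstacle; the one point requiring care is the identification, for each L-convex constraint $h$, of its restriction to the local semilattice as a submodular {\bf VCSP} constraint of the correct arity. This hinges on the commutation of neighborhood semilattices with products (Lemma~\ref{lem:product}) together with the submodularity of L-convex functions on ${\cal L}_p^{+}$ and ${\cal L}_p^{-}$ (Lemma~\ref{lem:submo_L_p}); once these are invoked, the complexity bound follows directly from the size estimate for {\bf VCSP} inputs in Section~\ref{subsec:VCSP} and from Theorem~\ref{thm:minimized}.
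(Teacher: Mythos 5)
Your proposal is correct and follows essentially the same route as the paper: the paper's own (very terse) justification is precisely the chain Lemma~\ref{lem:properties_L-convex} (the sum is L-convex) $\to$ Theorem~\ref{thm:L-optimality} (reduce to minimizing over ${\cal L}_p^{\pm}$) $\to$ Lemma~\ref{lem:submo_L_p} with Lemma~\ref{lem:product} (the restrictions are submodular constraints on the product semilattice) $\to$ Theorem~\ref{thm:minimized}. Your write-up merely fills in the bookkeeping (construction of the local semilattices via Proposition~\ref{prop:modularlattice} and the $O(|{\cal G}|L^K)$ evaluation count) that the paper leaves implicit.
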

\paragraph{Steepest descent algorithm.}
Theorem~\ref{thm:L-optimality}, Lemma~\ref{lem:submo_L_p}, and 
Theorem~\ref{thm:minimized}
naturally lead us 
to a descent algorithm for 
L-convex functions on modular complexes, 
analogous to the steepest descent algorithm 
for L-convex function minimization 
in discrete convex analysis.
Starting from an arbitrary point $p$, 
each descent step is to find, for $s\in \{-,+\}$,
an optimal solution $q^{s}$ of the problem:
\begin{myitem}
Minimize $g(q)$ 
over $q \in {\cal L}_p^{s}$. \label{eqn:sigma}
\end{myitem}\noindent
As mentioned already, this is a submodular function minimization.
If $g(p) = g(q^+) = g(q^-)$, then $p$ is optimal. 
Otherwise, take $s \in \{-,+\}$ 
with $g(q^{s}) = \min \{g(q^{-}), g(q^{+}) \} (< g(p))$,  
let $p := q^{s}$ ({\em steepest direction}), 
and repeat the descent step. 
After a finite number of descent steps,  
we can obtain an optimal solution (a minimizer of $g$).

In the case where $f$ 
is an L$^\natural$-convex function 
on a box subset $B$ of $\ZZ^n$, 
Murota~\cite{Murota00} proved that, 
by appropriate choices of steepest directions,
the number of the descent steps is bounded by 
$l_{1}$-diameter of $B$;
later Kolmogorov and Shioura~\cite{KolmogorovShioura} 
improved this bound.
We do not know whether
a similar upper bound exists for 
L-convex function minimizations
on general modular complexes.
This issue will be studied in~\cite{HHprepar}.

\subsection{Proofs}\label{subsec:L-convex_proof}

\subsubsection{Proof of Proposition~\ref{prop:modularlattice} and Theorem~\ref{thm:lattice}}\label{subsub:proof_of_lattice}
We can assume that $h$ is the uniform unit edge weight, 
and $d_{\mGamma,h} = d_{\mGamma}$ is denoted by $d$.
A path $(p_0,p_1,p_2,\ldots,p_k)$ is said to be {\em ascending} 
if $p_i \swarrow p_{i+1}$ for $i=0,\ldots,k-1$.
\begin{Lem}\label{lem:ascending}
For $p,q \in V_\mGamma$ with $p \preceq q$,
a $(p,q)$-path $P$ is shortest 
if and only if $P$ is an ascending path from $p$ to $q$.
In particular, $I(p,q) = [p,q]$,  any maximal chain in $[p,q]$ 
has the same length, and the rank $r$ of $[p,q]$ is given by $r(a) = d(a,p)$.
\end{Lem}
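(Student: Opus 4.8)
The plan is to prove the two directions of the equivalence separately and then read off the three ``in particular'' assertions. Throughout I write $a \swarrow b$ for an edge oriented so that $a \prec b$, and I use that $o$ is acyclic (Lemma~\ref{lem:acyclic}), so $(V_\mGamma,\preceq)$ is a genuine poset and any ascending walk is strictly increasing, hence a simple path; in particular $p \preceq q$ is equivalent to the existence of an ascending $(p,q)$-path.

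The engine for everything is the implication \emph{ascending $\Rightarrow$ shortest}, which I would prove by a minimal counterexample. Take an ascending path $P=(p_0 \swarrow p_1 \swarrow \cdots \swarrow p_k)$ that is not a geodesic with $k$ minimum, so every ascending path of length $<k$ is a geodesic. Bipartiteness together with $d(p_0,p_{k-1})=k-1$ forces $d(p_0,p_k)=k-2$, and the shorter subpaths give $d(p_1,p_k)=k-1$ and $d(p_2,p_k)=k-2$; thus the neighbours $p_0,p_2$ of $p_1$ are both one step closer to $p_k$ than $p_1$. Applying the quadrangle condition (Lemma~\ref{lem:quadrangle}) at $p_1$ with target $p_k$ yields a common neighbour $p^{**}$ of $p_0,p_2$ with $d(p^{**},p_k)=k-3$, and admissibility of $o$ on the $4$-cycle $p_0p_1p_2p^{**}$ (whose middle edges $p_0\swarrow p_1\swarrow p_2$ are known) forces $p_0\swarrow p^{**}\swarrow p_2$. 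Then $(p^{**},p_2,\dots,p_k)$ is an ascending path of length $k-1$, hence a geodesic, so $d(p^{**},p_k)=k-1$, contradicting $d(p^{**},p_k)=k-3$. This shows $d(a,b)$ equals the common length of all ascending $(a,b)$-paths whenever $a\preceq b$, that this length is additive along chains $a\preceq b\preceq c$, and in particular that $[p,q]\subseteq I(p,q)$.

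Next I would prove $I(p,q)=[p,q]$ by induction on $d(p,q)$, which via the convexity criterion of Lemma~\ref{lem:2convexity} reduces to showing $[p,q]$ is convex. Connectedness of $\mGamma[[p,q]]$ is immediate, since each $a\in[p,q]$ is joined to $p$ by an ascending path lying in $[p,a]\subseteq[p,q]$; so it remains to verify $I(a,b)\subseteq[p,q]$ for $a,b\in[p,q]$ with $d(a,b)=2$. For a common neighbour $c$ that lies strictly between $a$ and $b$ in $\preceq$ this is trivial, and the only real work is the case where $c$ is a common lower (or, symmetrically, upper) neighbour. There $c\prec a\preceq q$ already gives $c\preceq q$, and if $d(p,c)=d(p,a)-1$ then $c\in I(p,a)=[p,a]$ by the inductive hypothesis. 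The dangerous case is $d(p,c)=d(p,a)+1=d(p,b)+1$, which is exactly where I expect the main difficulty: I would dispose of it by the quadrangle condition at $c$ with target $p$, producing a common neighbour $c^{*}$ of $a,b$ with $d(p,c^{*})=d(p,a)-1$, while admissibility forces $c^{*}$ to lie above both $a$ and $b$; then $p\preceq a\prec c^{*}$ and additivity of ascending length give $d(p,c^{*})=d(p,a)+1$, a contradiction. Hence the bad case cannot occur, $[p,q]$ is convex, and since $p,q\in[p,q]$ we obtain $I(p,q)\subseteq[p,q]$, so $I(p,q)=[p,q]$.

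Finally I would harvest the remaining claims. For \emph{shortest $\Rightarrow$ ascending}: every vertex $p_i$ of a shortest $(p,q)$-path lies in $I(p,q)=[p,q]$, so $p\preceq p_i$ and $d(p,p_i)=i$; a descending step $p_{i+1}\swarrow p_i$ would give $p\preceq p_{i+1}\prec p_i$ and hence $d(p,p_i)=d(p,p_{i+1})+1=i+2$, which is impossible, so every edge is ascending. For the ``in particular'' clauses, a covering pair in $[p,q]$ must be an edge (otherwise an intermediate ascending vertex would violate covering), so maximal chains in $[p,q]$ are precisely ascending $(p,q)$-paths, all of length $d(p,q)$ by the forward direction; and $r(a)=r[p,a]$ equals the length of an ascending $(p,a)$-path, namely $d(a,p)$. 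The main obstacle is the single ``bad case'' of the convexity step, where I must combine the quadrangle condition, admissibility of the relevant $4$-cycle, and the already-established additivity of ascending length to force a contradiction; everything else is bookkeeping driven by the minimal-counterexample argument of the first step.
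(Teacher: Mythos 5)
Your proof is correct, and the forward direction (ascending $\Rightarrow$ shortest) is essentially the paper's argument: both use the quadrangle condition plus admissibility of the resulting $4$-cycle to replace a non-geodesic ascending path by an ascending path of length $k-1$ whose endpoint distance is $k-3$; you run it as a minimal counterexample at the bottom end of the path, the paper as an induction at the top end. Where you genuinely diverge is in the converse and in $I(p,q)=[p,q]$. The paper proves shortest $\Rightarrow$ ascending directly by induction on $k$: given an arbitrary shortest path $(q_0,\dots,q_k)$, the quadrangle condition applied to $q,p_{k-1},q_{k-1},p$ yields a common neighbour $q^*$ of $p_{k-1}$ and $q_{k-1}$ lying in $I(p,p_{k-1})=[p,p_{k-1}]$ by induction, and admissibility of the $4$-cycle $(p_{k-1},q,q_{k-1},q^*)$ then forces $q_{k-1}\swarrow q$; the identity $I(p,q)=[p,q]$ is read off afterwards from the established equivalence. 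You instead prove $I(p,q)=[p,q]$ first, by showing $[p,q]$ is convex via Chepoi's criterion (Lemma~\ref{lem:2convexity}), with the quadrangle condition and admissibility ruling out the one bad type of common neighbour, and the converse direction then falls out of additivity of $d$ along chains. Your route has the side benefit of establishing convexity of $[p,q]$ already at this point, a fact the paper defers to Proposition~\ref{prop:modularlattice} and proves there by a different method (as $(p)^{\uparrow}\cap(q)^{\downarrow}$, via Lemma~\ref{lem:filter}); the paper's route is shorter for the lemma as stated. Both rest on exactly the same two tools, and your handling of the induction (noting that in the hard cases $a,b$ are incomparable, so $d(p,a)<d(p,q)$ and the inductive hypothesis applies) is sound.
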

\begin{proof}
Suppose $p \preceq q$. Take
an ascending path $P = (p = p_0,p_1,p_2,\ldots,p_k = q)$.
We use the induction on length $k$; 
the statement for $k=1$ is obvious.

(If part). We show $d(p,q) = k$.
Suppose for contradiction that $d(p,q) < k$.
By induction and bipartiteness, we have
$d(p, p_{k}) = d(p,p_{k-1}) - 1$ and $d(p,p_{k-1}) = k - 1$.
By the quadrangle condition (Lemma~\ref{lem:quadrangle}~(2))
for $p_{k-1},p_{k},p_{k-2},p$, 
there is a common neighbor $q^{*}$ of $p_{k},p_{k-2}$
with $d(p,q^{*}) = d(p,p_{k-1}) - 2 = k-3$.
Consider the 4-cycle of $p_{k-1},p_{k},q^{*},p_{k-2}$.
By orientability $p_{k-2} \swarrow q^{*}$ must hold.
Hence we obtain an ascending path
$(p = p_0, p_1, \ldots, p_{k-2}, q^{*})$
of length $k-1$ with $d(p,q^{*}) = k-3$. 
A contradiction.

(Only if part). 
Take any shortest path
$Q = (p = q_0,q_1,q_2,\ldots,q_{k'} = q)$ between $p$ and $q$.
We show that $Q$ is ascending.
By the if part, necessarily $k' = k$.
It suffices to show that $p \preceq q_{k-1} \swarrow q$; 
by induction $(p = q_0,q_1,q_2,\ldots,q_{k-1})$ is ascending, and hence $Q$ is  ascending.
We can assume that $p_{k-1} \neq q_{k-1}$.
By the quadrangle condition for $q,p_{k-1},q_{k-1},p$, 
there is a common neighbor $q^{*}$ of $p_{k-1},q_{k-1}$ 
with $d(q^{*},p) = d(q,p) -2$.
Then $q^{*} \in I(p,p_{k-1}) = [p,p_{k-1}]$ (by induction).
This means $p \preceq q^* \swarrow p_{k-1}$, 
which in turn implies $q^* \swarrow q_{k-1} \swarrow q$ 
by the orientation of 4-cycle $(p_{k-1},q,q_{k-1},q^{*})$.
Thus $p \preceq q_{k-1} \swarrow q$, as required. 
\end{proof}
Let $(p)^{\uparrow}$ and $(p)^{\downarrow}$ denote 
the principal filter $\{q \in V_{\mGamma} \mid q \succeq p \}$ and 
the principal ideal $ \{q \in V_{\mGamma} \mid q \preceq p \}$
of $p$, respectively.
\begin{Lem}\label{lem:filter}
For $a,b \in (p)^{\uparrow}$,
there uniquely exists a median $m$ of $p,a,b$, 
which coincides with $a \wedge b$.
Similarly, for $a,b \in (p)^{\downarrow}$
there uniquely exists a median $m$ of $q,a,b$, 
which coincides with $a \vee b$.
Both $(p)^{\uparrow}$ and $(p)^{\downarrow}$ are convex, and modular semilattices. 
\end{Lem}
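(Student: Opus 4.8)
The plan is to prove the three assertions for the filter $(p)^{\uparrow}$ and then obtain the dual statements for the ideal $(p)^{\downarrow}$ for free, by noting that the reverse of an admissible orientation is again admissible (the defining condition~(\ref{eqn:frame})~(3) is invariant under reversing all arrows), so that reversal interchanges $(p)^{\uparrow}$ with $(p)^{\downarrow}$ and $\wedge$ with $\vee$. Throughout I would work with the unit edge-length, so that Lemma~\ref{lem:ascending} applies: for $p \preceq q$ one has $I(p,q)=[p,q]$ and $d(\cdot,p)=r(\cdot)$ on $[p,q]$, and shortest paths are exactly ascending paths.

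First I would establish convexity of $(p)^{\uparrow}$, which is the crux, via the criterion Lemma~\ref{lem:2convexity}~(3). Connectedness of $\mGamma[(p)^{\uparrow}]$ is immediate, since every $q \succeq p$ is joined to $p$ by an ascending path lying in $(p)^{\uparrow}$. For the distance-$2$ condition, take $x,y \in (p)^{\uparrow}$ with $d(x,y)=2$ and a common neighbour $c$; I must show $p \preceq c$. If $x,y$ are comparable this follows from $I(x,y)=[x,y]$. If they are incomparable, the orientations of the edges $cx$ and $cy$ leave only two possibilities: $c$ lies below both or above both. In the 'above' case $p \preceq x \prec c$. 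In the 'below' case I would use modularity of $\mGamma$ to produce a median $m$ of $p,x,y$; it satisfies $p \preceq m \preceq x$, $m \preceq y$, and $m \in I(x,y)$, so by incomparability $m$ is itself a below-both common neighbour with $p \preceq m$. If $c \neq m$, then $c,x,m,y$ would form a $4$-cycle with two minima $c,m$ and two maxima $x,y$, whose orientation violates admissibility; hence $c=m \succeq p$. This gives $I(x,y) \subseteq (p)^{\uparrow}$, so $(p)^{\uparrow}$ is convex; the dual argument makes $(p)^{\downarrow}$ convex.

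With convexity in hand, every interval $[p,a]=(p)^{\uparrow}\cap(a)^{\downarrow}$ is an intersection of convex sets, hence convex and therefore gated (Lemma~\ref{lem:2convexity}). For $a,b \in (p)^{\uparrow}$ I would set $m:=\pr_{[p,a]}(b)$, the gate of $b$ on $[p,a]$. Testing the gate identity at $p$ and at $a$ shows $m \in I(p,b)=[p,b]$ and $m \in I(a,b)$, so $m$ is a median of $p,a,b$ and a common lower bound of $a,b$ above $p$. To see $m=a\wedge b$, let $m'$ be any common lower bound with $p \preceq m' \preceq a$, $m' \preceq b$; the gate identity at $m'$, together with $d(m',b)=r(b)-r(m')$ and $d(m,b)=r(b)-r(m)$, yields $d(m',m)=r(m)-r(m')$, whence $m' \in I(p,m)=[p,m]$ and $m' \preceq m$. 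Uniqueness of the median follows from the rank count $d(a,b)=r(a)+r(b)-2r(m)$, which fixes the rank of every median: any median is a common lower bound, hence $\preceq m$, hence equal to $m$. This proves the first assertion, and the dual gate argument on $(p)^{\downarrow}$ exhibits $a\vee b$ as the unique median of $p,a,b$ there.

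Finally, to conclude that $(p)^{\uparrow}$ is a modular semilattice I would apply Theorem~\ref{thm:BVV}. The existence of meets makes $(p)^{\uparrow}$ a meet-semilattice whose covering graph coincides with $\mGamma[(p)^{\uparrow}]$, since by Lemma~\ref{lem:ascending} the edges of $\mGamma$ inside $(p)^{\uparrow}$ are precisely the covering pairs; and $\mGamma[(p)^{\uparrow}]$ is modular because a convex subset of a modular graph is isometric and closed under medians. Hence its covering graph is modular, and Theorem~\ref{thm:BVV} gives that $(p)^{\uparrow}$ is a modular semilattice, with $(p)^{\downarrow}$ handled dually. I expect the main obstacle to be the convexity step: the entire argument hinges on excluding the 'two-minima/two-maxima' square at distance $2$, which is exactly where admissibility of $o$ is used and where the filter structure, rather than an arbitrary convex set, is essential.
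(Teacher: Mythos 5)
Your proof is correct, but it takes a genuinely different route from the paper's at the two key steps. The paper proves \emph{uniqueness} of the median of $p,a,b$ first, by a fairly long direct argument: assuming two distinct medians $c,c'$, it takes a median $m$ of $c,c',p$ and repeatedly applies the quadrangle condition along an ascending path to reach the contradiction $d(a,b)\leq d(a,b)-2$; it then identifies this unique median with $a\wedge b$, and only afterwards proves convexity of $(p)^{\uparrow}$ (using the already-available $a\wedge b$ at the distance-$2$ step, with the same admissibility-of-the-$4$-cycle observation you use). You reverse the order: you get convexity first, needing only the \emph{existence} of some median of $p,x,y$ plus admissibility to exclude a second ``below-both'' common neighbour; you then produce the meet as the gate $\pr_{[p,a]}(b)$, exploiting the equivalence convex $=$ gated from Lemma~\ref{lem:2convexity}, and you recover uniqueness of the median almost for free from the rank identity $d(a,b)=r(a)+r(b)-2r(m)$ together with the maximality of the gate. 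The payoff of your route is that it replaces the paper's most technical computation (the iterated quadrangle-condition lifting) with the gated-set machinery already developed in Section~\ref{subsec:gated}; the paper's route, in exchange, is self-contained at the level of the quadrangle condition and additionally shows that $a\wedge b$ is the meet in the whole poset $(V_{\mGamma},\preceq)$, not merely in $(p)^{\uparrow}$ (your version, which only compares against lower bounds $m'\succeq p$, suffices for the lemma as stated). The final step --- covering graph of $(p)^{\uparrow}$ equals $\mGamma[(p)^{\uparrow}]$, which is modular by convexity, then Theorem~\ref{thm:BVV} --- and the dualization by reversing the admissible orientation coincide with the paper's.
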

\begin{proof}
It suffices to prove the claim for $(p)^{\uparrow}$.
Suppose that $a,b,p$ have two distinct medians $c,c'$.
Take a median $m$ of $c,c',p$.
Let $k := d(c,m) = d(c',m) > 0$.
We can take an ascending path $(m = m_0,m_1,\ldots,m_k = c)$
from $m$ to $c$, 
and also can take a neighbor $m'$ of $m$ 
with $d(m,c') = 1 + d(m',c')$; necessarily $m \swarrow m'$.
By the quadrangle condition for $m, m_1,m',a$, 
there is a common neighbor $m_1'$
of $m_1,m'$ such that $d(a,m'_1) = d(a,m) -2$.
Also by the quadrangle condition for $m, m_1,m',b$
there is a common neighbor $m_1''$ 
of $m_1,m'$ such that $d(b,m_1'') = d(b,m) - 2$.
By $m_1 \searrow m \swarrow m'$ 
and the orientability,
we have $m_1 \swarrow m'_1 \searrow m'$ 
and $m_1 \swarrow m''_1 \searrow m'$. 
Hence $m'_1 = m''_1$ must hold.
Similarly,
by the quadrangle condition for $m_1,m_2,m_1',a$ 
and for $m_1,m_2,m_1',b$, 
we can find a common neighbor $m_2'$ of $m_2,m'_1$ 
such that $d(m_2,a) = d(m_2',a) + 1$ and $d(m_2,b) = d(m'_2,b) + 1$.
Necessarily $m_2 \preceq m_2' \preceq a,b$.
Repeat this process 
to obtain a neighbor $m'_{k}$ of $m_{k} (= c)$ such that 
$d(c,a) = d(m'_{k},a) + 1$ and $d(c,b) = d(m'_{k},b) + 1$.
This implies that 
$d(a,b) \leq d(a, m'_k) + d(m'_k,b) = 
d(a,c) + d(c,b) -2 = d(a,b) -2$; 
a contradiction.

We show $m = a \wedge b$ in the poset $(p)^{\uparrow}$.
Indeed, take an arbitrary $p' \in (p)^{\uparrow}$ 
with $a \succeq p' \preceq b$.
Consider a median $m'$ of $a,b,p'$.
Since there is an ascending path 
from $p$ to $m'$ using $p'$, 
$m'$ is also a median of $a,b,p$, and $m' = m$ 
by the uniqueness. Hence $p' \preceq m$.

We next show the convexity of $(p)^{\uparrow}$ 
by verifying (3) in Lemma~\ref{lem:2convexity}.
Clearly the subgraph of $\mGamma$ 
induced by $(p)^{\uparrow}$ is connected.
Take $a,b \in (p)^{\uparrow}$ with $d(a,b) = 2$.
We show that $I(a,b) \subseteq (p)^{\uparrow}$. 
From Lemma~\ref{lem:ascending},
this is obvious when $a \preceq b$ or $b \preceq a$.
Thus we may assume 
$a \not \preceq b$ and $b \not \preceq a$.
Consider $a \wedge b$ in $(p)^{\uparrow}$
(the existence of $a \wedge b$ is guaranteed as above).
By $d(a,b) = 2$, $a$ and $b$ cover $a \wedge b$.
By the admissibility of the orientation $o$,
if $a$ and $b$ have another common neighbor $c \neq a \wedge b$,
then  $c$ covers $a$ and $b$, i.e., $c = a \vee b \in (p)^{\uparrow}$.
Hence $I(a,b) \subseteq (p)^{\uparrow}$, and $(p)^{\uparrow}$ is convex.

For arbitrary $a,b \in (p)^{\uparrow}$, 
every maximal common lower bound $m'$ of $a,b$
(not necessarily in $(p)^{\uparrow}$) 
belong to $I(a,b)$; consider a median $m',a,b$, which is equal to $m'$ by the above argument. 
By the convexity, $m'$ belongs to $(p)^{\uparrow}$, 
and is equal to $a \wedge b$ in $(p)^{\uparrow}$.
This means that $a,b$ have the meet in the poset $(V_{\mGamma}, \preceq)$.

Now $(p)^{\uparrow}$ is a semilattice, 
and convex in $\mGamma$. The covering graph of $(p)^{\uparrow}$ 
is equal to the subgraph $\mGamma[(p)^\uparrow]$ induced by $(p)^{\uparrow}$, 
which is modular (by convexity).
By Theorem~\ref{thm:BVV}, $(p)^{\uparrow}$ is a modular semilattice.
\end{proof}
\paragraph{Proof of Proposition~\ref{prop:modularlattice}.}
(1). 
Notice that the convexity is closed under the intersection.
By $[p,q] = (p)^{\uparrow} \cap (q)^{\downarrow}$ and by the previous lemma,
we have the convexity of $[p,q]$. 
Also $[p,q]$ is an interval of a modular semilattice, and hence a modular lattice.

(2).  If $(p,q)$ is Boolean, then 
$q$ is the join of atoms in $[p,q]$, and $[p,q]$ is 
a complemented modular lattice (Theorem~\ref{thm:complemented}).
Conversely, if $[p,q]$ is a complemented modular lattice, then
there is a Boolean sublattice of full rank (generated by a base), 
and $(p,q)$ is Boolean. 

We can check 
whether a pair $(p,q)$ is Boolean by the following procedure. 
First construct the partial order $\preceq$, 
and the poset $(V_{\mGamma}, \preceq)$.
If $p \not \preceq q$, then $(p,q)$ is not Boolean.
Suppose $p \preceq q$. Construct (or enumerate) $[p,q]$.
For each $u\in [p,q]$, 
check the existence of an element (complement) $v \in [p,q]$ with 
$u \wedge v = p$ and $u \vee v = q$.
If every element has a complement, then $[p,q]$ is complemented modular, and
$(p,q)$ is Boolean.
Otherwise, $(p,q)$ is not Boolean. 
This procedure can be done in time polynomial in $|V_{\mGamma}|$.
$\Box$

\paragraph{Proof of Theorem~\ref{thm:lattice}.}
It suffices to consider only ${\cal L}_p^+$.
The statement that ${\cal L}_p^+$ is a semilattice 
immediately
follows from Lemma~\ref{lem:filter} and (\ref{eqn:v}).
Next we show the convexity. 
In view of Lemma~\ref{lem:2convexity},
take $a,b \in {\cal L}^+_p$ with $d(a,b) = 2$, 
and take any common neighbor $c$ of $a,b$.
We show $c \in {\cal L}^+_p$.
This is obvious if
$a \swarrow c \swarrow b$ or $b \swarrow c \swarrow a$.
Also, if $a \searrow c \swarrow b$, 
then $c = a \wedge b \in {\cal L}^+_{p}$
(if $c \neq a \wedge b$, then 
4-cycle $(a,c,b, a \wedge b)$ violates the admissibility of the orientation $o$).

So suppose that $a \swarrow c \searrow b$.
By Lemma~\ref{lem:ascending}, both $[p,a]$ and $[p,b]$ belong to $[p,c]$.
In particular, $c$ is the join of $a$ and $b$ in $[p,c]$.
Here $a$ is the join of atoms in $[p,a]$ and $b$ is the join of atoms in $[p,b]$.
This means that $c$ is the join of atoms in $[p,c]$.
Hence modular lattice $[p,c]$ is complemented (Theorem~\ref{thm:complemented}), 
and hence $(p,c)$ is Boolean.

The subgraph of $\mGamma$ induced 
by any convex set is again a modular graph.
Therefore the covering graph of ${\cal L}^+_{p}$ is modular.
By Theorem~\ref{thm:BVV}, 
${\cal L}^+_{p}$ is a modular semilattice.
In particular, each $[p,q]$ for each $q \in {\cal L}^+_p$ 
is a complemented modular lattice, 
and ${\cal L}^+_{p}$ is a complemented modular semilattice.
$\Box$

\subsubsection{Proof of Theorem~\ref{thm:2subdivision} and Proposition~\ref{prop:1/2}}
\label{subsub:proof_2subdivision}
We start with preliminary results.
By Proposition~\ref{prop:modularlattice}, interval $[p,q]$ is a modular lattice, 
and is convex in $\mGamma$.
So we can consider the projection $\pr_{[p,q]}: V_{\mGamma} \to [p,q]$ 
(see Section~\ref{subsec:gated}).
\begin{Lem}\label{lem:[p,q][p',q']}
For $p,q,p',q' \in V_{\mGamma}$ 
with $p \preceq q$ and $p' \preceq q'$, 
let $u,v,u',v'$ be defined by
\begin{equation}\label{eqn:uvu'v'}
u := {\rm Pr}_{[p, q]} (p'), \ 
v := {\rm Pr}_{[p,q]} (q'), u' := {\rm Pr}_{[p',q']} (p), \ v' := {\rm Pr}_{[p',q']} (q).
\end{equation}
Then we have:
\begin{itemize}
\item[{\rm (1)}]  $u \preceq v$, $u' \preceq v'$, $\pr_{[p',q']}([p,q]) = [u', v']$, and $\pr_{[p,q]}([p',q']) = [u, v]$.
\item[{\rm (2)}] $[u,v]$ is isomorphic to $[u',v']$ by map $w \mapsto \pr_{[p',q']}(w)$. 
In particular, $p \sqsubseteq q$ implies $u' \sqsubseteq v'$.
\end{itemize}
\end{Lem}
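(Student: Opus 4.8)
The plan is to treat this as a statement about the mutual gate projections between two convex sets and to lean on the general gated-set machinery of Theorem~\ref{thm:gatedset}. Since $h$ may be taken to be the uniform unit weight, write $d = d_\mGamma$. By Proposition~\ref{prop:modularlattice}~(1) both $[p,q]$ and $[p',q']$ are convex, hence gated by Lemma~\ref{lem:2convexity}, so I set $A := [p,q]$, $A' := [p',q']$ and apply Theorem~\ref{thm:gatedset} with $B := \pr_A(A')$ and $B' := \pr_{A'}(A)$. The lemma then reduces to two claims: that $B = [u,v]$ and $B' = [u',v']$ with $u \preceq v$ and $u' \preceq v'$, and that complementedness transports across the isometry between $B$ and $B'$.

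The technical core is a monotonicity property of the gate map: if $x \swarrow y$ is an edge of $\mGamma$, then the gates $a := \pr_A(x)$ and $b := \pr_A(y)$ are either equal or form an edge oriented the same way, $a \swarrow b$. First I would record that gate maps are nonexpansive, i.e.\ $d(a,b) \leq d(x,y)$; this follows by adding the gate identities $d(x,b) = d(x,a) + d(a,b)$ and $d(y,a) = d(y,b) + d(a,b)$ and comparing with the triangle inequalities for $d(x,b)$ and $d(y,a)$. For an edge this yields $d(a,b) \leq 1$. When $a \neq b$, the same gate identities give $d(x,b) = d(x,a) + 1$ and $d(y,a) = d(y,b) + 1$, and a short triangle-inequality argument forces $d(x,a) = d(y,b)$; these are exactly the hypotheses of Lemma~\ref{lem:commonorbit}, whose part~(2) then gives $x \searrow y \Rightarrow a \searrow b$. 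Hence $\pr_A$ sends ascending paths to ascending walks.

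With this in hand, take any $w \in [p',q']$ and route an ascending path $p' \preceq w \preceq q'$ (Lemma~\ref{lem:ascending}); its image under $\pr_A$ is an ascending walk passing through $u$, $\pr_A(w)$, and $v$, so $u \preceq \pr_A(w) \preceq v$. Taking $w = q'$ gives $u \preceq v$, while ranging over all $w$ gives $\pr_A([p',q']) \subseteq [u,v]$. The reverse inclusion holds because $B = \pr_A(A')$ is gated, hence convex, by Theorem~\ref{thm:gatedset}~(3), and contains $u$ and $v$, so it contains $I(u,v) = [u,v]$. Therefore $B = [u,v]$, and symmetrically $B' = [u',v']$ with $u' \preceq v'$, proving~(1).

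For~(2), Theorem~\ref{thm:gatedset}~(1) shows that $\pr_{A'}$ and $\pr_A$ restrict to mutually inverse isometries between $B = [u,v]$ and $B' = [u',v']$; an isometry between convex subsets of a modular graph preserves adjacency and so is an isomorphism of the covering graphs, while the monotonicity above shows it respects $\preceq$, making it a lattice isomorphism $[u,v] \cong [u',v']$ via $w \mapsto \pr_{[p',q']}(w)$. Finally, if $p \sqsubseteq q$ then $[p,q]$ is a complemented modular lattice (Proposition~\ref{prop:modularlattice}~(2)), so its subinterval $[u,v]$ is complemented by relative complementation (Theorem~\ref{thm:complemented}), i.e.\ $u \sqsubseteq v$; transporting along the isomorphism makes $[u',v']$ complemented as well, i.e.\ $u' \sqsubseteq v'$. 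The main obstacle is the orientation-preservation step: checking the metric relations so that Lemma~\ref{lem:commonorbit}~(2) applies, which is precisely what guarantees that the projected image of an ascending path is again ascending.
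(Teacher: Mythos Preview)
Your argument is correct and uses the same ingredients as the paper---Theorem~\ref{thm:gatedset} and Lemma~\ref{lem:commonorbit}---but the organization differs in a way worth noting. The paper first observes that $B = \pr_A(A')$ is convex (Theorem~\ref{thm:gatedset}~(3)) and hence, by Lemma~\ref{lem:interval}~(2), equals \emph{some} interval $[a,b]$ of the modular lattice $[p,q]$; it then applies Lemma~\ref{lem:commonorbit} only to edges inside $B$ (where the metric hypotheses follow immediately from Theorem~\ref{thm:gatedset}~(1),(2)), and finally identifies $a=u$, $b=v$ via the composition formula $\pr_{B} = \pr_A \circ \pr_{A'}$ of Theorem~\ref{thm:gatedset}~(3) together with the fact that $a$ is the gate of $p$ at $[a,b]$. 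Your route instead establishes monotonicity of $\pr_A$ on \emph{all} edges of $A'$, which requires you to verify the hypotheses of Lemma~\ref{lem:commonorbit} by hand via the nonexpansiveness computation; once you have this, the containment $B \subseteq [u,v]$ and the identification of the endpoints come for free, bypassing Lemma~\ref{lem:interval}~(2) and the composition argument. Both are short; yours front-loads a general edge-level monotonicity lemma, while the paper's leans more on the abstract gated-set machinery.

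One small remark: your ``short triangle-inequality argument'' for $d(x,a)=d(y,b)$ does go through, but it is worth making explicit that when $d(a,b)=1=d(x,y)$ the nonexpansiveness inequality is tight, so both triangle inequalities $d(x,b)\le d(x,y)+d(y,b)$ and $d(y,a)\le d(y,x)+d(x,a)$ hold with equality; comparing with the gate identities $d(x,b)=d(x,a)+1$ and $d(y,a)=d(y,b)+1$ then yields $d(x,a)=d(y,b)$, which is exactly what Lemma~\ref{lem:commonorbit} needs.
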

\begin{proof}
The image of a convex set by the projection is again convex (Theorem~\ref{thm:gatedset}), 
and a convex set in a modular lattice is exactly an interval (Lemma~\ref{lem:interval}~(2)).
Hence $\pr_{[p',q']}([p,q])$ is equal to a subinterval $[a',b']$ of $[p',q']$, 
and $\pr_{[p,q]}([p',q'])$ is equal to a subinterval $[a,b]$ of $[p,q]$.
By Theorem~\ref{thm:gatedset}, 
$\Pr_{[p,q]}$ is an isometry between $[a,b]$ and $[a',b']$, 
and consequently this induces a graph isomorphism 
between the subgraphs induced by $[a,b]$ and $[a',b']$.
By Lemma~\ref{lem:commonorbit}, 
for $g,h \in [a,b]$, 
if $g \swarrow h$ then $\pr_{[p',q']}(g) \swarrow \pr_{[p',q']}(h)$.
Therefore, a poset $[a,b]$ is isomorphic to $[a',b']$ by $\Pr_{[p',q']}$.
Necessarily $\pr_{[p,q]}(a') = a$.
Notice that $a$ is the gate of $p$ at $[a,b]$ (by Lemma~\ref{lem:ascending}). 
By Theorem~\ref{thm:gatedset}~(3), 
$a = \pr_{[a,b]} (p) = \pr_{[p,q]} \circ \pr_{[p',q']}(p) = \pr_{[p,q]}(u')$.
Then $\pr_{[p,q]}(u') = a = \pr_{[p,q]}(a')$ implies
$a' = u'$ (by Theorem~\ref{thm:gatedset}~(1)).
Similarly $b' = v'$, $a = u$, and $b = v$.
Thus we obtain (1) and (2).
\end{proof}
We use the same notation $d$ for $d_{\mGamma}$ and $d_{\mGamma^*,1/2}$
(since they can be distinguished by the arguments).
\begin{Lem}\label{lem:atleast}
$\displaystyle
d(q/p, q'/p') = \frac{d(p,p') + d(q,q')}{2}$ for 
$q/p,q'/p' \in V_{\mGamma^*}$.
\end{Lem}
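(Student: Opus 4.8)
The plan is to prove the two inequalities separately. The lower bound is immediate: along any path in $\mGamma^*$ from $q/p$ to $q'/p'$, each edge alters exactly one of the two coordinates of a Boolean pair by a single edge of $\mGamma$, so reading off the ``top'' coordinates yields a walk in $\mGamma$ from $q$ to $q'$ and reading off the ``bottom'' coordinates yields a walk from $p$ to $p'$. Hence the path uses at least $d(p,p')$ bottom-edges and at least $d(q,q')$ top-edges; since every edge of $\mGamma^*$ has weight $1/2$, we obtain $d(q/p,q'/p') \geq \frac12(d(p,p')+d(q,q'))$.

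For the upper bound I would construct a path of weight exactly $\frac12(d(p,p')+d(q,q'))$ by induction on $d(p,p')+d(q,q')$, first reducing to a ``parallel'' configuration by sliding along gates. By Proposition~\ref{prop:modularlattice}, $[p,q]$ is a convex, hence gated, modular lattice, so set $u := \pr_{[p,q]}(p')$ and $v := \pr_{[p,q]}(q')$. If $p \neq u$, pick $p_1$ covering $p$ in $[p,u]$; by Lemma~\ref{lem:ascending} we have $p_1 \in I(p,u) \subseteq I(p,p')$, so $d(p_1,p') = d(p,p')-1$, while $p \preceq p_1 \preceq q$ together with (\ref{eqn:v}) makes $(p_1,q)$ Boolean. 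Thus $q/p \to q/p_1$ is a legal weight-$\frac12$ edge of $\mGamma^*$ decreasing the potential by one, and the induction hypothesis finishes this case; symmetrically if $q \neq v$, and symmetrically on the target side via $u' := \pr_{[p',q']}(p)$, $v' := \pr_{[p',q']}(q)$. When no such reduction applies we have $u=p$, $v=q$, $u'=p'$, $v'=q'$, so Lemma~\ref{lem:[p,q][p',q']} shows that $C := [p,q]$ and $C' := [p',q']$ are gated, isomorphic via mutually inverse projections with $p \leftrightarrow p'$ and $q \leftrightarrow q'$, and $d(p,p') = d(q,q') = d(C,C') =: \delta$.

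It then remains to treat this parallel case, which I would handle by a second induction peeling off one unit of $\delta$. Choose $p_1 \sim p$ with $p_1 \in I(p,p')$; translating the cube $C$ one step along the edge $pp_1$, using the quadrangle condition (Lemma~\ref{lem:quadrangle}) together with the gatedness of $C'$ (Theorem~\ref{thm:gatedset}), should produce a cube $C_1 = [p_1,q_1]$ that is again gated and isomorphic to $C$, with $q_1$ the translate of $q$, $d(C,C_1)=1$, $d(C_1,C')=\delta-1$, and gate maps composing as $p \leftrightarrow p_1 \leftrightarrow p'$, $q \leftrightarrow q_1 \leftrightarrow q'$. Then $q/p$ is joined to $q_1/p_1$ by a weight-$1$ path (the $\delta=1$ instance below) and $q_1/p_1$ to $q'/p'$ by the inner induction, giving total weight $1+(\delta-1)=\delta$. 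For the base case $\delta=1$, the two parallel cubes $C,C'$ are joined by a matching at distance $1$; opposite edges of each connecting $4$-cycle are projective, so by admissibility (Lemma~\ref{lem:commonorbit}) all matching edges carry the same orientation. Consequently the whole prism is a single $(\dim C+1)$-cube, whence either $(p,q')$ or $(p',q)$ is Boolean, and the corresponding two-edge route $q/p \to q'/p \to q'/p'$ or $q/p \to q/p' \to q'/p'$ has weight $1$.

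The main obstacle is precisely the translation step in the parallel case: building $C_1$ and checking simultaneously that it is gated, that it is a cube isomorphic to $C$, and that the distances to $C$ and to $C'$ split as $1$ and $\delta-1$ with compatible gate maps. This is where the quadrangle condition and the calculus of gated sets (Theorem~\ref{thm:gatedset}) carry the real content; everything else is bookkeeping driven by the potential $d(p,p')+d(q,q')$.
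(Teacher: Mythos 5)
Your lower bound and your gate-based reduction to the ``parallel'' configuration $(p,q,p',q')=(u,v,u',v')$ are correct and coincide with the paper's own argument (the paper takes an atom $a$ of $[p,u]$ where you take a cover $p_1$ of $p$; same thing). The problem is the parallel case, which is exactly where the content of the lemma lives and exactly where your proof stops being a proof: the step ``translating the cube $C$ one step along the edge $pp_1$ \ldots should produce a cube $C_1=[p_1,q_1]$ that is again gated and isomorphic to $C$, with $d(C,C_1)=1$, $d(C_1,C')=\delta-1$, and compatible gate maps'' is asserted, not established, and you yourself identify it as the main obstacle. Nothing in the material you cite (the quadrangle condition, Theorem~\ref{thm:gatedset}) hands you this translation for free; proving it is a genuine parallelism statement about gated sets that would itself need an induction and a verification of Booleanness of $(p_1,q_1)$. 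There is also a conflation throughout of the cube subgraph witnessing $p\sqsubseteq q$ with the interval $[p,q]$, which by Proposition~\ref{prop:modularlattice} is only a complemented modular lattice and may strictly contain any such cube (it can contain diamonds); so ``the whole prism is a single $(\dim C+1)$-cube'' needs to be restated carefully before the base case $\delta=1$ is sound, although the intended conclusion there (one of $(p,q')$, $(p',q)$ is Boolean, via consistent orientation of the matching edges and the join-of-atoms criterion of Theorem~\ref{thm:complemented}) is correct.

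The paper avoids all of this with a much lighter move that you may want to compare against. In the parallel configuration one has $d(p,q')=d(p,q)+d(q,q')=d(q,p')$. Take a single neighbor $a$ of $p$ in $I(p,p')$ (so $a\notin[p,q]$) and let $b$ be a median of $q,a,q'$; then $b$ is a neighbor of $q$ with $d(a,b)=d(p,q)$ and $d(b,q')=d(q,q')-1$. Now only the orientation of the edge $qb$ is examined: if $q\swarrow b$ then $b=q\vee a$ in $[p,b]$, so $b$ is a join of atoms of $[p,b]$, $(p,b)$ is Boolean, and $b/p$ is a $\mGamma^*$-neighbor of $q/p$ with strictly smaller potential; if $b\swarrow q$ then $(a,q)$ is Boolean and $q/a$ plays the same role. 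Either way the single outer induction on $d(p,p')+d(q,q')$ closes immediately --- no second induction, no translated copy $C_1$, and no new gated set to certify. If you want to keep your translation-based route you must actually prove the translation lemma; otherwise the one-step median argument is the way to finish.
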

\begin{proof}
Take a path 
$P = (q/p = q_0/p_0,q_1/p_1,\ldots, q_k/p_k = q'/p')$ 
between $q/p$ and $q'/p'$ in $\mGamma^*$.
The length of $P$ is equal to 
$\sum_{i=0}^{k-1}(d(q_i,q_{i+1}) + d(p_i,p_{i+1}))/2 \geq 
(d(q,q') + d(p,p'))/2$;
hence $(\geq)$ holds. 

We show the equality by the induction on 
$d(p,p') + d(q,q')$.
Define $u,v,u',v'$ by~(\ref{eqn:uvu'v'}).
Then $d(p,p') = d(p,u) + d(u,u') + d(u',p')$ 
and $d(q,q') = d(q,v) + d(v,v') + d(v',q')$ hold. 
Suppose $p \prec u$.
Take an atom $a$ of $[p,u]$.
Then $(a,q)$ is Boolean by (\ref{eqn:v}), and $q/a$ is adjacent to $q/p$ in $\mGamma^*$.
Also $d(a,u) = d(p,u) -1$ (Lemma~\ref{lem:ascending}), 
implying $d(a,p') = d(p,p') - 1$.
By induction, $d(q/a,q'/p') = (d(a,p') + d(q,q'))/2$, 
and hence $d(q/p,q'/p') \leq (d(a,p') + d(q,q'))/2 + 1/2 = (d(p,p') + d(q,q'))/2$, as required.

Consequently we can assume $(p,q,p',q') = (u,v,u',v')$.
In particular, $d(p,q) = d(p',q')$, $d(p,p') = d(q,q')$, 
and $d(p,q') = d(p,q) + d(q,q') = d(q,p')$ must hold.
Take a neighbor $a$ of $p$ in $I(p,p')$; then $a \not \in [p,q]$.
Take a median $b$ of $q,a,q'$.
Then $b$ must be a neighbor of $q$ and satisfies
$d(p,q) = d(a,b) = d(p,b) - 1 = d(a,q) - 1$.
If $q \swarrow b$, then $p \swarrow a \preceq b$ (Lemma~\ref{lem:ascending}), 
and the join $q \vee a$ in $[p,b]$ is equal to $b$; 
this means that $[p, b]$ is a complemented modular lattice, 
$(p,b)$ is Boolean, and $b/p$ is adjacent to $q/p$.
Applying the induction to $b/p$ and $q'/p'$, 
we obtain the equality ($=$).
Similarly, if $b \swarrow q$, then 
$(a,q)$ is Boolean
and apply the induction to $q/a$ and $q'/p'$. 
\end{proof}

\paragraph{Proof of Theorem~\ref{thm:2subdivision}.}
Any 4-cycle in $\mGamma^*$ is represented 
as $(q/p,q'/p, q'/p', q/p')$ for some edges $pp',qq'$ in $\mGamma$, or 
$(x/p,y/p, z/p, w/p)$ or $(p/x, p/y, p/z, p/w)$ 
for 4-cycle $(x,y,z,w)$ and vertex $p$ in $\mGamma$.
This immediately implies that $o^*$ 
is an admissible orientation 
and $h^*$ is orbit-invariant.

To show that $\mGamma^*$ is modular,
we are going to verify that $\mGamma^*$ 
satisfies the two conditions of Lemma~\ref{lem:quadrangle}.
If $q/p$ and $q'/ p'$ are joined by an edge,
then $d_{\mGamma}(p,q)$ and $d_{\mGamma}(p',q')$ 
have different parity. This implies that $\mGamma^*$ is bipartite.

We next verify
the quadrangle condition (Lemma~\ref{lem:quadrangle}~(2)).
Take boolean pairs $q/p$ and $q'/p'$.
Suppose further that we are given 
two neighbors $q_1/p_1$ and $q_2/p_2$ 
of $q/p$ with $d(q/p,q'/p') 
= 1/2 + d(q_1/p_1, q'/p') = 1/2 + d(q_2/p_2, q'/p')$.
Our goal is to show the existence of 
a common neighbor $q^{*}/p^{*}$
of $q_1/p_1, q_2/p_2$ 
with $d(q/p, q'/p') = 1 + d(q^*/p^*, q'/p')$.

It suffices to consider the following three cases:
\begin{itemize}
\item[(i)] $p_1 = p = p_2$.
\item[(ii)] $p_1 = p$, $q_1 \swarrow q$, and $q_2 = q$. 
\item[(iii)] $p_1 \swarrow p = p_2$ and $q_1 = q \swarrow q_2$. 
\end{itemize}

\noindent
{\bf Case (i).} 
By Lemma~\ref{lem:atleast}, 
we have $d(q,q') = 1 + d(q_i,q')$ for $i=1,2$.
By Lemma~\ref{lem:quadrangle}~(2) for $\mGamma$, 
there is a common neighbor $q^*$ of $q_1,q_2$ 
with $d(q,q') = 2 + d(q^*,q')$.
Here $p \sqsubseteq q_i$ $(i=1,2)$, 
and hence $q_i \in {\cal L}^+_p$.  
By the convexity of ${\cal L}^+_{p}$ (Theorem~\ref{thm:lattice}), 
we have $q^{*} \in {\cal L}^+_{p}$, implying $p \sqsubseteq q^*$.
Again, by Lemma~\ref{lem:atleast},
we have $d(q/p,q'/p') = 1 + d(q^*/p,q'/p')$, as required.\\

\noindent
{\bf Case (ii).} We show $p_2 \sqsubseteq q_1$, which implies that 
$q_1/p_2$ is a required common neighbor (by Lemma~\ref{lem:atleast}).
If $p_2 \swarrow p$, then $p_2 \sqsubseteq q$ 
and $p_2 \swarrow p \preceq q_1 \swarrow q$ 
imply $p_2 \sqsubseteq q_1$ (by (\ref{eqn:v})).
Suppose $p \swarrow p_2$.
By Lemma~\ref{lem:[p,q][p',q']},  
$\pr_{[p,q]}([p',q'])$ is equal to interval $[a,b]$ for
$a = \pr_{[p,q]}(p')$ and $b = \pr_{[p,q]}(q')$. 
Then $d(p_2,p') = d(p,p') - 1$ implies $p_2 \preceq a$.
Similarly $b \preceq q_1$.
Thus $p \preceq p_2 \preceq a \preceq b \preceq q_1 \preceq q$
and $p \sqsubseteq q$ imply $p_2 \sqsubseteq q_1$ 
(by (\ref{eqn:v})), as required.\\

\noindent
{\bf Case (iii).}
We show $p_1 \sqsubseteq q_2$;
then $q_2/p_1$ is a common neighbor as required.
We use the induction on $d(p_1,q_2)$;
in the case of $d(p_1,q_2) = 2$ the subsequent argument shows that
$I(p_1,q_2) = [p_1,q_2]$ contains an element different from $p_1,q_2, p$, 
and hence $(p_1,q_2)$ is Boolean.
Since $[p_1,q_2]$ is a convex set, 
we can consider $\pr_{[p_1,q_2]}([p',q'])$, which is equal to $[u,v]$ 
for $u:= \pr_{[p_1,q_2]}(p')$, $v:=  \pr_{[p_1,q_2]}(q')$ (Lemma~\ref{lem:[p,q][p',q']}).
Then necessarily $p \not \preceq u$; otherwise 
$d(p_1,p') = d(p_1,u) + d(u,p') = 1 + d(p,u) + d(u,p') = 1 + d(p,p')$, 
contradicting the first assumption $d(p_1,p') = d(p,p') - 1$.

Suppose $p_1 \neq u$.
Take an atom $a \in [p_1,u]$.
If $a \not \preceq q$, 
then $q_2$ is equal to the join $a \vee q$, and consequently 
the join of atoms in $[p_1,q_2]$. Thus $[p_1,q_2]$ is complemented, and 
$p_1 \sqsubseteq q_2$ (Proposition~\ref{prop:modularlattice}).
Suppose that $a \preceq q$.
Consider the join $a \vee p$ in $[p_1,q_2]$, which belongs to $[p,q]$.
Then $d(a \vee p,p') = d(a \vee p, u) + d(u,p') = 1 + d(a,u) + d(u,p') = 1+ d(a,p')$ 
(by $a \vee p \not \prec u$ and Lemma~\ref{lem:ascending}).
By (\ref{eqn:v}),  both $(a,q_1)$ and $(a \vee p, q_2)$ are Boolean.
By induction, $a  \sqsubseteq q_2$, and $[a, q_2]$ is complemented modular.
Thus we can take an atom $b (\neq a \vee p)$ of $[a,q_2]$ with $q \vee b = q_2$.
Necessarily $p \not \preceq b$ 
(otherwise both $a$ and $p$ are covered by $c$ and $a \vee p$; contradicting 
the fact that $[p_1,q_2]$ is a lattice).
By modularity equality with $p \wedge b = p_1$,  
the join $p \vee b$ has rank $3$ in $[p_1,q_2]$.
We can take $c$ with $p  \swarrow c \swarrow b \vee p$.
Then $d(p,b) = 3$, and $d(p_1,b) = d(c,b) = 2$.
By the quadrangle condition, there is a common neighbor 
$w$ of $p_1,c, b$, which is an atom of $[p_1,q]$.
Then $a \vee w = b$, and $q_2 = q \vee b = q \vee a \vee w$.
This means that $q_2$ is the join of atoms of $[p_1,q_2]$; 
thus $p_1 \sqsubseteq q_2$.

By the same argument (for $v, q_2$),
we can assume that $u = p_1$ and $v = q_2$.
Since $[u,v]$ is isomorphic to a subinterval of complemented modular lattice $[p',q']$ 
(Lemma~\ref{lem:[p,q][p',q']}),
$[p_1, q_2] (= [u,v])$ is also a complemented modular lattice, implying $p_1 \sqsubseteq q_2$.
$\Box$
\paragraph{Proof of Proposition~\ref{prop:1/2}.}
We have proved (\ref{eqn:isometric_embedding}) for the case $h=1$ in Lemma~\ref{lem:atleast}. 
By Theorem~\ref{thm:2subdivision} shown above, $\mGamma^*$ is now a modular graph.
Let $P$ be a shortest path with respect to $h^*$.
By Lemma~\ref{lem:shortest},
this is also shortest 
with respect to uniform edge-length $1/2$.
Necessarily the paths obtained from 
$(q=q_0,q_1,q_2,\ldots,q_k = q')$ and 
$(p = p_0,p_1,p_2,\ldots,p_k = p')$
(by identifying repetitions)
are both shortest in $\mGamma$ 
with respect to uniform edge-length $1$.
Again, by Lemma~\ref{lem:shortest}, 
they are shortest relative to $h$, 
and have the lengths $d_{\mGamma,h}(p,p')$ and $d_{\mGamma,h}(q,q')$, 
respectively. 
Thus (\ref{eqn:isometric_embedding}) holds.
$\Box$

\subsubsection{Proof of Theorems~\ref{thm:d_is_submodular} and~\ref{thm:d_is_L-convex}}\label{subsub:d_is_L-convex}
Consider the 2-subdivision $(\pmb{\mGamma} \times {\pmb\mGamma})^{*}$, 
which is identified with $\pmb{\mGamma}^* \times \pmb{\mGamma}^*$ 
by correspondence $(q,q')/(p,p') \leftrightarrow (q/p,q'/p')$
(Lemma~\ref{lem:product}). 
Consider 
$\overline{d_{\mGamma,h}}: V_{(\mGamma \times \mGamma)^{*}} \to \RR$.
Then we have
\[
\overline{d_{\mGamma, h}}((q,q')/(p,p')) = 
\frac{ d_{\mGamma,h}(p,p') + d_{\mGamma,h}(q,q')}{2} 
= d_{\mGamma^*, h^*}(q/p, q'/p'),
\]
where the first equality is the definition (\ref{eqn:Lovasz}) 
and the second follows from Proposition~\ref{prop:1/2}.
Hence it suffices to show that 
$d_{\mGamma^*,h^*}: V_{\mGamma^*} \times V_{\mGamma^*} \to \RR$ 
is submodular on ${\cal L}^+_{(a/a,b/b)}(\pmb{\mGamma}^* \times \pmb{\mGamma}^*) 
= {\cal L}^+_{a/a}(\pmb{\mGamma}^*)  \times {\cal L}^+_{b/b}(\pmb{\mGamma}^*)$ 
for every $(a,b) \in V_{\mGamma} \times V_{\mGamma}$.
Therefore, by taking $\pmb{\mGamma}^*$ as $\pmb{\mGamma}$, 
Theorem~\ref{thm:d_is_L-convex} follows from the following.
\begin{Lem}
The distance function $d_{\mGamma, h}$ 
is submodular on 
${\cal L}^+_a \times {\cal L}^+_b$ for every $a,b \in V_{\mGamma}$.
\end{Lem}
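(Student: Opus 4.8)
The plan is to verify the submodularity of the finite-valued function $f(p,q) := d_{\mGamma,h}(p,q)$ on the product semilattice ${\cal L}^+_a \times {\cal L}^+_b$ by means of the local criterion of Proposition~\ref{prop:2section}. Both factors are complemented modular semilattices, convex in $\mGamma$, by Theorem~\ref{thm:lattice}, and $f$ is everywhere finite, so the proposition applies. Thus it suffices to establish two families of inequalities: the submodularity inequality for every $2$-bounded pair, and the $\wedge$-convexity inequality for every pair one of whose coordinates is fixed while the other is antipodal. The combinatorial fact I would exploit is that in a product poset a $2$-bounded pair $(P,Q) = ((p_1,p_2),(q_1,q_2))$ is of one of only two types: either (I) the two points agree in one coordinate (say $p_1 = q_1$) and form a $2$-bounded pair in the other factor; or (II) $P$ and $Q$ differ by a single covering in each coordinate, in opposite directions, say $q_1 \prec p_1$ in ${\cal L}^+_a$ and $p_2 \prec q_2$ in ${\cal L}^+_b$, so that $P\wedge Q = (q_1,p_2)$ and $P\vee Q = (p_1,q_2)$.

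For the one-coordinate-fixed inequalities I would show, for each fixed $c \in {\cal L}^+_a$, that the slice $q \mapsto d_{\mGamma,h}(c,q)$ is submodular on ${\cal L}^+_b$ (and symmetrically that $p \mapsto d_{\mGamma,h}(p,c')$ is submodular on ${\cal L}^+_a$); by the forward direction of Theorem~\ref{thm:submo} this delivers precisely the type-(I) $2$-bounded inequalities and the required antipodal $\wedge$-convexity inequalities. Since ${\cal L}^+_b$ is convex, hence gated, in $\mGamma$ (Theorem~\ref{thm:lattice}, Lemma~\ref{lem:2convexity}), let $c^{*} := \pr_{{\cal L}^+_b}(c)$ be the gate of $c$. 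Then $d_{\mGamma,h}(c,q) = d_{\mGamma,h}(c,c^{*}) + d_{\mGamma,h}(c^{*},q)$ for every $q \in {\cal L}^+_b$, so the slice is a constant plus the internal distance $d_{{\cal L}^+_b}(c^{*},\cdot)$ from the fixed point $c^{*} \in {\cal L}^+_b$. By Theorem~\ref{thm:d_is_submodular} the distance $d_{{\cal L}^+_b}$ is submodular on ${\cal L}^+_b \times {\cal L}^+_b$; fixing its first argument to $c^{*}$ (Lemma~\ref{lem:properties_submo}~(3)) and adding a constant (Lemma~\ref{lem:properties_submo}~(1)) shows the slice is submodular.

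The remaining, genuinely two-sided, case is (II): for $\mGamma$-edges $q_1p_1$ with $q_1 \prec p_1$ and $p_2q_2$ with $p_2 \prec q_2$ (these are edges of $\mGamma$ since ${\cal L}^+_a,{\cal L}^+_b$ are convex), I must prove
\[
d_{\mGamma,h}(p_1,p_2) + d_{\mGamma,h}(q_1,q_2) \ \geq\ d_{\mGamma,h}(q_1,p_2) + d_{\mGamma,h}(p_1,q_2).
\]
I would argue by contradiction. Moving along the edge $q_1p_1$ changes the distance to any fixed point by exactly $\pm h(q_1p_1)$ (bipartiteness of $\mGamma$), so the inequality can fail only when $p_1$ is the gate of $p_2$ and $q_1$ is the gate of $q_2$ at this edge, that is, $p_1 \in I(q_1,p_2)$ and $q_1 \in I(p_1,q_2)$. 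These are betweenness relations, so by Lemma~\ref{lem:shortest} I may pass to unit edge-length and compute $d(q_1,p_2) = d(p_1,p_2)+1$ and $d(p_1,q_2) = d(q_1,q_2)+1$; combining these with the unit edge $p_2q_2$ forces $d(p_1,p_2) = d(q_1,q_2)$ and $d(p_1,q_2) = d(p_1,p_2)+1 = d(q_1,p_2)$. This is exactly the hypothesis of Lemma~\ref{lem:commonorbit} for the edges $p_1q_1$ and $p_2q_2$, whose part (2) yields $p_1 \searrow_o q_1 \Rightarrow p_2 \searrow_o q_2$, i.e. $q_2 \prec p_2$, contradicting $p_2 \prec q_2$.

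Putting the cases together verifies both hypotheses of Proposition~\ref{prop:2section} and hence the asserted submodularity. The main obstacle is case (II): it is the only place where the two distinct semilattices ${\cal L}^+_a$ and ${\cal L}^+_b$ genuinely interact, and the one-variable slicing argument of the previous paragraph says nothing about it. Its resolution hinges on the projectivity/orientation criterion of Lemma~\ref{lem:commonorbit}, which is precisely the tool that excludes the offending four-point configuration; I expect the bulk of the care in the write-up to go into the sign bookkeeping that reduces the failure of the four-point inequality to that configuration.
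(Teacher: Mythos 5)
Your overall skeleton --- Proposition~\ref{prop:2section} applied to ${\cal L}^+_a\times{\cal L}^+_b$, the split of $2$-bounded pairs into a one-coordinate type and a cross type, and the treatment of the cross type --- matches the paper. Your case (II) is essentially the paper's case $(2')$: the paper projects $\{p,q\}$ onto the gated edge $\{p',q'\}$ and uses Lemma~\ref{lem:commonorbit}~(2) to exclude the crossed configuration, while you reach the same four-point configuration by the $\pm h(p_1q_1)$ parity argument and kill it with the same lemma; both are correct.

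The gap is in your treatment of the one-coordinate cases. The reduction of the slice $q\mapsto d_{\mGamma,h}(c,q)$ to $d(c,c^*)+d_{{\cal L}^+_b}(c^*,q)$ via the gate $c^*$ is fine, but you then invoke Theorem~\ref{thm:d_is_submodular} to conclude that $d_{{\cal L}^+_b}$ is submodular on ${\cal L}^+_b\times{\cal L}^+_b$. In this paper Theorem~\ref{thm:d_is_submodular} has no independent proof: Section~\ref{subsub:d_is_L-convex} derives it from (the principal-filter variant of) exactly the lemma you are proving, by viewing a modular semilattice as $({\bf 0})^{\uparrow}$ in its covering graph. So your argument is circular. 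Nor is it repaired by an induction on the size of the covering graph of ${\cal L}^+_b$: when $\mGamma$ is a hypercube and $b$ its source, that covering graph is $\mGamma$ itself. What is missing is a direct verification of the two one-coordinate families of inequalities, and this is where the paper does the real work: for an antipodal pair $(p,q)$ in ${\cal L}^+_a$ and $u\in{\cal L}^+_b$ it takes a median $m$ of $p,q,u$, writes $m=p'\vee q'$ with $p'\in[p\wedge q,p]$ and $q'\in[p\wedge q,q]$, and reduces the $\wedge$-convexity inequality to $v[p',p]\,v[q',q]\geq v(p')v(q')$, i.e.\ to (\ref{eqn:antipodal}); for a $2$-bounded pair it projects $u$ onto the $4$-cycle $[p\wedge q,p\vee q]$ and checks three cases for the position of the gate, using orbit-invariance of $h$. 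You need to supply an argument of this kind (or an independent proof of Theorem~\ref{thm:d_is_submodular}) before the proposal is complete.
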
 
\begin{proof}
By Proposition~\ref{prop:2section}, 
it suffices to show the following,
where we denote $d_{\mGamma,h}$ by $d$, 
and denote the valuation on ${\cal L}^+_a$
(defined in (\ref{eqn:valuations})) by $v$.

\begin{itemize}
\item[(1)] For every $u \in {\cal L}^+_b$ and
every antipodal pair $(p,q)$ in ${\cal L}^+_a$, we have
\[
v[p \wedge q,q] d (p,u) + v[p \wedge q,p] d(q,u) \geq 
(v[p \wedge q,p] + v[p \wedge q, q]) d(p \wedge q, u).
\]
\item[(2)] 
For every $u \in {\cal L}^+_{b}$ and every $2$-bounded pair $(p,q)$ in ${\cal L}^+_{a}$, 
we have
\[
d(p,u) + d(q,u) \geq d(p \wedge q,u) + d(p \vee q, u).
\]
\item[$(2')$] For every $p,q \in {\cal L}^+_{a}$ with $p \swarrow q$ 
and every $p', q' \in {\cal L}^+_b$ with $p' \swarrow q'$,
we have
\[
d(q,p') + d(p,q') \geq d(q,q') + d(p, p').
\]
\end{itemize}
Note that (2) and $(2')$ correspond 
to the submodularity condition for $2$-bounded pairs.

(1). We may assume that $p \wedge q = a$ 
(by considering ${\cal L}_{p \wedge q}^+$) and $v(a) = 0$.
Take a median $m$ of $p,q,u$.
By $m \in I(p,q)$ and Lemma~\ref{lem:I(p,q)}, 
there are $p' \in [a,p]$ 
and $q' \in [a,q]$ with $m = p' \vee q'$.
Let $D := d(m,u)$.
Then we have $d(p,u) =  v[p',p] + v(q') + D$, 
$d(q,u) =  v[q',q] + v(p') + D$, 
and $d(a,u) \leq v(p') + v(q') + D$.
Hence we get
\begin{eqnarray*}
&& v(q) d(p, u) + v(p) d(q, u) - (v(p) + v(q))d(a, u) \\
&& \geq  v(q) \{ v[p',p] + v(q') + D\} 
+ v(p) \{ v[q',q] + v(p') + D\}  \\ 
&& \quad \quad \quad - \{v(p) + v(q)\} \{v(p') + v(q') + D\} \\
&& = v(q) v[p',p] +  v(p) v[q',q] - v(p)v(q') - v(q)v(p') \\
&& = 2 v[q',q] v[p',p] - 2 v(p')v(q'), 
\end{eqnarray*}
where we use $v(p) = v(p') + v[p',p]$ and $v(q) = v(q') + v[q',q]$ for the last equality.
This must be nonnegative
since $(p,q)$ is antipodal; see (\ref{eqn:antipodal})

(2). Recall the notion of gated sets 
(Section~\ref{subsec:gated}); 
$[p \wedge q, p \vee q]$ is convex, and is gated 
(Lemmas~\ref{lem:2convexity} and \ref{lem:interval}).
Let $m:= \pr_{[p \wedge q, p \vee q]}(u)$, and $D := d(m,u)$. 
Then we have $d(x,u) = d(x,m) + D$ for 
$x \in \{p,q,p \wedge q, p \vee q\}$.
There are three cases: (i) 
$m \in \{p,q\}$, (ii) $m \in \{p \wedge q, p \vee q\}$, 
and (iii) $m \not \in \{p,q, p \wedge q, p \vee q\}$.
Note that 
$(p,p \wedge q, q, p \vee q)$ forms a 4-cycle 
since $(p,q)$ is $2$-bounded.
Let $\alpha := v[p,p \vee q] = v[p \wedge q,q]$ 
and $\beta:= v[p \wedge q, p] = v[q, p \vee q]$.
Consider the case (i). 
Then $\{ d(p \wedge q, m), d(p \vee q, m)\} = \{\alpha, \beta\}$ and 
$\{d(p,m), d(q,m)\} = \{0, \alpha+\beta \}$. 
Hence 
$d(p,u) + d(q,u) - d(p \wedge q,u) - d(p \vee q, u) = 0$.
Consider the case (ii).
Then $\{d(p, m), d(q, m)\} = \{ \alpha, \beta\}$, 
and $\{d(p \wedge q,m), d(p \vee q, m)\} = \{0, \alpha+\beta\}$.
Hence 
$d(p,u) + d(q,u) - d(p \wedge q,u) - d(p \vee q, u) = 0$.
Consider the case (iii). 
Then $m$ is a common neighbor of $p \wedge q,p \vee q$ 
different from $p,q$.
Hence all edges in $[p \wedge q, p \vee q]$ 
belong to the same orbit.
Thus $\alpha = \beta$, $d(p,m) = d(q,m) = 2\alpha$, 
$d(p \wedge q,m) = d(p \vee q, m) = \alpha$, 
and
$d(p,u) + d(q,u) - d(p \wedge q,u) - d(p \vee q, u) = 2\alpha > 0$.

$(2')$. Consider $\pr_{\{p',q'\}}(\{p,q\})$. 
Let $D := d(\{p,q\}, \{p',q'\})$.
There are two cases: (i) $|\pr_{\{p',q'\}}(\{p,q\})| = 1$
and (ii) $\{p', q'\} = \pr_{\{p',q'\}}(\{p,q\})$. 
Consider the case (i).
For $u,v,u',v'$ with
$\{u,v\} = \{p,q\}$ and $\{u',v'\} = \{p',q'\}$, 
we have $d(v,u') = D$, $d(u,u') = D + h(uv)$, 
$d(v,v') = D + h(u'v')$, 
and $d(u,v') = D + h(uv) + h(u'v')$.
Thus $d(u,u') + d(v,v') = d(u,v') + d(v,u')$, 
and 
the equality holds in $(2')$.
Consider the case (ii). 
By Theorem~\ref{thm:gatedset} and Lemma~\ref{lem:commonorbit},
we have $p' = \pr_{\{p',q'\}}(p)$, 
$q' = \pr_{\{p',q'\}}(q)$, $d(p,p') = d(q,q') = D$, and
that $pq$ and $p'q'$ must 
belong to the same orbit $Q$.
Then $d(p,q') = d(q,p') = D + h_Q$.
Therefore $(2')$ holds.
\end{proof}

The above proof works even when ${\cal L}^{+}_{a}$ and ${\cal L}^{+}_{b}$ are replaced by 
the principal filters $(a)^{\uparrow}$ and $(b)^{\uparrow}$, respectively, 
since they are convex and are  (not necessarily complemented)
modular semilattices (Lemma~\ref{lem:filter}).
Therefore Theorem~\ref{thm:d_is_submodular} follows from Theorem~\ref{thm:BVV} and:
\begin{Lem}
The distance function $d_{\mGamma,h}$ 
is submodular on 
$(a)^{\uparrow} \times (b)^{\uparrow}$ for every $a,b \in~V_{\mGamma}$.
\end{Lem}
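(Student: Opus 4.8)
The plan is to reuse, essentially verbatim, the proof of the immediately preceding lemma (submodularity of $d_{\mGamma,h}$ on ${\cal L}^+_a \times {\cal L}^+_b$), replacing the appeal to Theorem~\ref{thm:lattice} by an appeal to Lemma~\ref{lem:filter}. First I would record that by Lemma~\ref{lem:filter} the principal filters $(a)^{\uparrow}$ and $(b)^{\uparrow}$ are convex in $\mGamma$ and are modular semilattices (not necessarily complemented). Being convex, they are isometric subspaces of $(V_{\mGamma},d_{\mGamma,h})$, so endowing each with the valuation $v(p):=d_{\mGamma,h}(p,a)$ (respectively $v(p):=d_{\mGamma,h}(p,b)$) via Lemma~\ref{lem:valuation}~(2) makes the associated metric $d_{{\cal L}}$ coincide with the restriction of $d_{\mGamma,h}$. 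This is exactly the setting needed to speak of submodularity on the product $(a)^{\uparrow}\times(b)^{\uparrow}$.

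Next I would invoke Proposition~\ref{prop:2section}, which applies to the product of \emph{any} two modular semilattices and reduces the desired submodularity to the three conditions (1), (2), and $(2')$ already isolated in the preceding proof. I would then check that each verification carries over unchanged. Condition~(1), the $\wedge$-convexity inequality for an antipodal pair $(p,q)$ in $(a)^{\uparrow}$ with the second coordinate $u$ fixed, uses only the existence of a median $m$ of $p,q,u$, the interval description of Lemma~\ref{lem:I(p,q)} (writing $m=p'\vee q'$ with $p'\in[a,p]$, $q'\in[a,q]$), and the antipodality inequality~(\ref{eqn:antipodal}). Conditions~(2) and $(2')$, the submodularity inequalities for $2$-bounded pairs, use only the gatedness of the interval $[p\wedge q,p\vee q]$ and of the two-element set $\{p',q'\}$, which follows from convexity by Lemmas~\ref{lem:2convexity} and~\ref{lem:interval}, together with the orbit-invariance of $h$ supplied by Lemma~\ref{lem:commonorbit}.

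The only point requiring care---and the main (if mild) obstacle---is to confirm that nothing in the earlier argument secretly used the complementedness of ${\cal L}^+_a$ or ${\cal L}^+_b$. I would therefore trace the earlier proof line by line: the sole role of Theorem~\ref{thm:lattice} there was to provide convexity in $\mGamma$ and the modular-semilattice structure, both of which Lemma~\ref{lem:filter} now furnishes for the filters, while each subsequent step rests only on medians, metric intervals, gatedness, and orbit-invariance. Having verified this, the lemma is established, and combining it with Theorem~\ref{thm:BVV} (modular semilattices are exactly those with modular covering graph) yields Theorem~\ref{thm:d_is_submodular}.
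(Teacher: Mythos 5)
Your proposal is correct and matches the paper's own argument: the paper likewise proves this lemma by observing that the preceding proof for ${\cal L}^+_a \times {\cal L}^+_b$ used only convexity and the modular-semilattice structure (never complementedness), both of which Lemma~\ref{lem:filter} supplies for the principal filters $(a)^{\uparrow}$ and $(b)^{\uparrow}$. Your line-by-line check of conditions (1), (2), and $(2')$ via Proposition~\ref{prop:2section} is exactly the intended verification.
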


\subsubsection{Proof of L-optimality criterion (Theorem~\ref{thm:L-optimality})}
\label{subsub:L-optimality}
Let $\pmb{\mGamma} = (\mGamma,o,h)$ be a modular complex 
and let $g$ be a function on $V_\mGamma$.
Let $\bar{\mGamma}$ denote the graph obtained from 
$\mGamma$ by joining all Boolean pairs $(p,q)$ 
(with $d_{\mGamma}(p,q) \geq 2$).
Namely $\bar{\mGamma}$ 
is the 1-skeleton graph of the complex $\mit{\Delta}(\pmb{\mGamma})$.
For $\alpha \in \RR$, 
the {\em level-set subgraph} $\bar{\mGamma}_{g, \alpha}$
is the subgraph of $\bar{\mGamma}$ 
induced by the set of vertices $p$ 
with $g(p) \leq \alpha$.
The following connectivity property 
of $\bar{\mGamma}_{g, \alpha}$ rephrases 
the L-optimality criterion (Theorem~\ref{thm:L-optimality}).
\begin{Prop}\label{prop:connected}
Let $g$ be an L-convex function on $\pmb{\mGamma}$, and let $l := \min_{p \in V_{\mGamma}} g(p)$.
For every $\alpha \geq l$,  the level-set graph $\bar{\mGamma}_{g,\alpha}$ 
is connected. In addition, if $\alpha >  l$, then
every vertex in $g^{-1}(\alpha)$ is adjacent to a vertex of
$\bar{\mGamma}_{g,\alpha} \setminus g^{-1}(\alpha)$ 
in $\bar{\mGamma}_{g,\alpha}$.
\end{Prop}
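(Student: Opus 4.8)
The plan is to derive both assertions from a single \emph{descent property}: if $g(p) > l$, then $p$ has a neighbor $q$ in $\bar{\mGamma}$ (that is, a vertex $q \neq p$ with $p \sqsubseteq q$ or $q \sqsubseteq p$) satisfying $g(q) < g(p)$. This is exactly the ``In addition'' clause, since $\bar{\mGamma}_{g,\alpha} \setminus g^{-1}(\alpha)$ is the set of vertices of value strictly below $\alpha$. I would prove the descent property directly from submodularity---\emph{not} from Theorem~\ref{thm:L-optimality}, which this proposition is meant to establish. Granting it, connectivity of $\bar{\mGamma}_{g,\alpha}$ follows by induction on the finite set of values taken by $g$: every vertex of value exactly $\alpha$ is, by descent, adjacent in $\bar{\mGamma}_{g,\alpha}$ to a vertex of strictly smaller value, hence to the sublevel set of the next lower value, which is connected by the inductive hypothesis; the base case is the connectivity of the minimizer set $g^{-1}(l)$. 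Theorem~\ref{thm:L-optimality} then drops out, since the descent property applied at every vertex is precisely the contrapositive of the implication $(2) \Rightarrow (1)$ there.

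For the descent property I would work in the $2$-subdivision $\pmb{\mGamma}^*$ via the isometric embedding $p \mapsto p/p$ of Proposition~\ref{prop:1/2}, so that the $\bar{\mGamma}$-neighbors of $p$ are the nonminimal elements of the neighborhood semilattice ${\cal L}^*_p$, on which $\overline{g}$ is submodular by the very definition of L-convexity. By Lemma~\ref{lem:submo_L_p} the restrictions of $g$ to the gated convex semilattices ${\cal L}^+_p$ and ${\cal L}^-_p$ (Theorem~\ref{thm:lattice}) are submodular as well. Assuming $g(p) > l$, I would choose $r$ with $g(r) < g(p)$ and $d_{\mGamma}(p,r)$ minimum. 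Using gatedness (Lemma~\ref{lem:2convexity}, Theorem~\ref{thm:gatedset}) together with medians, I would split the geodesic from $p$ to $r$ into a downward part entering ${\cal L}^-_p$ and an upward part entering ${\cal L}^+_p$, isolate the first step towards $r$---a single Boolean pair, hence an atom of ${\cal L}^*_p$---and feed the pair of first steps into the submodularity and $\wedge$-convexity inequalities (\ref{eqn:submodular})--(\ref{eqn:^-convexity}), valid on ${\cal L}^*_p$ by Theorem~\ref{thm:submo}. The outcome is dichotomous: either this first step already yields $g(q) < g(p)$, proving descent, or submodularity produces a vertex $r'$ with $g(r') < g(p)$ and $d_{\mGamma}(p,r') < d_{\mGamma}(p,r)$, contradicting the minimality of $d_{\mGamma}(p,r)$.

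For the base case I would show that $g^{-1}(l)$ is connected in $\bar{\mGamma}$ by a parallel exchange argument: among pairs of minimizers not joined by a path of minimizers, pick one with $d_{\mGamma}$ minimum, project one onto the gated neighborhood of the other, and use submodularity of $\overline{g}$ on ${\cal L}^*_p$ to exhibit an intermediate minimizer strictly closer to both, a contradiction. This step is lighter than the descent step but relies on the same geometric toolkit of gates and medians.

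The main obstacle is the descent step. Its difficulty is that the nearest strictly-lower vertex $r$ is in general reached by first descending and then ascending, so one cannot argue inside ${\cal L}^+_p$ or ${\cal L}^-_p$ alone; the combined local behaviour must be controlled on the full neighborhood semilattice ${\cal L}^*_p$, where the governing inequality is the $\wedge$-convexity inequality for antipodal pairs rather than ordinary submodularity. The delicate point is to guarantee that the locally steepest direction is realized by a single Boolean pair---one edge of $\bar{\mGamma}$---so that a genuine one-step descent exists; this is exactly where the submodular structure of ${\cal L}^*_p$, and not merely convexity of $g$ along geodesics, is essential.
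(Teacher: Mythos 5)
Your overall architecture inverts the paper's: you want to prove the one\hbox{-}step descent property (``every non-minimizer has a $\bar{\mGamma}$-neighbor of strictly smaller value'') first and then deduce connectivity by upward induction on the values of $g$, whereas the paper proves connectivity of all level sets first, by a downward ``critical level'' argument, and only afterwards extracts the descent property (Theorem~\ref{thm:L-optimality}) as a corollary. The inversion would be fine if your descent step worked, but it has a genuine gap. L-convexity is a purely \emph{local} condition: the only inequalities it supplies at $p$ are submodularity/$\wedge$-convexity inequalities for $\overline{g}$ on ${\cal L}^*_p$, and these involve $g$ only at vertices of ${\cal L}^+_p \cup {\cal L}^-_p$. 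If the nearest strictly-lower vertex $r$ satisfies $r \notin {\cal L}^+_p \cup {\cal L}^-_p$, the value $g(r)$ simply does not occur in any inequality available at $p$, so no amount of ``feeding the first steps of the geodesic into (\ref{eqn:submodular})--(\ref{eqn:^-convexity})'' can produce a vertex $r'$ with $g(r') < g(p)$ closer to $p$: the intermediate vertices of the geodesic have uncontrolled values, and there is no analogue here of the translation-submodularity inequality that makes the corresponding argument work for L$^\natural$-convex functions on $\ZZ^n$. The same objection kills your base case: for two minimizers far apart in $\mGamma$, no local inequality at either one mentions the other, so the ``parallel exchange'' cannot get started.

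The paper's proof is designed precisely to avoid this. It picks the critical $\alpha$ at which $\bar{\mGamma}_{g,\alpha}$ is connected but $\bar{\mGamma}_{g,\alpha}\setminus g^{-1}(\alpha)$ is not, takes a shortest path $(p=p_0,p_1,\ldots,p_k=p')$ \emph{inside the level set} (so all intermediate values equal $\alpha$ and the endpoint values are $<\alpha$), and runs the local analysis at the intermediate vertex $p_1$, where the two relevant neighbors have \emph{known} values. The local engine is Lemma~\ref{lem:2term} (the comparable-sequence property of submodular functions on a modular semilattice, proved from the fractional-join inequality (\ref{eqn:gsubmo})), applied to $\overline{g}$ on ${\cal L}^*_{p_1}$; this propagates a strictly-lower neighbor from $p_0$ to $p_2$ and forces $k=2$, then yields the contradiction. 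Your write-up never supplies an analogue of Lemma~\ref{lem:2term}, and the point at which you apply the local inequalities (at $p$ against a distant $r$, rather than at an interior vertex of a controlled path) is the wrong one. To repair your plan you would have to replace ``nearest $r$ in $d_{\mGamma}$'' by ``nearest $r$ along paths whose intermediate values are controlled,'' which is exactly the level-set connectivity you were trying to deduce from descent — so the circularity cannot be broken in the direction you propose.
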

In a crucial step of the proof, 
we use the following general property 
of submodular functions 
on a modular semilattice, where
a sequence $(p_0,p_1,\ldots, p_m)$ of elements in a poset
is said to be {\em comparable} if $p_i \preceq p_{i+1}$ or  $p_{i+1} \preceq p_{i}$ for $i=0,1,2,\ldots,m$.
\begin{Lem}\label{lem:2term}
Let $f$ be a submodular function 
on a modular semilattice ${\cal L}$.
For $p,q \in {\cal L}$ and $\alpha \in \RR$, 
if $f(p) \leq \alpha$ and $f(q)  < \alpha$, 
there exists a comparable sequence
$(p = p_0, p_1,p_2,\ldots, p_m = q)$ such that 
$f(p_i) < \alpha$ for $i=1,2,\ldots, m$.
\end{Lem}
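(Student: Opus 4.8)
The plan is to induct on the distance $d:=d_{\cal L}(p,q)$, reducing the pair $(p,q)$ to two pairs of strictly smaller distance that share a common endpoint $u$ lying in the interior of the interval $I(p,q)$ and satisfying $f(u)<\alpha$. First I would dispose of the easy configurations. If $p=q$ then $d=0$ and the one-term sequence $(p)$ works vacuously (note $f(q)<\alpha$). If $p,q$ are comparable, then $(p,q)$ is itself a comparable sequence and $f(q)<\alpha$ is exactly the condition required. So the substance is the incomparable case, in which $p\wedge q\neq p,q$ and, by Lemma~\ref{lem:I(p,q)}, $p\wedge q\in I(p,q)$.

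The core of the induction will be the following claim: there is $u\in I(p,q)\setminus\{p,q\}$ with $f(u)<\alpha$. Granting it, since $u$ is a proper interior point we have $0<d(p,u)<d$ and $0<d(u,q)<d$, so I can apply the induction hypothesis to $(p,u)$ (valid as $f(p)\le\alpha$, $f(u)<\alpha$) and to $(u,q)$ (valid as $f(u)\le\alpha$, $f(q)<\alpha$). Concatenating the two resulting comparable sequences at their shared endpoint $u$ produces a comparable sequence from $p$ to $q$ in which every element after the initial $p$ has $f$-value $<\alpha$, as desired. Thus the termination and the bookkeeping are automatic once the claim is in hand; observe that the asymmetric hypotheses are preserved, the only element ever allowed to attain the value $\alpha$ being the original $p$.

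To prove the claim I would argue as follows. If $f(p\wedge q)<\alpha$, simply take $u=p\wedge q$. Otherwise $f(p\wedge q)\ge\alpha$, and I expand the $(p,q)$-envelope ${\cal E}(p,q)=\{p=u_0,u_1,\dots,u_m=q\}$, writing $c_i:=[C(u_i;p,q)]$, so that each $c_i>0$ and $\sum_i c_i=1$ by Lemma~\ref{lem:[C]} and (\ref{eqn:normal_fan}). If $m=1$ the pair is antipodal, and the $\wedge$-convexity inequality (\ref{eqn:^-convexity}) together with $v[p\wedge q,p],v[p\wedge q,q]>0$ forces $f(p\wedge q)<\alpha$, contradicting the standing assumption; hence $m\ge2$. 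I then insert $(p,q)$ into the submodularity inequality (\ref{eqn:gsubmo}), peel off the two extreme terms $c_0f(p)$ and $c_mf(q)$, and use $f(p)\le\alpha$, $f(p\wedge q)\ge\alpha$, and the \emph{strict} bound $f(q)<\alpha$ (with $1-c_m>0$ since $c_0>0$) to reach
\[
\sum_{i=1}^{m-1}c_i\,f(u_i)\;<\;\Big(\sum_{i=1}^{m-1}c_i\Big)\alpha .
\]
As the interior coefficients are positive, some $u_j$ with $1\le j\le m-1$ satisfies $f(u_j)<\alpha$, and $u_j\in I(p,q)\setminus\{p,q\}$ by Lemma~\ref{lem:unique}, completing the claim.

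The hard part will be exactly this last step. A single submodularity inequality only bounds a convex combination of the envelope values, so one cannot naively conclude that any one interior point is cheap; the peeling of the two endpoint terms, combined with the strict inequality $f(q)<\alpha$, is precisely the device that yields an interior cheap point and, crucially, lets the argument survive the borderline case $f(p)=\alpha$ without strengthening the hypothesis. The antipodal sub-case is handled by $\wedge$-convexity, and I would present it mainly as the tool that rules out $m=1$ when $f(p\wedge q)\ge\alpha$.
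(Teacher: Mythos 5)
Your proof is correct and follows essentially the same route as the paper's: apply the submodularity inequality (\ref{eqn:gsubmo}) to the incomparable pair $(p,q)$, peel off the two endpoint terms (using $f(p)\le\alpha$, the strict bound $f(q)<\alpha$, and $[C(q;p,q)]<1$) to conclude that either $f(p\wedge q)<\alpha$ or some interior envelope point has value $<\alpha$, and then recurse on the distance and concatenate at that point. The only cosmetic differences are that the paper normalizes $\alpha=0$ and handles the antipodal case implicitly (an empty interior sum forces $f(p\wedge q)<\alpha$) rather than as a separate $m=1$ sub-case.
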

\begin{proof}
We can assume that $\alpha = 0$ by letting $f \leftarrow f - \alpha$.
Also we may assume that $p$ and $q$ 
are incomparable; $\Conv I(p,q)$ is a polygon, 
and hence $[C(p;p,q)] < 1$ and  $[C(q;p,q)] < 1$.
Consider inequality (\ref{eqn:gsubmo}):
\[
(1 - [C(p;p,q)]) f(p) + (1 - [C(q;p,q)]) f(q) 
\geq f(p \wedge q) + \sum_{u \in {\cal E}({p,q}) \setminus \{p,q\}} [C(u;p,q)] f(u).
\]
Then
the left hand side is negative, and hence the right hand side is negative.
If $f(p \wedge q) < 0$, 
then $(p, p \wedge q, q)$ is a required sequence.
Otherwise there exists 
$u \in {\cal E}({p,q}) \setminus \{p,q\}$ with $f(u) < 0$ 
(since $[C(u;p,q)]$ is nonnegative).
By an inductive argument (on distance between $p$ and $q$), 
there are comparable sequences $(p,p_1,p_2,\ldots,p_k=u)$ 
and $(u,q_1,q_2,\ldots,q_{k'} = q)$ 
with $f(p_i) < 0$ and $f(q_j) < 0$.
Concatenating them, 
we obtain a required sequence. 
\end{proof}
\paragraph{Proof of Proposition~\ref{prop:connected}.}
Suppose (indirectly) 
that $\bar{\mGamma}_{g,\alpha'}$ 
is disconnected for some $\alpha'$.
For a sufficiently large $\alpha$, 
the graph $\bar{\mGamma}_{g, \alpha}$ is equal to $\bar{\mGamma}$, 
and is connected (since $\mGamma$ is finite and $g$ has no infinite value).
Also, for a sufficiently small $\epsilon > 0$, 
it holds $\bar{\mGamma}_{g, \alpha - \epsilon} 
= \bar{\mGamma}_{g,\alpha} \setminus g^{-1}(\alpha)$.
This implies that there exists $\alpha \geq l$
such that $\bar{\mGamma}_{g, \alpha}$ is connected, 
and $\bar{\mGamma}_{g, \alpha} \setminus g^{-1}(\alpha)$ 
is disconnected.
Then there exists a pair of vertices $p,p'$ belonging 
to different components in 
$\bar{\mGamma}_{g, \alpha} \setminus g^{-1}(\alpha)$; 
in particular $g(p) < \alpha$ and $g(p') < \alpha$.
Take such a pair $(p,p')$ with
$k := d_{\bar{\mGamma}_{g, \alpha}}(p,p')$ minimum.
There exists a path $(p = p_0,p_1,\ldots,p_k = p')$ 
in $\bar{\mGamma}_{g, \alpha}$
with $g(p_i) = \alpha$ for $i=1,2,\ldots,k-1$.

We first show $k=2$.
Consider ${\cal L}^*_{p_1}$ and $\bar g$ on ${\cal L}^*_{p_1}$.
We may assume that $p_1 \sqsubseteq p_2$.
Let $u := p/p_1$ if $p_1 \sqsubseteq p$ and $u:= p_1/p$ 
if $p \sqsubseteq p_1$.
Then $\bar g(u) < \alpha$ and 
$\bar g(p_2/p_1) \leq \alpha$.
Therefore, by Lemma~\ref{lem:2term}, 
there exists a comparable sequence
$(u = u_0,u_1,\ldots,u_{m-1}, u_m = p_2/p_1)$ 
in ${\cal L}^*_{p_1}$ such that $\bar g (u_i) < \alpha$ 
for $i=0,1,2,\ldots, m-1$.
Consider $u_{m-1}$, which is equal to $q'/q$ for some 
$q,q' \in V_{\mGamma}$ with $q \sqsubseteq q'$. 
Then (i) $p_2/p_1 \sqsubseteq_{*} q'/q$ or (ii) $q'/q \sqsubseteq_{*} p_2/p_1$.

Consider case (i).
By (\ref{eqn:p'pqq'}), we have 
$q \preceq p_1 \preceq p_2 \preceq q'$.
By (\ref{eqn:v}), 
we have $q \sqsubseteq p_i \sqsubseteq q'$ for $i=1,2$.
Thus both $q$ and $q'$ are 
adjacent to each of $p_1$ and $p_2$ (in $\bar{\mGamma}$).
By $\bar g (u_{m-1}) < \alpha$, 
we have $g(q) < \alpha$ or $g(q')  < \alpha$.
Say $g(q) < \alpha$; $q$ is adjacent to $p_1$ and $p_2$ 
in $\bar{\mGamma}_{g,\alpha}$.
If $q$ and $p'$ belongs different components in 
$\bar{\mGamma} \setminus g^{-1}(\alpha)$, 
then path $(q,p_2,p_3,\ldots, p_k = p')$ violates 
the minimality assumption.
This means that $q$ and $p'$ 
belong to the same component, 
which is different from the component that $p$ belongs to.
Thus we could have chosen path $(p,p_1,q)$ of length $2$. 
This implies that $k=2$ and $g(p_2) < \alpha$. 

Consider case (ii). By (\ref{eqn:p'pqq'}) and $q'/q \in {\cal L}_{p_1}^*$, 
we have $p_1 \preceq q \preceq q' \preceq p_2$ and $q \preceq p_1 \preceq q'$.
Hence $q = p_1$, and $p_1 \sqsubseteq q' \sqsubseteq p_2$ (by (\ref{eqn:v})).
Also, we have $g(q') < \alpha$,  
and $q'$ is adjacent to each of $p_1$ and $p_2$.
As above, by the minimality, 
we must have $k = 2$ and $g(p_2) < \alpha$.

Suppose that 
$u_i$ is represented by $u_i = q'_i/q_i$ 
for $q_i,q'_i \in V_{\mGamma}$ with $q_i \sqsubseteq q'_i$
($i=0,1,2,\ldots,m$).
Then $q_i \preceq q_{i+1} \preceq q'_{i+1} \preceq q'_{i}$ 
or $q_{i+1} \preceq q_{i} \preceq q'_{i} \preceq q'_{i+1}$.
Again by (\ref{eqn:v}),
both $q_i$ and $q'_i$ are adjacent to 
each of $q_{i+1}$ and $q'_{i+1}$ in $\bar{\mGamma}$.
Also, by $\bar g(q'_i/q_i) < \alpha$, 
at least one of $g(q_i)$ and $g(q'_i)$
is less than $\alpha$.
This means that there is a path in 
$\bar{\mGamma}_{g, \alpha} \setminus g^{-1}(\alpha)$ 
connecting $p$ and $p'$.
This is a contradiction to the initial assumption 
that $p$ and $p'$ belong to 
distinct components in 
$\bar{\mGamma}_{g, \alpha} \setminus g^{-1}(\alpha)$.

We show the latter part.
Take $p \in g^{-1}(\alpha)$. 
Then there is a pair of $q \in V_{\mGamma}$ 
and a path $(q = p_0, p_1,p_2, \ldots, p_k = p)$ 
in $\bar{\mGamma}$ such that $g(q) < \alpha$ and
$g(p_i) = \alpha$ for $i=1,2,\ldots,k$.
Take such a pair with the minimum length $k$. 
We show $k=1$.
Suppose that $k \geq 2$.
As above, by considering $\bar g$ on ${\cal L}^*_{p_1}$, 
we can find a neighbor $q'$ of $p_2$ with $f(q') < \alpha$.
This is a contradiction to the minimality of $k$. 
Hence $k=1$, implying the latter statement. $\Box$

\section{Minimum 0-extension problems}
\label{sec:0extension}
In this section, we study, 
from the viewpoint developed in the previous sections, 
the minimum $0$-extension problem {\bf 0-Ext}$[\mGamma]$ 
on an orientable modular graph $\mGamma$.
In Section~\ref{subsec:location}, 
we verify that {\bf 0-Ext}$[\mGamma]$ can be formulated as 
an L-convex function minimization on a modular complex ${\pmb{\mGamma}}^n$.
In Section~\ref{subsec:criterion},
we present a powerful optimality criterion 
(Theorem~\ref{thm:optimality})
for {\bf 0-Ext}$[\mGamma]$ 
by specializing the L-optimality criterion 
(Theorem~\ref{thm:L-optimality}).
In Section~\ref{subsec:proof_main}
we prove the main theorem (Theorem~\ref{thm:main}) of this paper.
In Section~\ref{subsec:nongraphical} 
we consider the minimum 0-extension problem for metrics, 
not necessarily graph metrics,
and extend Theorem~\ref{thm:main} to metrics.

\subsection{L-convexity of multifacility location functions}\label{subsec:location}
Let $\mGamma$ be an orientable modular graph 
with an orbit-invariant function $h$.
We are given a finite set $V$ 
with $V_{\mGamma} \subseteq V$. 
Suppose that $V \setminus V_{\mGamma}  = \{1,2,\ldots,n\}$.
For a nonnegative cost $c:{V \choose 2} \to \QQ_{+}$, 
let $(c \cdot d_{\mGamma,h})$  be a function on  $({V_{\mGamma}})^n$ defined by
\begin{eqnarray*}
 (c \cdot d_{\mGamma,h}) (\rho) & := & \sum_{s \in V_{\mGamma}} \sum_{1 \leq j \leq n} c(s j) d_{\mGamma,h}(s, \rho_j)
+ \sum_{1 \leq i < j \leq n} c(i j) d_{\mGamma,h}(\rho_i, \rho_j) \\
&&  \hspace{4cm} (\rho = (\rho_1,\rho_2,\ldots,\rho_n) \in (V_{\mGamma})^n).
\end{eqnarray*}
Such a function is called a {\em multifacility location function} on $\mGamma$.
A point $\rho$ in $(V_{\mGamma})^n$ is called {\em a location}.
Consider the following natural weighted version of {\bf 0-Ext}$[\mGamma]$:
\begin{description}
\item[{{\bf Multifac}$[\mGamma,h; V,c]$}:] \quad \quad Minimize $(c \cdot d_{\mGamma,h}) (\rho)$
over all locations $\rho \in (V_{\mGamma})^n$, 
\end{description}
where the unweighted version corresponds to $h = 1$.

Fix an admissible orientation $o$ of $\mGamma$.
By a natural identification $(V_{\mGamma})^n \simeq V_{\mGamma^n}$, 
a location is regarded as a vertex in $V_{\mGamma^n}$.
In particular, $(c \cdot d_{\mGamma,h})$ is regarded as a function on $V_{\mGamma^n}$.
By Theorem~\ref{thm:d_is_L-convex} and 
Lemma~\ref{lem:properties_L-convex}, we have:
\begin{Thm}\label{thm:L-convex}
Multifacility location function 
$(c \cdot d_{\mGamma, h})$ is an L-convex function on modular complex 
$\pmb{\mGamma}^n$.
\end{Thm}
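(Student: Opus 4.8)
The plan is to exhibit $(c \cdot d_{\mGamma,h})$ as a finite nonnegative combination of L-convex functions on $\pmb{\mGamma}^n$, each obtained from the distance function through the closure operations of Lemma~\ref{lem:properties_L-convex}, and then to invoke closure under nonnegative sums. Concretely, the objective splits into the binary terms $c(ij)\, d_{\mGamma,h}(\rho_i,\rho_j)$ for $1 \le i < j \le n$ and the unary terms $c(sj)\, d_{\mGamma,h}(s,\rho_j)$ for $s \in V_{\mGamma}$, $1 \le j \le n$. First I would show that each single term, viewed as a function on $V_{\mGamma^n}$, is L-convex on $\pmb{\mGamma}^n$, and then assemble the whole sum.

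For the binary terms, Theorem~\ref{thm:d_is_L-convex} already gives that $d_{\mGamma,h}$ is L-convex on $\pmb{\mGamma} \times \pmb{\mGamma}$. Identifying the two factors with the $i$-th and $j$-th copies and reordering coordinates so that $\pmb{\mGamma}^n \cong (\pmb{\mGamma}_i \times \pmb{\mGamma}_j) \times \prod_{k \ne i,j} \pmb{\mGamma}_k$ (using that the product of modular complexes is associative and commutative up to the natural identification of vertex sets), Lemma~\ref{lem:properties_L-convex}~(2) pulls the function $(\rho_i,\rho_j) \mapsto d_{\mGamma,h}(\rho_i,\rho_j)$ back to an L-convex function on $\pmb{\mGamma}^n$ that ignores the remaining coordinates.

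For the unary terms, I would first reduce $d_{\mGamma,h}(s,\cdot)$ to a coordinate restriction of the distance function. Since $d_{\mGamma,h}$ is L-convex on $\pmb{\mGamma} \times \pmb{\mGamma}$ and $s \in V_{\mGamma}$ is a fixed terminal, Lemma~\ref{lem:properties_L-convex}~(3) (fixing the coordinate that carries $s$, and using the symmetry of $d_{\mGamma,h}$) shows that $\rho_j \mapsto d_{\mGamma,h}(s,\rho_j)$ is L-convex on the single copy $\pmb{\mGamma}_j$; Lemma~\ref{lem:properties_L-convex}~(2) then lifts it to an L-convex function on $\pmb{\mGamma}^n$. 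Summing over $s$ with the nonnegative weights $c(sj)$ stays within the L-convex class by Lemma~\ref{lem:properties_L-convex}~(1).

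Finally, since every coefficient $c(ij)$ and $c(sj)$ is nonnegative, repeated application of Lemma~\ref{lem:properties_L-convex}~(1) shows that the finite nonnegative sum of all these L-convex functions is again L-convex on $\pmb{\mGamma}^n$, which is exactly the assertion. I do not expect a genuine obstacle here: the substantive content, namely L-convexity of the distance function itself, is supplied by Theorem~\ref{thm:d_is_L-convex}, so all that remains is bookkeeping. The only points requiring care are the coordinate reordering needed to express each low-arity term as a pull-back of a function on a subproduct, and the observation that the terminal terms are genuinely unary and hence correctly handled by the coordinate-fixing operation of Lemma~\ref{lem:properties_L-convex}~(3).
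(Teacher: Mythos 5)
Your proposal is correct and is essentially the paper's own argument: the paper proves Theorem~\ref{thm:L-convex} by exactly this combination of Theorem~\ref{thm:d_is_L-convex} (L-convexity of $d_{\mGamma,h}$ on $\pmb{\mGamma}\times\pmb{\mGamma}$) with the closure properties of Lemma~\ref{lem:properties_L-convex}, merely leaving the bookkeeping you spell out implicit. No gap.
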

Therefore {\bf Multifac}$[\mGamma,h; V, c]$ 
is an L-convex function minimization on modular complex $\pmb{\mGamma}^n$.
So we can apply the results in the previous section to {\bf Multifac}$[\mGamma,h; V, c]$.

\subsection{Optimality criterion and orbit-additivity}
\label{subsec:criterion}
Let $\rho = (\rho_1,\rho_2,\ldots,\rho_n)$ be a location.
A location $\rho' = (\rho'_1,\rho'_2,\ldots,\rho'_n)$ is said to be
a {\em forward neighbor} of 
$\rho$ if $\rho'_i \in {\cal L}_{\rho_i}^+$ for all $i$, 
and
is said to be a {\em backward neighbor} of $\rho$ 
if $\rho'_i \in {\cal L}_{\rho_i}^-$ for all $i$.
A forward or backward neighbor 
is simply called a {\em neighbor}.
This terminology is due to \cite{HH11folder}.
By Lemma~\ref{lem:product}, 
the set of forward (resp. backward) neighbors of $\rho$
is equal to ${\cal L}^+_{\rho}(\pmb{\mGamma}^n) 
= {\cal L}^+_{\rho_1} \times {\cal L}^+_{\rho_2} \times \cdots \times {\cal L}^+_{\rho_n}$ (resp. 
${\cal L}^-_{\rho}(\pmb{\mGamma}^n) 
= {\cal L}^-_{\rho_1} \times {\cal L}^-_{\rho_2} \times \cdots \times {\cal L}^-_{\rho_n}$).

We introduce a sharper concept of a neighbor using orbits. 
Recall (\ref{eqn:L|U}) in Section~\ref{subsec:lattice} for definition of 
${\cal L}|Q$. 
Let $Q$ be an orbit of $\mGamma$; 
consider ${\cal L}_{\rho}^{\pm}|Q := {\cal L}_{\rho}^{\pm}| (Q \cap \{\mbox{edges belonging to ${\cal L}^{\pm}_{\rho}$}\})$. 
A forward neighbor $\rho'$ of $\rho$ is called a {\em forward $Q$-neighbor} of $\rho$
if $\rho'_i \in {\cal L}^+_{\rho_i}|Q$ for all $i$, and 
is called a {\em backward $Q$-neighbor} of $\rho$
if $\rho'_i \in {\cal L}^-_{\rho_i}|Q$ for all $i$.
The set of all forward (reps. backward) $Q$-neighbors of $\rho$ is denoted by
${\cal L}^+_\rho|Q = {\cal L}^+_{\rho_1}|Q \times {\cal L}^+_{\rho_2}|Q \times \cdots \times {\cal L}^+_{\rho_n}|Q$ 
(resp. ${\cal L}^-_\rho|Q = {\cal L}^-_{\rho_1}|Q \times {\cal L}^-_{\rho_2}|Q \times \cdots \times {\cal L}^-_{\rho_n}|Q$).
A forward or backward $Q$-neighbor 
is simply called a {\em $Q$-neighbor}.

The main result in this section 
is the following optimality criterion, 
which has been shown  
for some special cases of orientable modular graphs: 
trees by Kolen~\cite[Chapter 3]{Kolen},  
median graphs by Chepoi~\cite[p.11--12]{Chepoi96}, and
frames by Hirai~\cite[Section 4.1]{HH11folder}.
\begin{Thm}\label{thm:optimality}
Let $\mGamma$ be an orientable modular graph with 
an admissible orientation $o$ and a positive orbit-invariant function $h$.
For a location $\rho$,
the following conditions 
are equivalent:
\begin{itemize}
\item[{\rm (1)}] $\rho$ is optimal to 
{\bf Multifac}$[\mGamma,h;V,c]$.
\item[{\rm (2)}] $\rho$ is optimal to 
{\bf Multifac}$[\mGamma,1 ;V,c]$.
\item[{\rm (3)}] For every neighbor $\rho'$ of $\rho$, 
we have $(c \cdot d_{\mGamma,h})(\rho) \leq (c \cdot d_{\mGamma, h})(\rho')$. That is
\[
(c \cdot d_{\mGamma,h})(\rho) = \min \{ (c \cdot d_{\mGamma,h})(\rho') \mid \rho' \in {\cal L}^{+}_{\rho}\}
= \min \{ (c \cdot d_{\mGamma,h})(\rho') \mid \rho' \in {\cal L}^{-}_{\rho}\}.
\]
\item[{\rm (4)}] For every neighbor $\rho'$ of $\rho$, 
we have $(c \cdot d_{\mGamma,1})(\rho) \leq (c \cdot d_{\mGamma,1})(\rho')$. That is
\[
(c \cdot d_{\mGamma,1})(\rho) = \min \{ (c \cdot d_{\mGamma,1})(\rho') \mid \rho' \in {\cal L}^{+}_{\rho}\}
= \min \{ (c \cdot d_{\mGamma,1})(\rho') \mid \rho' \in {\cal L}^{-}_{\rho}\}.
\]
\item[{\rm (5)}] For every orbit $Q$ and 
every $Q$-neighbor $\rho'$ of $\rho$, 
we have $(c \cdot d_{\mGamma,1})(\rho) \leq (c \cdot d_{\mGamma,1})(\rho')$.
That is,  for every orbit $Q$, we have
\[
(c \cdot d_{\mGamma,1})(\rho) = 
\min\{ (c \cdot d_{\mGamma,1})(\rho') \mid \rho' \in {\cal L}^{+}_{\rho}|Q\} = 
\min\{ (c \cdot d_{\mGamma,1})(\rho') \mid \rho' \in {\cal L}^{-}_{\rho}|Q\}.
\]
\end{itemize}
\end{Thm}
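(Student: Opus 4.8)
The plan is to split the five conditions into two groups---$\{(1),(3)\}$, governed by the given edge-length $h$, and $\{(2),(4)\}$, governed by the unit length---and then to fuse them through the orbit-refined condition (5). The first remark is that the neighborhood posets ${\cal L}^+_\rho$ and ${\cal L}^-_\rho$ are defined purely through Boolean pairs, so they depend only on the orientation $o$ and not on the chosen orbit-invariant function; thus ``neighbor'' denotes the same combinatorial object in (3), (4), and (5). Since both $(c \cdot d_{\mGamma,h})$ and $(c \cdot d_{\mGamma,1})$ are L-convex on $\pmb{\mGamma}^n$ by Theorem~\ref{thm:L-convex}, applying the L-optimality criterion (Theorem~\ref{thm:L-optimality}) to each of them yields (1)$\Leftrightarrow$(3) and (2)$\Leftrightarrow$(4) at once.

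The engine is orbit-additivity. Using (\ref{eqn:into}) I would write $(c \cdot d_{\mGamma,h}) = \sum_Q h_Q F_Q$, where $Q$ runs over orbits and $F_Q(\rho) := (c \cdot d_{\mGamma/Q,1})(\rho/Q)$ depends on a location $\rho$ only through its image $\rho/Q := (\rho_1/Q,\dots,\rho_n/Q)$ in the contraction $\mGamma/Q$; in particular $(c \cdot d_{\mGamma,1}) = \sum_Q F_Q$. The key local fact is that a forward or backward $Q$-neighbor $\sigma$ of $\rho$ moves only along $Q$: by the definition of ${\cal L}^\pm_{\rho_i}|Q$ the geodesic from $\rho_i$ to $\sigma_i$ crosses no edge outside $Q$, so $\sigma_i/Q' = \rho_i/Q'$ for every orbit $Q'\ne Q$. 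Hence $(c \cdot d_{\mGamma,h})(\sigma)-(c \cdot d_{\mGamma,h})(\rho) = h_Q(F_Q(\sigma)-F_Q(\rho))$ and $(c \cdot d_{\mGamma,1})(\sigma)-(c \cdot d_{\mGamma,1})(\rho) = F_Q(\sigma)-F_Q(\rho)$. As $h_Q>0$, minimality over $Q$-neighbors with respect to $h$ and with respect to $1$ coincide, and each amounts to $F_Q(\sigma)\ge F_Q(\rho)$ for all $Q$-neighbors $\sigma$. Restricting the all-neighbor conditions (3) and (4) to $Q$-neighbors therefore gives (3)$\Rightarrow$(5) and (4)$\Rightarrow$(5).

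The crucial step is the converse, (5)$\Rightarrow$(3) and (5)$\Rightarrow$(4), for which I would prove a \emph{lifting lemma}: for any forward neighbor $\rho'$ of $\rho$ and any orbit $Q$ there is a forward $Q$-neighbor $\sigma$ of $\rho$ with $\sigma/Q = \rho'/Q$ (and dually for backward neighbors). This is where the gate theory pays off. Applying Lemma~\ref{lem:interval}(3) inside the complemented modular semilattice ${\cal L}^+_{\rho_i}$ (Theorem~\ref{thm:lattice}) with the orbit $Q$, the gate $\sigma_i := \rho'_i|Q \in {\cal L}^+_{\rho_i}|Q$ satisfies $\sigma_i \preceq \rho'_i$ with the $(\sigma_i,\rho'_i)$-geodesic avoiding $Q$, whence $\sigma_i/Q = \rho'_i/Q$; setting $\sigma := (\sigma_1,\dots,\sigma_n)$ and invoking Lemma~\ref{lem:product} produces the required $Q$-neighbor. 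Granting the lemma, and since $F_Q$ depends only on the $\mGamma/Q$-image, condition (5) gives $F_Q(\rho') = F_Q(\sigma) \ge F_Q(\rho)$ for every orbit $Q$; summing these inequalities with weights $h_Q$ (respectively weight $1$) yields $(c \cdot d_{\mGamma,h})(\rho') \ge (c \cdot d_{\mGamma,h})(\rho)$ (respectively its unit-length version), i.e. (3) (respectively (4)). Thus $\{(1),(3),(5)\}$ are equivalent and $\{(2),(4),(5)\}$ are equivalent, so all five conditions coincide.

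I expect the lifting lemma to be the only non-routine ingredient. The decomposition $(c \cdot d_{\mGamma,h})=\sum_Q h_Q F_Q$ and the ``$Q$-neighbors move only along $Q$'' fact are immediate from (\ref{eqn:into}) and the definition of ${\cal L}^\pm_\rho|Q$, and the two applications of Theorem~\ref{thm:L-optimality} are direct. The care required in the lifting step is to verify that $Q$, restricted to the convex subsemilattice ${\cal L}^+_{\rho_i}$, is still an orbit-union there---so that Lemma~\ref{lem:interval}(3) applies---and that the gate indeed lands in the $Q$-neighborhood; both hold because a convex subgraph is isometric and projectivity is inherited from $\mGamma$.
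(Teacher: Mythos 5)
Your proposal is correct and follows essentially the same route as the paper: (1)$\Leftrightarrow$(3) and (2)$\Leftrightarrow$(4) via Theorems~\ref{thm:L-convex} and \ref{thm:L-optimality}, the orbit decomposition (\ref{eqn:into}) for the downward implications to (5), and for (5)$\Rightarrow$(3),(4) the same gate construction $\rho'_i\mapsto \rho'_i|Q$ from Lemma~\ref{lem:interval}~(3), which is exactly the lemma the paper proves (stated there as $d_{\mGamma,h}(p',q') - d_{\mGamma,h}(p,q) = \sum_Q h_Q \{ d_{\mGamma,1}(p'|Q,q'|Q) - d_{\mGamma,1}(p,q)\}$). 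Your ``lifting lemma'' is precisely the paper's definition of the $Q$-neighbor $\rho'|Q$ together with the observation that $(\rho'_i|Q)/R$ equals $\rho'_i/Q$ when $R=Q$ and $\rho_i/R$ otherwise.
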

Any orbit $Q$ in $\mGamma$ forms an orbit-union in the covering graph of each ${\cal L}^+_p$.
By Lemma~\ref{lem:interval}~(3), ${\cal L}^+_p|Q$ is a complemented modular lattice, 
and convex in ${\cal L}^+_p$. 
Thus all conditions (3),(4), and (5) are checked 
by submodular function minimization on modular semilattice.

Before the proof, 
we explain consequences of Theorem~\ref{thm:optimality}. 
The first consequence is  that
in solving {\bf Multifac}$[\mGamma, h; V,c]$,
we may replace $h$ with the unit function, 
even when $h$ is not positive.
\begin{Thm}\label{thm:suffice}
For every nonnegative orbit-invariant function $h$, 
every optimal location in {\bf Multifac}$[\mGamma, 1; V,c]$
is optimal to {\bf Multifac}$[\mGamma, h; V,c]$. 
\end{Thm}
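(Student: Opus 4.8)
The plan is to reduce Theorem~\ref{thm:suffice} to the optimality criterion of Theorem~\ref{thm:optimality}, exploiting the fact that conditions (1)--(5) there are all equivalent. The key observation is that Theorem~\ref{thm:optimality} is stated for a \emph{positive} orbit-invariant function $h$, whereas here $h$ is only assumed \emph{nonnegative}; so I cannot apply it directly to the given $h$. Instead, I would approximate: let $\rho$ be optimal to {\bf Multifac}$[\mGamma,1;V,c]$, fix any nonnegative orbit-invariant $h$, and aim to show $\rho$ is optimal to {\bf Multifac}$[\mGamma,h;V,c]$.

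First I would set up the comparison between $h$ and the unit function through the orbit decomposition. By equation~(\ref{eqn:into}), for every pair $p,q$ we have $d_{\mGamma,h}(p,q) = \sum_{Q:\text{orbit}} h_Q\, d_{\mGamma/Q,1}(p/Q,q/Q)$, so the objective $(c\cdot d_{\mGamma,h})$ decomposes as a nonnegative combination $\sum_Q h_Q\,(c\cdot d_{\mGamma/Q,1})$-type contributions over orbits. The crucial point is that condition~(5) of Theorem~\ref{thm:optimality} is an \emph{orbit-by-orbit} optimality statement using only the unit length $d_{\mGamma,1}$, and it is equivalent to genuine optimality. So the natural strategy is: since $\rho$ is optimal for the unit function, condition~(4) holds, hence condition~(5) holds, i.e.\ $\rho$ is optimal against every $Q$-neighbor simultaneously for each orbit $Q$ separately.

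The main step is then to argue that orbit-wise optimality against $Q$-neighbors, which is insensitive to the actual positive weights, already certifies optimality for the weighted problem with weights $h_Q$. Concretely, I would consider the positive orbit-invariant function $h^\varepsilon := h + \varepsilon$ for small $\varepsilon>0$, which \emph{is} positive, so Theorem~\ref{thm:optimality} applies to it verbatim. Because the $Q$-neighbor optimality conditions in~(5) refer only to $d_{\mGamma,1}$ and are the same for every choice of positive weights, the optimality of $\rho$ for $h^\varepsilon$ would follow from the optimality for the unit function via the chain (4)$\Rightarrow$(5)$\Rightarrow$(1) applied with weight function $h^\varepsilon$. Thus $\rho$ is optimal to {\bf Multifac}$[\mGamma,h^\varepsilon;V,c]$ for every $\varepsilon>0$. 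Finally, letting $\varepsilon\to 0$, the objective values converge (each $d_{\mGamma,h^\varepsilon}$ is continuous and nonincreasing in $\varepsilon$ toward $d_{\mGamma,h}$), and a minimizer of a limit of objectives that share the common minimizer $\rho$ remains a minimizer; so $\rho$ is optimal to {\bf Multifac}$[\mGamma,h;V,c]$.

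The hard part will be justifying the limiting argument rigorously: I must ensure that optimality of the \emph{same} location $\rho$ persists in the limit, rather than merely that optimal \emph{values} converge. The clean way to handle this is to avoid the limit altogether and instead verify directly that the equivalence (4)$\Leftrightarrow$(5) in Theorem~\ref{thm:optimality} is proved using $d_{\mGamma,1}$ in an $h$-independent manner, so that condition~(5) certified by the unit-weight optimality of $\rho$ immediately yields, through the $\varepsilon$-perturbed version of the theorem and the fact that~(5) is literally the same statement for $h^\varepsilon$ as for $1$, that $\rho$ satisfies the optimality condition~(1) for $h$ as well. I would therefore lean on the structure of Theorem~\ref{thm:optimality}'s own proof, where the separation of the positive-weight role (conditions (1)--(4)) from the unit-weight, orbit-wise role (condition (5)) is exactly what makes the weight $h$ replaceable; the perturbation is only a device to stay within the positivity hypothesis, and the genuinely $h$-free content lives in condition~(5).
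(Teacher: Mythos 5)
Your proposal is correct and follows essentially the same route as the paper: perturb to the positive orbit-invariant function $h+\varepsilon 1$, invoke Theorem~\ref{thm:optimality} (optimality for a positive orbit-invariant weight is equivalent to optimality for the unit weight) to conclude $\rho$ is optimal for $h+\varepsilon 1$ for every $\varepsilon>0$, and let $\varepsilon\to 0$. Your worry about the limit is unnecessary: by the orbit decomposition (\ref{eqn:into}) one has $d_{\mGamma,h+\varepsilon 1}=d_{\mGamma,h}+\varepsilon\, d_{\mGamma,1}$, so for each fixed competitor $\rho'$ the inequality $(c\cdot d_{\mGamma,h})(\rho')+\varepsilon (c\cdot d_{\mGamma,1})(\rho')\geq (c\cdot d_{\mGamma,h})(\rho)+\varepsilon (c\cdot d_{\mGamma,1})(\rho)$ is affine in $\varepsilon$ and passes to $\varepsilon=0$, which is exactly how the paper concludes, with no need for the detour through condition~(5).
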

\begin{proof}
Let $\rho$ be an optimal location for {\bf Multifac}$[\mGamma, 1; V,c]$.
Take an arbitrary positive $\epsilon > 0$.
Consider the positive orbit invariant function 
$h + \epsilon 1$.
By Theorem~\ref{thm:optimality},
$\rho$ is optimal to 
${\bf Multifac}[\mGamma, h + \epsilon 1 ;V,c]$. 
Hence, for an arbitrary location $\rho'$, 
we have 
\[
(c \cdot d_{\mGamma,h})(\rho') + 
\epsilon (c \cdot d_{\mGamma,1})(\rho')  
 = (c \cdot d_{\mGamma, h + \epsilon 1})(\rho') \geq 
(c \cdot d_{\mGamma, h + \epsilon 1})(\rho)
=  (c \cdot d_{\mGamma, h})(\rho) + \epsilon (c \cdot d_{\mGamma,1})(\rho). 
\]
Since $\epsilon > 0$ was arbitrary, 
we have $(c \cdot d_{\mGamma,h})(\rho') 
\geq (c \cdot d_{\mGamma,h})(\rho)$.
\end{proof}
The second consequence is 
a decomposition property of 
{\bf Multifac}$[\mGamma, h;V,c]$. 
For an orbit $Q$, define $(c \cdot d_{\mGamma/Q,h}): (V_{\mGamma/Q})^n \to \RR$ by
\begin{eqnarray*}
 (c \cdot d_{\mGamma/Q,h}) (\rho) & := & \sum_{s \in V_{\mGamma}} \sum_{1 \leq j \leq n} c(s j) d_{\mGamma/Q,h}(s/Q, \rho_j)
+ \sum_{1 \leq i < j \leq n} c(i j) d_{\mGamma/Q,h}(\rho_i, \rho_j) \\
&&  \hspace{4cm} (\rho = (\rho_1,\rho_2,\ldots,\rho_n) \in (V_{\mGamma/Q})^n).
\end{eqnarray*}
See Section~\ref{subsec:orbits} for the definition of $\mGamma/Q$.
Consider the following problem on $\mGamma/Q$: 
\begin{equation}\label{eqn:Gamma/Q}
\mbox{Minimize 
$(c \cdot d_{\mGamma/Q,h})(\rho)$
over all locations $\rho \in (V_{\mGamma/Q})^n$.}
\end{equation}
The optimal value of (\ref{eqn:Gamma/Q})
is denoted by $\tau_Q(\mGamma,h; V,c)$, whereas
the optimal value of the original problem 
{\bf Multifac}$[\mGamma, h; V,c]$ 
is denoted by $\tau({\mGamma, h; V,c})$.
Then we have
\begin{equation}\label{eqn:additive}
\tau({\mGamma,h;V,c}) \geq \sum_{Q:\footnotesize{\mbox{orbit}}} 
h_Q \tau_Q({\mGamma,1; V,c}).
\end{equation}
Indeed, for any optimal location $\rho$ in {\bf Multifac}$[\mGamma, h; V,c]$, 
define a location $\rho/Q$ for $\mGamma/Q$ by 
$\rho/Q :=  (\rho_1/Q, \rho_2/Q,\ldots, \rho_n/Q)$.
By (\ref{eqn:into}) we have
\begin{equation}\label{eqn:additive2}
\tau({\mGamma, h; V,c}) = (c \cdot d_{\mGamma,h})(\rho) 
= \sum_{Q:\footnotesize{\mbox{orbit}}} h_Q (c \cdot d_{\mGamma,1})(\rho/Q) 
\geq \sum_{Q:\footnotesize{\mbox{orbit}}} h_Q \tau_Q({\mGamma,1; V,c}).
\end{equation}
Note that
problems {\bf Multifac}$[\mGamma, h;V,c]$ and 
(\ref{eqn:Gamma/Q}) can be considered
for a possibly nonorientable modular graph, 
and the inequality relation (\ref{eqn:additive}) still holds; 
see \cite{Kar04a}. 
A modular graph $\mGamma$ 
is said to be {\em orbit-additive}
if (\ref{eqn:additive}) holds in equality.
Karzanov~\cite[Section 6]{Kar04a} 
conjectured that every orientable modular graph 
is orbit-additive. 
We can solve this conjecture affirmatively.
\begin{Thm}\label{thm:additive}
Every orientable modular graph is orbit-additive.
\end{Thm}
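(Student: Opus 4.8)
The inequality (\ref{eqn:additive}) gives ``$\geq$'' for free, so the whole content of Theorem~\ref{thm:additive} is the reverse inequality, i.e.\ the existence of a single location attaining the right-hand side simultaneously for all orbits. The plan is to produce such a location from the optimality theory already developed. First I would take a location $\rho$ that is optimal for {\bf Multifac}$[\mGamma,1;V,c]$ (one exists since $(c\cdot d_{\mGamma,1})$ is an L-convex function, Theorem~\ref{thm:L-convex}); by Theorem~\ref{thm:suffice} this same $\rho$ is optimal for {\bf Multifac}$[\mGamma,h;V,c]$, and then (\ref{eqn:additive2}) already reads $\tau(\mGamma,h;V,c)=\sum_{Q}h_Q\,(c\cdot d_{\mGamma,1})(\rho/Q)$. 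Thus orbit-additivity reduces to the single claim that, for \emph{every} orbit $Q$, the projected location $\rho/Q$ is optimal for the problem (\ref{eqn:Gamma/Q}) on $\mGamma/Q$, i.e.\ $(c\cdot d_{\mGamma/Q,1})(\rho/Q)=\tau_Q(\mGamma,1;V,c)$.

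The key computation, which I would carry out next, shows that a $Q$-neighbor move in $\mGamma$ is indistinguishable from a neighbor move in $\mGamma/Q$. Fix an orbit $Q$ and a forward $Q$-neighbor $\rho'$ of $\rho$. By the definition (\ref{eqn:L|U}) of ${\cal L}^+_{\rho_i}|Q$, every shortest $(\rho_i,\rho'_i)$-path lies entirely in $Q$; hence for any orbit $Q'\neq Q$ these edges are contracted in $\mGamma/Q'$, so $\rho'_i/Q'=\rho_i/Q'$ for all $i$. Decomposing the cost over orbits via (\ref{eqn:into}) with $h=1$, every $Q'\neq Q$ summand is therefore unchanged and only the $Q$-summand varies, giving
\[
(c\cdot d_{\mGamma,1})(\rho')-(c\cdot d_{\mGamma,1})(\rho)=(c\cdot d_{\mGamma/Q,1})(\rho'/Q)-(c\cdot d_{\mGamma/Q,1})(\rho/Q),
\]
and the identical identity holds for backward $Q$-neighbors. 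Since $\rho$ is globally optimal, condition~(5) of Theorem~\ref{thm:optimality} holds, so the left-hand side is $\geq 0$ for every $Q$-neighbor; the identity then transfers this to say that every projected neighbor of $\rho/Q$ has $(c\cdot d_{\mGamma/Q,1})$-value at least that of $\rho/Q$.

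To conclude I would apply the optimality criterion \emph{on $\mGamma/Q$}. For this one must first check that $\mGamma/Q$ is a modular complex: it is modular by Lemma~\ref{lem:orbits}~(1), and the orientation $o$ induces an admissible orientation $o/Q$, well defined because $o$ orients all edges of the orbit $Q$ consistently (Lemma~\ref{lem:commonorbit}~(2)). Second, one needs that the quotient map identifies the local neighbourhoods: the isomorphism ${\cal L}^{\pm}_{\rho_i}|Q\cong {\cal L}^{\pm}_{\rho_i/Q}(\mGamma/Q)$ (using that ${\cal L}^{\pm}_{\rho_i}|Q$ is a complemented modular lattice convex in ${\cal L}^{\pm}_{\rho_i}$, by Lemma~\ref{lem:interval}~(3), together with the shortest-path projection Lemma~\ref{lem:orbits}~(3)), so that forward/backward neighbours of $\rho/Q$ in $\mGamma/Q$ are \emph{exactly} the images of forward/backward $Q$-neighbours of $\rho$. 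Granting these two points, the previous paragraph says no neighbour of $\rho/Q$ decreases cost, whence Theorem~\ref{thm:optimality} (implication $(3)\Rightarrow(1)$ applied to the modular complex $\mGamma/Q$) forces $\rho/Q$ to be optimal on $\mGamma/Q$; summing $h_Q\,\tau_Q(\mGamma,1;V,c)$ over all orbits then yields equality in (\ref{eqn:additive}). I expect the main obstacle to be precisely this last step: rigorously verifying that $\mGamma/Q$ is orientable modular with the induced orientation $o/Q$ and that the contraction realizes the neighbourhood isomorphism above (lifting Boolean pairs and shortest paths through the contracted edges). The cost bookkeeping and the reduction itself are routine once these structural facts about $\mGamma/Q$ are in hand.
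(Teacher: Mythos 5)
Your reduction is the right one (take $\rho$ optimal for ${\bf Multifac}[\mGamma,1;V,c]$, use Theorem~\ref{thm:suffice} to get $\tau(\mGamma,h;V,c)=\sum_Q h_Q\,(c\cdot d_{\mGamma/Q,1})(\rho/Q)$, and reduce to showing $\rho/Q$ is optimal for (\ref{eqn:Gamma/Q})), and your identity
$(c\cdot d_{\mGamma,1})(\rho')-(c\cdot d_{\mGamma,1})(\rho)=(c\cdot d_{\mGamma/Q,1})(\rho'/Q)-(c\cdot d_{\mGamma/Q,1})(\rho/Q)$ for $Q$-neighbors $\rho'$ is correct (it is exactly the computation in the proof of $(3)\Rightarrow(5)$ of Theorem~\ref{thm:optimality}). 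But the way you close the argument has a genuine gap. You propose to invoke the L-optimality criterion \emph{on the quotient graph} $\mGamma/Q$, which requires (i) that $\mGamma/Q$ with an induced orientation is an orientable modular graph, i.e.\ a modular complex, and (ii) that the forward/backward neighbors of $\rho/Q$ in $\mGamma/Q$ are exactly the images of the forward/backward $Q$-neighbors of $\rho$. Neither fact is established in the paper, and neither is routine: an edge of $\mGamma/Q$ is a class of $Q$-edges whose orientations need not be consistent a priori, a $4$-cycle of $\mGamma/Q$ need not lift to a $4$-cycle of $\mGamma$ (so admissibility of the induced orientation is unclear), and Boolean pairs of $\mGamma/Q$ would have to be lifted through the contraction. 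You flag these as ``the main obstacle'' but do not resolve them, so as written the local-to-global step on $\mGamma/Q$ cannot be invoked and the proof is incomplete.

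The paper's proof avoids the quotient graph's structure entirely. It applies Theorem~\ref{thm:suffice} with the \emph{nonnegative} (not positive) orbit-invariant weight $1_Q$, equal to $1$ on $Q$ and $0$ elsewhere: the optimal location $\rho$ of ${\bf Multifac}[\mGamma,1;V,c]$ is then optimal for ${\bf Multifac}[\mGamma,1_Q;V,c]$, and by (\ref{eqn:into}) the objective $(c\cdot d_{\mGamma,1_Q})(\sigma)$ equals $(c\cdot d_{\mGamma/Q,1})(\sigma/Q)$ for every location $\sigma$; since every location of $\mGamma/Q$ lifts to one of $\mGamma$, this says directly that $\rho/Q$ attains $\tau_Q(\mGamma,1;V,c)$, with no optimality criterion on $\mGamma/Q$ needed. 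Note that Theorem~\ref{thm:suffice} was proved precisely for nonnegative (possibly degenerate) $h$ via the perturbation $h+\epsilon 1$, so this use is legitimate. If you want to salvage your route, the cleanest fix is to replace your last step by this observation; otherwise you must actually prove the structural facts about $\mGamma/Q$ that you listed, which is a substantial undertaking and is not needed for the theorem.
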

\begin{proof}
Take an optimal solution $\rho$ in 
${\bf Multifac}[\mGamma,1;V,c]$.
By Theorem~\ref{thm:suffice}, $\rho$ is also optimal to 
${\bf Multifac}[\mGamma, 1_Q; V,c]$ for every orbit $Q$, 
where $1_Q$ is the orbit-invariant function 
taking $1$ on $Q$ and $0$ on $E_{\mGamma} \setminus Q$.
Here ${\bf Multifac}[\mGamma, 1_Q; V,c]$ 
is equivalent to (\ref{eqn:Gamma/Q}).
Hence the inequality in (\ref{eqn:additive2}) 
holds in equality.
\end{proof}
\begin{Rem}
If problem (\ref{eqn:Gamma/Q}) is solvable in 
(strongly) polynomial 
time for each orbit, 
then by Theorem~\ref{thm:additive} 
we can evaluate $\tau$ in (strongly) polynomial time, 
and hence {\bf 0-Ext}$[\mGamma]$ is solvable in 
(strongly) polynomial time 
by the variable-fixing technique (see the augment after Theorem~\ref{thm:TZ}). 
%
As was suggested by Karzanov~\cite{Kar04a},
this approach is applicable to the case where 
each orbit graph of $\mGamma$ is a frame.
Then (\ref{eqn:Gamma/Q}) is a 0-extension problem on a frame,
is solvable in strongly polynomial time, and 
hence {\bf 0-Ext}$[\mGamma]$ is solvable 
in strongly polynomial time;
the (strong polytime) tractability 
of this class of orientable modular graphs 
was conjectured by~\cite{Kar04a}.
It should be noted that our proof of the main theorem
gives only a weakly polynomial time algorithm.
\end{Rem}

\paragraph{Proof of Theorem~\ref{thm:optimality}.}

(1) $\Leftrightarrow$ (3) and (2) $\Leftrightarrow$ (4)
follow from Theorems~\ref{thm:L-optimality} 
and \ref{thm:L-convex}. 
(4) $\Rightarrow$ (5) is obvious.

(3) $\Rightarrow$ (5). 
Suppose that $\rho'$ is a $Q$-neighbor of $\rho$. 
Then $\rho'/R = \rho/R$ for orbit $R$ different from $Q$, 
and $d_{\mGamma/Q,h} = h_Q d_{\mGamma/Q,1}$.
By (\ref{eqn:into}) we have
\begin{eqnarray*}
0 & \leq & (c \cdot d_{\mGamma,h})(\rho') - (c \cdot d_{\mGamma,h})(\rho) 
=  \sum_{R:\footnotesize\mbox{\rm orbit}} 
(c \cdot d_{\mGamma/R,h})(\rho'/R) - (c \cdot d_{\mGamma/R,h})(\rho/R) \\
&= & (c \cdot d_{\mGamma/Q,h})(\rho'/Q) - (c \cdot d_{\mGamma/Q,h})(\rho/Q) \\ 
&= & h_Q \left\{ (c \cdot d_{\mGamma/Q,1})(\rho'/Q) - (c \cdot d_{\mGamma/Q,1})(\rho/Q)\right\} \\
& = & h_Q \sum_{R:\footnotesize\mbox{\rm orbit}} (c \cdot d_{\mGamma/R,1})(\rho'/R) - (c \cdot d_{\mGamma/R,1})(\rho/R) \\
&= &
h_Q \left\{ (c \cdot d_{\mGamma,1})(\rho') - (c \cdot d_{\mGamma,1})(\rho) \right\}.
\end{eqnarray*}

(5) $\Rightarrow$ (3) and (5) $\Rightarrow$ (4).
By Lemma~\ref{lem:interval}~(3), 
${\cal L}_p^+|Q$ is convex in ${\cal L}_p^+$, and we can define
$q |Q \in {\cal L}_p^+|Q$ for $q \in {\cal L}_p^+$. 
\begin{Lem}
For $p,q \in V_{\mGamma}$, $p' \in {\cal L}^+_p$, and $q' \in {\cal L}^+_q$, we have
\begin{equation*}
d_{\mGamma,h}(p',q') - d_{\mGamma,h}(p,q) = 
\sum_{Q: {\rm orbit}} h_Q \left\{ d_{\mGamma,1}(p'|Q, q'|Q) - d_{\mGamma,1}(p,q) \right\}.
\end{equation*}
\end{Lem}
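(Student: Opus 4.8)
The plan is to reduce the stated identity to a term-by-term statement for each orbit $Q$, and then to sum it with weights $h_Q$ using the orbit decomposition (\ref{eqn:into}). First I would record the two projection facts that make everything collapse. Since $p'|Q \in {\cal L}_p^+|Q$, every shortest $(p,p'|Q)$-path uses only edges of $Q$ (by the definition (\ref{eqn:L|U})); consequently, for every orbit $R \neq Q$ these edges are contracted in $\mGamma/R$, so $(p'|Q)/R = p/R$, and likewise $(q'|Q)/R = q/R$. On the other hand, by Lemma~\ref{lem:interval}~(3) we have $p'|Q \preceq p'$ and no shortest $(p'|Q,p')$-path meets $Q$; since such a path lies in the convex set ${\cal L}_p^+$ (Theorem~\ref{thm:lattice}), it uses no edge of $Q$ at all, hence is contracted in $\mGamma/Q$, giving $(p'|Q)/Q = p'/Q$ and symmetrically $(q'|Q)/Q = q'/Q$.

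Next I would establish, for each fixed orbit $Q$, the local identity
\begin{equation*}
d_{\mGamma,1}(p'|Q, q'|Q) - d_{\mGamma,1}(p,q) = d_{\mGamma/Q,1}(p'/Q, q'/Q) - d_{\mGamma/Q,1}(p/Q, q/Q).
\end{equation*}
To see this, apply the $h=1$ case of (\ref{eqn:into}) to both $d_{\mGamma,1}(p'|Q, q'|Q)$ and $d_{\mGamma,1}(p,q)$, splitting each as a sum over all orbits $R$. For $R \neq Q$ the two summands coincide by the first projection fact above, so those terms cancel in the difference; the only surviving contribution is the $R=Q$ term, which by the second projection fact equals $d_{\mGamma/Q,1}(p'/Q, q'/Q) - d_{\mGamma/Q,1}(p/Q, q/Q)$. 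Note that it is precisely this cancellation which justifies the otherwise-mysterious appearance of the $Q$-independent term $d_{\mGamma,1}(p,q)$ on the right-hand side of the claim.

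Finally I would multiply the local identity by $h_Q$, sum over all orbits $Q$, and invoke (\ref{eqn:into}) once more—now with the weight $h$—to recognise $\sum_Q h_Q\, d_{\mGamma/Q,1}(p'/Q, q'/Q) = d_{\mGamma,h}(p',q')$ and likewise for $(p,q)$; this yields exactly the asserted equation. The one point that deserves care, and the only real obstacle, is the bookkeeping of the two orbit structures: $Q$ denotes an orbit of $\mGamma$, whereas $p'|Q$ is defined inside the semilattice ${\cal L}_p^+$ via the restricted edge set $Q \cap \{\text{edges belonging to } {\cal L}_p^+\}$. I must therefore verify that the edges witnessing the $(p,p'|Q)$- and $(p'|Q,p')$-geodesics are genuinely $Q$- and non-$Q$-edges of the ambient graph $\mGamma$, which holds because ${\cal L}_p^+$ is convex, so these geodesics stay inside it, and because the relevant edge set is taken as the restriction of the global orbit $Q$ rather than as an intrinsic orbit of ${\cal L}_p^+$.
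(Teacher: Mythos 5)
Your proposal is correct and follows essentially the same route as the paper: both rest on the two projection facts $(p'|Q)/R = p/R$ for $R \neq Q$ and $(p'|Q)/Q = p'/Q$ (the latter via Lemma~\ref{lem:interval}~(3)), followed by the orbit decomposition (\ref{eqn:into}) applied with weight $1$ and then with weight $h$. Your reorganization into a per-orbit local identity before summing is only a cosmetic difference from the paper's direct double-sum manipulation.
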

\begin{proof}
First we remark
\[
(p'|Q)/R = \left\{ 
\begin{array}{ll}
p'/Q & {\rm if}\ Q = R,\\
p/R  & {\rm otherwise}, 
\end{array}\right. \quad (p' \in {\cal L}^+_p,\ Q, R:\mbox{orbits}).
\]
Indeed, if $Q \neq R$, then $p'|Q$ and $p$ 
are joined by edges in $Q \subseteq E_{\mGamma} \setminus R$, 
and hence $(p'|Q)/R = p/R$.
If $Q = R$, then $p'$ and $p'|Q$ 
are joined by $E_{\mGamma} \setminus Q$ (Lemma~\ref{lem:interval}~(3)), 
and hence $p'/Q = (p'|Q)/Q$.
Thus we have
\begin{eqnarray*}
&& \sum_{Q: {\rm orbit}} h_Q \left\{ d_{\mGamma,1}(p'|Q, q'|Q) - d_{\mGamma,1}(p,q) \right\} \\
&& = \sum_{Q: {\rm orbit}} h_Q \sum_{R: {\rm orbit}} 
\left\{  d_{\mGamma/R,1}((p'|Q)/R, (q'|Q)/R) - d_{\mGamma/R,1}(p/R,q/R) \right\} \\
&& = \sum_{Q: {\rm orbit}}  
h_Q \left\{ d_{\mGamma/Q,1}(p'/Q, q'/Q) - d_{\mGamma/Q,1}(p/Q,q/Q) \right\} \\
&& = d_{\mGamma,h}(p',q') - d_{\mGamma,h}(p,q).
\end{eqnarray*}
\end{proof}

Let $\rho$ be a location, and let $\rho'$ be a forward neighbor of $\rho$.
For each orbit $Q$, define the forward $Q$-neighbor $\rho'|Q$ 
by $(\rho'|Q)_i := \rho'_i|Q$ for $i=1,2,\ldots,n$.
By the above lemma, we have
\begin{equation*}
(c \cdot d_{\mGamma,h})(\rho') - (c \cdot d_{\mGamma,h})(\rho) = 
\sum_{Q:{\rm orbit}} h_Q \left\{ (c \cdot d_{\mGamma,1})(\rho'|Q) - (c \cdot d_{\mGamma,1})(\rho) \right\}.
\end{equation*}
From this, we have (5) $\Rightarrow$ (3) and (5) $\Rightarrow$ (4).

\subsection{Proof of the main theorem (Theorem~\ref{thm:main})}\label{subsec:proof_main}
In this section, we complete the proof 
of the main theorem (Theorem~\ref{thm:main}) stating 
that {\bf 0-Ext}$[\mGamma]$ 
for every orientable modular graph $\mGamma$ 
can be solved in polynomial time.
By Theorem~\ref{thm:L-convex}, an instance {\bf Multifac}$[\mGamma,1;V,c]$ of
{\bf 0-Ext}$[\mGamma]$ is the problem of 
minimizing the sum $(c \cdot d_{\mGamma,1})$ of
L-convex constraints on $\pmb{\mGamma}^n$, 
where the arity of each constraint is $2$.
Hence, every location $\rho = (\rho_1,\rho_2,\ldots, \rho_n)$, and sign $s \in \{-,+\}$, 
$(c \cdot d_{\mGamma,1})$ is 
the sum of arity-2 submodular 
constraints on ${\cal L}_{\rho}^{s} = 
{\cal L}_{\rho_1}^{s} \times {\cal L}_{\rho_2}^{s} \times \cdots \times {\cal L}_{\rho_n}^{s}$, 
where each semilattice ${\cal L}_{\rho_i}^{s} = {\cal L}_{\rho_i}^{s}(\pmb{\mGamma})$ is 
constructed in polynomial time (Proposition~\ref{prop:modularlattice}). 
By Theorem~\ref{thm:minimized},
we can minimize $(c \cdot d_{\mGamma,1})$ 
over ${\cal L}^+_{\rho} \cup {\cal L}^-_{\rho}$ in polynomial time.
Therefore we can assume 
that we have a {\em descent oracle},  
an oracle that returns an optimal solution 
of this (local) problem.

By Theorem~\ref{thm:L-optimality} and the steepest descent algorithm, 
we can obtain a global optimal solution.
As mentioned already, we do not know whether 
the number of descent steps
is polynomially bounded.
Fortunately,
in the case of multifacility location functions,  
a cost-scaling approach gives 
a {\em weakly} polynomial bound on
the number of descent steps. 
Now the main theorem (Theorem~\ref{thm:main}) follows from 
the following.

\begin{Prop}
Suppose that $c$ is integer-valued.  
${\bf Multifac}[\mGamma,1; V,c]$ can be solved 
with $O(|V|^2 \diam \mGamma \log C)$ 
calls of the descent oracle, 
where $C := \max \{ c(xy) \mid xy \in {V \choose 2}\}$ and 
$\diam \mGamma$ denotes the diameter of $\mGamma$.
\end{Prop}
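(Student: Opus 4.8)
The plan is to combine a binary cost-scaling scheme with warm-started descent, and to count oracle calls by \emph{total objective decrease} rather than by any geometric measure of the descent trajectory. Since $c$ is integer-valued, I would write it in binary: set $L := \lfloor \log_2 C \rfloor$ and, for $k = L, L-1, \ldots, 0$, define $c_k(xy) := \lfloor c(xy)/2^k \rfloor$, so that $c_L$ takes values in $\{0,1\}$, $c_0 = c$, and $c_{k-1} = 2 c_k + b_{k-1}$ for a $0/1$ function $b_{k-1}$ recording the next bit. This yields $L+1 = O(\log C)$ scaling phases. In phase $k$ I would solve ${\bf Multifac}[\mGamma, 1; V, c_k]$ by repeatedly calling the descent oracle: from the current location $\rho$ it returns a minimizer of $(c_k \cdot d_{\mGamma,1})$ over ${\cal L}^+_\rho \cup {\cal L}^-_\rho$; we move there if it strictly improves, and otherwise stop, which by Theorem~\ref{thm:optimality} certifies global optimality. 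The optimal location found in phase $k$ is used as the starting location (the \emph{warm start}) for phase $k-1$, while phase $L$ starts from an arbitrary location.

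First I would bound the number of oracle calls within a single phase. Because $c_k$ and $d_{\mGamma,1}$ are integer-valued, $(c_k \cdot d_{\mGamma,1})$ takes only nonnegative integer values, so every non-terminal oracle call strictly decreases the objective by at least $1$. Hence the number of calls in phase $k$ is at most $1$ plus the gap $(c_k \cdot d_{\mGamma,1})(\rho^{\mathrm{start}}) - \min (c_k \cdot d_{\mGamma,1})$ at the starting location.

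Next I would bound this starting gap by $O(|V|^2 \diam \mGamma)$. Let $\rho^*_k$ denote the phase-$k$ optimum (hence the warm start for phase $k-1$) and $\rho^*_{k-1}$ the phase-$(k-1)$ optimum. Using $c_{k-1} = 2 c_k + b_{k-1}$ and the linearity of $(c \cdot d_{\mGamma,1})$ in $c$, the gap at the warm start equals $2\left[(c_k \cdot d_{\mGamma,1})(\rho^*_k) - (c_k \cdot d_{\mGamma,1})(\rho^*_{k-1})\right] + \left[(b_{k-1} \cdot d_{\mGamma,1})(\rho^*_k) - (b_{k-1} \cdot d_{\mGamma,1})(\rho^*_{k-1})\right]$; the first bracket is nonpositive since $\rho^*_k$ minimizes $(c_k \cdot d_{\mGamma,1})$, and the second is at most $(b_{k-1} \cdot d_{\mGamma,1})(\rho^*_k) \le \binom{|V|}{2} \diam \mGamma$, because $b_{k-1} \le 1$ on each of the at most $\binom{|V|}{2}$ pairs appearing in the objective and each distance is at most $\diam \mGamma$. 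The same bound holds trivially for the arbitrary start of phase $L$, where $0 \le (c_L \cdot d_{\mGamma,1}) \le \binom{|V|}{2} \diam \mGamma$. Combining, each phase uses $O(|V|^2 \diam \mGamma)$ oracle calls, and summing over the $O(\log C)$ phases gives the claimed $O(|V|^2 \diam \mGamma \log C)$.

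The main point to get right---rather than a genuine obstacle---is that this argument deliberately avoids any per-step geometric (e.g.\ $\ell_1$- or $\ell_\infty$-type) distance bound on the descent trajectory, since, as noted after Theorem~\ref{thm:L-optimality}, no such bound is known for L-convex functions on general modular complexes. Integrality of the scaled costs lets us charge each oracle call to a unit of objective decrease, and cost scaling keeps the warm-start gap polynomially small in every phase; the only quantities that enter are the number of cost terms $O(|V|^2)$, the diameter $\diam \mGamma$ bounding each distance, and the $O(\log C)$ scaling phases. This also explains why the resulting algorithm is only \emph{weakly} polynomial, through the $\log C$ factor.
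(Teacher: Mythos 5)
Your proposal is correct and follows essentially the same route as the paper: binary cost scaling with the halved-cost optimum as warm start, the gap bound $2\bigl[(c_k\cdot d)(\rho^*_k)-(c_k\cdot d)(\rho^*_{k-1})\bigr]+\bigl[(b_{k-1}\cdot d)(\rho^*_k)-(b_{k-1}\cdot d)(\rho^*_{k-1})\bigr]\le \binom{|V|}{2}\diam\mGamma$ is exactly the paper's inequality (\ref{eqn:steps}) with $\epsilon=c-2\lfloor c/2\rfloor$. The only difference is presentational: you make explicit the (correct) observation that integrality of $(c_k\cdot d_{\mGamma,1})$ lets each non-terminal oracle call be charged a unit of objective decrease, which the paper leaves implicit.
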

\begin{proof}
Let $\lfloor c/2 \rfloor: {V \choose 2} \to \ZZ_+$ 
be defined by $\lfloor c/2 \rfloor(xy) := \lfloor c(xy)/2 \rfloor$ 
(the largest integer not exceeding $c(xy)/2$).
Let $\rho$ be an optimal location in ${\bf Multifac}[\mGamma,1; V, \lfloor c/2 \rfloor]$, 
and let $\rho^*$ be an optimal location in ${\bf Multifac}[\mGamma,1; V, c]$.
We show
\begin{equation}\label{eqn:steps}
(c \cdot d_{\mGamma,1})(\rho) -  (c \cdot d_{\mGamma,1})(\rho^*)
\leq |V|^2 \diam \mGamma.
\end{equation}
If (\ref{eqn:steps}) is true, then the number of 
the descent steps from an initial starting point $\rho$
is bounded by $|V|^2 \diam \mGamma$.
Consequently, by recursive scaling,
we obtain an optimal solution for 
${\bf Multifac}[\mGamma,1; V,c]$
in $O(|V|^2 \diam \mGamma \log C)$ descent steps.

%
Let $\epsilon$ be a $\{0,1\}$-valued 
cost defined by $\epsilon := c - 2 \lfloor c/2 \rfloor$.
%
Then we have
\begin{eqnarray*}
&& (c \cdot d_{\mGamma,1})(\rho) - (c \cdot d_{\mGamma,1})(\rho^*) \\
&&  =   (2 \lfloor c/2 \rfloor \cdot d_{\mGamma,1})(\rho) - (2 \lfloor c/2 \rfloor \cdot d_{\mGamma,1})(\rho^*)  + (\epsilon \cdot d_{\mGamma,1})(\rho) -  
(\epsilon \cdot d_{\mGamma,1})(\rho^*)  \\ 
&& \leq (n|V_{\mGamma}| + n(n-1)/2) \diam \mGamma,
\end{eqnarray*}
where we use the facts that $\rho$ is also an optimal location in 
${\bf Multifac}[\mGamma,1; V, 2 \lfloor c/2 \rfloor]$ and that 
each term in $(\epsilon \cdot d_{\mGamma,1})(\rho')$ (for any location $\rho'$) is at most  $\diam \mGamma$.
\end{proof}

\subsection{Minimum $0$-extension problems for metrics}\label{subsec:nongraphical}
Let $\mu$ be a metric on a finite set $S$ (not necessarily a graph metric).
We can naturally consider the minimum $0$-extension 
problem {\bf 0-Ext}$[\mu]$ for a general $\mu$ formulated as:
{\em Given a set $V \supseteq S$ and 
$c: {V \choose 2} \to \QQ_+$, 
find a $0$-extension $(V,d)$ of $(S,\mu)$ 
with $\sum_{xy} c(xy) d(x,y)$ minimum}. 

Metric $\mu$ is said to be {\em modular} 
if $(S,\mu)$ is a modular metric space 
(see Section~\ref{sec:preliminaries}).
Let $H_\mu$ be the graph on the vertex set $S$ 
with edge set $E_{H_{\mu}}$ given as: $xy \in E_{H_\mu}$ 
$\Leftrightarrow$ there is no $z \in S \setminus \{x,y\}$ 
with $\mu(x,z) + \mu(z,y) = \mu(x,y)$.
$H_\mu$ is called the {\em support graph} of $\mu$.
Karzanov~\cite{Kar04b} extended the hardness result 
(Theorem~\ref{thm:hard}) to the following. 
\begin{Thm}[\cite{Kar04b}]\label{thm:mu_hard}
If $\mu$ is not modular or 
$H_\mu$ is not orientable, 
then {\bf 0-Ext}$[\mu]$ is NP-hard. 
\end{Thm}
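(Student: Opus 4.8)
The plan is to split the argument according to the hypothesis and, wherever a graph skeleton is available, to reduce to the graph hardness result (Theorem~\ref{thm:hard}); the genuinely metric phenomenon---a non-modular $\mu$---is then handled by a direct gadget reduction. The first and enabling step is a structure lemma: if $\mu$ is modular, then $\mu$ is exactly a positively weighted modular graph metric. Precisely, I would show that $H_\mu$ is a modular graph and $\mu = d_{H_\mu,w}$ where $w(xy):=\mu(x,y)$ for $xy \in E_{H_\mu}$. The inequality $\mu(x,y) \le$ (length of any $H_\mu$-path) is just the triangle inequality; for the reverse I would construct a geodesic $x = z_0, z_1, \dots, z_k = y$ in $H_\mu$ with $\sum_i \mu(z_i,z_{i+1}) = \mu(x,y)$, repeatedly choosing a support-graph neighbour $z_1$ of $x$ inside $I(x,y)$ and using the median/quadrangle structure (Lemma~\ref{lem:quadrangle}) to keep each step on a shortest path. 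A short computation on any $4$-cycle $x,a,y,b$ of $H_\mu$ shows that opposite edges have equal $\mu$-length, so $w$ is a \emph{positive orbit-invariant} function; hence {\bf 0-Ext}$[\mu]$ is precisely the weighted problem {\bf Multifac}$[H_\mu,w;\cdot]$ studied above, and positivity of $w$ transfers modularity of the metric to modularity of $H_\mu$ (cf. Lemma~\ref{lem:shortest}).

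Next I would treat the case where $\mu$ is modular but $H_\mu$ is not orientable. By the structure lemma $H_\mu$ is modular and not orientable, so the unweighted problem {\bf 0-Ext}$[H_\mu]$ is NP-hard by Theorem~\ref{thm:hard}. Since {\bf 0-Ext}$[\mu]$ equals {\bf Multifac}$[H_\mu,w;\cdot]$ with $w$ positive orbit-invariant, it remains to transfer hardness from the unit weighting to $w$. I would argue that the reduction underlying Theorem~\ref{thm:hard} is local---its gadgets compare locations across single edges and $4$-cycles---and that $w$ is constant on each orbit, so the construction replays with $w$ in place of the unit length, a positive orbit-invariant rescaling that does not change which configuration is optimal (mirroring the easy-direction robustness recorded in Theorem~\ref{thm:suffice}). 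This yields NP-hardness of {\bf 0-Ext}$[\mu]$.

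Finally, the case where $\mu$ is not modular is the main obstacle, since then $\mu$ need not be any graph metric and Theorem~\ref{thm:hard} is unavailable. From non-modularity I would fix a triple $x_1,x_2,x_3$ with $I(x_1,x_2) \cap I(x_2,x_3) \cap I(x_3,x_1) = \emptyset$ and build a reduction from the multiway cut problem {\bf 0-Ext}$[K_3]$, which is NP-hard by \cite{DJPSY94}. The idea is to place the three terminals at $x_1,x_2,x_3$, to attach the instance graph through auxiliary vertices, and to choose the cost function $c$ so that every free vertex is forced (by large penalties) onto $S$ and so that the absence of a common median makes it impossible to route the three terminal demands cheaply through a single shared location. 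The frustration induced by the medianless triple then reproduces the multiway-cut gap.

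I expect the non-modular case to carry the real technical weight: the delicate points are to certify, using only the metric inequalities of $\mu$ (which may admit many near-geodesics), that optimal $0$-extensions of the gadget are pinned to the intended locations, and to amplify a single medianless triple into a full, gap-preserving, polynomial-time reduction. The modular cases, by contrast, reduce cleanly to the already-established graph classification once the structure lemma $\mu = d_{H_\mu,w}$ and the orbit-invariance of $w$ are in hand.
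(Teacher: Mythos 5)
First, a point of orientation: the paper does not prove Theorem~\ref{thm:mu_hard} at all --- it is quoted from Karzanov~\cite{Kar04b}, so there is no in-paper proof to match yours against. Judged on its own terms, your write-up is an outline with two genuine gaps rather than a proof. Your opening structure lemma ($\mu$ modular $\Rightarrow$ $H_\mu$ modular and $\mu = d_{H_\mu,\bar\mu}$ with $\bar\mu$ positive and orbit-invariant) is fine and is in fact exactly the fact the paper records as (5.11), citing \cite{Bandelt88}; no objection there.

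The first gap is in the case ``$\mu$ modular, $H_\mu$ not orientable.'' You want to transfer NP-hardness from {\bf 0-Ext}$[H_\mu]$ (unit weights, hard by Theorem~\ref{thm:hard}) to the $\bar\mu$-weighted problem by appealing to the kind of weight-robustness recorded in Theorems~\ref{thm:suffice} and~\ref{thm:optimality}. But those theorems are proved only for \emph{orientable} modular graphs --- their proofs run through the entire L-convexity machinery, which needs an admissible orientation --- and the case you are in is precisely the one where $H_\mu$ is \emph{not} orientable. There is no a priori reduction between the unit-weight and $w$-weight problems on the same graph; to make your argument work you would have to reopen Karzanov's hardness gadgets for non-orientable graphs and verify they survive an orbit-invariant reweighting (the decomposition (\ref{eqn:into}) makes this plausible if the gadget lives on a single orbit, but ``the reduction is local, so it replays'' is an assertion, not an argument). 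The second and larger gap is the non-modular case, which you yourself flag as carrying ``the real technical weight'': placing three terminals at a medianless triple and hoping the ``frustration'' reproduces the multiway-cut gap is a plan, not a proof. Nothing in your sketch certifies that optimal $0$-extensions of the gadget are pinned where you want them, nor that a single medianless triple can be amplified into a gap-preserving reduction; Karzanov's actual argument selects a medianless triple with additional minimality properties and performs a careful case analysis that your outline does not touch. As it stands, both branches of the proof are missing their cores.
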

We can also consider LP-relaxation {\bf Ext}$[\mu]$ obtained
by relaxing $0$-extensions into extensions in {\bf 0-Ext}$[\mu]$.
Extending Theorem~\ref{thm:minimizable},
Bandelt, Chepoi, and Karzanov~\cite{BCK00} proved 
that {\bf Ext}$[\mu]$ is exact if and only if 
$\mu$ is modular and $H_\mu$ is frame.

Our framework
covers {\bf 0-Ext}$[\mu]$ 
for a metric $\mu$ such that $\mu$ is modular 
and $H_{\mu}$ is orientable.
Indeed $\mu$ induces 
the edge-length 
$\bar \mu$ on $H_{\mu}$ by 
$\bar \mu(pq) = \mu(p,q)$ $(pq \in E_{H_{\mu}})$.
From the definition of the support graph $H_{\mu}$, 
we have $\mu = d_{H_{\mu}, \bar \mu}$.
Moreover, it was shown in \cite{Bandelt88} (see \cite[Section 2]{Kar04a})  
that
\begin{myitem}
if $\mu$ is modular, then $H_{\mu}$ is a modular graph, 
and $\bar \mu$ is orbit-invariant.
\end{myitem}\noindent
Hence we can apply 
the argument in Section~\ref{sec:0extension} 
to {\bf Multifac}$[H_{\mu},\bar \mu; V,c]$ 
to obtain results for {\bf 0-Ext}$[\mu]$.
By Theorems~\ref{thm:main} and \ref{thm:optimality},
we obtain the converse of Theorem~\ref{thm:mu_hard}, 
which completes the classification of 
those metrics for which {\bf 0-Ext}$[\mu]$ is tractable.
\begin{Thm}\label{thm:main_mu}
If $\mu$ is modular and $H_\mu$ is orientable,
then {\bf 0-Ext}$[\mu]$ is solvable in polynomial time.
\end{Thm}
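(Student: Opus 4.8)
The plan is to show that {\bf 0-Ext}$[\mu]$ is literally an instance of the weighted multifacility location problem on the support graph $H_\mu$, and then to transfer the already-established graph result. First I would unpack the reduction of a $0$-extension to a location, exactly as in~(\ref{eqn:multifac}): a $0$-extension $(V,d)$ of $(S,\mu)$ is equivalent to a choice $\rho = (\rho_1,\ldots,\rho_n) \in S^n$ (with $V \setminus S = \{1,\ldots,n\}$), where $\rho_j$ is the point of $S$ at distance $0$ from the new point $j$. Since the definition of the support graph yields $\mu = d_{H_\mu,\bar\mu}$, the objective $\sum_{xy}c(xy)d(x,y)$ equals $(c \cdot d_{H_\mu,\bar\mu})(\rho)$. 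Thus {\bf 0-Ext}$[\mu]$ is exactly {\bf Multifac}$[H_\mu,\bar\mu;V,c]$.

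Next I would invoke the structural fact quoted above: since $\mu$ is modular, $H_\mu$ is a modular graph and $\bar\mu$ is orbit-invariant; moreover $\bar\mu$ is positive because $\mu$ separates points, and $H_\mu$ can be built from $\mu$ in time polynomial in $|S|$ by testing betweenness of all triples. By hypothesis $H_\mu$ is orientable, so for any admissible orientation $o$ the triple $(H_\mu,o,\bar\mu)$ is a modular complex and all the machinery of Section~\ref{sec:0extension} applies to {\bf Multifac}$[H_\mu,\bar\mu;V,c]$.

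The decisive step is to eliminate the weights $\bar\mu$. By Theorem~\ref{thm:suffice}, every optimal location for the unit-length problem {\bf Multifac}$[H_\mu,1;V,c]$ is also optimal for {\bf Multifac}$[H_\mu,\bar\mu;V,c]$. But {\bf Multifac}$[H_\mu,1;V,c]$ is precisely {\bf 0-Ext}$[H_\mu]$, which is solvable in polynomial time by Theorem~\ref{thm:main} because $H_\mu$ is orientable modular. Having computed an optimal location $\rho$, the optimal value $(c \cdot d_{H_\mu,\bar\mu})(\rho)$ of {\bf 0-Ext}$[\mu]$ follows by direct evaluation, so one obtains a polynomial-time algorithm.

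The point worth checking is the faithfulness of the reduction rather than any new algorithmic difficulty: the genuinely hard case, an orientable modular graph, is discharged by Theorem~\ref{thm:main}, and the passage from weighted to unit edge-lengths is exactly Theorem~\ref{thm:suffice} (itself a corollary of the optimality criterion Theorem~\ref{thm:optimality}). Hence the only thing I would verify carefully is that $\mu = d_{H_\mu,\bar\mu}$ with $\bar\mu$ a positive orbit-invariant function on a modular graph $H_\mu$, after which no essentially new argument is required.
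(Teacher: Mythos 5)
Your proposal is correct and follows essentially the same route as the paper: identify {\bf 0-Ext}$[\mu]$ with {\bf Multifac}$[H_\mu,\bar\mu;V,c]$ via $\mu = d_{H_\mu,\bar\mu}$, use the fact that modularity of $\mu$ makes $H_\mu$ modular with $\bar\mu$ a positive orbit-invariant function, and then reduce to the unit-weight case by the optimality criterion (the paper cites Theorem~\ref{thm:optimality} directly where you cite its corollary Theorem~\ref{thm:suffice}, which is an immaterial difference) before applying Theorem~\ref{thm:main}.
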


\section{Concluding remark}\label{sec:concluding}

In this paper, we established 
the ``P or NP-hard" classification of the minimum 0-extension problem.
This dichotomy result is related to a special case of 
a dichotomy theorem for {\em finite-valued CSP} 
due to Thapper and \v{Z}ivn\'{y}~\cite{ThapperZivny13STOC}.
Here we briefly explain their result and its relation to our result.

To describe their result, we formulate 
valued CSP in a setting slightly different from that in Section~\ref{subsec:VCSP}.  
Let $D$ be a finite set. A (finite-valued) {\em cost function} 
on $D$ is a function $f: D^{k} \to \QQ$ for some $k = k_f$.
A (finite-valued) {\em constraint language}, or simply, {\em language} on $D$ is
a set  $\mLambda$ of cost functions on $D$.
Let $x_1,x_2,\ldots,x_n$ be a set of variables.
By a {\em $\mLambda$-constraint}, 
we here mean a triple $(w, f, \sigma)$ of a nonnegative weight $w \in \QQ_+$, 
a cost function $f$ in $\mLambda$, and a map $\sigma:\{1,2,\ldots,k_f\} \to \{1,2,\ldots,n\}$.
For a constraint language $\mLambda$,
the problem ${\bf VCSP}[\mLambda]$ is formulated as:
\begin{description}
\item[{${\bf VCSP}[\mLambda]$}:] Given a set ${\cal C}$ of $\mLambda$-constraints, \\[0.5em]
minimize $\displaystyle \sum_{(w,f,\sigma) \in {\cal C}} w f(x_{\sigma(1)}, x_{\sigma(2)},\ldots, x_{\sigma(k_f)})$ 
over all $x = (x_1,x_2,\ldots,x_n) \in D^n$.
\end{description}
This is also a subclass of {\bf VCSP} studied in Section~\ref{subsec:VCSP}. 
Therefore we can consider the basic LP relaxation (BLP).
An {\em $m$-ary fractional polymorphism} for $\mLambda$ is 
a formal convex combination $\omega = \sum_{\vartheta} \omega(\vartheta) \vartheta$ 
of $m$-ary operations $\vartheta: D^m \to D$ such that
\begin{eqnarray*}
 \frac{1}{m} \{ f(x^1) + f(x^2) + \cdots + f(x^m) \} 
&\geq & \sum_{\vartheta} \omega(\vartheta) f(\vartheta (x^1,x^2,\ldots,x^m)) \\
&& \quad (f \in \mLambda,\ x^1,x^2,\ldots,x^m \in D^{k_f}),
\end{eqnarray*}
where an operation $\vartheta: D \times D \times \cdots \times D \to D$ is extended 
to an operation $D^k \times D^k \times \cdots \times D^k  \to D^k$ by
$(\vartheta(x^1,x^2,\ldots,x^k))_i := \vartheta(x^1_i,x^2_i,\ldots,x^k_i)$.
If $m= 1$, then $\omega$ is said to be {\em unary}, 
and if $m=2$, then $\omega$ is said to be {\em binary}.
A binary fractional polymorphism $\omega$ is said to be {\em symmetric} 
if its support consists of symmetric operations, i.e., operations $\vartheta$ satisfy 
$\vartheta (x,y) = \vartheta (y,x)$ for $x,y \in D$, and $\omega$ is said to be {\em idempotent} 
if its support consists of idempotent operations, i.e.,
operations $\vartheta$ satisfy $\vartheta(x,x) = x$ for $x \in D$.

Just after the developments \cite{Kolmogorov13ICALP, ThapperZivny12FOCS} (see \cite{KTZ13}),
Thapper and \v{Z}ivn\'y~\cite{ThapperZivny13STOC} 
established the following dichotomy theorem for finite-valued CSP.
Here a language $\mLambda$ is said be a {\em core} if 
for every unary fractional polymorphism, 
its support consists of injective operations;
It is shown in \cite{ThapperZivny13STOC} that ${\bf VCSP}[\mLambda]$ is polynomial time 
reducible to ${\bf VCSP}[\mLambda']$ for a core language $\mLambda'$.
\begin{Thm}[\cite{ThapperZivny13STOC}]\label{thm:TZ13}
Let $\mLambda$ be a finite-valued core language on $D$.
If $\mLambda$ admits a binary symmetric and idempotent fractional polymorphism, 
then BLP is exact for $\mLambda$, and ${\bf VCSP}[\mLambda]$ can be solved in polynomial time.
Otherwise ${\bf VCSP}[\mLambda]$ is NP-hard.
\end{Thm}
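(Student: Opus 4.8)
The plan is to establish the two alternatives through the theory of fractional polymorphisms together with weighted expressibility, assuming (as noted just before the statement) that it suffices to treat the case where $\mLambda$ is a core. For the tractable direction I would first invoke the general form of the criterion underlying Theorem~\ref{thm:TZ}: BLP is exact for $\mLambda$ as soon as $\mLambda$ admits a symmetric fractional polymorphism of \emph{every} arity $m$, the quoted special case being where a single semilattice operation supplies all arities through its iterated powers. The whole tractable case then reduces to a generation lemma: a single binary symmetric idempotent fractional polymorphism on a core forces symmetric fractional polymorphisms of all arities. I would attack this by LP duality, since the existence of an arity-$m$ symmetric fractional polymorphism is the feasibility of an explicit linear program in the coefficients $\omega(\vartheta)$; the idea is to show that a Farkas certificate of infeasibility at arity $m$ can be folded down to a certificate obstructing the binary polymorphism, with idempotency (available precisely because $\mLambda$ is a core) used to align the diagonal terms. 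A limiting symmetric-composition argument, building balanced $m$-ary operations from the binary one and averaging over all coordinate permutations, offers an alternative route.

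For the remaining case I would argue contrapositively through weighted expressibility, meaning the closure of $\mLambda$ under nonnegative scaling, summation of constraints, and minimizing out auxiliary variables. The crucial structural fact is that binary symmetric idempotent fractional polymorphisms are inherited under expressibility, so that if $\mLambda$ admits none, this failure can be converted into a construction rather than merely asserted. Concretely, infeasibility of the binary-polymorphism linear program yields, by duality (Motzkin transposition / Farkas), a finite weighted family of tuple-pairs witnessing that no symmetric idempotent binary operation improves the averaged cost; I would read this witness as a gadget expressing, on a two-element subset of $D$, a cost function equivalent to a \textsc{Max-Cut} objective, thereby reducing \textsc{Max-Cut} to {\bf VCSP}$[\mLambda]$ in polynomial time and establishing NP-hardness.

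The hard part will be the hardness direction: turning the purely existential failure of a binary symmetric fractional polymorphism into an explicit, polynomial-size reduction. This demands both a careful duality analysis to extract a usable certificate and a constructive expressibility argument guaranteeing that the function produced is genuinely hard rather than degenerate (a constant, or one that is trivially averageable). The generation lemma is also delicate, since binary symmetry does not automatically propagate to higher arities under naive composition — the coordinate weights come out unbalanced — so the limiting or duality step must be handled with care; but once the composition calculus for fractional polymorphisms and the role of idempotency are set up, that part is largely self-contained.
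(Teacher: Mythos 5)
First, note that the paper does not prove Theorem~\ref{thm:TZ13} at all: it is imported verbatim from \cite{ThapperZivny13STOC} and is used only in the concluding section to situate the paper's own dichotomy for {\bf 0-Ext} inside the general finite-valued CSP dichotomy. So there is no in-paper argument to compare yours against; the only meaningful comparison is with the proof in the cited reference, which occupies a substantial part of that STOC paper.

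Your outline does track the architecture of that reference: the tractable half goes through the characterization that BLP is exact if and only if $\mLambda$ admits symmetric fractional polymorphisms of every arity, reduced to a generation lemma for a single binary symmetric idempotent one; the hard half goes through LP duality plus expressibility to manufacture a Max-Cut-type binary function on a two-element subdomain. But as a proof it has genuine gaps precisely where all the content lies. The generation lemma is not a routine Farkas-folding or averaging exercise --- you yourself observe that naive symmetric composition produces unbalanced coordinate weights --- and the actual argument in \cite{ThapperZivny13STOC} is a multi-page construction (a reachability argument over weighted collections of $m$-tuples of operations) that your sketch neither reproduces nor replaces with anything checkable. Likewise, in the hardness direction, extracting from the Motzkin/Farkas certificate an \emph{expressible} cost function $f$ on some two-element subset $\{a,b\} \subseteq D$ satisfying $f(a,b) + f(b,a) < f(a,a) + f(b,b)$ --- rather than a constant or trivially averageable function --- is exactly where the core and idempotency assumptions must be deployed, and your proposal explicitly defers this step. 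Naming the correct ingredients and acknowledging which parts are hard is a sound reading of the literature, but as submitted this is a roadmap of the known proof, not a proof.
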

The minimum 0-extension problem can naturally be formulated as ${\bf VCSP}[\mLambda]$.
Let $\mu$ be a (rational-valued) metric on $D$; 
we can assume that $\mu(s,t) > 0$ whenever $s \neq t$.
The metric $\mu: D^2 \to \QQ_+$ itself is regarded as a binary cost function on $D$.
For $s \in D$, let $\mu_s: D \to \QQ_+$ 
be a unary cost function on $D$ defined by
\[
\mu_{s}(x) := \mu(s,x) \quad (x \in D).
\]
Let $\mLambda_{\mu}$ be a (finite-valued) constraint language defined by
\[
\mLambda_{\mu} := \{ \mu \} \cup \{ \mu_{s} \}_{s \in D}.
\]
By definition, we have:
\begin{Lem}
${\bf VCSP}[\mLambda_{\mu}]$ $=$ {\bf 0-Ext}$[\mu]$.
\end{Lem}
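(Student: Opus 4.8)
The plan is to exhibit an explicit objective-preserving bijection between the feasible solutions of the two problems, under which the two objective functions agree up to an additive constant that does not depend on the solution. Write $D := S$, and suppose $V \setminus S = \{1,2,\ldots,n\}$, so that an assignment in ${\bf VCSP}[\mLambda_{\mu}]$ is a tuple $x = (x_1,x_2,\ldots,x_n) \in D^n$.

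First I would show that a $0$-extension is nothing but a retraction of $V$ onto $S$. Given a $0$-extension $(V,d)$ of $(S,\mu)$, for each $y \in V$ there is $s \in S$ with $d(y,s)=0$; this $s$ is unique, since $d(y,s)=d(y,s')=0$ forces $\mu(s,s')=d(s,s')=0$ and hence $s=s'$ by the standing assumption $\mu(s,t)>0$ for $s\neq t$. Denote it $\rho(y)$, so that $\rho|_S=\mathrm{id}_S$. The triangle inequalities then give $d(y,z)=\mu(\rho(y),\rho(z))$ for all $y,z \in V$: indeed $d(y,z)\le d(y,\rho(y))+\mu(\rho(y),\rho(z))+d(\rho(z),z)=\mu(\rho(y),\rho(z))$, and symmetrically $\mu(\rho(y),\rho(z))=d(\rho(y),\rho(z))\le d(y,z)$. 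Thus $d$ is completely determined by $\rho$. Conversely, for any map $\rho:V\to S$ with $\rho|_S=\mathrm{id}_S$, the function $d(y,z):=\mu(\rho(y),\rho(z))$ is a $0$-extension of $(S,\mu)$, as it inherits the metric axioms from $\mu$, restricts to $\mu$ on $S$, and satisfies $d(y,\rho(y))=0$. Hence the feasible $0$-extensions are in bijection with the assignments $x:=(\rho(1),\ldots,\rho(n))\in D^n$.

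Next I would translate the objective. Substituting $d(y,z)=\mu(\rho(y),\rho(z))$ and splitting $\binom{V}{2}$ according to how many endpoints lie in $S$ yields
\[
\sum_{yz \in {V \choose 2}} c(yz)\, d(y,z) = \sum_{st \in {S \choose 2}} c(st)\,\mu(s,t) + \sum_{s \in S}\sum_{j=1}^n c(sj)\, \mu_s(x_j) + \sum_{1 \le i < j \le n} c(ij)\, \mu(x_i,x_j),
\]
exactly as in the graphical case (see (\ref{eqn:multifac})). The last two sums form a set ${\cal C}$ of $\mLambda_{\mu}$-constraints, namely one constraint $(c(sj),\mu_s,\sigma)$ with $\sigma(1)=j$ for each $s\in S$ and each $j$, and one constraint $(c(ij),\mu,\sigma)$ with $\sigma(1)=i,\sigma(2)=j$ for each $i<j$; their weighted sum is precisely the ${\bf VCSP}[\mLambda_{\mu}]$ objective on ${\cal C}$. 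The first sum is a constant independent of $x$, so it affects neither the set of minimizers nor, up to this fixed shift, the optimal value. Conversely, any set of $\mLambda_{\mu}$-constraints arises in this way: aggregate the weights of equal-scope constraints into a cost $c$ on $\binom{V}{2}$, taking $c\equiv 0$ on ${S \choose 2}$. This identifies instances of the two problems together with their solutions, establishing that ${\bf VCSP}[\mLambda_{\mu}]$ and {\bf 0-Ext}$[\mu]$ coincide.

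The only genuinely substantive point is the first step, that a $0$-extension is fully recoverable from its retraction $\rho$, which relies essentially on $\mu$ being strictly positive off the diagonal: otherwise the gate $\rho(y)$ need not be unique and $d$ need not be determined by $\rho$. The remaining bookkeeping, including the harmless constant term contributed by the $S$--$S$ pairs, is routine.
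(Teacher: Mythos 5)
Your proposal is correct and follows the same route the paper takes implicitly: the paper states this lemma with the single phrase ``By definition,'' relying on exactly the identification of $0$-extensions with retractions $\rho:V\to S$ (equivalently, locations in $D^n$) that you spell out, together with the splitting of the objective already used in (\ref{eqn:multifac}). Your write-up merely makes explicit the two points the paper leaves tacit --- that positivity of $\mu$ off the diagonal makes $\rho$ well-defined and $d$ recoverable from it, and that the $S$--$S$ pairs contribute only a solution-independent constant --- so there is nothing to add.
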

Moreover we have:
\begin{Lem}
$\mLambda_{\mu}$ is a core language.
\end{Lem}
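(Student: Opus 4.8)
The plan is to prove something slightly stronger than required, namely that the identity is the only unary operation that can occur in the support of any unary fractional polymorphism of $\mLambda_{\mu}$. This immediately gives that $\mLambda_{\mu}$ is a core, since the identity is injective. The key observation is that the unary cost functions $\mu_{s}$ $(s \in D)$, which all belong to $\mLambda_{\mu}$, force every operation in such a support to fix each point of $D$.

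Concretely, let $\omega = \sum_{\vartheta} \omega(\vartheta) \vartheta$ be an arbitrary unary fractional polymorphism for $\mLambda_{\mu}$. I would specialize the defining inequality (the case $m=1$) to the unary cost function $f = \mu_{s}$ and to the argument $x = s$, using that its left-hand side is $\mu_{s}(s) = \mu(s,s) = 0$:
\[
0 = \mu_{s}(s) \geq \sum_{\vartheta} \omega(\vartheta)\, \mu_{s}(\vartheta(s)) = \sum_{\vartheta} \omega(\vartheta)\, \mu(s,\vartheta(s)).
\]
Since each weight $\omega(\vartheta) \geq 0$ and each term $\mu(s,\vartheta(s)) \geq 0$, the vanishing of this sum forces every summand to be zero. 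Hence $\mu(s,\vartheta(s)) = 0$ for every $\vartheta$ in the support of $\omega$, and the standing assumption $\mu(s,t) > 0$ whenever $s \neq t$ then gives $\vartheta(s) = s$.

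Letting $s$ range over all of $D$, I conclude that $\vartheta = \mathrm{id}_{D}$ for every $\vartheta$ in the support of $\omega$; in particular the support consists of injective operations, so $\mLambda_{\mu}$ is a core. Note that the binary cost function $\mu$ plays no role in this argument: the entire weight falls on the presence of \emph{all} unary functions $\mu_{s}$ together with the strict positivity of $\mu$ off the diagonal, which is the only place the metric hypothesis is used. There is therefore no genuine obstacle; the one point requiring care is to invoke the $m=1$ instance of the fractional polymorphism inequality correctly and to exploit nonnegativity of the weights to split the vanishing sum term by term.
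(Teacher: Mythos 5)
Your proof is correct and follows essentially the same route as the paper: both apply the unary fractional polymorphism inequality to the unary cost functions $\mu_s$ at the point $s$, use $\mu_s(s)=0$ together with nonnegativity of the weights and of $\mu$ to force each term $\mu(s,\vartheta(s))$ to vanish, and conclude $\vartheta(s)=s$ from positivity of $\mu$ off the diagonal. Your write-up merely makes explicit a few points the paper leaves implicit (quantifying over all $s\in D$ and noting that the binary constraint $\mu$ is not needed).
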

\begin{proof}
Take a unary fractional polymorphism $\omega = \sum_{\vartheta} \omega(\vartheta) \vartheta$
for $\mLambda_{\mu}$.  
Then it holds
\[
0 = \mu(s, s) \geq \sum_{\vartheta} \omega(\vartheta) \mu(s, \vartheta(s)) \geq 0.
\]
Therefore $\vartheta(s) = s$ must hold. 
This means that the support of $\omega$ consists of 
the identity map on $D$, which is trivially injective. 
\end{proof}
Therefore Theorem~\ref{thm:TZ13} is applicable to {\bf 0-Ext}$[\mu]$.
In particular, Theorems~\ref{thm:mu_hard} and \ref{thm:main_mu}
can be viewed as a sharpening of Theorem~\ref{thm:TZ13} 
for constraint languages $\mLambda_\mu$.
For those metrics $\mu$ in Theorem~\ref{thm:main_mu},  
core language $\mLambda_{\mu}$ must have a 
binary symmetric and idempotent fractional polymorphism, 
and {\bf 0-Ext}$[\mu]$ must be solved directly by BLP 
(under the assumption P $\neq$ NP).
In Remark~\ref{rem:nonsemilattice},
we have verified this fact for the case where $\mu$ is the metric on a modular semilattice ${\cal L}$;
it is a good exercise to construct a binary symmetric and idempotent 
fractional polymorphism from (\ref{eqn:polymorphism_submo}).
We however could not find 
such a fractional polymorphism for the general case.
As seen in Section~\ref{sec:submo}, 
a fractional polymorphism can rather be complicated 
and consist of a large number of operations. 
To construct a fractional polymorphism as required, 
it might need a further thorough investigation on orientable modular graphs. 

\section*{Acknowledgments}
We thank the referee for helpful comments, and thank Kazuo Murota for careful reading 
and numerous helpful comments,  
Kei Kimura for discussion on Valued-CSP, 
Satoru Iwata for communicating the paper~\cite{Kuivinen} of Kuivinen, 
Akiyoshi Shioura for the paper~\cite{Kolmogorov} of Kolmogorov, and
Satoru Fujishige for the paper~\cite{HuberKolmogorov12} 
of Huber-Kolmogorov.
This research is partially supported by the Aihara Project, the FIRST
program from JSPS, by Global COE Program ``The research and training
center for new development in mathematics'' from MEXT, and
by a Grant-in-Aid for Scientific Research 
from the Ministry of Education, Culture, Sports,
Science and Technology of Japan.

\end{document}